\newtheorem{thm}{Theorem}[section]
\newtheorem{lem}[thm]{Lemma}
\newtheorem{prop}[thm]{Proposition}
\newtheorem{cor}[thm]{Corollary}
\newtheorem{defn}[thm]{Definition}
\newtheorem{conj}[thm]{Conjecture}
\newcommand{\LSN}[2]{\sin(\theta_{#2}-\theta_{#1})}
 \newcommand{\obib}[1]{}
\newcommand{\axMonotone}{A4}
\newcommand{\axDegenerate}{A3}
\newcommand{\axCompare}{A1}
\newcommand{\axZero}{A2}
\newcommand{\Rin}[0]{\mathbf{Rin}(9)}
\newcommand{\Arr}{\mathbf{\mathcal{A}}}
\newcommand{\Brr}{\mathbf{\mathcal{B}}}
\newcommand{\Crr}{\mathbf{\mathcal{C}}}
\newcommand{\jle}{\preccurlyeq}
\newcommand{\sjle}{\prec}
\newcommand{\SN}[2]{S^{#2}_{#1}}
\newcommand{\BSN}[2]{\mathbf{{\boldsymbol{S}}^{#2}_{#1}}}
\newcommand{\PA}{A}
\newcommand{\NA}{-A}
\newcommand{\PPA}{A^{+}}
\newcommand{\ZA}{A^{0}}
\newcommand{\NNA}{A^{-}}
\newcommand{\Natural}{\mathbb{N}}
\newcommand{\Nat}{\mathbb{N}}
\newcommand{\Real}{\mathbb{R}}
\newcommand{\Mat}{\mathcal{M}}
\newcommand{\DualMat}{\mathcal{M^*}}
\newcommand{\Circ}{\mathcal{C}}
\newcommand{\CoC}{\mathcal{C^*}}
\newcommand{\ind}{\boldsymbol{1}}
\DeclareMathOperator{\rank}{rank}
\newcommand{\pow}{\mathfrak{p}}
\newcommand{\cupdot}{\ensuremath{\mathaccent\cdot\cup}}
\title{A New Proof of Pappus's Theorem}
\author{Jeremy J. Carroll}
\email{pinkboy@bluebottle.com}
\begin{document}

\subjclass[2000]{Primary: 52C30, 52C40; Secondary: 42A05, 42A63}
\keywords{pseudoline stretching, Pappus, oriented matroid realizability,
polar coordinates, sine, multiset}

\date{25th April 2007}

\thanks{
Copyright \copyright~2007 Jeremy J. Carroll.
}
\thanks{
Copying this document is licensed under a 
Creative Commons Attribution 2.0 UK: England \& Wales License.
See http://creativecommons.org/licenses/by/2.0/uk/ for details.
}
\thanks{
The author invites correspondence related to this paper, including
discussion of the results and suggested corrections.
}

\begin{abstract}
Any stretching of Ringel's non-Pappus pseudoline arrangement
when projected into the Euclidean plane,
implicitly contains a particular arrangement
of nine triangles.
This arrangement has a complex constraint
involving the sines of its angles.
These constraints cannot be satisfied
by any projection of the initial arrangement.
This is sufficient to prove Pappus's theorem.
The derivation of the constraint is via
systems of inequalities arising
from the polar coordinates of the lines.
These systems are linear in $\mathbf{r}$ for any given $\boldsymbol{\theta}$,
and their solubility can be analysed in terms of the signs of determinants.
The evaluation of the determinants is via a normal form for sums
of products of sines, 
giving a powerful system of trigonometric identities.
The particular result is generalized to arrangements
derived from three edge connected totally cyclic directed graphs,
conjectured to be sufficient for a complete analysis of angle 
constraining
arrangements of lines, and thus a full response to Ringel's slope 
conjecture.
These methods are generally applicable to the realizability problem
for rank 3 oriented matroids.
\end{abstract}

\maketitle

\section{Introduction}
A more accurate, but less snappy, title for this paper might have been: ``New Approaches
to Angles and Arrangements of Lines and Pseudolines applied to Pappus's Theorem''.
This paper does contain a new proof of Pappus's theorem, but it is
fairly laborious and ugly. In particular, it is strangely asymmetric given the
beauty and symmetry of the theorem being proved. The reader only wishing to
be convinced of the truth of Pappus's theorem is best advised to go elsewhere.

The hope is that the reader will find:
\begin{itemize}
\item 
A new appreciation of the complex constraints between angles
in line arrangements, without regard to any distances in the arrangement.
\item
An awareness of the power of polar coordinates in addressing the
pseudoline stretching problem.
\item
New techniques for decomposing pseudoline arrangements into partial
arrangements by considering the orientations of 
only some of the triangles.
\item
A normal form for sums of products of sines, useful
for finding complex trigonometric identities.
\item
A concept of twisted graph, allowing
the derivation of angle constraining arrangements of lines
from three edge connected graphs.
\item
An understanding of the potential for solving problems set in
the projective plane by an analysis in the Euclidean plane.
\item
And, a new proof of Euclid's porism, 
usually known as Pappus's theorem.
\end{itemize}

\begin{figure}[htp]
  \begin{center}
    \subfigure[The main result (E)]{\label{f:main}\includegraphics[width=.45\textwidth]{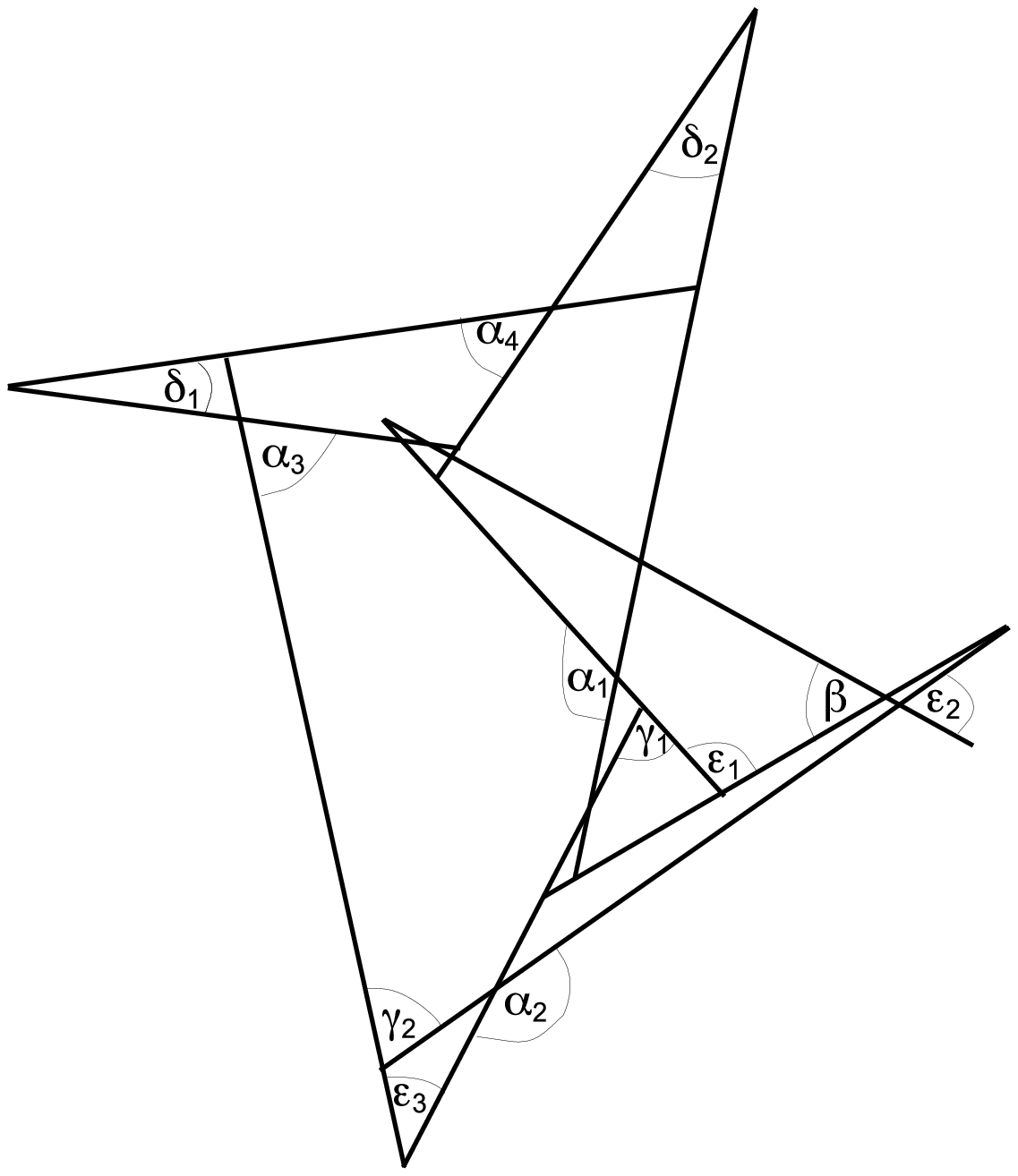}}
    \subfigure[$\Rin$ (from \cite{grunbaum:straight}) (P)]{\label{f:grunbaum}\includegraphics[width=.45\textwidth]{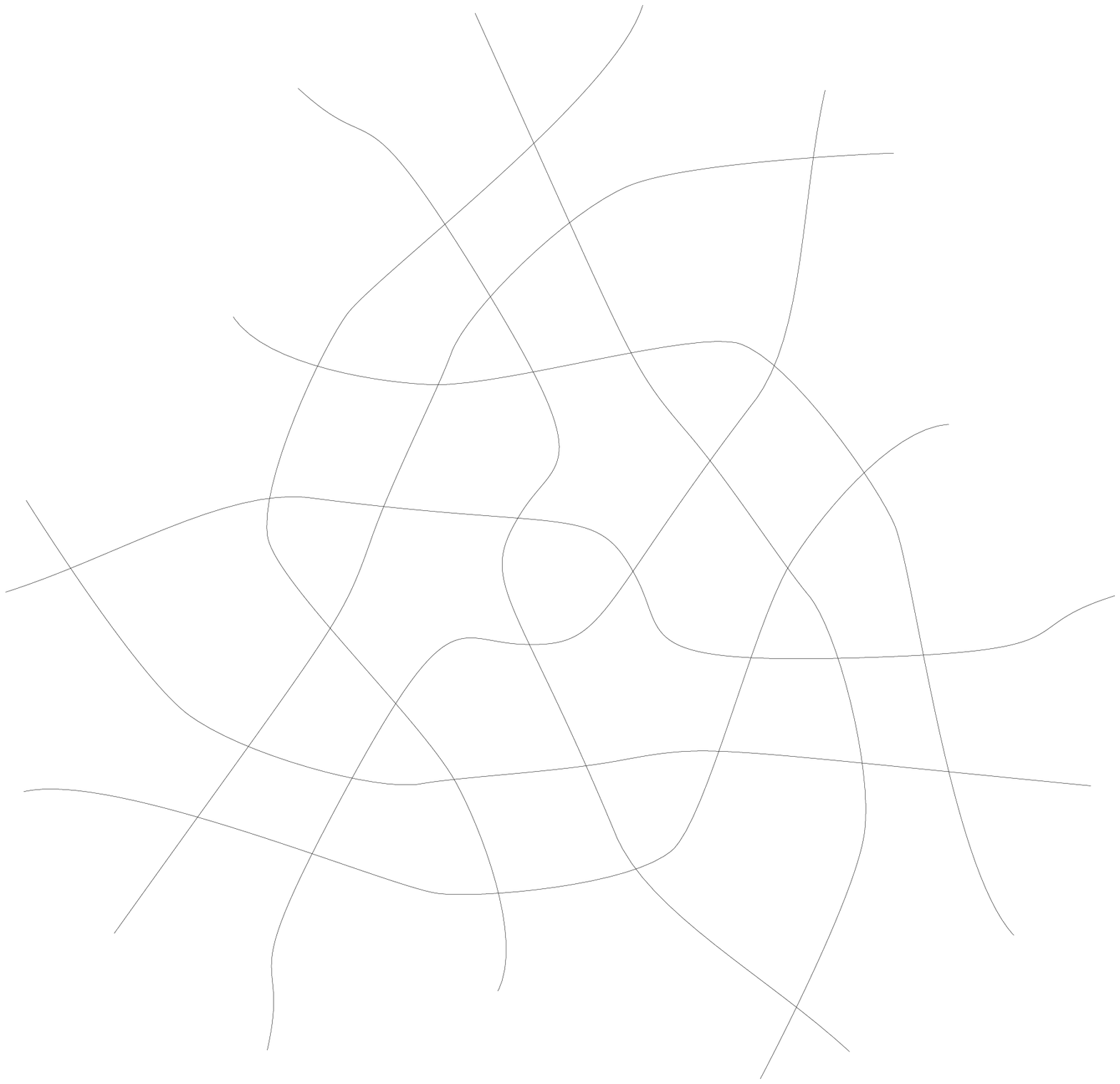}} 
  \end{center}
  \caption{}
  \end{figure}


\subsection{Main Result}
In figure~\ref{f:main}, we have:
\begin{multline}
\label{e:main}
\sin(\alpha_1)\sin(\alpha_2)\sin(\alpha_3)\sin(\alpha_4)\sin(\beta) \\
+ \sin(\beta)\sin(\gamma_1)\sin(\gamma_2)\sin(\delta_1)\sin(\delta_2) \\
> \sin(\delta_1)\sin(\delta_2)\sin(\epsilon_1)\sin(\epsilon_2)\sin(\epsilon_3)
\end{multline}

%

This result is sufficient to show that \obib{Ringel's} the 9-line non-Pappus pseudoline arrangement of
\cite{ringel:teilungen},
fig.~\ref{f:grunbaum}, cannot be stretched, i.e. drawn with straight lines.
Reversing Ringel's non-stretchability proof from Pappus, proves
Pappus from equation~(\ref{e:main}).
The main result is first proved fairly directly, 
from Motzkin's PhD thesis, and then as a special case of a general result
which gives similar conditions to an infinite class of diagrams, for example,
derivable from every cubic graph.

\subsection{Angles}
This paper studies the relationship between angles and the topology
of line arrangements in the Euclidean plane.
This presents a significant departure from previous approaches
to geometry, in which either the metric plays a central role, or
which abstract both from the metric and the protractor.
The relationships of interest are nontrivial relationships
constraining products of sines of the angles
which do not involve any of the distances. Some initial examples are shown in 
figures~\ref{f:circsaw3}, \ref{f:circsaw4} and~\ref{f:circsaw4ringel}. 
For each of these the following inequality holds:
\begin{equation}
\Pi_{i=1}^n \sin(\beta_i) > \Pi_{i=1}^n \sin(\alpha_i)
\end{equation}
This result is derived in \cite{carroll:sines}, by a simple application of the sine formula to 
each of the exscribed larger triangles.
\begin{figure}[htp]
\begin{center}
\begin{tabular}{{cc}}
\subfigure[n=3]{\label{f:circsaw3}\includegraphics[width=.4\textwidth]{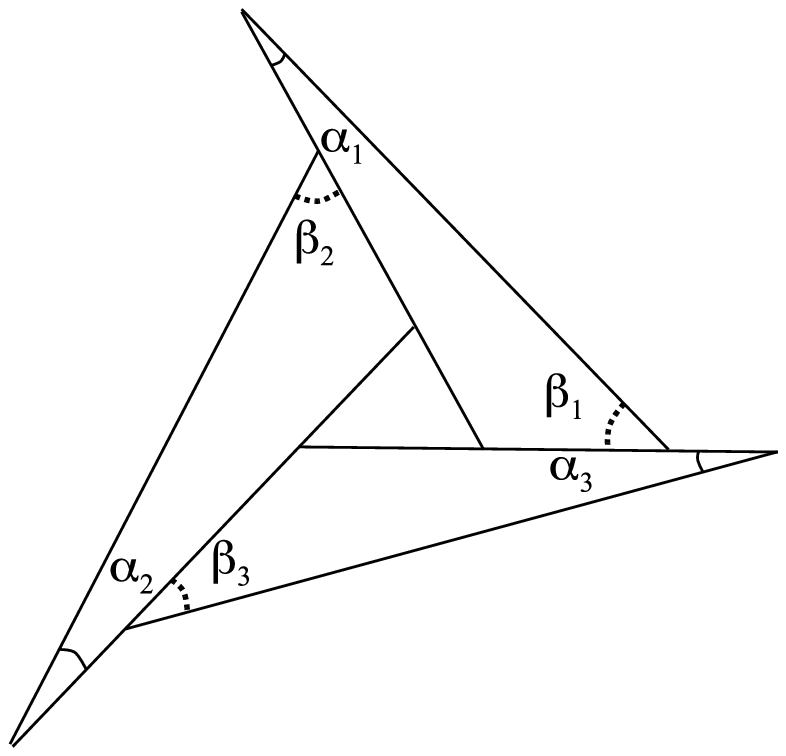}}&
\subfigure[n=4]{\label{f:circsaw4}\includegraphics[width=.4\textwidth]{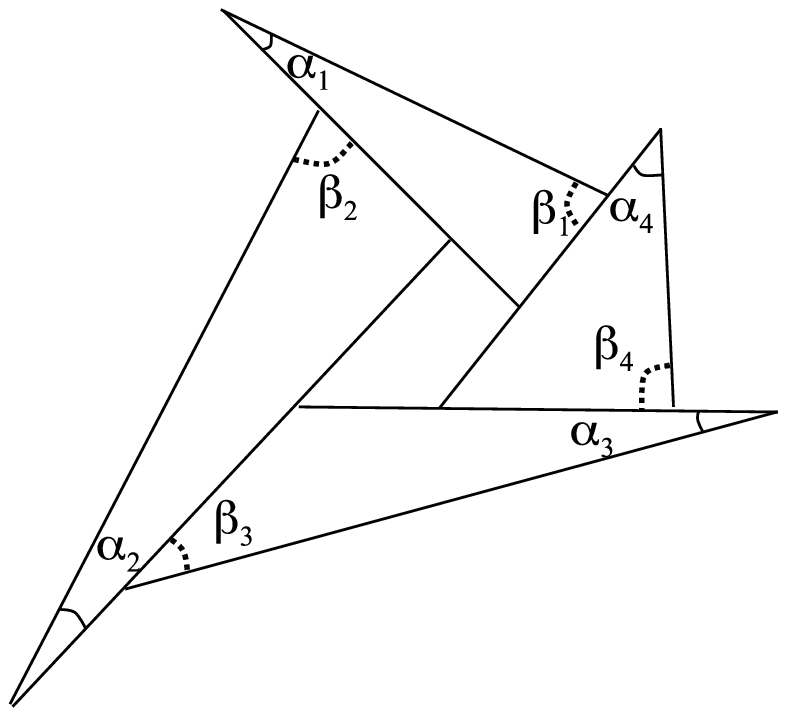}}\\
\subfigure[n=4]{\label{f:circsaw4ringel}\includegraphics[width=.4\textwidth]{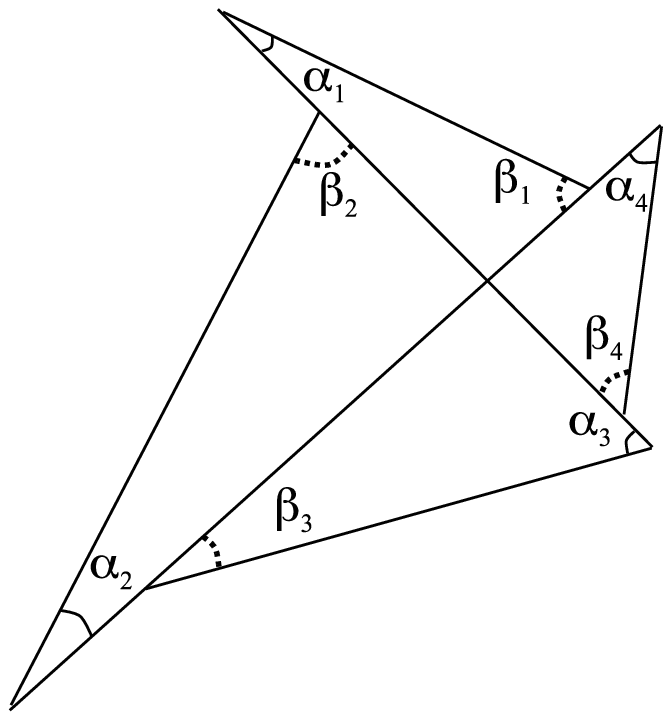}}&
\subfigure[Ceva]{\label{f:ceva}\includegraphics[width=.4\textwidth]{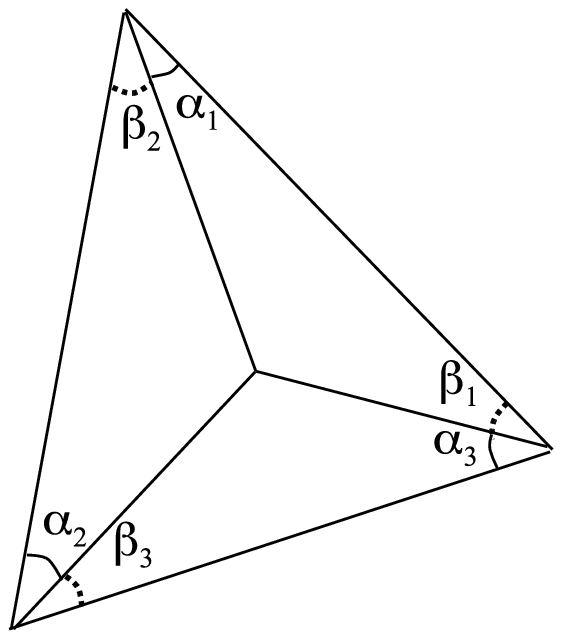}}
\end{tabular}
  \end{center}
  \caption{Circular Saws (E)}
  \label{f:circsaws}
\end{figure}
The first of these, when taken to the limit amounts to the trigonometric variation
of Ceva's theorem~\cite{ceva:lineis} \obib{, 1678, } that in figure~\ref{f:ceva}, 
$\Pi_{i=1}^3 \sin(\beta_i) = \Pi_{i=1}^3 \sin(\alpha_i)$.

The longterm goal of this work is to solve the pseudoline stretching problem: 
to provide a useable algorithm that, given a pseudoline arrangement, either
provides an equivalent line arrangement or provides a proof that none exists.

\section{Outline of Paper}
\label{s:outline}
To help the reader navigate this overly long paper,
we give a detailed outline.

The next section gives an overview of the broad context
of this study. 
Some specific conventions and notations are described
in section~\ref{s:notation}; while our specific approach
to polar coordinates of lines is given in section~\ref{s:polar}. 
The relationship between our use of polar coordinates 
and the chirotope of oriented matroids is given in
section~\ref{s:chirotope}, which should be omitted
by the reader unfamiliar with oriented matroids.

Results from the literature are given throughout the paper:
\begin{description}
\item[section~\ref{s:pseudolines}]
describes pseudoline arrangements, and the stretchability problem.
\item[section~\ref{s:motzkin}]
summarizes Motzkin's version of Carver's results
concerning system of strict linear inequalities.
\item[section~\ref{s:om}]
discusses the oriented matroids derived from directed graphs
and total orders.
\end{description}

The new contributions start with the statement
of the main theorem in section~\ref{s:maintheorem}.

This is used in section~\ref{s:proof} to prove
the nonstretchability of Ringel's non-Pappus
pseudoline arrangement. From this we derive
Pappus's theorem. This derivation is
included principally to justify the title of this paper.
The specialist reader may wish to skip it. The more general
reader may find that it relates the somewhat esoteric topic
of oriented matroid realizability with something more familiar.
Section~\ref{s:proof} starts with
a brief look at Pappus's own work, including,
notably, two of Pappus's own diagrams, again aimed at the more
general reader.

The first proof of the main theorem is given
in section~\ref{s:mainproof}.
This tedious proof involves the computation of the signs
of twenty two determinants of sub-matrices from a particular
nine by ten matrix, encapsulating fig.~~\ref{f:main}.

The computations in that proof depend 
on the manipulation of sums of products of sines.
Hence, section~\ref{s:normal} explores
such sums, giving a simple method of expanding them to normal
form, much simplifying section~\ref{s:mainproof}.
Unfortunately, the methods used to prove the correctness of the normal form
depend on multisets
and I was unable to find an appropriate treatment in the literature,
so that section~\ref{s:multisets} gives a quick overview,
extending the description from Wikipedia to permit a finite power multiset
operation. Thus section~\ref{s:normal} consists of two digressions,
and should probably be omitted at first reading, except the introductory
paragraphs (on the other hand, the digression is interesting in itself, and
a different reader may prefer to read only that section, and omit
the rest of the paper).

The most interesting new work concerns generalisations 
of the techniques used to prove the main theorem.
These are given in sections~\ref{s:twisted}, \ref{s:om2}
and~\ref{s:twistedsimplex}. The first of these introduces
the notion of a twisted graph, generalizing the
line arrangement studied in the main theorem. 
Section~\ref{s:twistedsimplex}
shows that results similar to the main theorem can be derived
for many twisted graphs, including all totally cyclic,
simply three edge connected, directed
cubic graphs. Conjectures of stronger results are made.

The final discussion, in section~\ref{s:future},
concerns
how these results
may be relevant to the pseudoline stretching problem,
giving (without proof), a cryptomorphic axiom system
for rank 3 acyclic oriented matroids, suited
for studying partial line arrangements in the Euclidean plane.
We suggest that every unrealizable rank 3 oriented matroid 
contains a line arrangement that is a counterexample
to Ringel's slope conjecture, thus showing that the proof
technique used to prove the unstretchability of the non-Pappus
arrangement is general.
We discuss future directions for this work.

The paper closes with a brief conclusion.

\section{Context and Related Work}

\subsection{Euclidean Geometry}

Euclidean geometry is the oldest area of mathematical study,
but is nowadays seen as essentially completed and not an area for research.
One of the claims of this paper is that angles have not been studied
adequately independently of the metric in Euclidean geometry (unlike the metric
independently of angles).
As well as the angular variant of Ceva's theorem,
there is a small amount of recent work on 
angles independent of distance, such as the study of the 
angles formed by $n$-points in the plane, specifically the
greatest least angle~\cite{jaudon:angles} and the least greatest angle~\cite{sendov:minimax}.

The ancients accounted for  linear constraints on angles, such as the
angle sum of a triangle, and the angles formed by a transversal of 
parallel lines. However, their emphasis was on results whose primary focus
is distance or area, such as Pythagoras' theorem.

Later, the introduction of Cartesian coordinates put a further emphasis
on distance, at the expense of angles. Trigonometry, of course,
does give angles a central role, but rarely to the exclusion of distance.

As modern geometry developed, either both angles and distances were retained
(e.g. hyperbolic geometry), or angles got abstracted away (projective geometry)  ,
or both distances and angles vanish in the abstraction.

\subsection{Pseudoline Arrangements}
Pseudolines were introduced by \obib{Levi in 1926~} \cite{levi:pseudolines},
who along with most authors (such as 
\obib{Ringel~} \cite{ringel:teilungen}), work in the projective plane.
A seminal paper is \obib{Gr\"unbaum's} \lq The Importance of Being Straight\rq~\cite{grunbaum:straight}.
A core problem in the study of pseudolines is stretchability:
given a pseudoline arrangement is there an equivalent line arrangement.

In this paper, because of the focus on angles,
we work primarily in the Euclidean plane.
This preference is shared with some
other authors such as \cite{felsner:number,
felsner:sweeps,
sharir:generalized,
agarwal:pseudoline,
shor:stretchability}.
The definition used by most of these is that a pseudoline is an $x$-monotone
curve in the Euclidean plane; and in an
arrangement of pseudolines every pair 
meet exactly once, at a point where they cross. This
definition commits to a Cartesian coordinate
system, whereas we work in polar coordinates. 
\obib{Shor~}\cite{shor:stretchability} works in the Euclidean plane and allows more
general pseudolines (``the image of a line under a homeomorphism of the plane")
and permits `parallel' pseudolines that do not meet.
In contrast, we follow
\obib{Felsner~} \cite{felsner:triangles}, and define pseudolines 
in the projective plane,
but work in the Euclidean plane.

We note that line arrangements (in the Euclidean plane) were a major focus of mathematics
for over a millennium. 

The pseudoline stretchability problem is known to be NP-hard~\cite{shor:stretchability}. Moreover, via the relationship
to oriented matroids, it is known to be polynomial time equivalent
to the existential theory of the reals, (i.e. multivariate polynomial programming)~\cite{mnev:universality}.

\subsection{Oriented Matroids}
The projective plane appears most forcefully in the correspondence between
pseudolines and oriented matroids given by \obib{Folkman and Lawrence's} 
the topological 
representation theorem of \cite{folkman:oriented}. In this, 
the problem of  pseudoline stretchability 
is equivalent to the problem of rank 3 oriented matroid realizability.
Every rank 3 oriented matroid can be represented by a pseudoline
arrangement. Realizable oriented matroids can be represented
by a line arrangement.
Other authors study oriented matroids via the chirotope,
which, in line arrangements,
corresponds naturally to determinants of
homogeneous coordinates (in the projective plane). 
Most progress on oriented matroid realizability has been made
in such terms. For example, Bokowski's algorithm for finding
biquadratic final polynomials can be applied to the oriented
matroid $\Rin$ to prove its nonrealizability. The main result
of this paper is equivalent. The invaluable standard reference
for oriented matroids is~\cite{bjorner:oriented}.

\subsection{Ringel's Slope Conjecture}
\cite{ringel:teilungen} conjectured that in an arrangement of lines in general position the slopes 
could be arbitrarily prescribed. This conjecture was disproved first by \cite{lasvergnas:order} 
using oriented matroid techniques over a 32 point dual construction. \cite{richter:topology} 
improved this to give a 6 line counterexample (fig.~\ref{f:circsaw4ringel}), still 
demonstrating the slope constraint using oriented matroid techniques. \cite{felsner:zonotopes} give a different proof of the counterexample using higher Bruhat orders. \cite{carroll:sines} demonstrates
the result using schoolbook geometry. The main theorem of this paper is another counter-example.
The theme of the more general analysis of this paper, 
sections~\ref{s:twisted} to~\ref{s:future}, 
is the search for all minimal counter-examples to
this conjecture.

\subsection{Venn Triangles}
This paper builds on my earlier work\footnote{
\cite{carroll:between, carroll:saws, carroll:drawing}
} which suffers from complete ignorance of the field.
This was motivated by a specific pseudoline stretching problem, relating
to drawing diagrams of 6-Venn triangles~\cite{carroll:venn}. While I produced
a pseudoline stretching program that stretched the diagrams 
of interest to me at that time,
I could not adequately explain why it worked. This paper is a move towards an explanation.
The key insight of my earlier work, that is not in the literature, is
that the use of polar coordinates allow the pseudoline stretching problem
to be divided into two separate phases: first determine 
the $\boldsymbol{\theta}$ coordinates of the lines, and second, 
use linear programming techniques to determine the $\mathbf{r}$ coordinates. 
The latter step is a solved problem, although
issues are presented by the 
linear program being over the reals rather than the rationals; 
and by the extensive use of strict inequalities.
Thus, my primary interest is the first problem
of what are the nonlinear constraints placed on angles by line arrangements,
such as those illustrated by figures~\ref{f:main} and~\ref{f:circsaws}.

\section{Notation and Conventions}
\label{s:notation}

\subsection{Geometry of Diagrams}
Some of the diagrams, e.g. fig.~\ref{f:main}, 
illustrate the Euclidean plane,
and the choice of the line at infinity is significant.
These are marked with an (E). 
Others, e.g. fig.~\ref{f:grunbaum}, illustrate the projective
plane, and no significance should be
read into the particularly choice of the projection
used for the illustration. These are marked with a (P).
 Figs.~\ref{f:carroll} and~\ref{f:carroll2} on page~\pageref{f:carroll},
 explicitly show the line at infinity,
as an oval around the diagram.

\subsection{Graphs, Directed Graphs}
We use simple graphs and directed graphs, both restricted
to be loop free and without parallel or antiparallel edges.
Fig.~\ref{f:graph} on page~\pageref{f:graph}
shows a simple graph and a directed graph, the latter also intended to have 
a Euclidean reading.

\subsection{Polar Coordinates}
\label{s:s:polar}
We use polar coordinates extensively. 
They always refer to lines rather than points. 
A pair of polar coordinates $(r_i,\theta_i)$ refer to the $i$th line,
with $r_i$ being the perpendicular distance from the origin to the
line, and $\theta_i$ is the angle that the perpendicular makes with 
the polar direction.

In diagrams, we always place the origin at the far lower edge of the plane,
in an unbounded region, which is not cut by any of the lines (or their extensions),
in the picture.
We represent the origin either: as in figure~\ref{f:triangle}, like:\\
\begin{center}
\includegraphics[width=.06\textwidth]{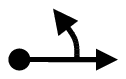}\\
\end{center}
to indicate its exact position; or, as in figure~\ref{f:straight}, like:\\
\begin{center}
\includegraphics[width=.15\textwidth]{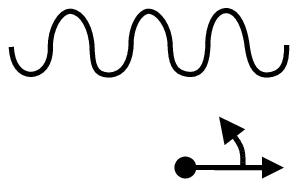}\\
\end{center}
to indicate that the origin lies directly below the indicated position,
sufficiently far to lie in an unbounded region.

This choice of positioning ensures that in the diagrams $r_i > 0$ 
and $0< \theta_i < 180$ for all lines $i$, and is formally
validated in lemma~\ref{l:origin}.

\subsection{Sines}
This paper deals extensively with the relationships between the sines
of different angles in line arrangements. In terms of the polar 
coordinates such sines are $\LSN{i}{j}$.
We abbreviate this as $\SN{i}{j}$. i.e.
\begin{equation}
\SN{i}{j}=\LSN{i}{j}
\end{equation}

\subsection{Partial and Total Orders}
We use $\sjle$ to represent partial orders, and $<$ to represent total orders.
The distinction is 
particularly pertinent in sections~\ref{s:twisted} to~\ref{s:twistedsimplex}.

\subsection{Matrices}
We make extensive use of matrices with a specific form. 
Each row has three non-zero entries.
If the non-zero columns are $i, j, k$ then the non-zero values are 
either: $\LSN{j}{k}, -\LSN{i}{k}, \LSN{i}{j}$
or  $-\LSN{j}{k}, \LSN{i}{k}, -\LSN{i}{j}$. In all cases the sines themselves are
positive. 
For example,
\begin{equation}
M = \left(\begin{smallmatrix}
&\SN{4}{6}&&-\SN{2}{6}&&\SN{2}{4}\\
\SN{2}{3}&-\SN{1}{3}&\SN{1}{2}&&&\\
-\SN{4}{5}&&&\SN{1}{5}&-\SN{1}{4}&\\
&&-\SN{5}{6}&&\SN{3}{6}&-\SN{3}{5}
\end{smallmatrix}
\right)
\end{equation}
abbreviates:
\begin{equation}
M = \left(\begin{smallmatrix}
0&\LSN{4}{6}&0&-\LSN{2}{6}&0&\LSN{2}{4}\\
\LSN{2}{3}&-\LSN{1}{3}&\LSN{1}{2}&0&0&0\\
-\LSN{4}{5}&0&0&\LSN{1}{5}&-\LSN{1}{4}&0\\
0&0&-\LSN{5}{6}&0&\LSN{3}{6}&-\LSN{3}{5}
\end{smallmatrix}
\right)
\end{equation}

For large such matrices, for convenience, we add explicit row and column labels, e.g.
\begin{equation}
M = \begin{pmatrix}
&(1)&(2)&(3)&(4)&(5)&(6)\\
(A)&&\SN{4}{6}&&-\SN{2}{6}&&\SN{2}{4}\\
(B)&\SN{2}{3}&-\SN{1}{3}&\SN{1}{2}&&&\\
(C)&-\SN{4}{5}&&&\SN{1}{5}&-\SN{1}{4}&\\
(D)&&&-\SN{5}{6}&&\SN{3}{6}&-\SN{3}{5}
\end{pmatrix}
\end{equation}
We represent submatrices formed by columns 
using the notation $M\left[2,3,6\right]$. Thus:
\begin{equation}
M\left[2,3,6\right] = \begin{pmatrix}
\SN{4}{6}&0&\SN{2}{4}\\
-\SN{1}{3}&\SN{1}{2}&0\\
0&0&0\\
0&-\SN{5}{6}&-\SN{3}{5}
\end{pmatrix}
\end{equation}

\section{Pseudolines}
\label{s:pseudolines}

Most authors, following Levi, work in the projective plane.
\cite{grunbaum:straight}:
\begin{quote}
an {\em arrangement of pseudolines} is a finite family [{\em with at least two member}]
of simple closed curves in the projective plane, such that every two curves have exactly
one point in common, each crossing the other at this point, while no point is common
to all the curves.
\end{quote}
A pseudoline $L$, like a projective line, is such
that $\mathbb{P}^2 \setminus L$ is connected.
A family of non-coincident lines in the projective plane satisfy this definition,
so that every {\em arrangement of lines} is an arrangement of pseudolines. 

Two pseudoline arrangements are {\em equivalent} if there is
a homeomorphism from one to the other.
A pseudoline arrangement is {\em stretchable} if it is equivalent
to a line arrangement.

Our use of polar coordinates commits to a Euclidean viewpoint,
and not the usual one of $x$-monotone lines. Following 
\cite{felsner:triangles} we use Levi's definition, 
but fix a line at infinity, with a homeomorphism
followed by a projection, and then work in the Euclidean plane.

\section{Polar Coordinates of Line Arrangements}
\label{s:polar}
\subsection{Polar Coordinates}
\begin{figure}[htp]
  \begin{center}
\begin{tabular}{{ccc}}
    \subfigure[A positive triangle]{\label{f:triangle}\scalebox{0.3}{\includegraphics{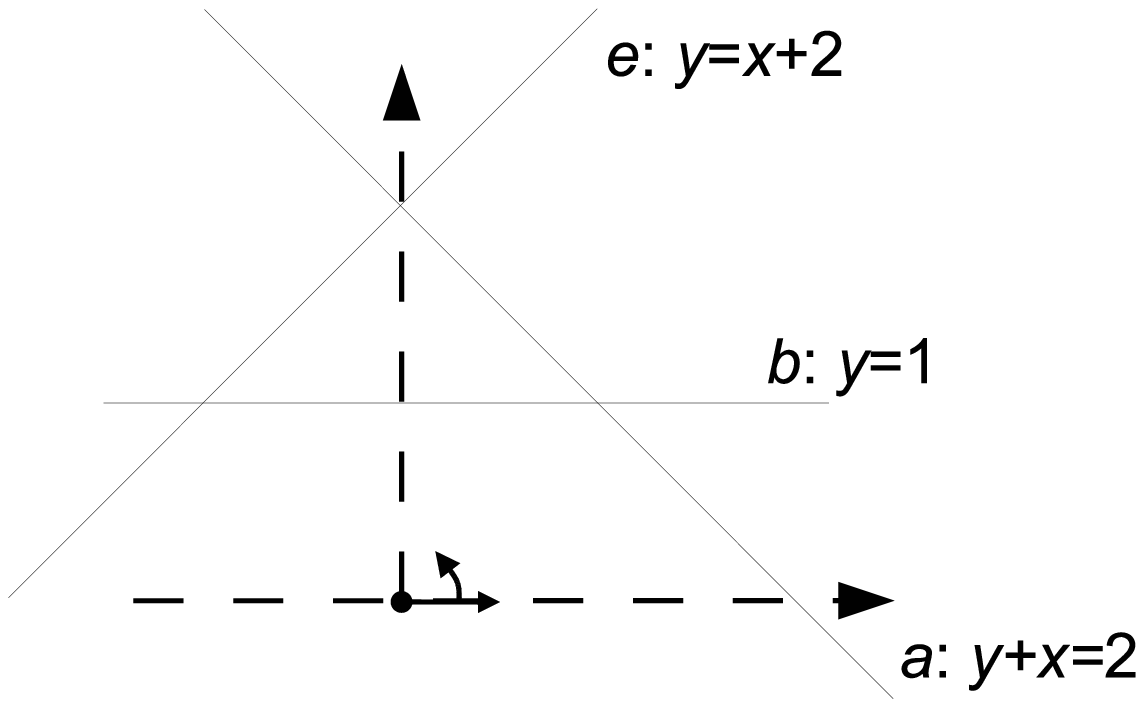}}} &
    \subfigure[Coincident lines]{\label{f:point3}\scalebox{0.3}{\includegraphics{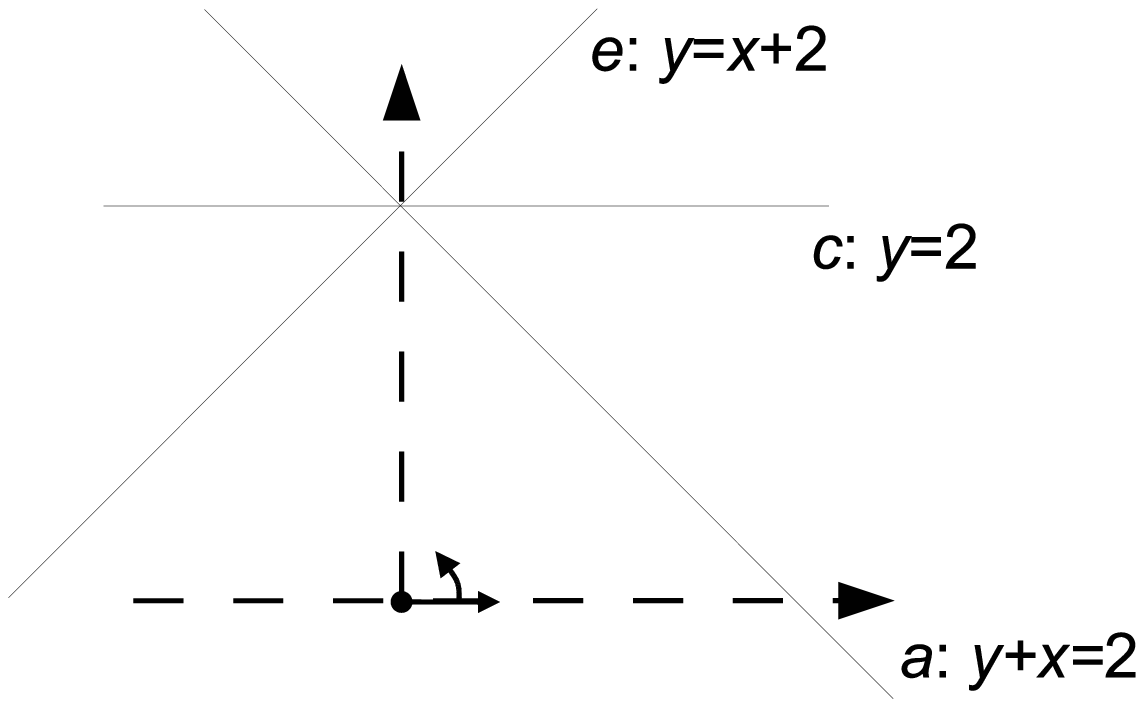}}}&
    \subfigure[A negative triangle]{\label{f:trianglex}\scalebox{0.3}{\includegraphics{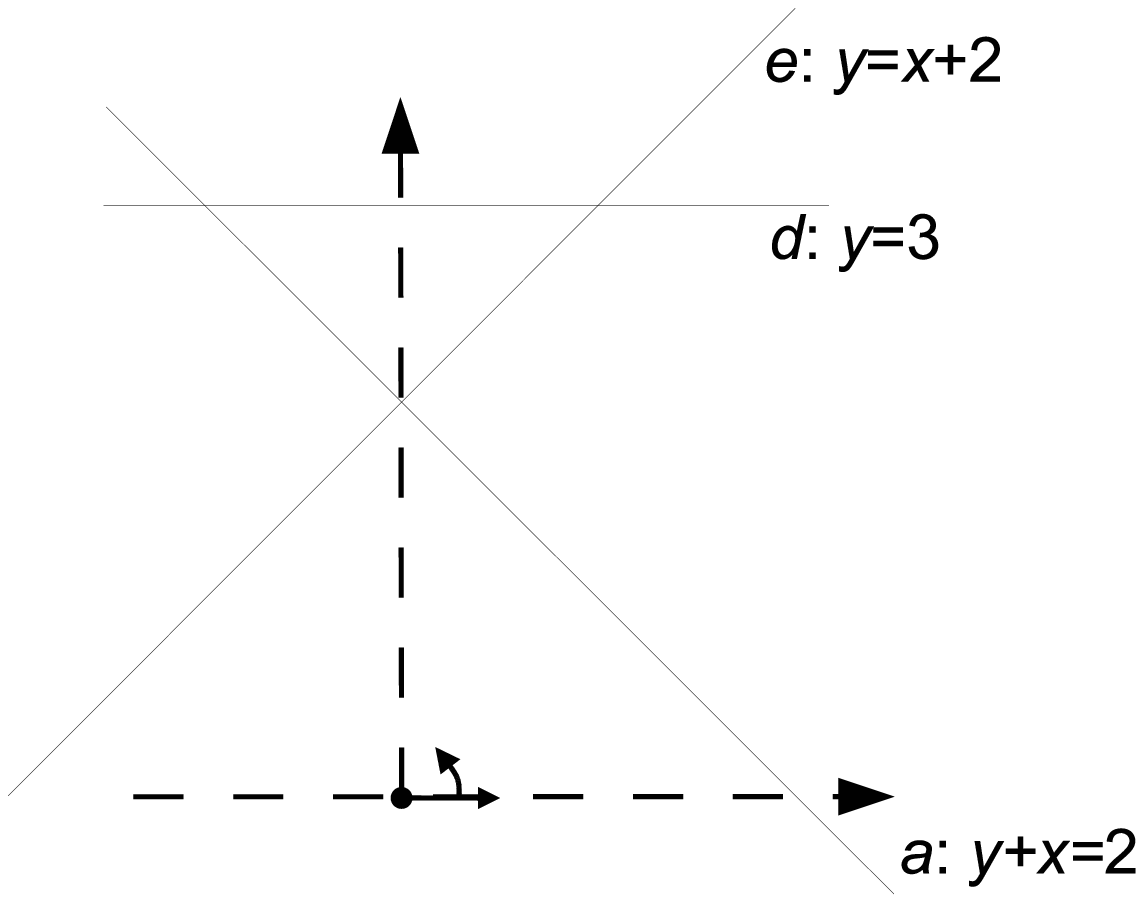}}} 
\end{tabular}
\end{center}
  \caption{Arrangements of three lines (E)}
  \label{f:lines}
\end{figure}
The polar coordinates of a line are given as the coordinates of the point
where a perpendicular to the origin can be drawn.
In figure~\ref{f:triangle}, for line (a) the point has cartesian coordinates $(1,1)$,
or polar coodinates $(\surd{2}, 45)$. Similarly, line (b) is given by $(1,90)$, and line (e)
by $(\surd{2},135)$.

For any three coincident lines as in fig.~\ref{f:point3},
if the lines have coordinates $(r_i,\theta_i)$, with $i$ from 1 to 3,
and $\theta_i$ increasing,
the following identity holds:
\begin{equation}
\label{e:coincident}
r_1\LSN{2}{3}-r_2\LSN{1}{3}+r_3\LSN{1}{2} = 0
\end{equation}

If the three lines form a {\em positively oriented} triangle, with the origin in 
an unbounded face with 
three edges, as in fig.~\ref{f:triangle}, then this identity becomes an inequality:
\begin{equation}
\label{e:ptriangle}
r_1\LSN{2}{3}-r_2\LSN{1}{3}+r_3\LSN{1}{2} > 0
\end{equation}

If the three lines form a {\em negatively oriented} triangle, with the origin in 
an unbounded face with 
two edges, as in fig.~\ref{f:trianglex}, then the inequality is reversed in sign:
\begin{equation}
\label{e:ntriangle}
-r_1\LSN{2}{3}+r_2\LSN{1}{3}-r_3\LSN{1}{2} > 0
\end{equation}

These results can be proved directly using schoolbook geometry.
\cite{carroll:between} uses trilinear coordinates \cite{coxeter:applications, plucker:system}.

We do not consider the case when the origin is inside a triangle,
preferring to only use coordinate systems with origins towards the edge of the plane, 
to ensure
that this does not happen. We also choose the coordinate system (particularly the location
of the origin), so that $0 < \theta < 180$ for all angles $\theta$ of interest.
In the next subsection, we derive both the above inequalities,
and formalize the choice of coordinate system in terms of oriented matroid theory.
This can be skipped by the uninterested reader.

\subsection{Chirotopes and the Choice of Polar Origin (optional)}
\label{s:chirotope}
Given a line arrangement in the Euclidean plane, we have argued above that the polar origin
can always be placed in such a way that all the lines have angles between 0 and 180.
In this section, we formalise the argument in terms of 
the chirotope of a rank 3 acyclic oriented matroid.
Given such a line arrangement, indexed by a set $X$, we can form a set
$E = X \cup \{ \omega \}$, and take homogenous coordinates for $E$, with 
$\omega$ representing
the line at infinity. We label some face adjacent to $\omega$ and not between parallel lines,
as the positive tope, and hence choose an acylic orientation for the matroid,
along with a particular realization.
We can then apply the following lemma, which shows how to compute polar 
coordinates with the desired property,
and hence locate a polar origin.

\begin{lem}
\label{l:origin}
For a rank 3 acyclic oriented matroid $\Mat$, with chirotope  
$\chi : E^3 \rightarrow \{ -1, 0, 1 \}$
with $E = X \cupdot \{ \omega \}$, with $\omega$, not being a coloop,
nor parallel or antiparallel,
to any other element,
and with distinct 
positive cocircuits $A$, $B$ neither containing $\omega$,
such that for all $a \in A \setminus B$ and $b \in B \setminus A$,
$\chi(\omega,a,b) = 1$,
and  with a realization
given by $\mathbf{v}_e = \left( \begin{smallmatrix}
x_e\\
y_e\\
z_e
\end{smallmatrix}\right) \in \Real^3$, for each $e \in E$,
then the $\mathbf{v}_e$ can be chosen such that:
\begin{enumerate}
\item
$\mathbf{v}_\omega = \left( \begin{smallmatrix}
0\\
0\\
1
\end{smallmatrix}\right)$
\item
$x_a^2 + y_a^2 = 1$ for all $a \in X$
\item 
There are $\theta_a \in [0,360)$, such that
$x_a = -cos(\theta_a)$, $y_a = -sin(\theta_a)$ for each $a \in X$.
\item 
There are $\theta_a \in [0,180)$, such that
$x_a = -cos(\theta_a)$, $y_a = -sin(\theta_a)$ for each $a \in X$.
\item 
There are $\theta_a \in (0,180)$, such that
$x_a = -cos(\theta_a)$, $y_a = -sin(\theta_a)$ for each $a \in X$.
\item
$z_a > 0$ for each $a \in X$
\item
There are 
$(r_a,\theta_a)_{a \in X} \in \Real^+ \times (0,180)$
such that equations (\ref{e:coincident}, \ref{e:ptriangle}, \ref{e:ntriangle})  hold,
depending on whether the corresponding value of $\chi$ is 0, 1 or -1, respectively.
\item
$(r_a,\theta_a)_{a \in X}$ are polar coordinates for the realization,
given an appropriate origin.
\end{enumerate}
\end{lem}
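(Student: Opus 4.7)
The plan is to apply a linear change of basis to the realisation $(\mathbf{v}_e)$ followed by positive rescalings of each $\mathbf{v}_a$. The change of basis is chosen so that (1) and (6) hold simultaneously; the rescalings then deliver (2) and (3); a final rotation about the $z$-axis, invoking the cocircuits $A$ and $B$, yields (4) and (5); and (7) and (8) are a direct computation.

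For the change of basis, I would send $\mathbf{v}_\omega$ to $(0,0,1)^\top$ and simultaneously arrange for the third row of the change-of-basis matrix $L$, regarded as a covector on $\Real^3$, to lie inside the positive tope of the realisation. The positive tope was prescribed as a face adjacent to $\omega$, so it is nonempty; the condition $L\mathbf{v}_\omega = (0,0,1)^\top$ fixes only two parameters of $L$, leaving room for the second requirement. The first condition is exactly (1); the second forces $z_a = (L\mathbf{v}_a)_3 > 0$ for every $a \in X$, which is (6). Because $\omega$ is neither parallel nor antiparallel to any $a$, the first two coordinates of $\mathbf{v}_a$ are not both zero, so the positive rescaling $\mathbf{v}_a \mapsto (x_a^2 + y_a^2)^{-1/2}\mathbf{v}_a$ gives (2), and writing $(x_a,y_a) = (-\cos\theta_a,-\sin\theta_a)$ for a unique $\theta_a \in [0,360)$ delivers (3). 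Neither operation disturbs (1) or (6).

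The main obstacle is the passage from (3) to (4) and (5). A positive cocircuit $C$ with $\omega$ in its zero set is realised by a nonzero vector $\mathbf{p}_C$ in the plane $\{z=0\}$ satisfying $\mathbf{v}_e \cdot \mathbf{p}_C > 0$ for every $e$ in the support of $C$ and $\mathbf{v}_e \cdot \mathbf{p}_C = 0$ otherwise. Any $a \in X$ annihilated by both $\mathbf{p}_A$ and $\mathbf{p}_B$ would force $\mathbf{v}_a$ to be proportional to $\mathbf{v}_\omega$, contradicting the parallelism hypothesis, so every normalised $(x_a,y_a)$ lies in the closed convex wedge $W = \{\mathbf{q} \in \Real^2 : \mathbf{q}\cdot\mathbf{p}_A \ge 0 \text{ and } \mathbf{q}\cdot\mathbf{p}_B \ge 0\}$. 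The distinctness of $A$ and $B$, together with the hypothesis $\chi(\omega,a,b) = 1$ for $a \in A\setminus B$, $b \in B\setminus A$ (which, via the determinantal definition of $\chi$, records the orientation of the wedge about the $z$-axis), forces $\mathbf{p}_A$ and $\mathbf{p}_B$ to be neither parallel nor antiparallel; hence $W$ has angular aperture strictly between $0$ and $\pi$ and is contained, apart from the origin, in some open half-plane. A rotation about the $z$-axis moves that half-plane to $\{y < 0\}$, yielding $-\sin\theta_a < 0$ and hence $\theta_a \in (0,180)$, which is (5); (4) is immediate. The rotation preserves (1), (2) and (6).

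Finally, for (7), the representatives now take the form $\mathbf{v}_a = (-\cos\theta_a,-\sin\theta_a, r_a)^\top$ with $r_a > 0$ and $\theta_a \in (0,180)$. Expanding $\det[\mathbf{v}_a,\mathbf{v}_b,\mathbf{v}_c]$ along the third row and applying $\cos\alpha\sin\beta - \sin\alpha\cos\beta = \sin(\beta-\alpha)$ yields
\begin{equation*}
r_a \LSN{b}{c} - r_b \LSN{a}{c} + r_c \LSN{a}{b},
\end{equation*}
whose sign coincides with $\chi(a,b,c)$ by the definition of the chirotope of a realised oriented matroid; this matches (\ref{e:coincident}), (\ref{e:ptriangle}) or (\ref{e:ntriangle}) according as $\chi$ is $0$, $1$, or $-1$. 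For (8), the affine chart $\{z=1\}$ then presents each line as $\{(x,y) : \cos\theta_a\,x + \sin\theta_a\,y = r_a\}$, whose foot of perpendicular from the origin has polar coordinates $(r_a,\theta_a)$, as required.
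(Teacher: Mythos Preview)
Your argument is correct and takes a genuinely different route from the paper's.

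The paper proceeds strictly in the order of the eight claims, applying at each step either a matrix $P$ with $\det P > 0$ or a positive rescaling. For (1) it uses $P = [\mathbf{v}_a\ \mathbf{v}_b\ \mathbf{v}_\omega]^{-1}$; for (4) it picks $a \in A\setminus B$, argues (via the $\chi(\omega,a,b)=1$ hypothesis and the induced total order on $X$) that $a$ is $\le$-minimal, and rotates so that $\theta_a = 0$; (5) is then a further rotation by a small $\epsilon$; and only afterwards does it achieve (6), by a shear $\left(\begin{smallmatrix}1&0&0\\0&1&-\mu\\0&0&1\end{smallmatrix}\right)$ exploiting that $y_a<0$ was already secured in (5).

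You instead collapse (1) and (6) into a single change of basis by putting a vector from the interior of the positive tope (which exists by acyclicity) into the third row of $L$; and you replace the paper's ``rotate the minimal element to $0$, then nudge by $\epsilon$'' with the cleaner geometric observation that the two positive cocircuits $A,B$ (having $\omega$ in their zero sets) give two linearly independent covectors in $\{z=0\}$, so all the normalised $(x_a,y_a)$ lie in a wedge of aperture strictly less than $\pi$ and can be rotated into $\{y<0\}$ in one stroke. This is arguably more transparent and explains better why two such cocircuits are hypothesised. In fact your wedge argument shows that the extra hypothesis $\chi(\omega,a,b)=1$ is not actually needed for (4)--(5): distinctness of $A$ and $B$ as positive cocircuits already forces $\mathbf{p}_A$ and $\mathbf{p}_B$ to be independent, since two cocircuits with the same zero set are equal or opposite, and opposite positive cocircuits are impossible. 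The paper, by contrast, does use that hypothesis, to identify an $a\in A\setminus B$ as $\le$-minimal.

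Two small points worth tightening. First, you should state that $L$ is chosen with $\det L>0$ (swap the first two rows if necessary); otherwise the chirotope flips sign and (7) would come out with the wrong inequalities. The paper is explicit about this. Second, your sentence ``Any $a\in X$ annihilated by both $\mathbf{p}_A$ and $\mathbf{p}_B$ would force $\mathbf{v}_a$ to be proportional to $\mathbf{v}_\omega$'' silently presupposes the independence of $\mathbf{p}_A,\mathbf{p}_B$ that you only establish in the next sentence; the logic is fine, but the order of presentation could be swapped.
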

\begin{proof}
If this is not true, then
for one of the claims we can find a realization
$\mathbf{v}_e$
that satisfies the previous claims, and there is no
realization that satisfies both the previous claims
and the new claim. We will show this leads to a contradiction,
by constructing a realization $\mathbf{v^\prime}_e$.

At each stage, we do one of:
\begin{itemize} 
\item
take $\mathbf{v^\prime}_e = \mathbf{v}_e $ for all $e \in E$.
\item
Give a matrix $P$ with positive determinant, and take
$\mathbf{v^\prime}_e = P \mathbf{v}_e $ for all $e \in E$.
\item
Give $\lambda_e > 0$ for each $e \in E$, and take 
$\mathbf{v^\prime}_e = \lambda_e \mathbf{v}_e $
\end{itemize}
Each of these steps leaves the signs of the subdeterminants
unchanged, so that $\mathbf{v^\prime}_e$ is a realization
of $\Mat$.

We use a total order $\leq$ over $E\setminus \{ \omega \}$, defined by:
\begin{equation}
x \leq y \text{ if and only if } \chi(\omega,x,y) \geq 0
\end{equation}
This is transitive and reflexive since $\chi(\omega, x, y)$ is
the chirotope of the acyclic rank 2 oriented matroid, $\Mat \setminus \{ \omega \}$.

\begin{enumerate}
\item
From the two cocircuits we can find
$a,b \in X$, with $a < b$.
Take $P = \begin{pmatrix}
\mathbf{v}_a & \mathbf{v}_b  & \mathbf{v}_\omega
\end{pmatrix}^{-1}$.
\item
Take 
\begin{equation}
\lambda_e = \begin{cases}
1_e&e = \omega\\
(x_a^2 + y_a^2)^{-1/2} & \text{otherwise}
\end{cases}
\end{equation}
\item 
Unchanged.
\item
Take $a \in A \setminus B$, then for all $b \in X$, $a \leq b$.
Take 
\begin{equation}
P = \begin{pmatrix}
cos(\theta_a)&sin(\theta_a)&0\\
-sin(\theta_a)&cos(\theta_a)&0\\
0&0&1
\end{pmatrix}
\end{equation}
Since $\chi(\omega,a,b) = 1$, for  $b \in X$,
we have $\sin(\theta^\prime_b-\theta^\prime_a) > 0$, and $\theta^\prime_a = 0$,
this
ensures that $\theta^\prime_b < 180$.
\item 
For some sufficiently small $\epsilon$, take
\begin{equation}
P = \begin{pmatrix}
cos(\epsilon)&-sin(\epsilon)&0\\
sin(\epsilon)&cos(\epsilon)&0\\
0&0&1
\end{pmatrix}
\end{equation}
\item
For every $a \in X$, $y_a = - sin(\theta_a) < 0$.
So we can choose some $\mu$ sufficiently large such that,
 for all $a \in X$,  $\mu > z_a / y_a$
so that $z_a - \mu y_a > 0$.
Take
\begin{equation}
P = \begin{pmatrix}
1&0&0\\
0&1&-\mu\\
0&0&1
\end{pmatrix}
\end{equation}
\item
Take $r_a = z_a$.
\item
Nothing to prove.
\end{enumerate}
\end{proof}

While the conditions on this theorem seem a bit restrictive, they do not
hinder our purposes,
for a Euclidean line arrangement, the process of adding $\omega$ as the line
at infinity, ensures the constraints on $\omega$ hold; and we can then find an
acyclic orientation with an appropriate positive tope, such that
$X$ is a 
positive vector, and satisfying the conditions on the positive cocircuits $A$ and $B$.
The only resulting constraint is that we cannot place the origin between parallel lines,
which is obvious.

\section{Statement of Main Theorem}
\label{s:maintheorem}
Using the notion of positively and negatively oriented triangles, we
can now formally state the main theorem, which is illustrated
in figure~\ref{f:straight}. The labels correspond to the figures
in the next section: the numbers labelling the lines, the letters
labelling shaded regions.

\begin{figure}[htbp]
\begin{center}
\includegraphics[width=.7\textwidth]{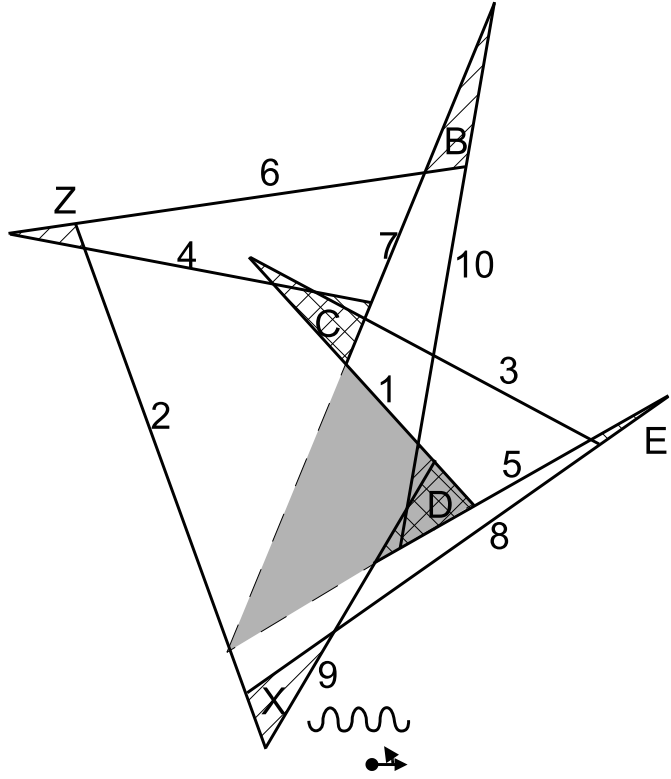}
\caption{Illustrating the main theorem (E)}
\label{f:straight}
\end{center}
\end{figure}

\begin{thm}
\label{t:main}
In any arrangement of 10 lines with polar coordinates 
$\{ (r_i,\theta_i): i = 1, \ldots 10\}$, in which the triangles 
$ (2,4,6)$,
$(1,5,9)$,
$(1,5,10)$,
$(1,5,7)$
are positively
oriented,
and the triangles
$ (1,3,7)$,
$ (1,4,7)$,
$(3,5,8)$,
$(2,8,9)$,
$ (6,7,10)$,
are negatively oriented,
we have:
\begin{multline}
\label{e:main2}
\LSN{8}{9}\LSN{1}{10}\LSN{2}{4}\LSN{3}{5}\LSN{6}{7} \\
+\LSN{4}{6}\LSN{1}{9}\LSN{7}{10}\LSN{3}{5}\LSN{2}{8} \\
- \LSN{4}{6}\LSN{3}{8}\LSN{7}{10}\LSN{2}{9}\LSN{1}{5}
> 0
\end{multline}
where, for each of the nine specified triangles, $(i,j,k)$,
we have $0 < \theta_i < \theta_j < \theta_k < 180$.
\end{thm}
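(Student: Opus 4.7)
The plan is to convert the nine triangle orientation hypotheses into nine strict linear inequalities in the ten polar radii $r_1,\ldots,r_{10}$ via equations~(\ref{e:ptriangle}) and~(\ref{e:ntriangle}): each positively oriented triangle $(i,j,k)$ with $i<j<k$ gives
\[
r_i\,\SN{j}{k}\;-\;r_j\,\SN{i}{k}\;+\;r_k\,\SN{i}{j}\;>\;0,
\]
and each negatively oriented triangle gives the same form with signs reversed. Collect these as the rows of a $9\times 10$ matrix $M$ whose entries $\pm\SN{a}{b}$ are all positive in absolute value by the angle-ordering hypothesis $0<\theta_i<\theta_j<\theta_k<180$ for each of the nine triangles. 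Adjoining the ten polar positivity constraints $r_\ell>0$, the hypothesis of the theorem is precisely that this combined system of strict linear inequalities is feasible (it is realized by the actual arrangement).

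I then invoke Motzkin's transposition theorem (section~\ref{s:motzkin}): feasibility of $Mr>0$, $r>0$ is equivalent to the non-existence of a non-zero $y\in\Real^9$, $y\geq 0$, with $M^Ty\leq 0$ componentwise. Equivalently, for every such $y$, at least one entry of $M^Ty$ must be strictly positive. The strategy is to construct an explicit non-negative weighting $y\in\Real^9_{\geq 0}$, with each $y_i$ equal to a product of four sines $\SN{a}{b}$ (hence manifestly non-negative under the hypothesis), engineered so that nine entries of $M^Ty$ are $\leq 0$ while the tenth collects, after all cancellations, to the left-hand side of~(\ref{e:main2}). Motzkin's theorem then forces this tenth entry to be strictly positive, which is the desired conclusion.

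The principal obstacle is combinatorial: identifying the correct weighting $y$ and verifying the nine sign conditions on $M^Ty$. Each entry of $M^Ty$ is a signed sum of products of five sines, and showing that nine of them collapse to values $\leq 0$ is where the bulk of the work lies --- it amounts to evaluating the signs of roughly twenty-two subdeterminants of $M$, as the paper indicates. The cancellations are driven by the three-term Grassmann--Pl\"ucker relation
\[
\SN{i}{l}\,\SN{j}{k}\;-\;\SN{i}{k}\,\SN{j}{l}\;+\;\SN{i}{j}\,\SN{k}{l}\;=\;0,
\]
valid for any four angles on the line, and the normal form for sums of products of sines developed in section~\ref{s:normal} is what makes these reductions systematic rather than ad hoc; without it, the bookkeeping on products of five sines across ten columns would be essentially unmanageable.
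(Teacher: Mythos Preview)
Your overall strategy---encode the nine triangle hypotheses as $M r > 0$ and use LP duality (Motzkin/Carver)---is exactly right and is what the paper does. But the specific plan of finding a single $y \ge 0$ with \emph{nine} entries of $M^T y \le 0$ and the tenth equal to the left-hand side will not work, and this is not merely a bookkeeping issue.

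Here is the obstruction. Take $y$ to be the positive dependency coming from the $8 \times 7$ simplex $S_8$ on columns $F = \{2,3,4,6,8,9,10\}$ (these are the subdeterminants~(\ref{e:subdets}), each a product of \emph{seven} sines, not four), with the ninth weight $y_I = 0$. Then seven entries of $M^T y$ vanish, and the three entries on columns $1,5,7$ are forced---because $M^T y$ lies in $\widehat{\Circ(<)}$ and is supported on $\{1,5,7\}$---to be a scalar multiple $c$ of the row-$I$ pattern $(\SN{5}{7}, -\SN{1}{7}, \SN{1}{5})$. One computes $c = \SN{1}{7}\SN{1}{5}\cdot(\text{LHS of }(\ref{e:main2}))$. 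Thus columns $1$ and $7$ carry the sign of the left-hand side while column $5$ carries the opposite sign: you always get \emph{two} entries of one sign and one of the other among these three. Adding any multiple $y_I \ge 0$ of row~$I$ only shifts $c$ to $c+y_I$, so the sign pattern is unchanged. Hence ``nine entries $\le 0$'' is never achieved in the regime $\text{LHS}>0$---precisely the case you are trying to establish.

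The repair is small but essential, and it is what the paper's argument actually hinges on. Rather than forcing nine entries $\le 0$, observe that the three nonzero entries of $M^T y$ are $c$ times row~$I$, so
\[
0 \;<\; y^T M r \;=\; (M^T y)\cdot r \;=\; c\bigl(\SN{5}{7}\,r_1 - \SN{1}{7}\,r_5 + \SN{1}{5}\,r_7\bigr),
\]
and the bracket is exactly the ninth hypothesis (triangle $(1,5,7)$ positively oriented), hence strictly positive. This forces $c>0$ and therefore the desired inequality. The paper's first proof packages the same content contrapositively: if $\text{LHS}\le 0$ then $c\le 0$, so either $M_8$ already has a positive dependency (Lemma~\ref{l:nequals}), or the $9\times 8$ submatrix $S_9$ including row~$I$ becomes a simplex and the remaining determinants $d_{1,5}$, $d_{5,7}$ vanish by normalization (Lemma~\ref{l:nnegative}); either way Motzkin gives insolubility. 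The determinant identity~(\ref{e:d1_d5_d7}) you would need to discover is precisely the statement that the three residual entries are proportional to row~$I$.
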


Note, that except as specified, we do not require
$\theta_i < \theta_j$ when $i<j$. For example, $\theta_2$
may be less than or equal to $\theta_1$, as in fig.~\ref{f:straight}.

Two proofs are given, the first in section~\ref{s:mainproof},
is a direct computation specific to this figure. This 
approach is then generalised in section~\ref{s:twistedsimplex},
and this specific result is derived on page~\pageref{p:second}
from a more general theorem.

From the discussion in section~\ref{s:polar},
we see that such a figure can be drawn  if, and only if,
the following system of inequalities is soluble:
\begin{equation}
\label{e:linear}
\left(
\begin{smallmatrix}
 &          &-\SN{5}{8}& &\SN{3}{8}& & &-\SN{3}{5}& &\\ 
 &-\SN{8}{9}&          & & & & &\SN{2}{9}&-\SN{2}{8}&\\ 
 &\SN{4}{6} &          &-\SN{2}{6}& &\SN{2}{4}& & & & \\ 
 &          &          & & &-\SN{7}{10}&\SN{6}{10}& & &-\SN{6}{7}\\ 
-\SN{3}{7}& &\SN{1}{7}& & & &-\SN{1}{3}& & & \\ 
-\SN{4}{7}& & &\SN{1}{7}& & &-\SN{1}{4}& & & \\ 
\SN{5}{9}& & & &-\SN{1}{9}& & & &\SN{1}{5} & \\ 
\SN{5}{10}& & & &-\SN{1}{10}& & & & &\SN{1}{5}\\ 
\SN{5}{7}& & & &-\SN{1}{7}& & \SN{1}{5} &  & & 
\end{smallmatrix}
\right)
\boldsymbol{r} > \boldsymbol{0}
\end{equation}
The resulting  $\boldsymbol{r}$ gives the $r_i$ coordinates
of a drawing of the figure.

For fixed $\boldsymbol{\theta}$, this is a linear program in $\boldsymbol{r}$.
The solubility of linear programs is a
well-understood problem, and we spend section~\ref{s:motzkin}
reviewing some results from Motzkin's PhD thesis.

\section{Pappus's Theorem}
\label{s:proof}

\begin{figure}[htp]
  \begin{center}
\begin{tabular}{{cc}}
    \subfigure[Pappus folio 161v (E)]{\label{f:jones138}\includegraphics[width=.3\textwidth]{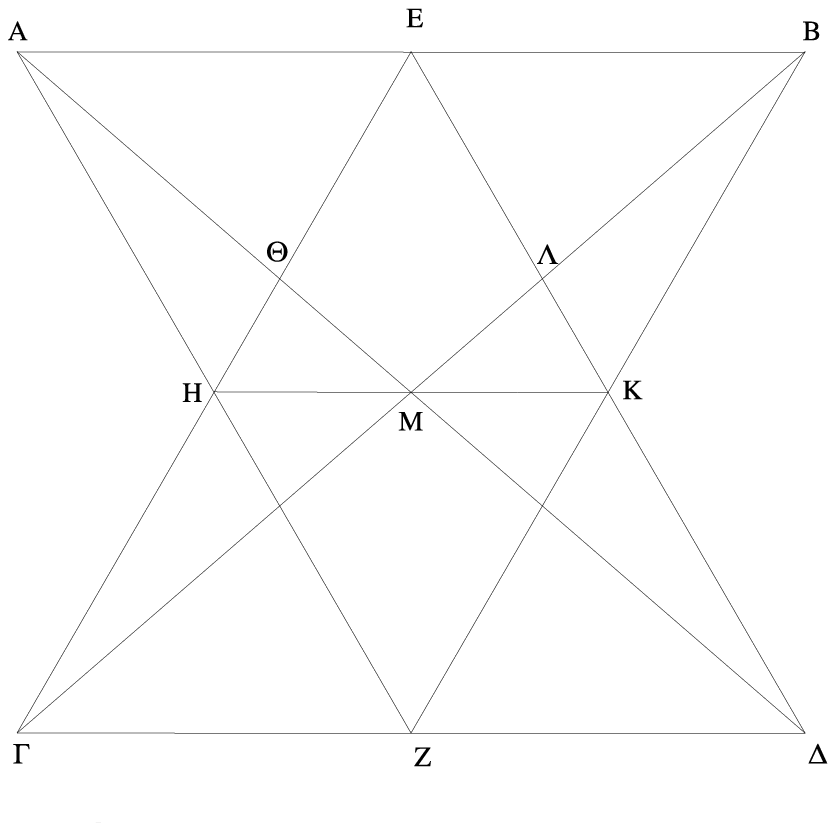}} &
      \subfigure[Pappus folio 162 \& Jones's fig. 139 (E)]{\label{f:jones139}\includegraphics[width=.45\textwidth]{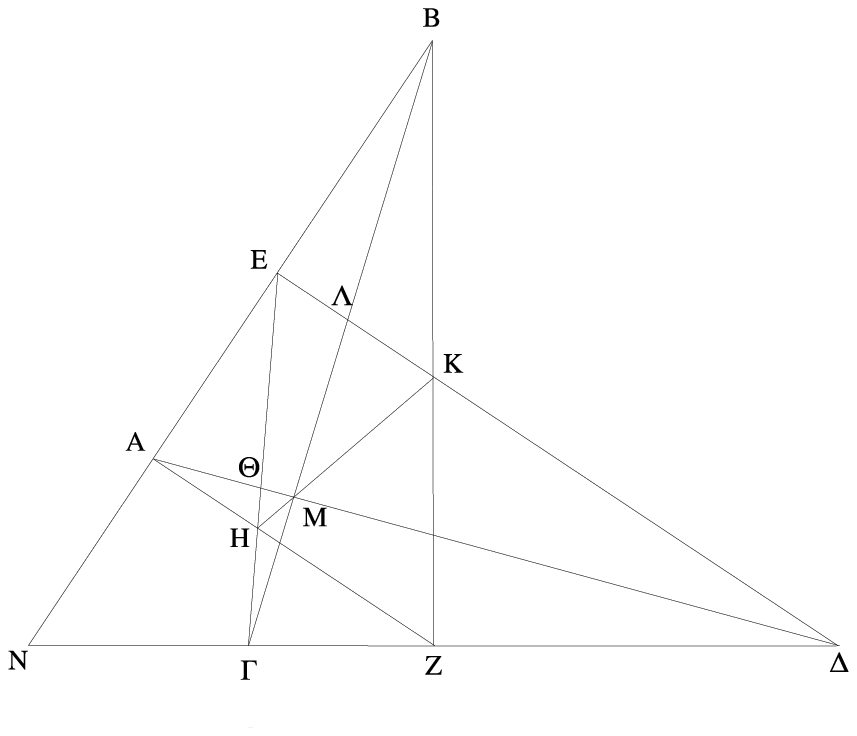}} 
      \end{tabular}
\end{center}
  \caption{Figures from \cite{pappus:synagogue}, \cite{pappus:jones1}  }
  \label{f:jones}
\end{figure}

In~\cite{pappus:synagogue}, Pappus of Alexandria proves
numerous lemmas concerning Euclid's Porisms~\cite{euclid:porisms}.
The combination of several, have become known as Pappus's Theorem.
A porism may have been a general statement linked to more specific examples,
in which case, Pappus's contribution of enumerating the cases and proving
each, would indeed merit the general attribution of the theorem to him.
Concerning the origin of the general statement,
even Pappus's attribution to Euclid may be insufficiently ancient:
at least some commentators view Euclid as a master compiler, rather
than a deep original thinker, which would suggest that
Euclid's lost Porisms, would in turn credit yet older work.

Pappus's statement of the lemmas, follows the convention
that the order of points on a line, and the definition of points
that are the intersections of lines is often left to the
reader's consulting of the drawing, see~\cite{pappus:jones1}. 
The two drawings for these lemmas
are taken from the earliest extant, tenth century, copy of~\cite{pappus:synagogue},
held in the Vatican library.
We've copied Jones' copies~\cite{pappus:jones2},
including his correction
to
fig.~\ref{f:jones139}  
of an error in the Vaticanus,
detailed on his page 624.
Jones notes in \cite{pappus:jones1} that Pappus's diagrams, following
the conventions of the time, have
a pronounced preference for symmetry and regularization. In particular,
line $HMK$ need not be horizontal in figure~\ref{f:jones138},
and none of the lines need be perpendicular in figure~\ref{f:jones139}.

Two of the relevant lemmas of Pappus are:

\begin{lem}
Figure~\ref{f:jones138}.
\cite{pappus:synagogue} (folio 161v in Vatican copy)
Now that these things have been proved, let it be required to
prove that, if $AB$ and $\Gamma\Delta$ are parallel,
and some straight lines $A\Delta$, $AZ$, $B\Gamma$, $BZ$
intersect them, and $E\Delta$ and $E\Gamma$ are joined,
it results that the (line) through $H$, $M$ and $K$ is straight.
\end{lem}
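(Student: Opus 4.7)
The plan is to reduce the collinearity of $H$, $M$, $K$ to a single application of Menelaus's theorem on a well-chosen auxiliary triangle, using the hypothesis $AB \parallel \Gamma\Delta$ only through the basic proportionality (intercept) theorem — so the proof will not depend on any projective machinery and in particular not on Pappus's theorem itself. First I would identify an auxiliary triangle whose sides are three of the already-drawn lines, picked so that exactly one of $H$, $M$, $K$ lies on each sideline. Natural candidates are triples chosen from $\{AZ, BZ, A\Delta, B\Gamma, E\Delta, E\Gamma\}$; the right choice is the one for which the three Menelaus ratios will share denominators and numerators.

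The second step is to compute the three signed division ratios produced by Menelaus. Each of $H$, $M$, $K$ is an intersection of two named lines, and the corresponding side of the auxiliary triangle is cut either by the parallel pair $AB \parallel \Gamma\Delta$ or by one of the remaining transversals through $Z$ or $E$. Where the parallel strip is involved, the intercept theorem converts the ratio to a quotient of two segments taken from $AB$ and from $\Gamma\Delta$; where a transversal is involved, a single similar-triangle pair does the same job. In every case the resulting ratio lives in a small fixed vocabulary of segments drawn from $A$, $B$, $\Gamma$, $\Delta$, $Z$, $E$.

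The third step is to multiply the three ratios. The choice of auxiliary triangle is made precisely so that each base segment appearing in a denominator reappears in the numerator of a different factor: the product telescopes to $1$ in magnitude. A short sign check, using that $Z$ and $E$ lie in prescribed components of the complement of $AB \cup \Gamma\Delta$, turns this into $-1$, and Menelaus then delivers collinearity.

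The main obstacle, as I see it, is not algebraic but combinatorial: the figure has many intersection points and several plausible auxiliary triangles, and the ratio computations only telescope if the triangle is chosen so that every ``new'' segment that appears in one ratio is cancelled in another. As a fallback I would place $\Gamma\Delta$ on $y = 0$ and $AB$ on $y = 1$, assign arbitrary abscissae to the five free points, express $H$, $M$, $K$ as rational functions of those abscissae by intersecting the named lines, and verify that the $3 \times 3$ determinant
\[
\begin{vmatrix} x_H & y_H & 1 \\ x_M & y_M & 1 \\ x_K & y_K & 1 \end{vmatrix}
\]
vanishes identically; this is mechanical and serves as a sanity check on the synthetic argument.
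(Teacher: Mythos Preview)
Your proposal outlines a sound classical attack: choose an auxiliary triangle from the drawn lines, apply Menelaus, and use $AB \parallel \Gamma\Delta$ via the intercept theorem to make the three ratios telescope. This is a standard route to the parallel case of Pappus, and your coordinate fallback would certainly succeed as a verification. The only soft spot is that you have not actually named the auxiliary triangle or written down the three ratios, so the ``telescoping'' is asserted rather than exhibited; but there is no reason to doubt that a suitable choice exists.

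However, you should be aware that the paper does \emph{not} prove this lemma at all in the classical sense. The two lemmas quoted from Pappus are presented purely as historical context; the paper then absorbs them into the projective statement of Pappus's theorem (Theorem~\ref{t:pappus}) and proves \emph{that} by an entirely different, and deliberately roundabout, mechanism. The paper's argument runs: (i) establish the trigonometric inequality of Theorem~\ref{t:main} via linear programming and determinant computations; (ii) deduce from it that Ringel's non-Pappus pseudoline arrangement $\Rin$ is unstretchable (Lemma~\ref{l:rin9}); (iii) show that any counterexample to Pappus's theorem could be perturbed into a straight-line drawing of $\Rin$, a contradiction. So your Menelaus approach is direct, elementary, and self-contained, while the paper's is intentionally indirect --- its whole point is to demonstrate that angle-constraint and oriented-matroid techniques are powerful enough to recover classical incidence theorems, not to give a short proof of this particular lemma.
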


\begin{lem}
Figure~\ref{f:jones139}.
\cite{pappus:synagogue} (folios 161v, 162, in Vatican copy)
But now let $AB$ and $\Gamma\Delta$ not be parallel,
but let them intersect at $N$. That again the (line) 
through $H$, $M$ and $K$ is straight.
\end{lem}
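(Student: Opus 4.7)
The plan is a proof by contradiction using the non-stretchability of Ringel's arrangement $\Rin$ established earlier in this section from theorem~\ref{t:main}. Assume the hypotheses of the lemma and suppose, for contradiction, that $H$, $M$, $K$ are not collinear. The eight straight lines explicitly named in the hypothesis (the two ``axes'' $AB$, $\Gamma\Delta$ and the six crosslines connecting points on them), together with the new straight line $\ell = HM$, form an arrangement of nine concrete Euclidean lines. I would use this arrangement as a candidate stretching of $\Rin$.

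The key step is to verify that, under a suitable labelling, this nine-line arrangement is combinatorially equivalent to $\Rin$ of figure~\ref{f:grunbaum}. Every triple incidence of the Pappus configuration at the six original points $A, B, Z, \Gamma, E, \Delta$ is forced by the hypotheses, the two Pappus points $H$, $M$ are triple points of the new arrangement after $\ell$ is adjoined, and by the contradictory assumption $K$ lies on two crosslines but not on $\ell$. This is precisely the missing triple incidence that distinguishes $\Rin$ from any realizable line arrangement. Hence the nine lines give a stretching of $\Rin$, contradicting its non-stretchability, so $H$, $M$, $K$ must be collinear.

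Whether $AB$ and $\Gamma\Delta$ meet at a finite point $N$ or at infinity makes no difference here: the non-stretchability of $\Rin$ is a projective statement, and lemma~\ref{l:origin} allows the polar origin to be chosen so that all angles lie in $(0,180)$ regardless of where the line at infinity is placed. The parallel case (the previous lemma) is the limit $N \to \infty$ of the same argument, so a single proof covers both lemmas with only notational adjustments.

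The main obstacle is the combinatorial verification in the second paragraph: one must exhibit an explicit labelling of the nine constructed lines that matches the nine pseudolines of $\Rin$ so that every triangle orientation agrees, and in particular so that the tiny triangle $HMK$ sits on the correct side of the hypothetical Pappus line. Once this labelling is pinned down the incidences follow routinely from the Pappus hypotheses, but the orientation bookkeeping is subtle --- it is exactly where the strict trigonometric inequality of theorem~\ref{t:main} encodes the content of Pappus's theorem.
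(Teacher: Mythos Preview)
Your overall strategy --- assume non-collinearity, build nine straight lines, identify them with $\Rin$, and invoke its non-stretchability --- is exactly the paper's route. However, two points deserve comment.

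First, the paper does not prove this lemma in isolation: it is quoted as a historical citation from Pappus, and the paper instead proves the combined projective statement (theorem~\ref{t:pappus}) covering all of Pappus's cases at once. So there is no ``paper's own proof'' of this lemma to compare against, only the proof of theorem~\ref{t:pappus}.

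Second, and more substantively, your construction ``add the line $\ell = HM$ to the eight given lines'' is not what the paper does, and the difference matters. The paper's proof of theorem~\ref{t:pappus} does \emph{not} simply adjoin one new line: it first normalizes by a projective transformation (sending $A,B,D,E$ to a square), performs a case split on the position of $F$ and on which side of $XZ$ the point $Y$ lies, uses relabelling symmetries to reduce to two cases, and then \emph{perturbs several lines} (replacing $XY$, $XZ$, $YZ$ by nearby parallels through an interior point $P$, and replacing the two axes by nearby parallels) before finally matching the result against $\Rin$. The reason for all this work is precisely the obstacle you flag at the end: the oriented-matroid type of your nine-line figure depends on which side of $HM$ the point $K$ falls, and on the ordering of the other crossings, and a single wrong orientation means you have built some other arrangement rather than $\Rin$. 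Your proposal names this obstacle but does not resolve it; the paper resolves it by the case analysis and perturbations just described. So your plan is correct in outline but stops short of the part that carries the actual content.
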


Combined, with the other cases considered by Pappus,
 these form a single theorem, which we state in
the projective plane, with more modern sensibilities,
illustrated with the less regular figure~\ref{f:pappus},
which is labelled with Latin rather than Greek letters.

\begin{figure}[htp]
  \begin{center}
\begin{tabular}{{cc}}
    \subfigure[Pappus's Theorem]{\label{f:pappus}\includegraphics[width=.45\textwidth]{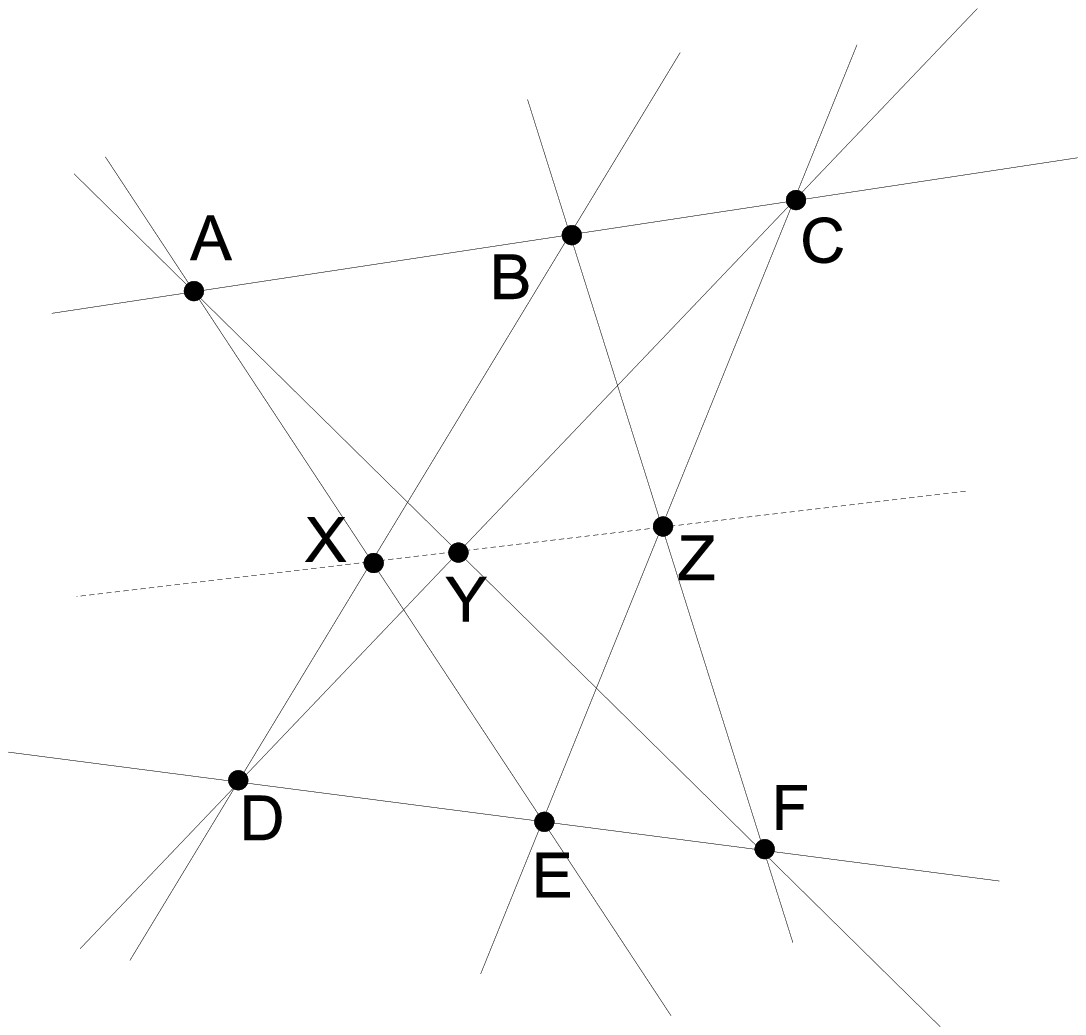}} &
   \subfigure[$\Rin$ labelled]{\label{f:grlabel}\includegraphics[width=.45\textwidth]{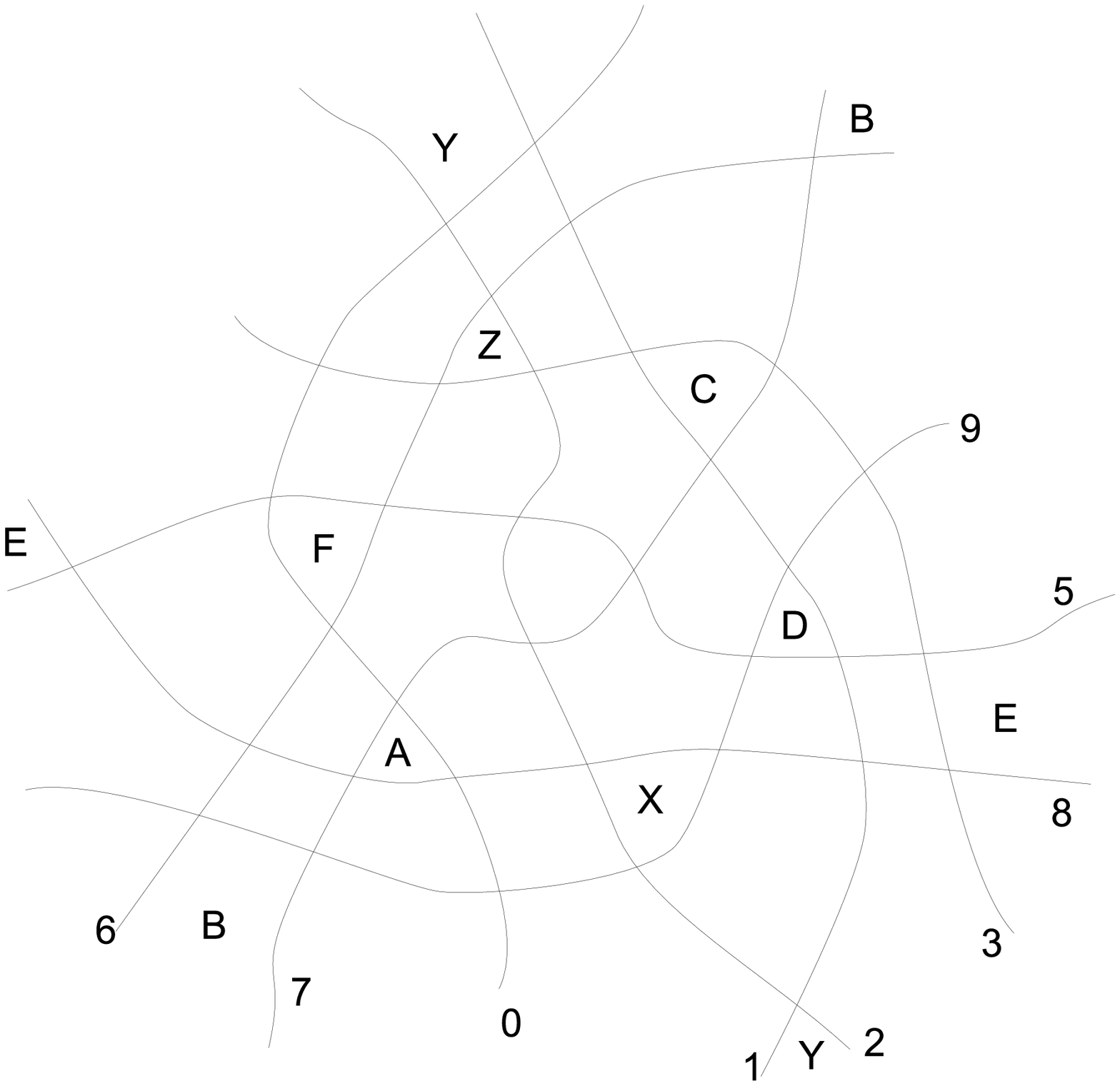}}\end{tabular}
\end{center}
  \caption{Pappus and non-Pappus (P)}
\end{figure}

\begin{thm}
\label{t:pappus}
In the projective plane,
if A, B, and C  are three points on one line, D, E, and F are three 
points on another line, and AE meets BD at X, 
AF meets CD at Y, 
and 
BF meets CE  at Z , then the three points X, Y, and 
Z are collinear. 
\end{thm}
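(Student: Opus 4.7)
The plan is to prove Pappus by contradiction, via the non-stretchability of Ringel's non-Pappus pseudoline arrangement $\Rin$. Ringel's original derivation shows that Pappus's theorem implies $\Rin$ cannot be stretched; that argument is reversible, so once $\Rin$ is shown non-stretchable independently, Pappus follows. The heart of the proof is therefore to use Theorem~\ref{t:main} to rule out any realization of $\Rin$ by straight lines.

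Suppose for contradiction that $\Rin$ is stretched by nine lines in the projective plane, labelled as in Figure~\ref{f:grlabel}. By Lemma~\ref{l:origin}, after a projective transformation I may work in the Euclidean plane with polar coordinates $(r_i,\theta_i)$, $r_i > 0$ and $\theta_i \in (0,180)$. The key geometric observation, to be verified face by face against Figure~\ref{f:grlabel}, is that the stretched $\Rin$ implicitly contains the ten-line configuration of Figure~\ref{f:straight}, with the tenth line either the chosen line at infinity or the line through a pair of intersection points forced by $\Rin$. Under this identification the nine triangles listed in Theorem~\ref{t:main} exist and carry exactly the orientations prescribed by its hypothesis, so Theorem~\ref{t:main} forces the strict inequality~(\ref{e:main2}).

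On the other hand, the combinatorial signature of $\Rin$ is that three of its triples of pseudolines are forced to be concurrent. Each such concurrency in the straight realization gives, via~(\ref{e:coincident}), a linear identity in the $r_i$ with sine coefficients. Combining the three concurrency identities produces an algebraic relation that reduces the left-hand side of~(\ref{e:main2}) to zero, contradicting its strict positivity. Thus $\Rin$ has no stretching. For the reversal, given any configuration witnessing a failure of Pappus (six points $A,B,C,D,E,F$ as in Theorem~\ref{t:pappus} with $X,Y,Z$ not collinear), one constructs a ninth line combining with the eight lines of the Pappus configuration to realize $\Rin$ as straight lines; non-stretchability of $\Rin$ then forces $X,Y,Z$ to be collinear, proving Theorem~\ref{t:pappus}.

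The main obstacle is the translation between the combinatorics of $\Rin$ and the polar-coordinate hypotheses of Theorem~\ref{t:main}. This has two distinct parts. First, a bookkeeping step: choosing the auxiliary tenth line and verifying all nine triangle orientations by reading off the cells of Figure~\ref{f:grlabel}. Second, and more delicate, showing that the three concurrencies in $\Rin$ collapse the three sine-products in~(\ref{e:main2}) to a literal equality, not merely a non-strict inequality with an uncontrolled sign; this is where the normal-form machinery of Section~\ref{s:normal} will be essential to organise the cancellations.
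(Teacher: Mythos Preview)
Your high-level strategy matches the paper's: deduce non-stretchability of $\Rin$ from Theorem~\ref{t:main}, then reverse Ringel's argument. But the mechanism you describe for the contradiction is wrong in a way that cannot be repaired as stated.

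You write that ``the combinatorial signature of $\Rin$ is that three of its triples of pseudolines are forced to be concurrent'' and that the resulting identities via~(\ref{e:coincident}) collapse~(\ref{e:main2}) to zero. This is backwards: $\Rin$ is a \emph{simple} arrangement; no three of its nine pseudolines are concurrent. That absence of concurrency is precisely what makes it the \emph{non}-Pappus arrangement. There are no identities of type~(\ref{e:coincident}) available, so your proposed cancellation has nothing to feed on. The paper's actual route is quite different: project one pseudoline (line~$0$) to infinity, leaving eight Euclidean lines, then \emph{add two auxiliary lines}, $4$ parallel to $3$ and $10$ parallel to $9$, to reach the ten lines required by Theorem~\ref{t:main}. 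The contradiction then comes not from concurrencies but from the equalities $\theta_3=\theta_4$ and $\theta_9=\theta_{10}$: substituting these into~(\ref{e:main2}) and repeatedly applying Proposition~\ref{p:normalize} reduces the left-hand side to a sum of strictly negative terms, which cannot be positive. Your description of the tenth line (``either the chosen line at infinity or the line through a pair of intersection points'') does not match this construction and would not produce the needed angle coincidences.

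Your sketch of the reverse step (counterexample to Pappus $\Rightarrow$ stretched $\Rin$) is also too thin. A Pappus counterexample has eight lines plus three concurrencies at $A,B,\ldots,Z$ and one \emph{failed} collinearity; to obtain $\Rin$ one must perturb lines to break every concurrency while controlling which side each new triangle falls on, and handle several cases depending on the position of $Y$ relative to $XZ$ and of $F$ relative to $D,E$. Simply ``constructing a ninth line'' does not do this; the paper spends a full case analysis (Figures~\ref{f:pappus2}--\ref{f:pappus4X}) to get there.
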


\subsection{Ringel's non-Pappus Arrangement}

\begin{figure}[htp]
  \begin{center}
\begin{tabular}{{cc}}
    \subfigure[Projecting on 0]{\label{f:carroll}\includegraphics[width=.45\textwidth]{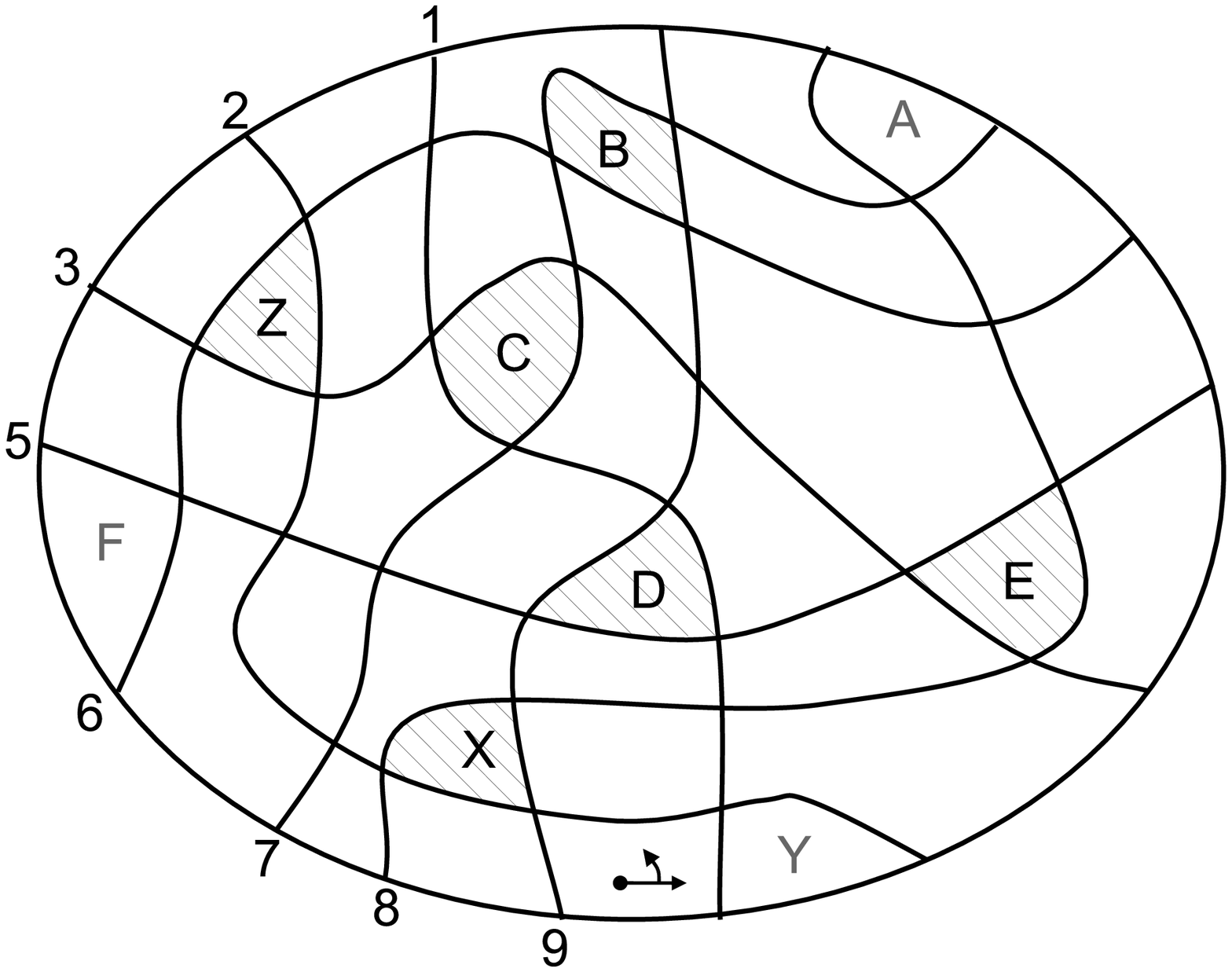}} &
    \subfigure[Adding $4 \parallel 3$ and $10 \parallel 9$ ]{\label{f:carroll2}
      \includegraphics[width=.45\textwidth]{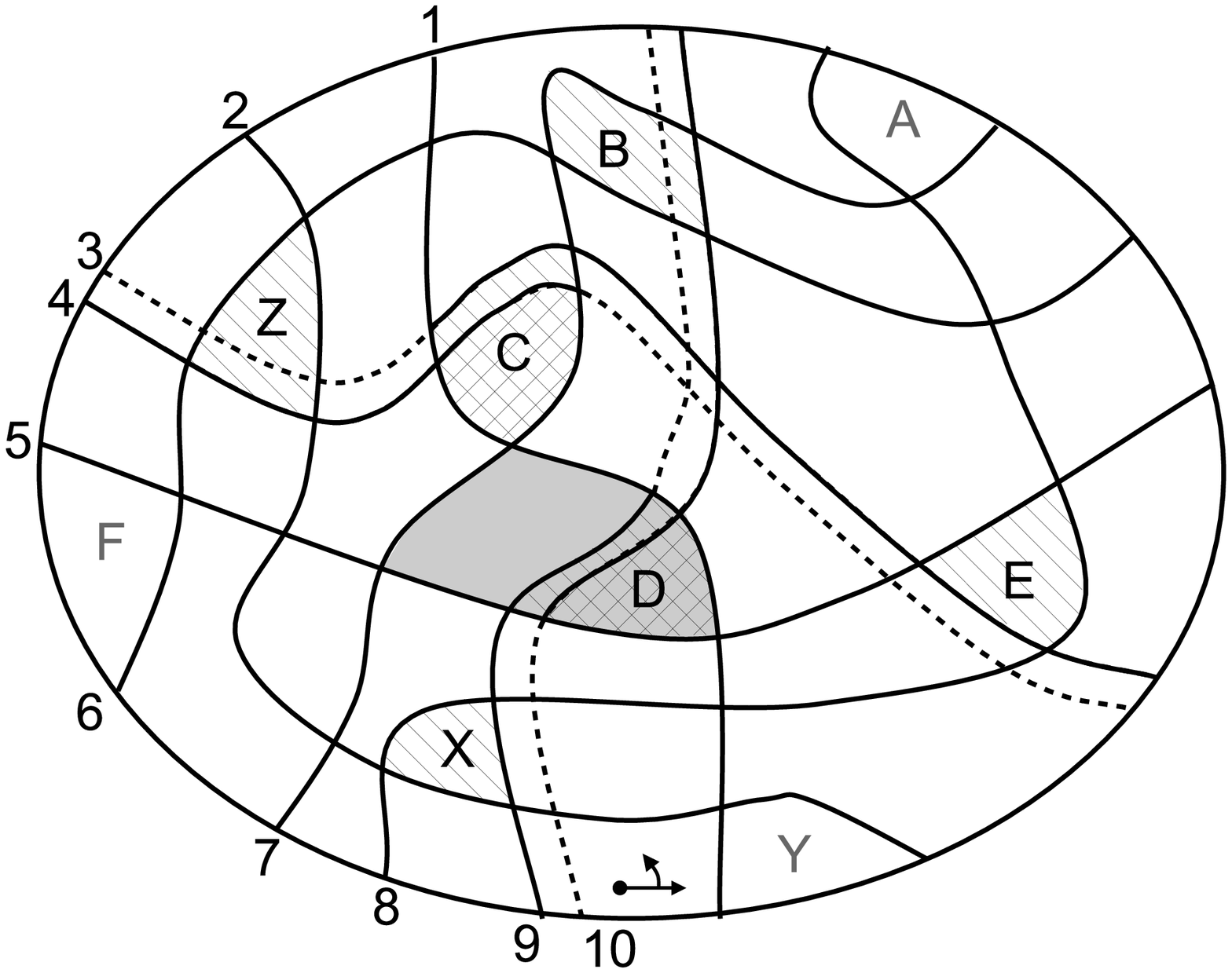}}
\end{tabular}
\end{center}
  \caption{Stretching $\Rin$ in the Euclidean plane}
 \label{f:stretching}
 \end{figure}

We start by proving that $\Rin$ is unstretchable.
Our proof below is new, and, I assert, interesting!
The result is well-known: \cite{ringel:teilungen} works from
Pappus, and \cite{bjorner:oriented} provide a final polynomial 
(see our equation~(\ref{e:final}) on page~\pageref{e:final}) which proves this independently of Pappus.

The remainder of the proof of Pappus, is an unsurprising reversal of
Ringel's argument.

The two proofs in this section
proceed by contradiction,
and are heavily illustrated. Thus, we need to
draw impossible illustrations. We do this by representing
hypothesised lines by actual pseudolines, hypothesised to
be straight.
The proofs argue to some extent `from the picture'. These
arguments fundamentally concern the relative ordering of various
points, on various lines, where the illustrations serve to capture the relative orderings,
and what is known about them. For such arguments, pseudoline arrangements suffice.
Indeed, in this paper, as in most of the literature, Ringel's non-Pappus
arrangement is not even defined, except by a picture. 
Formally, we could give a chirotope of
the oriented matroid, this would then relate to
equation~(\ref{e:coincident}) and its variations, to describe the significant
relationships between sets of three lines in each picture; however, we choose not to.

The labels in figures~\ref{f:pappus}, \ref{f:stretching},
\ref{f:pappus2}, \ref{f:pappus3} and~\ref{f:pappus4},
are all consistent with each other and figs~\ref{f:straight} 
and~\ref{f:graph}.
For example,  the triangle labelled $E$ in
fig.~\ref{f:straight} corresponds to the point labelled $E$
in figs~\ref{f:pappus} and~\ref{f:pappus3} and 
to the vertex labelled $E$ in fig.~\ref{f:graph}.

We use the following technical lemma:
\begin{prop}
\label{p:normalize}
For any real $\theta_a, \theta_b, \theta_c$ and $\theta_d$:
\begin{equation}
\label{e:normalize}
\LSN{a}{c}\LSN{b}{d}
= \LSN{a}{b}\LSN{c}{d} + \LSN{a}{d}\LSN{b}{c}
\end{equation}
\end{prop}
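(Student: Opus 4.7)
The plan is to reduce equation~(\ref{e:normalize}) to a telescoping identity among cosines by applying the product-to-sum formula $\sin X\sin Y = \tfrac12[\cos(X-Y)-\cos(X+Y)]$ to each of the three products. Expanding the left-hand side with $X=\theta_c-\theta_a$, $Y=\theta_d-\theta_b$ yields
$$\LSN{a}{c}\LSN{b}{d} = \tfrac12\bigl[\cos(\theta_b+\theta_c-\theta_a-\theta_d)-\cos(\theta_c+\theta_d-\theta_a-\theta_b)\bigr].$$
Applying the same formula to $\LSN{a}{b}\LSN{c}{d}$ produces the two cosines $\cos(\theta_b+\theta_c-\theta_a-\theta_d)$ and $\cos(\theta_b+\theta_d-\theta_a-\theta_c)$, while applying it to $\LSN{a}{d}\LSN{b}{c}$ produces the two cosines $\cos(\theta_b+\theta_d-\theta_a-\theta_c)$ and $\cos(\theta_c+\theta_d-\theta_a-\theta_b)$, each with the usual alternating signs. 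The two occurrences of $\cos(\theta_b+\theta_d-\theta_a-\theta_c)$ appear with opposite signs and cancel, and the remaining four cosines pair up to match the expansion of the left-hand side term by term.

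The only obstacle is arithmetic bookkeeping of the six pairs $(X\pm Y)$ with correct signs; there is no case analysis in the $\theta_i$, since the product-to-sum identity holds for arbitrary reals. As a conceptual sanity check, observe that when $\theta_a<\theta_b<\theta_c<\theta_d$ the identity is precisely Ptolemy's theorem applied to the cyclic quadrilateral with vertices at angles $2\theta_a,\ldots,2\theta_d$ on the unit circle (so that each chord length equals $2\sin$ of the corresponding half-arc difference). Since both sides of (\ref{e:normalize}) are real-analytic in the $\theta_i$, this ordered case would already force the general identity, but the direct product-to-sum computation is shorter and avoids any appeal to analytic continuation.
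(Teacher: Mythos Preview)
Your proof is correct. The paper itself does not give a proof at all---it simply states ``The proof is an exercise''---so your product-to-sum expansion is precisely the routine verification the author had in mind, and your Ptolemy remark is a nice bonus observation.
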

The proof is an exercise. We will only use this with
$a < b < c < d$. This allows us to expand any sum of products
of sines of angles in a line arrangement into a sum 
of products of non-overlapping angles (i.e. every pair of pairs is disjoint or nested).
Section~\ref{s:normal} is an in-depth study of  consequences of this lemma.

In the following proof, we project
one of the pseudolines of $\Rin$ to the line at infinity,
so that we have a  Euclidean arrangement of eight pseudolines.
This technique
was suggested by \obib{Lawrence~}\cite{lawrence:lopsided},
and drawn explicitly in figure 1 in~\cite{gioan:bases}.

\begin{lem}
\label{l:rin9}
$\Rin$ is unstretchable.
\end{lem}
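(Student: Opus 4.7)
The plan is to proceed by contradiction: assume $\Rin$ is stretchable and use the hypothesised straight-line realization to construct a $10$-line Euclidean configuration to which Theorem~\ref{t:main} applies. The combinatorics of $\Rin$ will then make the inequality~(\ref{e:main2}) delivered by that theorem impossible to satisfy, producing the contradiction.

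The first step is to pass to the Euclidean picture. Following the technique attributed to Lawrence and used by Gioan, I project one carefully chosen pseudoline of $\Rin$ to the line at infinity, obtaining an arrangement of eight Euclidean pseudolines as in fig.~\ref{f:carroll}; pairs of pseudolines of $\Rin$ that crossed on the projected line become parallel in the Euclidean picture. If $\Rin$ is stretchable, this Euclidean picture may be taken to consist of straight lines, and polar coordinates can be introduced via Lemma~\ref{l:origin} so that all directions lie in $(0,180)$. I then augment the arrangement by two extra straight lines --- one parallel to line $3$ (labelled line $4$) and one parallel to line $9$ (labelled line $10$) --- placed sufficiently close to their partners to produce the configuration of fig.~\ref{f:carroll2}.

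A direct inspection of fig.~\ref{f:carroll2} verifies that the nine specified triangles of Theorem~\ref{t:main}, namely $(2,4,6)$, $(1,5,9)$, $(1,5,10)$, $(1,5,7)$ positively oriented and $(1,3,7)$, $(1,4,7)$, $(3,5,8)$, $(2,8,9)$, $(6,7,10)$ negatively oriented, all have the required orientations. Theorem~\ref{t:main} then applies and yields the strict inequality~(\ref{e:main2}). On the other hand, since lines $3$ and $4$ are parallel we have $\theta_3 = \theta_4$, and similarly $\theta_9 = \theta_{10}$; substituting these identifications collapses~(\ref{e:main2}) to a statement involving only the eight original directions coming from $\Rin$. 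Expanding each five-fold product of sines by repeated application of Proposition~\ref{p:normalize} into a normal form of non-overlapping pair products, and using the concurrency relations of $\Rin$ itself --- each of which supplies, via equation~(\ref{e:coincident}), a linear relation between the relevant sines --- I expect the three terms on the left of~(\ref{e:main2}) to rearrange into a zero sum, directly contradicting the strict inequality.

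The hard part is not any single algebraic step but the combinatorial bookkeeping: one must choose the correct pseudoline of $\Rin$ to send to infinity so that the resulting parallelism pattern, together with the two auxiliary parallels, realizes exactly the oriented matroid required by Theorem~\ref{t:main}; then one must systematically check all nine triangle orientations against the combinatorial data of $\Rin$ (using the labelling of fig.~\ref{f:grlabel}); and finally one must track the trigonometric cancellations under the identifications $\theta_3=\theta_4$, $\theta_9=\theta_{10}$ down to zero. The paper's deliberately consistent labelling across figs.~\ref{f:straight}, \ref{f:carroll}, \ref{f:carroll2}, and \ref{f:grlabel} is designed to render this check mechanical rather than ingenious, and it is here that a mis-identification of a single triangle's orientation would derail the whole argument.
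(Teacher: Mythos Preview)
Your overall strategy matches the paper's proof: project line~$0$ to infinity, add the parallels $4\parallel 3$ and $10\parallel 9$, verify the nine triangle orientations, apply Theorem~\ref{t:main}, substitute $\theta_3=\theta_4$ and $\theta_9=\theta_{10}$, and reduce via Proposition~\ref{p:normalize}. However, your description of the final algebraic step contains a real misconception.

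The concurrency relations of $\Rin$ that you invoke through equation~(\ref{e:coincident}) are linear relations among the $r_i$ with sine \emph{coefficients}; they do not yield identities among the sines alone, and the paper's proof never uses them. After the two parallel substitutions, the argument is purely trigonometric: Proposition~\ref{p:normalize} is applied four times to rewrite the expression, and the left-hand side of~(\ref{e:main2}) collapses not to zero but to a sum of three \emph{strictly negative} terms (equation~(\ref{e:pp-c})), each a product of positive sines carrying an explicit minus sign. That manifestly negative sum, asserted to be $>0$, is the contradiction. So the cancellation is driven entirely by the identifications $\theta_3=\theta_4$, $\theta_9=\theta_{10}$ together with the identity~(\ref{e:normalize}); no further incidence data from $\Rin$ enters beyond what is already encoded in the triangle orientations fed into Theorem~\ref{t:main}.
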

\begin{proof}
Refer to fig.~\ref{f:grlabel}. 
Take the line 0, and project the figure onto the Euclidean plane,
with line 0 as the line at infinity, see fig.~\ref{f:carroll}.
If $\Rin$ is stretchable, then we can
take \ref{f:grlabel} as a line arrangement. The projection
is then also a line arrangement.
We choose a coordinate system with polar origin as indicated in the
face bounded by lines 0, 1, 2 and 9.
We draw a line 10 parallel to line 9, and a line 4 parallel to line 3;
in the simplest version, we draw these two parallel lines directly
on top of lines 9 and 3 respectively. In the illustration~\ref{f:carroll2},
we have drawn them slightly to one side, but leaving the relationship
with the other lines, and the origin, unchanged.
Fig.~\ref{f:carroll2}, being a line diagram (despite appearances),
then satisfies the conditions for theorem~\ref{t:main}.
Moreover:
\begin{itemize}
\item $\theta_3 = \theta_4$
\item $\theta_9 = \theta_{10}$
\item and $\theta_i < \theta_j$  and $\sin(\theta_j - \theta_i) > 0$
       for all $i < j$
\end{itemize}
Therefore, we have:
\begin{equation}
\label{e:pp-a}
\SN{8}{9}\SN{1}{9}\SN{2}{3}\SN{3}{5}\SN{6}{7} 
+\SN{3}{6}\SN{1}{9}\SN{7}{9}\SN{3}{5}\SN{2}{8} 
- \SN{3}{6}\BSN{3}{8}\SN{7}{9}\BSN{2}{9}\BSN{1}{5}
> 0
\end{equation}
from theorem~\ref{t:main}.

We apply proposition~\ref{p:normalize} twice to part of the last
term. We use bold terms to indicate the parts to be expanded.
\begin{align}
 &  \SN{3}{8}\BSN{1}{5}\BSN{2}{9} \\
 = & \BSN{3}{8}\SN{1}{9}\BSN{2}{5} + \SN{3}{8}\SN{1}{2}\SN{5}{9}\\
 = &\SN{1}{9}\SN{2}{8}\SN{3}{5} + \SN{1}{9}\SN{2}{3}\SN{5}{8} +\SN{3}{8}\SN{1}{2}\SN{5}{9}
\end{align}
Substituting this into~(\ref{e:pp-a}), and simplifying gives:
\begin{equation}
\label{e:pp-b}
\SN{8}{9}\SN{1}{9}\SN{2}{3}\SN{3}{5}\SN{6}{7}
- \BSN{3}{6}\BSN{7}{9}\SN{1}{9}\SN{2}{3}\BSN{5}{8} 
 -\SN{3}{6}\SN{7}{9}\SN{3}{8}\SN{1}{2}\SN{5}{9}
> 0
\end{equation}
Again, we  expand one part of the middle term:
\begin{align}
& \SN{3}{6}\BSN{7}{9}\BSN{5}{8} \\
= & \BSN{3}{6}\BSN{5}{7}\SN{8}{9} + \SN{3}{6}\SN{5}{9}\SN{7}{8} \\
= & \SN{3}{5}\SN{6}{7}\SN{8}{9} + \SN{3}{7}\SN{5}{6}\SN{8}{9} +
\SN{3}{6}\SN{5}{9}\SN{7}{8}
\end{align}
Substituting into equation (\ref{e:pp-b})
gives
\begin{equation}
\label{e:pp-c}
- \SN{1}{9}\SN{2}{3}\SN{3}{7}\SN{5}{6}\SN{8}{9}
- \SN{1}{9}\SN{2}{3}\SN{3}{6}\SN{5}{9}\SN{7}{8}
 -\SN{3}{6}\SN{7}{9}\SN{3}{8}\SN{1}{2}\SN{5}{9}
> 0
\end{equation}
which is a contradiction, proving the lemma.
\end{proof}
\begin{figure}[htp]
  \begin{center}
\begin{tabular}{{cc}}
    \subfigure[A counterexample (E)]{\label{f:pappus2}\includegraphics[width=.45\textwidth]{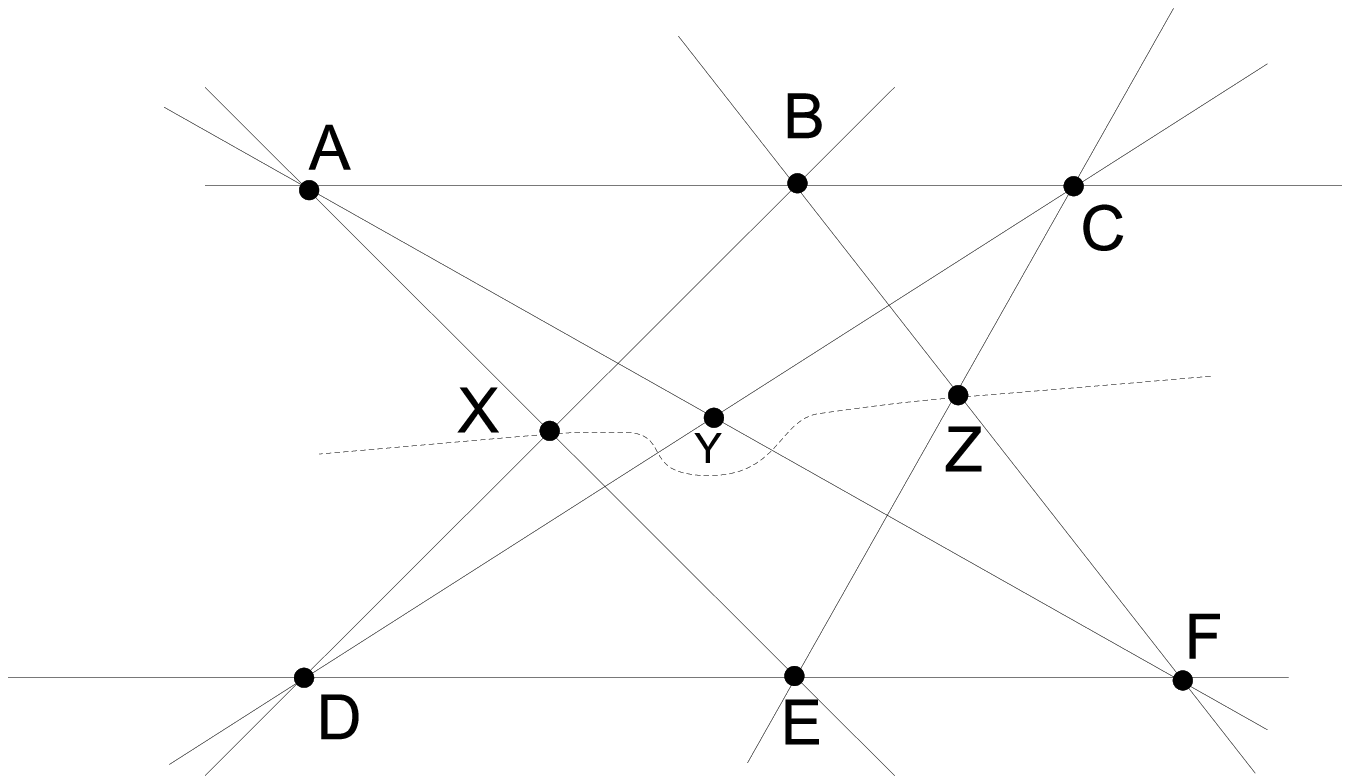}} &
      \subfigure[A 2nd counterexample (E)]{\label{f:pappus2X}\includegraphics[width=.45\textwidth]{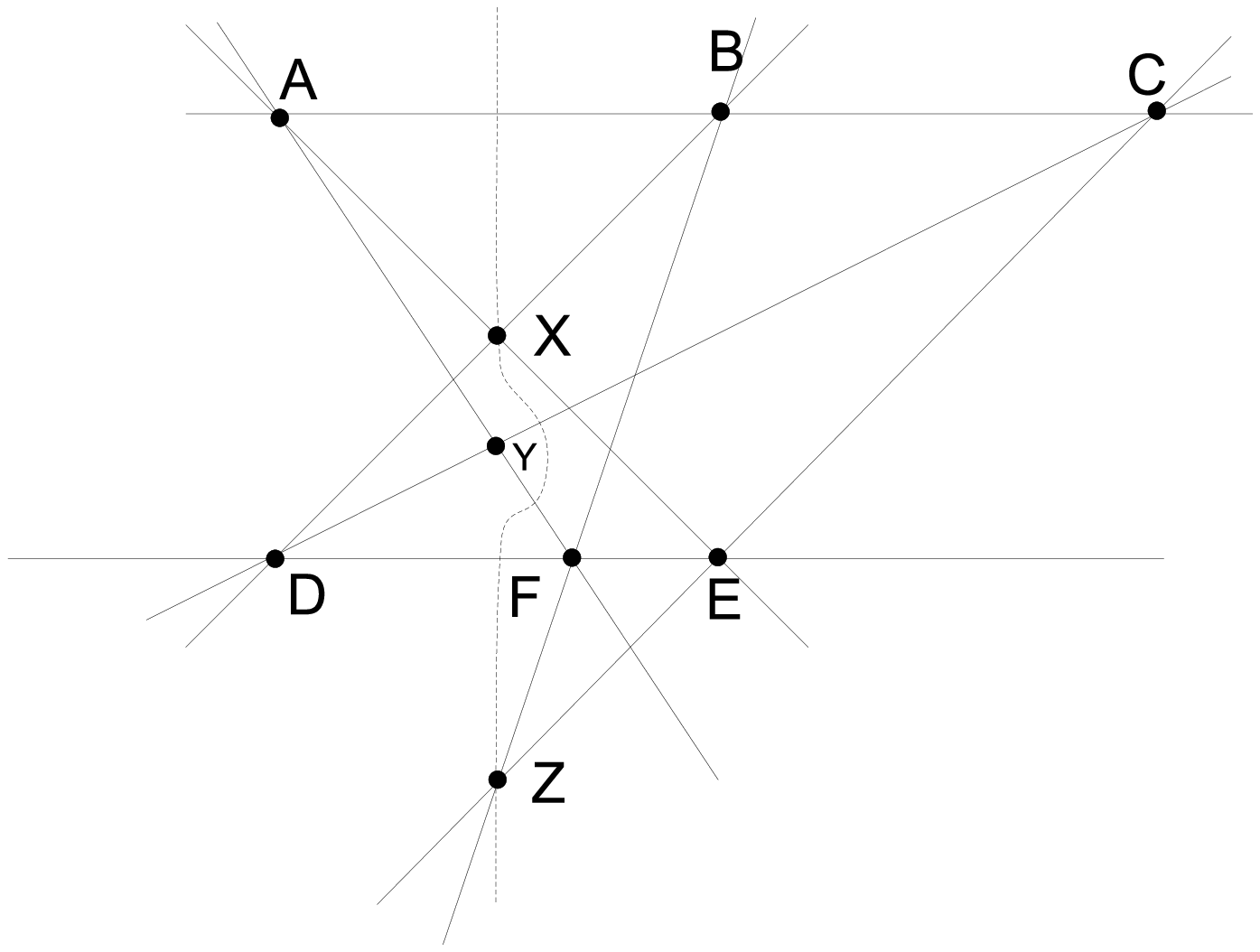}} \\
    \subfigure[Perturbing lines AF,CD,XY (E)]{\label{f:pappus3}
       \includegraphics[width=.45\textwidth]{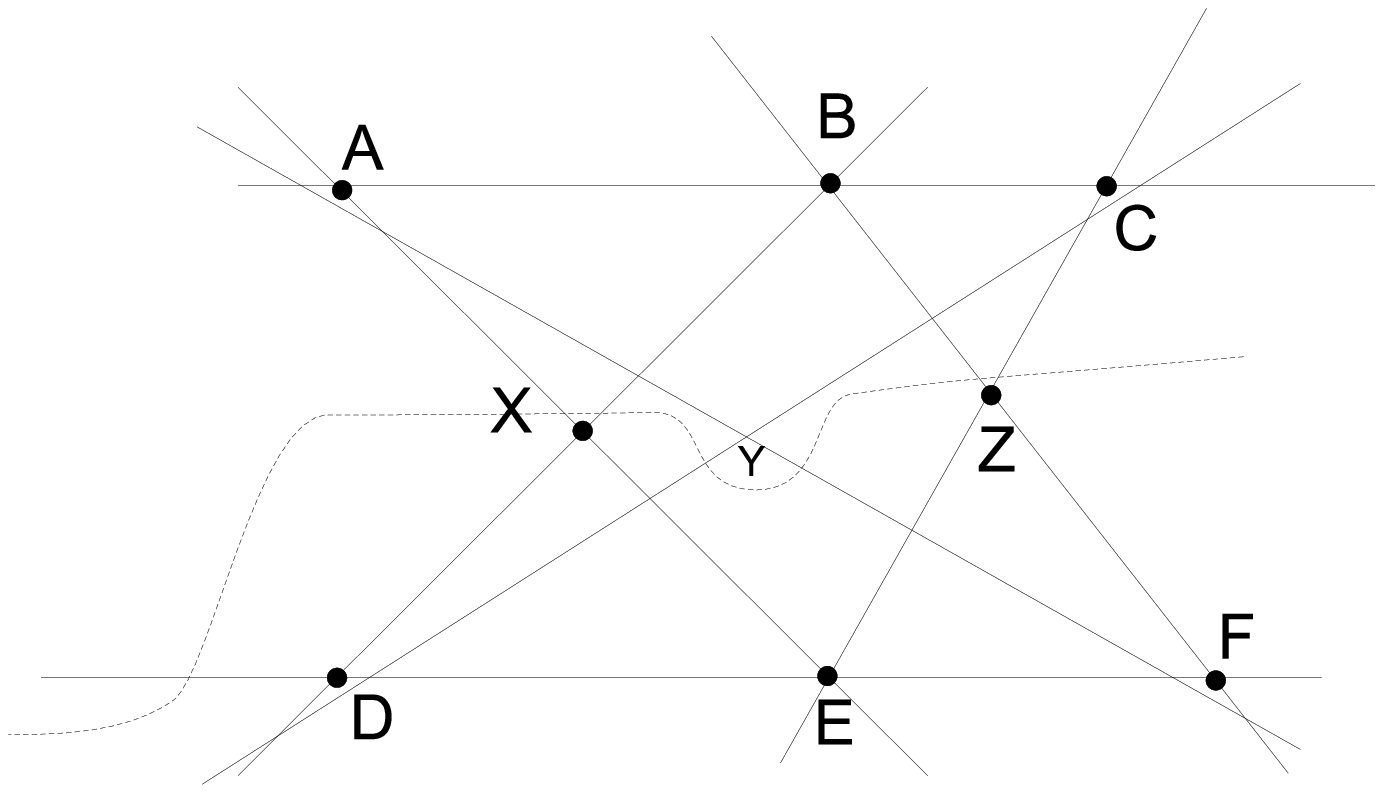}} &
    \subfigure[Perturbing lines AF,CD,XY  (E)]{\label{f:pappus3X}\includegraphics[width=.45\textwidth]{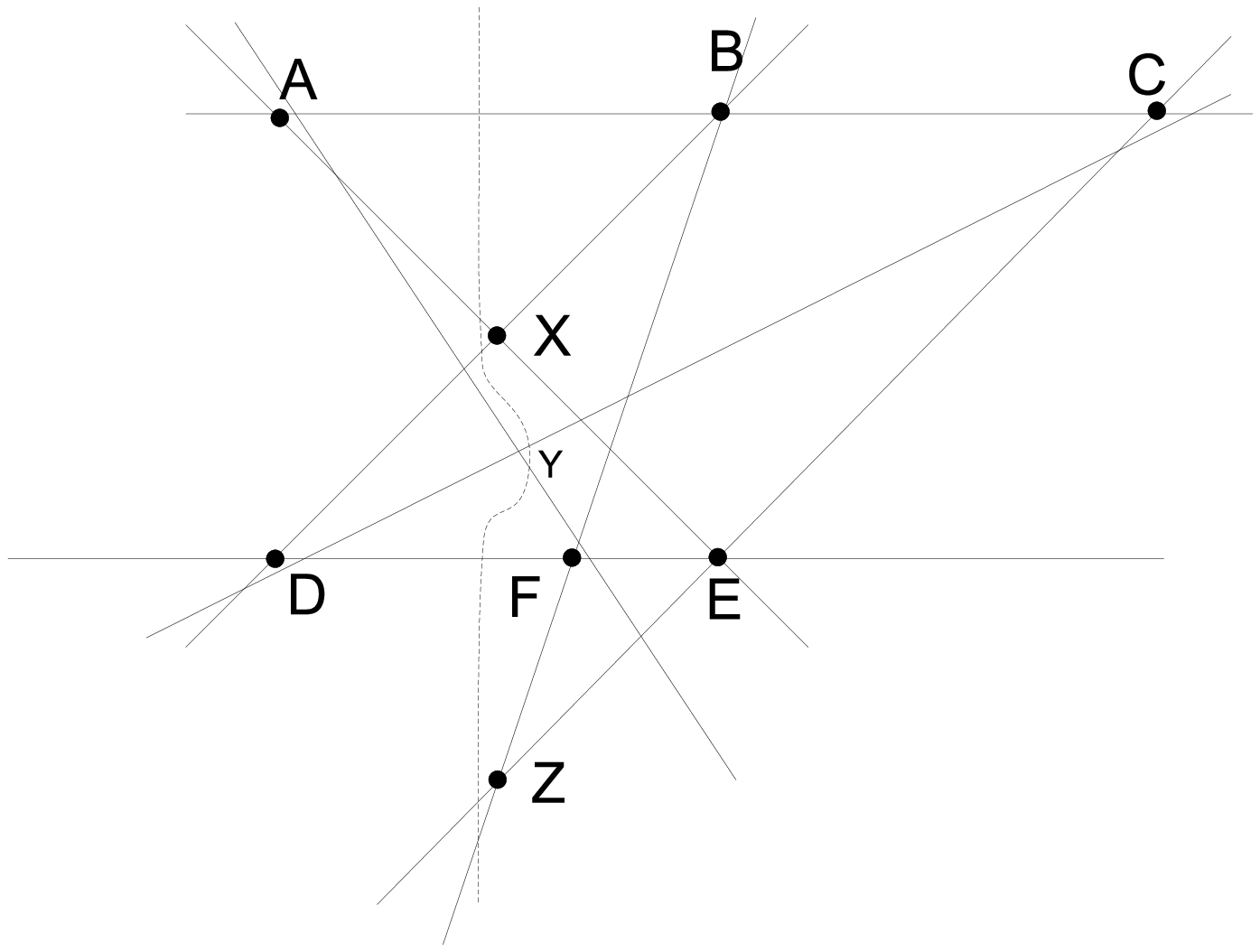}} \\
    \subfigure[Perturbing lines AC,DF (P,E)]{\label{f:pappus4}\includegraphics[width=.45\textwidth]{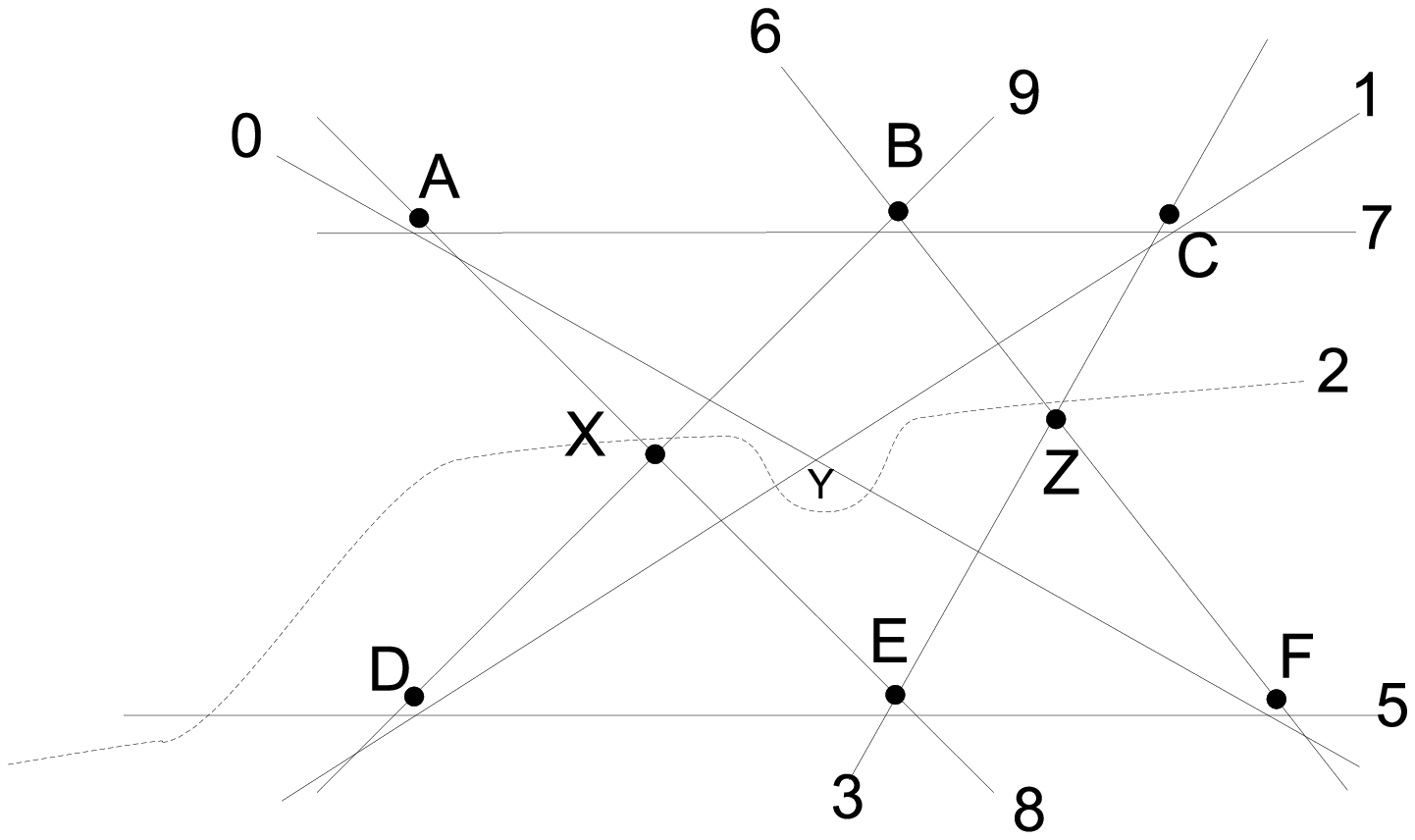}} &
        \subfigure[Perturbing lines AC,DF (P,E)]{\label{f:pappus4X}\includegraphics[width=.45\textwidth]{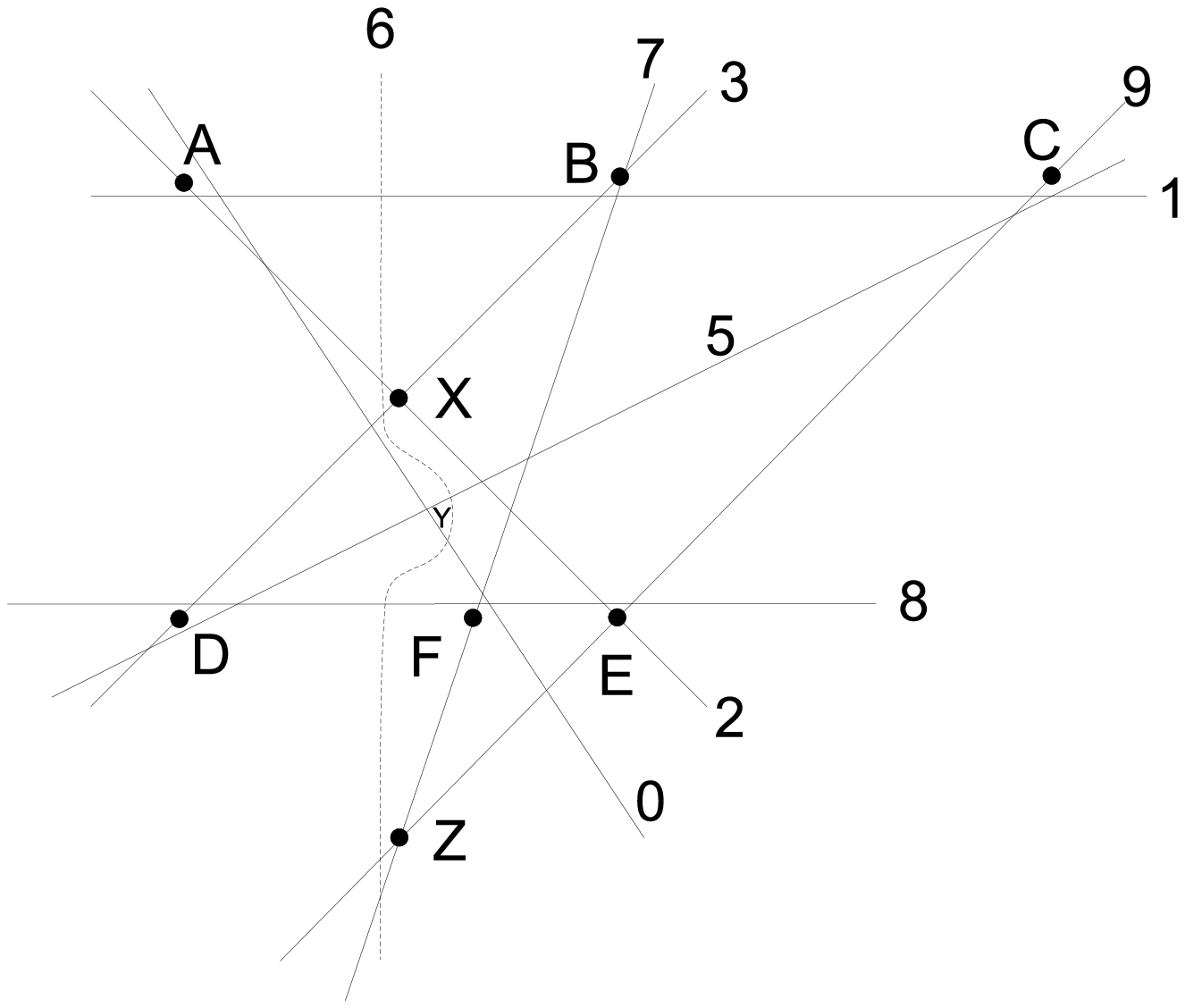}} 
\end{tabular}
\end{center}
  \caption{Illustrating the proof}
  \label{f:pappusproof}
\end{figure}

\subsection{Relationship with Pappus}

 \begin{proof}[Proof of Pappus's theorem~\ref{t:pappus}]
 Let N be the point of intersection of the two lines.
 
 If N, A, B, C, D, E and F are not all distinct,
 then the theorem is trivial.
 
 Take a counterexample to the theorem.
 
 By relabelling we can assume that A and C are both adjacent to N on one line,
 and that D is adjacent to N on the other. Since A, B, D, E are in general position, we
 can project them to the corners of a square,  fig.~\ref{f:pappus2}.
 Since C is adjacent to the point of intersection N, that lies at infinity,
 C is mapped to a point to the right of B as  illustrated. Since D is adjacent to N, 
 either F lies to
  the right of E as in fig.~\ref{f:pappus2}, or between D and E, as in fig.~\ref{f:pappus2X}.
 In the first case,
 X and Z are distinct because neither can be E and X lies on AE and Z lies on CE.
The line XZ is either parallel to DE, or not. If not, then it either intersects DE to the right of E
or to the left. If it intersects to the right, we can relabel the projective points 
by swapping A and C, and D and F,
and hence X and Z. 
So, in the first case, without loss of generality, 
we have XZ intersecting DE as illustrated,
or XZ is parallel to DE.
 
 By hypothesis Y does not lie on the line XZ.
 Without loss of generality we can assume that Y lies on the same side of XZ as A.
Since, if not,
 in the first of the two cases, 
 we can relabel the projective points swapping A and D, B and E, C and F, leaving X, Y and Z unchanged, but the newly labelled A lies in the same half plane of XZ as Y. 
 In the second,
 E is adjacent to N, and we can relabel the projective points, swapping A and D, B and F
 and C and E, so that the points X and Y are swapped too. 
 
Given either of these two diagrams, we can draw the corresponding 
diagram  fig.~\ref{f:pappus3} or~\ref{f:pappus3X} by: selecting a point P
within the triangle XYZ and drawing new lines parallel and close to XY, XZ and YZ, strictly
between the original line and P, while maintaining the incidence properties of
the original line, with the other lines in the diagram, except at the points that lie on it. 
In the first case, if XY is parallel to DE we can twist the new line through a small angle, maintaining all the incidence properties, 
except that it intersects DE to the left of E, thus arriving at fig.~\ref{f:pappus3}.

We then draw lines parallel but close to lines AC and DE.
To draw fig.~\ref{f:pappus4} from~\ref{f:pappus3}, we add new lines below the 
old lines, but close enough 
not to flip any triangles. 
To draw fig.~\ref{f:pappus4X} from~\ref{f:pappus3X}, we add new lines below AC and above DE.

We map the resulting line arrangement back into the projective plane.

So, to summarize, given an arrangement of nine lines in the projective plane,
a counterexample to Pappus's theorem,
we can project it onto either fig.~\ref{f:pappus2} or~\ref{f:pappus2X}, considered as recording the
relative incidence properties of the lines. By a sequence 
of operations, we can construct a different arrangement of nine lines, in the projective
plane, either fig.~\ref{f:pappus4} or~\ref{f:pappus4X}, again recording the arrangement of the lines.
However, the numbering of the lines in these last two figures, corresponds to the numbering
in fig.~\ref{f:grlabel}, with the same incidence properties, e.g. line 0 crosses lines 1, 2, 3, 5, 6, 7, 8, 9
in order; and line 8 crosses lines 1, 9, 2, 0, 7, 6, 5 and 3 in order; these statements are true of all
three diagrams. Hence they represent the same pseudoline arrangement. Thus we have
constructed a stretched version of $\Rin$, which is not possible: so  Pappus is proved.
 \end{proof}

\section{Linear Programming}
\label{s:motzkin}
In section~\ref{s:maintheorem}, we saw that the main theorem
amounts to a question of linear programming.

This section reviews, without proof, results from \cite{motzkin:phd},
\obib{Motzkin's} his seminal PhD thesis on systems of linear inequalities.
These results are all expressed in terms of vertical simplexes, and with a system of 
$n$ unknowns and $m$ inequalities being expressed by an $m$-by-$n$ matrix $M$, multiplying a column vector of size $n$.
Actually, we only care about such systems where the $n$ unknowns are the $r_i$ polar coordinates,
and each row of the matrix $M$, has three non-zero elements, the $i$-th being
$\pm\LSN{j}{k}$, the $j$-th being $\mp\LSN{i}{k}$ and the $k$-th being $\pm\LSN{i}{j}$, for some $i$, $j$ and $k$.

\begin{defn}
\label{d:simplex}
A matrix $M$ with $r+1$ rows and $r$ columns is a {\em simplex}
if, up to multiplication by a constant, there is precisely
one positive linear dependency between the rows.
\end{defn}

\begin{thm}
\label{t:simplex}
A matrix $M$ with $r+1$ rows and $r$ columns is a simplex if and only if
its $r+1$ $r$-by-$r$ subdeterminants alternate in sign.
\end{thm}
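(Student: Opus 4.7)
The plan is to identify the left null space of $M$ explicitly via cofactor expansion and then translate the simplex condition into sign conditions on the resulting null vector. Let $D_i$ denote the $r\times r$ subdeterminant of $M$ obtained by deleting row $i$. The key computation is the identity $\sum_{i=1}^{r+1} (-1)^{i+1} D_i M_{ij} = 0$ for every column index $j$. To see this, form the $(r+1)\times(r+1)$ matrix obtained by appending the $j$-th column of $M$ again as an extra column: this matrix has a repeated column and hence determinant zero, and cofactor expansion along the appended column yields exactly the claimed identity (up to an overall sign depending on where the column is inserted). Thus $\lambda_i := (-1)^{i+1} D_i$ defines a left linear dependency among the rows of $M$.

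Next I would show that the simplex hypothesis forces the left null space of $M$ to be exactly one-dimensional. If the null space had dimension $\geq 2$ and contained any strictly positive vector $\v{v}$, then all vectors in a neighbourhood of $\v{v}$ inside the null space would also be strictly positive, producing a whole open cone of positive dependencies, contradicting uniqueness up to a constant. Hence being a simplex implies the null space is one-dimensional; in particular $\mathrm{rank}(M) = r$, so at least one $D_i$ is non-zero and the vector $\lambda = (\lambda_1,\dots,\lambda_{r+1})$ constructed above is a non-zero spanning vector of the null space. The simplex condition then says, equivalently, that some scalar multiple of $\lambda$ is strictly positive, i.e.\ that all components $\lambda_i = (-1)^{i+1}D_i$ share a common non-zero sign, which is exactly the statement that the $D_i$ alternate in sign.

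The converse direction then falls out: if the $D_i$ strictly alternate in sign they are all non-zero, so $\mathrm{rank}(M)=r$, the null space is one-dimensional and spanned by $\lambda$, and $\pm\lambda$ is strictly positive, giving precisely one positive dependency up to positive scaling. The main obstacle I anticipate is pinning down the sign bookkeeping in the cofactor expansion (keeping track of which $(-1)^{i+?}$ factor appears depending on the insertion position of the duplicated column), together with carefully excluding the degenerate case $\mathrm{rank}(M)<r$, where every $D_i$ vanishes and the null space is high-dimensional; the uniqueness clause in Definition~\ref{d:simplex} is exactly what rules this degeneracy out.
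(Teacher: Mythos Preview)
Your argument is correct. Note, however, that the paper does not actually prove Theorem~\ref{t:simplex}: Section~\ref{s:motzkin} explicitly states that it ``reviews, without proof, results from \cite{motzkin:phd}'', so there is no paper proof to compare against. What you have written is essentially the standard proof: the cofactor identity $\sum_i (-1)^{i+1}D_i M_{ij}=0$ exhibits an explicit left null vector, the uniqueness clause in Definition~\ref{d:simplex} forces the left null space to be one-dimensional (your open-neighbourhood argument is fine here, since a strictly positive vector in a $\geq 2$-dimensional subspace has non-proportional positive neighbours), and the simplex condition then becomes a pure sign condition on $\lambda_i=(-1)^{i+1}D_i$. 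Your anticipated sign-bookkeeping worry is harmless: inserting the duplicated column in the last position gives precisely the signs you wrote up to a global factor of $(-1)^r$, which is immaterial.
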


\begin{thm}
\label{t:mus}
Given an $m$-by-$n$ matrix $M$, the system $M x > 0$ is
a minimal insoluble system if and only if there is some 
 $m$-by-$m-1$ submatrix $S$ of $M$ such that:
\begin{itemize}
\item 
$S$ is a simplex.
\item
for all columns $c$ of $M$, the $m$-by-$m$ matrix
$S c$ has determinant zero.
\end{itemize}
\end{thm}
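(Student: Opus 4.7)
The plan is to reduce both directions to a standard theorem of alternatives for strict inequalities (a Motzkin transposition theorem that would presumably appear elsewhere in \cite{motzkin:phd}): the system $Mx > 0$ has no solution if and only if there exists $y \in \Real^m$ with $y \geq 0$, $y \neq 0$, and $y^T M = 0$. I will call such a $y$ a \emph{positive row dependency}. Everything else is a short linear-algebra manipulation around the structure that minimality forces on such a dependency.

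For the forward direction, assume $Mx > 0$ is minimally insoluble, so a positive row dependency $y$ exists. First I would show $y$ must be strictly positive: if $y_i = 0$, then $y$ remains a positive row dependency of the matrix $M_{-i}$ obtained by deleting row $i$, contradicting solubility of the reduced system. Next I would prove $y$ is unique up to scaling: given a second positive row dependency $y'$, both are strictly positive by the previous step, so $z := y - \lambda y'$ with $\lambda := \min_i y_i/y'_i$ is a nonnegative nonzero row dependency with at least one zero entry, contradicting the strict-positivity argument just made. Hence the left null space of $M$ is one-dimensional, so $\rank M = m - 1$. I would then pick $m-1$ columns of $M$ forming a basis of its column span, and let $S$ be the resulting $m \times (m-1)$ submatrix; then $\rank S = m-1$, so its left null space is also one-dimensional and spanned by the strictly positive $y$, so $S$ meets Definition~\ref{d:simplex} and is a simplex. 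For every column $c$ of $M$ one has $c \in \mathrm{colspan}(S)$, so the $m \times m$ matrix $[S\mid c]$ has rank $m-1$ and determinant zero.

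For the converse, suppose such an $S$ exists. The simplex property gives $y > 0$ with $y^T S = 0$. Since $S$ has full column rank $m-1$, the condition $\det[S\mid c] = 0$ for every column $c$ of $M$ is equivalent to $c \in \mathrm{colspan}(S)$, so $y^T c = 0$ for every such $c$, giving $y^T M = 0$. The theorem of alternatives then yields that $Mx > 0$ is insoluble. For minimality, suppose $M_{-i} x > 0$ were also insoluble for some $i$; the theorem of alternatives supplies a nonnegative nonzero $y'$ with $y'^T M_{-i} = 0$, and extending by a zero in position $i$ gives a nonnegative element of the left null space of $M$ whose $i$-th entry is zero. But that null space is one-dimensional and spanned by the strictly positive $y$, a contradiction.

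The only real obstacle is to invoke (or, if necessary, re-derive in the strict-inequality form used above) the theorem of alternatives for $Mx > 0$; once that is in hand, the argument is routine. The chief pitfall to watch is that $S$ must be chosen so that its left null space coincides with that of $M$ --- this requires $\rank S = m-1$, not merely that $S$ have the right shape --- since both the simplex property of $S$ and the uniqueness step in the minimality argument depend on this coincidence.
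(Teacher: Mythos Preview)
The paper does not give its own proof of this theorem: section~\ref{s:motzkin} explicitly says it ``reviews, without proof, results from \cite{motzkin:phd}''. So there is nothing to compare against, and your task reduces to whether the argument stands on its own. It does.

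Your proof is correct. A few remarks. The theorem of alternatives you invoke is exactly Theorem~\ref{t:carver} (Carver's theorem), which the paper also quotes without proof, so you are on the same footing as the paper in that respect. In the converse direction you implicitly use that a simplex $S$ has rank $m-1$; this is true, since if the left null space of $S$ were at least two-dimensional and contained a strictly positive vector $y$, then perturbing $y$ along an independent null vector would yield a second, non-proportional positive dependency, contradicting Definition~\ref{d:simplex}. With $\rank S = m-1$ in hand, your deduction that every column $c$ of $M$ lies in the column span of $S$ (from $\det[S\mid c]=0$) is clean, and the minimality argument goes through because $\rank M \geq \rank S = m-1$ together with $y^T M = 0$ forces the left null space of $M$ to be the line through $y$.

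The forward direction is equally sound: minimality forces the Carver certificate $y$ to be strictly positive, the subtraction trick shows it is unique up to scale, hence $\rank M = m-1$, and any $m-1$ independent columns furnish the required simplex $S$. Nothing is missing.
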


The spirit of these results is anticipated by \obib{Carver~}\cite{carver:systems}; Motzkin's statements
clarify that it is sufficient to examine determinants of submatrices, which Carver mentions in passing.

Carver's key theorem is:
\begin{thm}
A necessary and sufficient condition that a given system $S$ be inconsistent is that there
exist a set of $m+1$ constants $k_1, k_2, \cdots k_{m+1}$, such that
\begin{equation}
\sum_{i=1}^m k_i L_i(x) + k_{m+1} \equiv 0,
\end{equation}
at least one of the $k$'s being positive, and none of them being negative.
\end{thm}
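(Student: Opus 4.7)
The plan is to split the theorem into its two directions. Sufficiency—that the existence of such $k_i$ implies inconsistency of $S$—will be immediate by substitution, while necessity will follow from a convex separation argument.

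For sufficiency, I would argue by contradiction: suppose $S$ has a solution $x^*$, so $L_i(x^*) > 0$ for each $i$. Evaluating the identity at $x^*$ gives $\sum_{i=1}^m k_i L_i(x^*) + k_{m+1} = 0$. Since every $k_i \geq 0$ and every $L_i(x^*) > 0$, each of the $m+1$ summands is non-negative, so the sum vanishes only if every summand vanishes. But some $k_j$ is strictly positive: if $j \leq m$, then $k_j L_j(x^*) > 0$; if $j = m+1$, then $k_{m+1} > 0$; either way we reach a contradiction.

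For necessity, assume $S$ is inconsistent. Writing $L_i(x) = a_i^\top x + b_i$, define the affine map $\Phi(x) = (L_1(x), \ldots, L_m(x))$, whose image $F = \Phi(\Real^n)$ is an affine subspace of $\Real^m$. Inconsistency of $S$ is exactly the statement that $F$ is disjoint from the open positive orthant $\Real^m_{>0}$. Since $F$ is convex and $\Real^m_{>0}$ is a non-empty open convex set disjoint from $F$, there is a separating hyperplane: a nonzero vector $k = (k_1, \ldots, k_m)$ and a scalar $\alpha$ with $k \cdot y \leq \alpha$ for all $y \in F$ and $k \cdot y \geq \alpha$ for all $y \in \Real^m_{>0}$. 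Testing standard basis directions in the orthant forces each $k_i \geq 0$, and letting $y \to 0$ forces $\alpha \leq 0$. On the other side, $k \cdot \Phi(x) = \sum_{i=1}^m k_i L_i(x)$ is an affine function of $x$ bounded above, hence constant; call its value $-k_{m+1}$, so that $\sum_{i=1}^m k_i L_i(x) + k_{m+1} \equiv 0$ holds, and $-k_{m+1} \leq \alpha \leq 0$ yields $k_{m+1} \geq 0$. Since the separator $(k_1, \ldots, k_m)$ is itself nonzero, at least one $k_j$ with $j \leq m$ is strictly positive, as required.

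The hard part will be pinning down the separation step together with the sign information in a rigorous way. The subtlety is that we are separating a closed affine flat from an open convex cone: strict separation of two closed convex sets does not apply directly, but the standard separation theorem for a convex set disjoint from an open convex set supplies a supporting hyperplane, and the signs of the entries of $k$ together with the bound $\alpha \leq 0$ must then be recovered by probing the cone along its extreme rays. A secondary delicate point is ruling out the degenerate case $(k_1,\ldots,k_m) = 0$, which is handled by observing that the separator itself must be nonzero, so its nontriviality transfers automatically to the first $m$ coefficients.
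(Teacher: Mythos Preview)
The paper does not actually prove this statement: section~\ref{s:motzkin} explicitly opens with ``This section reviews, \emph{without proof}, results from \cite{motzkin:phd}'' and then quotes Carver's theorem from the literature. So there is no paper proof to compare against.

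Your argument is nonetheless correct. The sufficiency direction is straightforward, and your necessity direction via separating the affine flat $\Phi(\Real^n)$ from the open positive orthant is a standard and valid route to this result (this is essentially the usual proof of Gordan/Motzkin-type alternatives). The only point I would tighten is the closing sentence: you should state explicitly that each $k_i \geq 0$ follows by sending $y = t e_i + \epsilon \mathbf{1}$ with $t \to \infty$ inside the open orthant, and that the separating-hyperplane theorem yields a \emph{nonzero} normal $k \in \Real^m$, whence some $k_j > 0$ among $k_1,\ldots,k_m$. With those clarifications the argument is complete.
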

In the terms of this paper, $k_{m+1}$ is always $0$,
since our inequalities always compare with $0$.
Thus, we can rearticulate this as:
\begin{thm}
\label{t:carver}
Given an $m$-by-$n$ matrix $M$, the system $M x > 0$ is
insoluble if and only if there is a non-negative, non-zero
linear
dependency between the rows of $M$.
\end{thm}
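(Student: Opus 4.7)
The plan is to deduce Carver's theorem from the Motzkin results already stated, namely theorems~\ref{t:simplex} and~\ref{t:mus}.

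First I would dispatch the easy direction. Suppose $\lambda^\top M = 0$ with $\lambda \geq 0$ and $\lambda \neq 0$. If $Mx > 0$ had a solution $x$, then $\lambda^\top(Mx) = \sum_i \lambda_i (Mx)_i$ is a sum of non-negative terms with at least one strictly positive (some $\lambda_i > 0$ paired with $(Mx)_i > 0$), so $\lambda^\top M x > 0$. But $\lambda^\top M = 0$ forces this to equal $0$, a contradiction.

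For the hard direction, assume $Mx > 0$ has no solution. Pass to a minimal insoluble subsystem: choose a minimal row subset $R \subseteq \{1,\dots,m\}$ for which the restricted system $M_R x > 0$ is still insoluble. Such an $R$ exists because the full system is insoluble and one can delete rows one at a time while preserving insolubility until no further deletion is possible. Apply theorem~\ref{t:mus} to $M_R$ to obtain an $|R| \times (|R|-1)$ simplex submatrix $S$ of $M_R$ such that $\det([S \mid c]) = 0$ for every remaining column $c$ of $M_R$. By theorem~\ref{t:simplex}, the $|R|$ maximal subdeterminants of $S$ alternate in sign; a standard cofactor identity (expanding $\det([S \mid s^j]) = 0$ along the appended column for each column $s^j$ of $S$) then produces a strictly positive vector $\alpha$ with $\alpha^\top S = 0$. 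Because the columns of $S$ are linearly independent, the determinant-zero condition places every remaining column $c$ of $M_R$ in the column span of $S$, and hence $\alpha^\top M_R = 0$. Padding $\alpha$ with zeros on the rows outside $R$ yields the desired non-negative non-zero $\lambda$ with $\lambda^\top M = 0$.

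The main obstacle is the transfer step from $\alpha^\top S = 0$ to $\alpha^\top M_R = 0$. This requires confirming that the $|R|-1$ columns of $S$ are linearly independent, so that $\det([S \mid c]) = 0$ genuinely forces $c$ into the column span of $S$ rather than merely reflecting a dependency already present among $S$'s columns. This independence is not spelled out in theorem~\ref{t:mus}, but follows from the essentially unique positive row dependency characterising a simplex (definition~\ref{d:simplex}): uniqueness of the dependency forces the row rank, and hence the column rank, to be exactly $|R|-1$. Once this small structural gap is closed, the remainder of the argument is bookkeeping and the ``non-zero'' condition on $\lambda$ is immediate from the strict positivity of $\alpha$.
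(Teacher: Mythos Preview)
The paper does not prove this theorem: section~\ref{s:motzkin} opens by saying it ``reviews, without proof, results from \cite{motzkin:phd}'', and theorem~\ref{t:carver} is one of those quoted results. So there is no proof in the paper to compare your attempt against.

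Your argument is correct. The easy direction is standard. For the hard direction, passing to a minimal insoluble subsystem and invoking theorem~\ref{t:mus} is legitimate, and the cofactor construction of a strictly positive $\alpha$ from the alternating subdeterminants of theorem~\ref{t:simplex} is exactly right. Your worry in the final paragraph is unnecessary, though: you already have what you need. Once theorem~\ref{t:simplex} tells you the $|R|$ maximal subdeterminants of $S$ alternate in sign, they are in particular all non-zero, so any $|R|-1$ rows of $S$ form an invertible square block; this gives column rank $|R|-1$ immediately, without appealing to the uniqueness clause in definition~\ref{d:simplex}. (That uniqueness argument is actually more delicate than you indicate: a two-dimensional left null space can meet the closed non-negative orthant in a single ray lying on a coordinate face, so uniqueness of a \emph{non-negative} dependency alone does not force the rank. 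The alternating-sign route sidesteps this entirely.)

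One structural remark: you are deriving the foundational alternative (Carver) from the more refined simplex characterisation (theorem~\ref{t:mus}). In Motzkin's own development the dependence runs the other way. Within this paper, where both are imported as black boxes, your direction is perfectly valid, but be aware that if you ever need to justify theorem~\ref{t:mus} itself you cannot then lean on theorem~\ref{t:carver}.
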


\section{A Normal Form for Products of Sines}
\label{s:normal}

In the previous section we saw that the solubility issues
for the linear program introduced in section~\ref{s:maintheorem}
are to be addressed by looking at determinants.
The matrix in question has non-zero entries of the from
$\LSN{i}{j}$. Hence, we are going to consider sums of products
of many such terms.
We have already seen one fairly laborious application
of the identity~(\ref{e:normalize}) to equation~(\ref{e:pp-a})
to derive~(\ref{e:pp-c}). The reader may fear that
this process will be repeated.

Fear not!

We will show that repeated application of~(\ref{e:normalize}),
with $a < b < c < d$ until it can no longer be applied, leads 
to a normal form. Despite the many choices faced
while making such a derivation, the process terminates,
and always at the same answer.
Then, with the remainder of the paper, whenever we need to
show a trigonometric identity, like that used in 
the proof of lemma~\ref{l:rin9}, we will simply say,
{\em by normalization}.
The suspicious reader, will need, like the author, to
write a simple computer program to perform the computation.
\begin{thm}
\label{t:normal}
Given a formal expression being a sum of products of sines
of differences between unknown angles, repeated expansion
using equation~(\ref{e:normalize}) with $a < b < c < d$,
always terminates at a uniquely determined normal form.
\end{thm}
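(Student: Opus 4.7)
My plan is to view each product of sines $\prod_k \SN{a_k}{b_k}$ (with each $a_k < b_k$) as a multiset of chords $\{\{a_k,b_k\}\}$ drawn on the linearly ordered index set of angles. Two chords $\{a,c\},\{b,d\}$ with $a<b<c<d$ constitute a \emph{crossing}, and the expansion rule~(\ref{e:normalize}) replaces such a crossing either by a disjoint pair $\{a,b\},\{c,d\}$ or by a nested pair $\{a,d\},\{b,c\}$, splitting the affected product into a sum of two products whose chord multisets each have fewer crossings of that particular type. A product is in normal form precisely when its chord multiset is non-crossing (every two chords disjoint or nested). The theorem then reduces to strong normalization (termination) together with confluence of this rewriting system on formal sums, which jointly guarantee a unique normal form.

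For termination, assign to each product with chord multiset $M$ the weight $\mu(M) = \sum_{\{i,j\} \in M} f(j-i)$, where $f:\mathbb{R}_{>0}\to\mathbb{R}_{>0}$ is any strictly increasing, strictly concave function, e.g.\ $f(x)=\sqrt{x}$. For the disjoint rewrite $\{a,c\},\{b,d\} \to \{a,b\},\{c,d\}$, the new widths $b-a$ and $d-c$ are both strictly smaller than the old widths $c-a$ and $d-b$, so $\mu$ strictly decreases just by monotonicity of $f$. For the nested rewrite $\{a,c\},\{b,d\} \to \{a,d\},\{b,c\}$ the two multisets of widths have the same sum $(d-a)+(c-b)=(c-a)+(d-b)$, but $(d-a,c-b)$ strictly majorizes $(c-a,d-b)$, so Karamata's inequality for strictly concave $f$ yields $f(d-a)+f(c-b) < f(c-a)+f(d-b)$. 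Since $\mu$ ranges over a finite set of values on chord multisets drawn from a fixed finite index set, single-product reduction terminates. For a sum of products, each step replaces one summand by two strictly $\mu$-smaller summands, and the Dershowitz--Manna multiset extension of a well-founded order is itself well-founded.

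Given termination, Newman's lemma reduces uniqueness to local confluence: whenever $S$ admits two one-step rewrites $S \to S_1$ and $S \to S_2$, both further rewrite to a common sum. Rewrites in different summands commute, as do rewrites within a single summand whose two chosen crossings involve four disjoint chords. The remaining case is two crossings inside one product $P$ that share a chord; here a finite case analysis on the linear order of the three chord endpoints verifies convergence, and is essentially a single instance of the four-index Pl\"ucker relation applied in two different ways.

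The main obstacle will be this local-confluence case analysis for shared chords: each individual subcase is short, but keeping the bookkeeping straight is the error-prone step. A cleaner but heavier alternative is to identify $\SN{i}{j}$ with the $2\times 2$ Pl\"ucker coordinate $[ij]$ of the $2\times n$ matrix whose columns are $(\cos\theta_k,\sin\theta_k)^T$; the rewrite is then literally an application of the Pl\"ucker three-term relation, so the output in the Grassmannian coordinate ring equals the input, and the standard monomial theorem for $Gr(2,n)$, which identifies non-crossing Pl\"ucker monomials as a basis, immediately yields uniqueness of the normal form.
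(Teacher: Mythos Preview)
Your argument is correct, and it takes a genuinely different route from the paper's.

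\textbf{Termination.} The paper assigns to each non-normal product $p$ the quantity $\prod_{\{a,b\}\in p}|a-b|$, tracks the maximum of this over the summands together with its multiplicity, and shows that expanding a maximising summand decreases this pair lexicographically. Strictly speaking this yields only \emph{weak} normalisation (some reduction sequence terminates). Your concave weight $\mu$ with the Dershowitz--Manna multiset extension gives \emph{strong} normalisation directly, which is what you need for Newman's lemma; it is also a cleaner termination measure.

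\textbf{Uniqueness.} Here the two approaches diverge more substantially. The paper argues \emph{semantically}: two normal forms $\beta,\gamma$ reached from the same $\alpha$ satisfy $\tau(\beta)(\boldsymbol{\theta})=\tau(\gamma)(\boldsymbol{\theta})$ for all real $\boldsymbol{\theta}$, and then one proves that non-crossing sine monomials are linearly independent as functions by induction on the largest index, via a contraction that sets $\theta_{n}=\theta_{n-1}$. Your primary route is purely \emph{syntactic}: Newman's lemma plus local confluence, never touching the analytic interpretation. Your alternative route through the coordinate ring of $Gr(2,n)$ is the algebro-geometric packaging of exactly the linear-independence fact the paper proves by hand; the paper's inductive contraction can be read as an elementary proof that standard monomials on $Gr(2,n)$ are independent.

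\textbf{Trade-offs.} The paper's semantic approach dodges the local-confluence case analysis entirely, at the cost of an ad hoc induction with multiset bookkeeping. Your syntactic approach is conceptually standard term-rewriting, but the overlapping critical pair (three chords sharing one) involves five or six indices, not four, so it is not literally ``a single instance of the four-index Pl\"ucker relation''; it reduces to a handful of straightening computations on five or six points that you would have to carry out or cite. Your $Gr(2,n)$ alternative is the cleanest of the three, provided one is willing to invoke standard monomial theory as a black box.
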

Apart from this result, this fairly long section is unused elsewhere 
in the paper.
It is suggested that on first reading, you skip to page~\pageref{s:mainproof}.

\subsection{Multisets}
\label{s:multisets}
We wish to represent an expression such as $\LSN{2}{4}\LSN{1}{3}$
as a set of pairs $\{ \{2,4\}, \{1,3\} \}$. However, if we use a set
then the different expressions $\LSN{1}{3}\LSN{1}{3}$ and $\LSN{1}{3}$
would be represented as the same set $\{ \{ 1, 3 \} \}$.
Therefore we will work with multisets. Unfortunately, I have failed to
find an appropriate paper introducing multisets, so, in this subsection, I will
give a quick introduction based on the Web page at Wikipedia\footnote{
http://en.wikipedia.org/wiki/Multiset, as of 24th March 2007. 
My extensions include using $\aleph_0$ as a multiplicity and the  $  \uparrow, \downarrow$
and $\pow$ operators.
}

We will use the set
\begin{equation}
\Nat^+ = \Nat \cup \{ \infty \}
\end{equation}
for counting. Formally, by $ \infty$ we mean
$\aleph_0$, and the arithmetic we are using is cardinal arithmetic so that, for example:
\begin{align}
2 - 3 &= 0 \\
\infty + 1 &= 1 + \infty = \infty\\
5 &< \infty
\end{align}
We will also have the convention that
\begin{equation}
\infty - \infty = 0
\end{equation}

Given a fixed set $X$, then a multiset $A$ is formally defined by an indicator function 
$\ind_A : X \rightarrow \Nat^+$, which gives the multiplicities of the elements
of $X$. We define the usual set operators, 
$\in, \cap, \cup, \setminus, \subset, |\cdot|$ over multisets.
We also define three multiset specific operators $ \uplus, \uparrow, \downarrow$.
\begin{align}
x \in A & \: \: \textrm{when } \ind_A(x) \geq 1 \\
\ind_{A \cap B}(x) &= \min \{ \ind_A(x), \ind_B(x) \} \\ 
\ind_{A \cup B}(x) &= \max \{ \ind_A(x), \ind_B(x) \} \\ 
\ind_{A \setminus B} &=  \ind_A -  \ind_B \\ 
\ind_{A \uplus B} &=  \ind_A +  \ind_B \\
A \subset B &   \: \: \textrm{when for all } x \in X, \; \ind_A(x) \leq \ind_B(x) \\
|A| &= \Sigma_{x \in X} \ind_A(x) \\
\ind_{\downarrow A}(x) &= \begin{cases}
0 & \textrm{when }\ind_A(x) = 0 \\
1 & \textrm{otherwise}
\end{cases} \\
\ind_{\uparrow A}(x) &= \begin{cases}
0 & \textrm{when }\ind_A(x) = 0 \\
\infty & \textrm{otherwise}
\end{cases} 
\end{align}

The sum and product operators, $\sum$, $\prod$, are defined
in terms of the standard ones, using multiplicity, i.e.
\begin{align}
\sum_{a \in A} f(a) &= \sum_{x \in X} \ind_A(x) f(x) \\
\prod_{ a \in A} f(a) &= \prod_{x \in X} f(x) ^ {\ind_A(x)}
\end{align}

We see in these two expressions that the expression $a \in A$
is in some circumstances understood as 
itself having a multiplicity. This is particular significant in definitions
of multisets in terms of other multisets, e.g.
\begin{align}
A &= \{ 2, 3, 3,4,4,4 \} \\
B &= \{ a+3 : a \in A \} \\
B &= \{ 5, 6, 6, 7,7,7 \}
\end{align}
so that the multiplicities in $A$ carry across to the multiplicities in $B$.
A second example:
\begin{align}
A &= \{ 2, 3, 3, 4,4,4 \} \\
C &= \{ a \mod 2 :  a \in A \} \\
C &= \{ 0,0,0,0, 1,1 \}
\end{align}
Formally, given a multiset $A$, a predicate $P(x)$ assigning truth values
to each $x \in A$ and a partial function $f: X \rightharpoonup X$ defined on all
$x \in A$ with $P(x)$,
we can construct the multiset $\{ f(x) : x \in A, P(x) \}$,
with indicator function defined:
\begin{equation}
\ind_{\{ f(x) : x \in A, P(x) \}}(y) =
\sum_{\{x\in X: \ind_A(x) > 0, P(x), y = f(x)\}} \ind_A(x)
\end{equation}
where the sum is over a set (not a multiset).
If more than one occurrence of the
multiset membership operator 
occurs
in such a set definition then the second
is introduced with words (like `such that') indicating 
that it is to be read as a true/false predicate, ignoring multiplicities.

In contrast the
multiset subset operator is simply a predicate, with a true or false value.

The most complex multiset operator we use is $\pow$ for 
finite powermultisets. 
The finite powermultiset $\pow A$ of a multiset
$A$ contains precisely each of the finite multisets 
that are subsets of $A$, and each 
has an infinite multiplicity in $\pow A$:
\begin{equation}
\pow A = \uparrow \{ B \in X : B \subset A, |B| < \infty \}
\end{equation}
Notice that this differs from the normal set definition of powerset.
We will make $X$  large enough  so that  $B \in X$ is not restrictive .

In that definition we see that any subset of $X$ can be considered 
as a multiset, 
whose indicator function
takes values in $\{ 0, 1 \}$. Likewise, any multiset whose indicator function
only takes values $0$ or $1$, can be considered as a set. In particular,
for any multiset $A$, $\downarrow A$ can be considered a set.

A final multiset operator is $\biguplus$, 
which additively combines all members of a multiset of multisets.
This can be defined by:
\begin{equation}
\ind_{\biguplus A} = \sum_{a \in A} \ind_a
\end{equation}

The base set $X$ can usually be chosen large enough to contain everything of interest
for a particular discussion, and hence can be ignored.
Formally, for this section we will take $X$ as the smallest set
containing both $\Nat$ and $\pow \uparrow X$.
i.e. for any $x \notin X$,
$\pow \uparrow X$ when  defined over a ground of $X \cup \{ x \}$
has the same elements as when defined
over $X$.

Thus $X$ contains many multisets. 
Since $\pow$ introduces only {\em finite}
multisets, this is adequately limited to avoid paradox,
and could, with just a little bit more effort, be fully
formalized within ZF.
We make no further reference to $X$.

\subsection{On Pairs of Integers}
The set $\Nat^{(2)}$ is the set of all pairs of natural numbers.
If $\{a,b\} \in \Nat^{(2)}$ then $a \neq b$.

We use a function $\tau$ to map members of $\Nat^{(2)}$ to formal expressions
over a vector $\boldsymbol{\theta}$, corresponding to the sine function. i.e.
\begin{equation}
\tau( \{ a, b \} ) = \begin{cases}
\LSN{a}{b} & a < b \\
\LSN{b}{a} & b < a
\end{cases}
\end{equation}

Technically, the range of $\tau$ is a free algebra.
Given values for $\boldsymbol{\theta} \in \Real ^\Nat$, 
we can evaluate $\tau(\alpha)$, by
substituting in the values for $\boldsymbol{\theta}$. 
We write $\tau(\alpha)(\boldsymbol{\theta})$
for this value.

The expressions of interest are those such as in equation~(\ref{e:main2}). We will
separate out the positive and negative terms, so that we have two expressions, 
each being the sum of products of sines of differences of pairs of angles.

To express products of sines, we will use finite submultisets of 
$\uparrow \Nat^{(2)}$, i.e. any member of $P$
\begin{equation}
P = \pow \uparrow \Nat^{(2)}
\end{equation}
We extend the definition of $\tau$ for $p \in P$, with
\begin{equation}
\tau( p ) = \prod_{ x \in p } \tau(x)
\end{equation}
For example:
\begin{equation}
\tau( \{ \{ 2, 4 \}, \{2, 4 \}, \{ 1, 3 \} \} ) = \LSN{2}{4}\LSN{2}{4}\LSN{1}{3}
\end{equation}

To express sums of products of sines, we use finite submultisets of $P$,
i.e: any member of $S$
\begin{equation}
S = \pow P
\end{equation}
We similarly extend $\tau$ to $S$, to give the following
definition of $\tau$ on $\Nat^2 \cup P \cup S$:
\begin{equation}
\tau(x) = \begin{cases}
\LSN{a}{b} & x = \{ a, b \} \in \Nat^{(2)}, a < b \\
\prod_{ y \in x } \tau(y) & \textrm{when } x \in P \\
\sum_{ y \in x } \tau(y) & \textrm{when } x \in S 
\end{cases}
\end{equation}

We then define an equivalence relationship over $S$ by:
\begin{equation}
\alpha \equiv \beta \textrm{ if and only if } \tau(\alpha)(\boldsymbol{\theta}) 
= \tau(\beta)(\boldsymbol{\theta}) 
\textrm{ for all } \boldsymbol{\theta} \in \Real ^\Nat
\end{equation}

Trigonometric identities, such as 
in equation~(\ref{e:d1_d5_d7}), can then be verified by gathering together
the positive and negative terms, to give two members of $S$
and using the combinatoric methods of this section, to show that
they are equivalent.

We are interested in the applicability of formula~(\ref{e:normalize})
to members of $P$ and $S$. We say
\begin{defn}
A pair of pairs $\{ x, y \} \in P$ is
{\em expandable}, if there are $a,b,c,d \in \Nat$, with
\begin{align}
a &< b < c < d \\
x &= \{ a, c \} \\
y &= \{ b, d \}
\end{align}
\end{defn}

We also define the multiset $B \subset S$ (resp. $B_0 \subset P$)
of atomic elements $\alpha$
of $S$ (resp. $P$) such that formula~(\ref{e:normalize}) is not applicable
to $\tau(\alpha)$:
\begin{align}
B_0 &=\{ p \in P : \textrm{There is no expandable } q \subset p
\}  \\
B  &= \pow B_0 
\end{align}

In contrast, we can expand any $\alpha \in S \setminus B$
corresponding to an application of~(\ref{e:normalize})
to $\tau(\alpha)$.

\begin{prop}
\label{p:expansion}
For any $\alpha \in S \setminus B$, we can {\em expand} 
$\alpha$ to some $\beta \in S$, by taking
$p \in \alpha$, with
 $\{ a, b \}, \{ c, d \} \in p$,
such that $a < b < c <  d$,
and $q, r \in P$,
such that:
\begin{align}
q &= \{ \{ a , c \}, \{ b, d \} \} \cup p \setminus \{ \{ a, b \}, \{ c, d \} \} \\
r &= \{ \{ a , d \}, \{ b, c \} \} \cup p \setminus \{ \{ a, b \}, \{ c, d \} \} \\
\beta  &= \{ q, r \} \uplus \alpha \setminus \{ p \} 
\end{align}
\end{prop}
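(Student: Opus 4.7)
The plan is to (i) extract the witness $p$ together with the four indices $a<b<c<d$ from the hypothesis $\alpha \in S \setminus B$, and (ii) verify that the displayed formulas yield legitimate $q, r \in P$ and $\beta \in S$; the semantic content of the whole step is then one instance of identity~(\ref{e:normalize}) applied to the factor of $\tau(p)$ indexed by the four angles.

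First I would produce the witness. Because $B = \pow B_0$ collects the finite submultisets of $B_0$, the hypothesis $\alpha \in S \setminus B$ forces some $p \in \alpha$ with $p \notin B_0$, i.e., $p$ contains a subpair on which identity~(\ref{e:normalize}) can act; reading off the four distinct indices in sorted order supplies $a < b < c < d$ together with the two pair-entries of $p$ that participate in the rewrite. These deliver the $p$ and the pair-entries $\{a,b\},\{c,d\} \in p$ with $a<b<c<d$ named in the statement.

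Next I would check that the displayed multiset formulas are well-typed. Reading each formula pointwise on indicator functions, using the definitions of $\cup$, $\setminus$, $\uplus$ from Section~\ref{s:multisets}, $q$ is obtained from $p$ by decrementing the multiplicities of the two available entries $\{a,b\}, \{c,d\}$ and incrementing the multiplicities of $\{a,c\}, \{b,d\}$; the saturated subtraction is harmless because those entries are present in $p$ by hypothesis, and the result is a finite multiset of elements of $\uparrow \Nat^{(2)}$, so $q \in P$. The same argument gives $r \in P$. Finally, $\beta = \{q,r\} \uplus \alpha \setminus \{p\}$ removes one copy of $p$ from the finite multiset $\alpha$ and adjoins one copy each of $q$ and $r$ additively, keeping $\beta$ a finite submultiset of $\uparrow P$, hence $\beta \in S$.

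The main obstacle, small as it is, is operator-precedence bookkeeping: making sure the displayed expressions realise the intended single-entry substitution even when $\{a,c\}$ or $\{b,d\}$ already occurs in $p$ with positive multiplicity, and in particular that the multiset union $\cup$ (which takes the max rather than the sum of the multiplicities) does not suppress the count of the newly introduced entries. Once that is settled, the content claim $\tau(\beta) = \tau(\alpha)$ reduces, after cancelling the common factor $\tau(p \setminus \{\{a,b\},\{c,d\}\})$, to a single application of identity~(\ref{e:normalize}) to the four angles $\theta_a, \theta_b, \theta_c, \theta_d$, closing the argument.
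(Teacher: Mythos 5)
The paper offers no proof of this proposition: it functions as the definition of the elementary rewrite $\mapsto_1$, so all that needs checking is exactly what you set out to check — that a suitable $p$ and pair of entries exist, and that the displayed formulas land in $P$ and $S$. Your outline has the right shape, and you are right to flag $\cup$ versus $\uplus$: with $\cup$ read as the maximum of multiplicities, a copy of a newly introduced pair already present in $p$ is absorbed rather than added, $\tau$ is not preserved, and the later bookkeeping (for instance the invariance of $E_\alpha$ under $\mapsto_1$) breaks; the formulas must be read additively, as in the analogous constructions elsewhere in section~\ref{s:normal}. But you name this issue and move on without settling it.

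The genuine gap is in your witness-extraction step, which hides a second mismatch you do not remark on. From $\alpha \in S \setminus B$ you get some $p \in \alpha$ with $p \notin B_0$, i.e.\ $p$ contains an \emph{expandable} pair, which by the paper's definition has the crossing form $\{a,c\},\{b,d\}$ with $a<b<c<d$ — not the form $\{a,b\},\{c,d\}$ named in the statement. Under the literal reading the existence claim already fails: for $\alpha = \{\,\{\{1,3\},\{2,4\}\}\,\}$ we have $\alpha \in S \setminus B$, yet its unique product contains no entries $\{a,b\},\{c,d\}$ with $a<b<c<d$. Moreover, with the literal formulas one gets $\tau(q)+\tau(r) = \tau(p')\bigl(\LSN{a}{c}\LSN{b}{d}+\LSN{a}{d}\LSN{b}{c}\bigr)$ against $\tau(p)=\tau(p')\LSN{a}{b}\LSN{c}{d}$, which equation~(\ref{e:normalize}) does not equate, and the termination measure $\prod|a-b|$ in lemma~\ref{l:expansion} would increase rather than decrease. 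So the construction must be read with the roles of $\{a,c\},\{b,d\}$ and $\{a,b\},\{c,d\}$ exchanged: the crossing pair is removed from $p$, and the disjoint and nested pairs are adjoined (with $\uplus$) to form $q$ and $r$. Your proposal slides from ``a subpair on which the identity can act'' (necessarily the crossing pair) to ``the two pair-entries $\{a,b\},\{c,d\}$'' without noticing the switch, so as written the witness you extract does not match the formulas you then verify, and the final reduction of $\tau(\beta)=\tau(\alpha)$ to one instance of~(\ref{e:normalize}) only holds after both corrections are made explicit.
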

In such a case, we write
$\alpha \mapsto_1 \beta$.

We use $\mapsto$ as the transitive closure of $\mapsto_1$.

Inductively, from proposition~\ref{p:normalize}, we have:
\begin{prop}
\label{p:equivalence}
If $\alpha \mapsto \beta$ then $\alpha \equiv \beta$.
\end{prop}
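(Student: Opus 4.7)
The plan is induction on the length of the reduction $\alpha \mapsto \beta$, which lets one immediately reduce to the single-step case $\alpha \mapsto_1 \beta$ since $\equiv$ is transitive: if $\alpha \mapsto_1 \gamma \mapsto \beta$ with a shorter derivation from $\gamma$, then the induction hypothesis gives $\gamma \equiv \beta$, and it suffices to prove $\alpha \equiv \gamma$. The base case of zero steps is trivial ($\alpha = \beta$), so only the one-step claim needs real work.

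For the single-step case, unpack the witnesses given by Proposition~\ref{p:expansion}: a product $p \in \alpha$, a distinguished pair of pairs inside $p$ over four indices $a < b < c < d$ witnessing the expandability condition, and the two replacement products $q$ and $r$ that share with $p$ the common remainder $p_0 = p \setminus \{\{a,b\},\{c,d\}\}$ of factors untouched by the expansion. Expanding the definition of $\tau$ on products, I would write
\begin{equation}
\tau(p) = \tau(p_0)\cdot \tau(\{a,c\})\tau(\{b,d\}), \qquad
\tau(q) = \tau(p_0)\cdot \tau(\{a,b\})\tau(\{c,d\}), \qquad
\tau(r) = \tau(p_0)\cdot \tau(\{a,d\})\tau(\{b,c\}),
\end{equation}
and multiply the identity of Proposition~\ref{p:normalize}, applied to the angles $\theta_a,\theta_b,\theta_c,\theta_d$, by $\tau(p_0)$ to obtain the polynomial identity $\tau(p) = \tau(q) + \tau(r)$, valid for all $\boldsymbol{\theta} \in \Real^\Nat$.

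The remaining step is bookkeeping with the multiset $\beta = \{q,r\} \uplus \alpha \setminus \{p\}$. By linearity of the sum defining $\tau$ on $S$ with respect to multiplicities, $\tau(\alpha \setminus \{p\})$ equals $\tau(\alpha) - \tau(p)$ (removing a single occurrence), and then $\tau(\beta) = \tau(\alpha) - \tau(p) + \tau(q) + \tau(r) = \tau(\alpha)$ by the identity just established. Hence $\alpha \equiv \beta$. I do not expect any real obstacle: the whole proof is the scalar identity of Proposition~\ref{p:normalize} wrapped in careful multiset accounting, and the only subtlety is resisting the urge to treat $p \in \alpha$ as a set element — the bookkeeping really does require working with multiplicities, but once that is granted, everything collapses to a one-line computation.
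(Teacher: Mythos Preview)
Your argument is correct and is exactly the paper's approach: the paper's entire proof is the one-line remark ``Inductively, from proposition~\ref{p:normalize}'', and you have simply unpacked that induction. One small slip: you write $p_0 = p \setminus \{\{a,b\},\{c,d\}\}$ but then factor $\tau(p) = \tau(p_0)\,\tau(\{a,c\})\tau(\{b,d\})$, which requires $p_0 = p \setminus \{\{a,c\},\{b,d\}\}$ instead; this is only a labeling hiccup (partly inherited from the paper's own notation in Proposition~\ref{p:expansion}) and does not affect the argument.
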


\begin{lem}
\label{l:expansion}
For each $\alpha \in S$,
there exists at least one $\beta \in B$
with $\alpha \mapsto \beta$.
\end{lem}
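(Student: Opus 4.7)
The plan is to prove that $\mapsto_1$ is strongly terminating: every $\mapsto_1$-sequence from $\alpha$ is finite. Once this is shown, the endpoint $\beta$ of any such maximal sequence admits no further expansion, so every product in $\beta$ lies in $B_0$, giving $\beta \in B$ with $\alpha \mapsto \beta$.

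For each $p \in P$, let $N(p)$ count the expandable sub-multisets $\{x,y\} \subset p$, so that $N(p)=0$ iff $p \in B_0$. The combinatorial heart of the argument is the claim: whenever $\alpha \mapsto_1 \beta$ by expanding an expandable sub-pair $\{\{a,c\},\{b,d\}\}$ (with $a<b<c<d$) in some $p \in \alpha$ into the two new products $q$ and $r$ (replacing this sub-pair with the separated $\{\{a,b\},\{c,d\}\}$ and the nested $\{\{a,d\},\{b,c\}\}$ respectively), then both $N(q) < N(p)$ and $N(r) < N(p)$.

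The claim is proved by distinguishing (i) the internal crossing $\{\{a,c\},\{b,d\}\}$, which contributes $1$ to $N(p)$ but $0$ to each of $N(q)$ and $N(r)$ (the replacement pairs being respectively disjoint and nested), and (ii) for each remaining pair $\{u,v\} \in p$, a comparison of its number of crossings with $\{a,c\}, \{b,d\}$ against its number of crossings with the two replacement pairs. For (ii), assuming first that $\{u,v\} \cap \{a,b,c,d\} = \emptyset$, I would parametrise $\{u,v\}$ by the gap-indices of $u<v$ relative to $a<b<c<d$ and observe that the crossing count is controlled entirely by the contiguous set $T$ of positions of $\{a,b,c,d\}$ strictly between $u$ and $v$. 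A short enumeration over $|T| \in \{0,1,2,3,4\}$, with the key subcase $|T|=2$ splitting into $T \in \{\{1,2\},\{2,3\},\{3,4\}\}$, yields the per-pair inequality; shared-endpoint configurations (including the degenerate case where $\{u,v\}$ itself coincides with one of the removed pairs) are handled by direct inspection.

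Granted the claim, each $\mapsto_1$ step replaces one element $p$ of the multiset $\alpha \in S$ by two elements of strictly smaller $N$-value, so the image of $\alpha$ under the map $p \mapsto N(p)$ strictly decreases in the Dershowitz--Manna multiset extension of $<$ on $\Nat$. That extension being well-founded yields termination and hence the lemma. The main obstacle is the case-enumeration in step (ii); everything else is standard rewriting-theory bookkeeping.
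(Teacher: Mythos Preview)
Your argument is correct and proves strictly more than the lemma requires: you establish strong termination of $\mapsto_1$, whereas the lemma only asks for the existence of \emph{some} terminal $\beta$. The case analysis you outline in step~(ii) does go through (all fifteen gap-pair configurations satisfy $X_q \le X_p$ and $X_r \le X_p$, with the internal crossing supplying the strict drop), and the Dershowitz--Manna wrap-up is standard.

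The paper takes a different and shorter route. Instead of the crossing count $N(p)$, it uses the product measure $\pi(p) = \prod_{\{a,b\}\in p} |a-b|$. A single expansion step replaces the factor $(c-a)(d-b)$ by either $(b-a)(d-c)$ or $(d-a)(c-b)$; since $(c-a)(d-b) = (b-a)(d-c) + (d-a)(c-b)$ with all three terms positive (this is exactly the integer shadow of identity~(\ref{e:normalize})), both replacements are strictly smaller. No case analysis is needed. The paper then exhibits one terminating sequence by always expanding a $p$ on which $\max_{p\in\alpha\setminus B}\pi(p)$ is attained, tracking the pair $(s_1,s_2)=(\max\pi,\text{multiplicity of the max})$ lexicographically. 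Your Dershowitz--Manna argument would apply equally well to $\pi$ and give strong termination with essentially no extra work, so in hindsight the product measure dominates the crossing-count measure: it yields the same conclusion with the enumeration in your step~(ii) replaced by a one-line algebraic identity.
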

\begin{proof}
Given such an $\alpha$, if $\alpha \in B$, then we are done.
Otherwise, there is some $p \in \alpha$ which can be expanded
to $q$ and $r$ as in proposition~\ref{p:expansion}.

We can do this repeatedly to arrive at some $\beta \in B$.
We need to prove termination of such a derivation.

We do so with the size function:
\begin{align}
s: & \downarrow S \rightarrow \Natural \times \Natural \\
\label{e:defn-p-size}
s_1(\alpha) & = \max_{p \in \alpha \setminus B } \prod_{ \{ a, b \} \in p }| a - b | \\
s_2(\alpha) &= | \{ p \in{ \alpha \setminus B }  : \prod_{ \{ a, b \} \in p } | a - b | = s_1(\alpha) \} | \\
s(\alpha) &= ( s_1(\alpha), s_2(\alpha) )
\end{align}

A $p$ in (\ref{e:defn-p-size}) on which the maximum is realized,
can be expanded using proposition~\ref{p:expansion}. 
This gives an $\alpha'$, with $\alpha \mapsto_1 \alpha'$
and either $s_1(\alpha') < s_1(\alpha)$
or  $s_1(\alpha') = s_1(\alpha)$ and $s_2(\alpha') < s_2(\alpha)$.

Induction then proves the result.
\end{proof}

We prove uniqueness in several steps.
We use two values computed from any $\alpha \in S$.
$E_\alpha$ is the multiset formed from the 
numbers that appear
in any pair in any product in $\alpha$,
each with the multiplicity it has in the product in which
it appears most often,
and $n_\alpha$
is the greatest of these.
i.e.
\begin{align}
E_\alpha &= \bigcup \left\{ \biguplus p : p \in \alpha \right\} \\
n_\alpha &= \max E_\alpha
\end{align}

Now, each expansion step of proposition~\ref{p:expansion}
leaves the multipliticies in $\biguplus q$ and 
$\biguplus r$ the same as the multiplicities in $\biguplus p$,
so that if $\alpha \mapsto_1 \beta$ we have $E_\alpha = E_\beta$.
Inductively, we have:
\begin{prop}
If $\alpha \mapsto \beta$ then $E_\alpha = E_\beta$.
\end{prop}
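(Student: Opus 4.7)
The plan is a straightforward induction on the length of the $\mapsto$-derivation, exploiting the single-step observation recorded in the paragraph immediately preceding the statement. Since $\mapsto$ is by definition the transitive closure of $\mapsto_1$, any instance $\alpha \mapsto \beta$ unpacks as a finite chain $\alpha = \gamma_0 \mapsto_1 \gamma_1 \mapsto_1 \cdots \mapsto_1 \gamma_n = \beta$, and it suffices to establish $E_{\gamma_i} = E_{\gamma_{i+1}}$ for each $i$ and then concatenate equalities.

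For a single step $\gamma_i \mapsto_1 \gamma_{i+1}$ arising from proposition~\ref{p:expansion} with data $p, q, r$, the key point is the identity $\biguplus p = \biguplus q = \biguplus r$ as multisets of naturals. This is immediate from the definitions in section~\ref{s:multisets}: the three multisets agree on the common part $\biguplus(p \setminus \{\{a,b\},\{c,d\}\})$, and the two distinguished pairs in each of $p$, $q$, $r$ contribute exactly the multiset $\{a,b,c,d\}$ to the flattened sum. Passing from $\gamma_i$ to $\gamma_{i+1}$ therefore replaces one occurrence of the value $\biguplus p$ in $\{\biguplus p' : p' \in \gamma_i\}$ with two fresh occurrences of the same value, and since $E$ is defined as the elementwise-maximum union $\bigcup$, this alteration is invisible to $E$; hence $E_{\gamma_i} = E_{\gamma_{i+1}}$.

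Chaining these identities yields $E_\alpha = E_{\gamma_0} = \cdots = E_{\gamma_n} = E_\beta$, with the base case $n=0$ vacuous. There is no substantive obstacle here: the real content is the single-step invariance, which is a bookkeeping check against the multiset definitions of section~\ref{s:multisets}, and passage to the transitive closure is pure induction. The one point deserving mild attention is the possibility that the multiplicity of $p$ in $\gamma_i$ drops to zero in $\gamma_{i+1}$; but in that eventuality the value $\biguplus p$ is still realised in $\gamma_{i+1}$ by $q$ (and by $r$), so no element is lost from the outer max-union and $E$ is indeed preserved.
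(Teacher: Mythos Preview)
Your proposal is correct and follows exactly the paper's own approach: the paper records the single-step invariance $\biguplus p = \biguplus q = \biguplus r$ in the sentence immediately preceding the proposition and then simply writes ``Inductively, we have'' as the entire justification. Your write-up merely expands this into an explicit induction on derivation length, with the same bookkeeping check on the flattened multisets.
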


The same observation leads to the following definition, 
and proposition.
\begin{defn}
An $\alpha \in S$ is {\em regular}, if, for every $p \in \alpha$
$E_p = E_\alpha$.
\end{defn}

\begin{prop}
If $\alpha$ is regular, and $\alpha \mapsto \beta$ then $\beta$
is regular.
\end{prop}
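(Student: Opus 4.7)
The plan is to reduce everything to the one-step case $\alpha \mapsto_1 \beta$ and then invoke induction on the length of the derivation. Since $\mapsto$ is the transitive closure of $\mapsto_1$, and the preceding proposition already gives $E_\alpha = E_\beta$, the condition for $\beta$ to be regular, namely that $E_{p'} = E_\beta$ for every $p' \in \beta$, is the same as $E_{p'} = E_\alpha$.

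So focus on the single step. Suppose $\alpha \mapsto_1 \beta$ via the expansion of some $p \in \alpha$ on the pairs $\{a,b\}$ and $\{c,d\}$ with $a<b<c<d$, producing the two products $q$ and $r$ of proposition~\ref{p:expansion}, so $\beta = \{q,r\} \uplus (\alpha \setminus \{p\})$. The key observation is that both expansions swap the contribution $\{a,b\} \uplus \{c,d\}$ of the two removed pairs for $\{a,c\} \uplus \{b,d\}$ in $q$, and for $\{a,d\} \uplus \{b,c\}$ in $r$; each of these three multisets is simply $\{a,b,c,d\}$ with every element of multiplicity one. Hence $\biguplus q = \biguplus r = \biguplus p$, which gives $E_q = E_r = E_p$.

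Now the regularity of $\alpha$ says $E_{p'} = E_\alpha$ for every $p' \in \alpha$, and in particular $E_p = E_\alpha$. Every element of $\beta$ is either $q$, or $r$, or some $p' \in \alpha \setminus \{p\}$; in all cases its $E$-value equals $E_\alpha$, which equals $E_\beta$ by the previous proposition. This shows $\beta$ is regular, completing the inductive step and therefore the proof.

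The only delicate point is multiset bookkeeping: one must check that removing one copy of each of $\{a,b\}$ and $\{c,d\}$ from $p$ and adding one copy of each of the replacement pairs really does preserve $\biguplus$ as a multiset of natural numbers. Because $\{a,b\}, \{c,d\}, \{a,c\}, \{b,d\}, \{a,d\}, \{b,c\}$ are six distinct elements of $\Nat^{(2)}$, the multiplicity adjustments in $p$ are unambiguous and the check is mechanical. In essence, the lemma is just the statement that the basic expansion move is a local rewrite preserving the total multiset of indices, a fact already implicit in the argument establishing $E_\alpha = E_\beta$.
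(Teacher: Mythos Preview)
Your proof is correct and follows exactly the approach the paper intends: the paper gives no explicit argument, merely saying ``the same observation'' (namely that each expansion step preserves $\biguplus p$, hence $E_p$) yields the proposition, and you have spelled out precisely that observation together with the routine induction on the length of the $\mapsto$-chain.
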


The following definitions and lemmas
provide an inductive step for proving uniqueness.

\begin{defn}
Given $p \in P$, with $\{ n_p-1, n_p \} \notin p$
then the contraction $p^*$ of $p$ is:
\begin{equation}
p^* = \left\{ \{ a, b \} : \{ a, b \} \in p, a, b < n_p 
\right\} \uplus 
\left\{ 
\{ a, n_p-1 \} : 
\{ a, n_p \} \in p \right\}
\end{equation}
\end{defn}

\begin{defn}
Given $\alpha \in S$, the contraction $\alpha ^ *$ is given by:
\begin{equation}
\alpha^* = \left\{ p^* : 
p \in \alpha, n_p = n_\alpha, \{ n_\alpha -1, n_\alpha\} \notin p \right\}
\uplus
\left\{
p : p \in \alpha, n_p < n_\alpha
\right\} 
\end{equation}
\end{defn}

\begin{lem}
\label{l:inductiveStep}
If $p, q \in B_0$,
with $E_p = E_q$,
$\{ n_p-1, n_p \} \notin p \cup q$ 
and
$p^* = q^*$,
then $p = q$.
\end{lem}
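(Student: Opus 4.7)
The plan is to reduce to matching the ``top part'' of $p$ and $q$ (the pairs involving $n := n_p = n_q$ or $n-1$) via the $B_0$ hypothesis, and to let $p^* = q^*$ handle the rest automatically. Since $E_p = E_q$, we have $n_p = n_q$, call it $n$. Every pair of $p$ meeting $\{n-1, n\}$ has the form $\{b, n\}$ with $b \leq n-2$ or $\{a, n-1\}$ with $a \leq n-2$ (the pair $\{n-1, n\}$ is excluded by hypothesis). Let $m, \ell$ be the multiplicities of $n, n-1$ in $E_p$; by $E_p = E_q$ these counts are identical for $q$. So $p$ contains $m$ pairs $\{\cdot, n\}$, $\ell$ pairs $\{\cdot, n-1\}$, and a remainder of pairs with both entries $<n-1$. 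The contraction leaves that remainder untouched, so $p^* = q^*$ already forces those pairs of $p$ and $q$ to agree, and it suffices to match the top part.

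Next, extract the constraint from $p \in B_0$. If $p$ contained $\{b, n\}$ and $\{a, n-1\}$ with $a < b$, then the four values $a < b < n-1 < n$ exhibit the expandable pattern $\{\{a, n-1\}, \{b, n\}\} \subset p$ (matching $\{a, c\}, \{b, d\}$ in the definition of expandable), contradicting $p \in B_0$. Hence in $p$ every $a$-value from an $\{a, n-1\}$ pair is $\geq$ every $b$-value from a $\{b, n\}$ pair; equal values are allowed, since a shared coordinate prevents expandability. The same conclusion holds in $q$.

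In $p^*$ the pairs with top coordinate $n-1$ have their second coordinates forming a multiset $M^p$ equal to the disjoint union of the $m$ $b$-values and $\ell$ $a$-values of $p$; likewise $M^q$ for $q$. Since $p^* = q^*$ we get $M^p = M^q =: \mathcal{M}$, while the cardinalities $m, \ell$ are fixed. The ordering constraint from the previous paragraph forces the $b$-values to be the $m$ smallest elements of $\mathcal{M}$ (as a multiset) and the $a$-values to be the $\ell$ largest, so the multisets of $b$- and $a$-values of $p$ and $q$ coincide. Combined with the already-matching remainder this gives $p = q$. The main technical point I expect to need care on is this last uniqueness of the ``prefix/suffix'' split: when several elements of $\mathcal{M}$ tie at the boundary between the $b$'s and $a$'s, the split is still unique because the multiplicities assigned to the boundary value in each part are determined by the fixed cardinality $m$ rather than by the values themselves.
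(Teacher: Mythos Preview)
Your proof is correct and follows essentially the same route as the paper's: both arguments split off the pairs not meeting $\{n-1,n\}$ using $p^*=q^*$, then use non-expandability to force the partners of $n$ to be the smallest $m$ elements of the common multiset $M$ and the partners of $n-1$ to be the remaining $\ell$, with $m,\ell$ fixed by $E_p=E_q$. Your explicit handling of ties at the boundary is a nice clarification of a point the paper leaves implicit.
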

\begin{proof}
Consider the multiset:
\begin{equation}
M = \{ a : \{ a, n_{p^*}\} \in p^* \}
\end{equation}
$|M|$ is the same as the number of occurrences
of $n_{p^*}$ in $p^*$, which by construction is
the same as the sum of the number of occurrences  in $p$
of $n_p$ and of $n_p - 1$, i.e.
\begin{equation}
|M| = \ind_{E_p}(n_p) + \ind_{E_p}(n_p - 1) 
\end{equation}
Since $p$ is not expandable, the pairs in $p$ giving rise
to $M$ must be nested. So that the first $\ind_{E_p}(n_p)$
members of $M$ are paired with $n_p$ in $p$, and the remaining
$\ind_{E_p}(n_p-1)$ members of $M$ are paired with $n_p-1$.
Thus, we can find $M_1, M_2$ with
\begin{align}
M &= M_1 \uplus M_2 \\
|M_1| &= \ind_{E_p}(n_p) \\
|M_2| &= \ind_{E_p}(n_p-1) \\
a \in M_1, & b \in M_2 \Rightarrow a \leq b
\end{align}
\begin{multline}
p =
\left\{ \{ a, b \} \in p^* : a, b < n_p - 1 \right\} \\
\uplus
\left\{ \{ a, n_p \} : a \in M_1 \right\}
\uplus
\left\{ \{ b, n_p-1 \} : b \in M_2 \right\}
\end{multline}
Since $\ind_{E_p} = \ind_{E_q}$ we find an identical
formula for $q$, so that $p = q$.
\end{proof}

\begin{lem}
If $\beta, \gamma \in B$ are regular 
with $E_\beta = E_\gamma$
and
$\beta \equiv \gamma$
then $\beta = \gamma$.
\end{lem}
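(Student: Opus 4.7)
The plan is a strong induction on the pair $(n_\beta,|E_\beta|)$, ordered lexicographically; since $E_\beta=E_\gamma$ and both are regular, this parameter is well-defined and agrees for $\beta$ and $\gamma$. The base case $|E_\beta|=0$ is immediate: every product is empty, so $\tau(\beta)=|\beta|$ as a constant, and $\beta\equiv\gamma$ forces $|\beta|=|\gamma|$, hence $\beta=\gamma$.

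For the inductive step, write $n=n_\beta$ and split $\beta=\beta_0\uplus\beta_1$ and $\gamma=\gamma_0\uplus\gamma_1$ according to whether the pair $\{n-1,n\}$ is absent from or present in $p$. The key move is to specialize $\theta_n=\theta_{n-1}$ in the identity $\tau(\beta)=\tau(\gamma)$: every $p\in\beta_1\cup\gamma_1$ contains the vanishing factor $\sin(\theta_n-\theta_{n-1})$ and drops out, while every $p\in\beta_0$ becomes $\tau(p^*)$, because each surviving pair $\{a,n\}$ turns into $\sin(\theta_{n-1}-\theta_a)$. This gives $\beta_0^*\equiv\gamma_0^*$. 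A short combinatorial check shows that contraction preserves non-crossing (two contracted pairs either inherit nesting from the originals or share the endpoint $n-1$) and regularity, and that $E_{\beta_0^*}=E_{\gamma_0^*}$ is $E_\beta$ with the multiplicity of $n$ transferred to $n-1$. Since $n_{\beta_0^*}\le n-1$ while $|E_{\beta_0^*}|=|E_\beta|$, the inductive hypothesis gives $\beta_0^*=\gamma_0^*$. Lemma~\ref{l:inductiveStep} then lifts this to $\beta_0=\gamma_0$: each element of the common contracted multiset has a unique preimage on either side under the injective map $p\mapsto p^*$, and these preimages coincide because the needed $E$ is the same.

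Subtracting yields $\tau(\beta_1)=\tau(\gamma_1)$. Stripping one copy of $\{n-1,n\}$ from each product and setting $\tilde\beta_1=\{p\setminus\{\{n-1,n\}\}:p\in\beta_1\}$, we have $\tau(\beta_1)=\sin(\theta_n-\theta_{n-1})\cdot\tau(\tilde\beta_1)$, and the same for $\gamma$. Dividing out the non-identically-zero factor (and invoking continuity on its zero locus) gives $\tilde\beta_1\equiv\tilde\gamma_1$. Both sides are still non-crossing and regular, with $E_{\tilde\beta_1}$ equal to $E_\beta$ less one occurrence each of $n-1$ and $n$, so $|E_{\tilde\beta_1}|=|E_\beta|-2$. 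This is a strict lexicographic descent whether or not $n_{\tilde\beta_1}$ drops, so the induction hypothesis delivers $\tilde\beta_1=\tilde\gamma_1$; reinstating the stripped pair in each product gives $\beta_1=\gamma_1$. Combined with $\beta_0=\gamma_0$ this finishes the proof. The only potentially awkward subcase is when one of $\beta_1,\gamma_1$ (or $\beta_0,\gamma_0$) is empty while the other is not: this is ruled out by the observation that $\tau(p)>0$ for every $p\in B_0$ on the open chamber $0<\theta_1<\cdots<\theta_n<\pi$, so a nonempty multiset of such products cannot match the identically zero function.

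The main obstacle is not any single step but the combinatorial bookkeeping around the two reduction moves: verifying that contraction and single-pair removal both stay inside $B$ and preserve regularity, and choosing an induction invariant on which both strictly descend. The lexicographic order on $(n_\beta,|E_\beta|)$ is what makes this fit together, and Lemma~\ref{l:inductiveStep} is doing the real local work. Should either closure property fail under scrutiny, the fallback would be a direct linear-independence proof for the family $\{\tau(p):p\in B_0,\,\biguplus p=E\}$ at fixed $E$, which is plausible via monomial-leading-term arguments in $\sin\theta_j,\cos\theta_j$ but considerably messier than the substitution route above.
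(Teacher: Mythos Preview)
Your proof is correct and follows essentially the same route as the paper's: split on whether $\{n-1,n\}$ occurs, specialize $\theta_n=\theta_{n-1}$ to isolate the contracted part, invoke Lemma~\ref{l:inductiveStep} to lift $\beta_0^*=\gamma_0^*$ to $\beta_0=\gamma_0$, then strip the common factor $\sin(\theta_n-\theta_{n-1})$ from the remainder. Your secondary induction parameter $|E_\beta|$ is marginally tidier than the paper's $\sum_{p\in\beta}|p|$ (it lets you strip directly from $\beta_1$ without first arguing that $\beta_0$ is empty), and you are more explicit than the paper about the closure checks and the empty-versus-nonempty subcase.
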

\begin{proof}
Suppose not.
Then we can find such a regular counterexample with least $n_\beta$,
and secondarily, with $\sum_{p \in \beta}|p|$ as small as possible, 
and $\beta \neq \gamma$.

We can divide $\beta$ 
and $\gamma$ into those products that involve $\{ n_\beta-1, n_\beta \}$
and those that don't:
\begin{align}
\beta_1 &= \{ p : p \in \beta  \textrm{ such that }\{ n_\beta-1, n_\beta \} \in p \}
\\
\beta_2 &= \{ p : p \in \beta  \textrm{ such that }\{ n_\beta-1, n_\beta \} \notin p \}
\\
\beta &= \beta_1 \uplus \beta_2 \\
\gamma_1 &= \{ p : p \in \gamma  \textrm{ such that }\{ n_\beta-1, n_\beta \} \in p \}
\\
\gamma_2 &= \{ p : p \in \gamma  \textrm{ such that }\{ n_\beta-1, n_\beta \} \notin p \}
\\
\gamma &= \gamma_1 \uplus \gamma_2 
\end{align}

If  $\beta_1$ and $\beta_2$ are both empty, then $E_\beta$ is empty, and so is $E_\gamma$
and $\beta = \emptyset = \gamma$, and this was not a counterexample.

Otherwise
consider any $\boldsymbol{\theta} \in \Real ^ \Nat$, with
$\theta_{n_\beta} = \theta_{n_\beta-1}$, then:
\begin{align}
\tau(\beta_1)(\boldsymbol{\theta}) &= 0 = \tau(\gamma_1)(\boldsymbol{\theta}) \\
\tau(\beta_2)(\boldsymbol{\theta}) &= \tau(\beta_2^*)(\boldsymbol{\theta}) \\
\tau(\gamma_2)(\boldsymbol{\theta}) &= \tau(\gamma_2^*)(\boldsymbol{\theta})
\end{align}
i.e. every term in $\tau(\beta_1)$ and $\tau(\gamma_1)$ contains
a factor $\sin(\theta_{n_\beta}-\theta_{n_\beta-1})$ and so they vanish,
whereas, the evaluation of $\beta_2$ and $\beta_2^*$ is the same,
since they differ only by replacing all $\theta_{n_\beta}$ with
$\theta_{n_\beta-1}$ which have the same value.

Thus:
\begin{equation}
\tau(\beta_2^*)(\boldsymbol{\theta}) = \tau(\gamma_2^*)(\boldsymbol{\theta})
\end{equation}
and so $\beta_2^* \equiv \gamma_2^*$. 
Since $n_{\beta_2^*} < n_\beta$
by the minimality of $\beta$ we have that $\beta_2^* = \gamma_2^*$,
and hence that $\beta_2 = \gamma_2$, by the previous lemma.
As a consequence, $\beta_2 \equiv \gamma_2$,
and so $\beta_1 \equiv \gamma_1$, and in addition 
 $\beta_1 \neq \gamma_1$.

Thus, by minimality of $\sum_{p \in \beta}|p|$, we have
$\sum_{p \in \beta_1}|p| = \sum_{p \in \beta}|p|$, and so $\beta_2$ is empty.
But, consider:
\begin{align}
\beta' &= \{ p \setminus \{ \{ n_\beta-1, n_\beta \}\}: p \in \beta_1  \} \\
\gamma' &= \{ p \setminus \{ \{ n_\beta-1, n_\beta \}\}: p \in \gamma_1  \} 
\end{align}
We have $\tau(\beta)=\sin(\theta_{n_\beta}-\theta_{n_\beta-1})\tau(\beta')$
and $\tau(\gamma)=\sin(\theta_{n_\beta}-\theta_{n_\beta-1})\tau(\gamma')$,
so that (noting the continuity of $\tau(\beta')$ and $\tau(\gamma')$) for the
case $\theta_{n_\beta}=\theta_{n_\beta-1}$, we have $\beta' \equiv \gamma'$.
However, $\sum_{p \in \beta'}|p| < \sum_{p \in \beta}|p|$, and
so $\beta' = \gamma'$, and hence $\beta = \gamma$.
\end{proof}

\begin{lem}
\label{l:unique}
If $\alpha \in S$ and $\beta, \gamma \in B$
with $\alpha \mapsto \beta$
and $\alpha \mapsto \gamma$ 
then $\beta = \gamma$.
\end{lem}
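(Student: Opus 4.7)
The plan is to reduce Lemma~\ref{l:unique} to the preceding uniqueness lemma (for regular $\beta,\gamma\in B$) by decomposing $\alpha$ into its regular components indexed by the $E_p$ values. The key elementary observation is that one expansion step $p\mapsto_1\{q,r\}$ preserves $E$ at the level of individual products: the pairs $\{\{a,c\},\{b,d\}\}$ and $\{\{a,d\},\{b,c\}\}$ each use the multiset $\{a,b,c,d\}$ with the same multiplicities as $\{\{a,b\},\{c,d\}\}$, so $E_q=E_r=E_p$, and the other products of $\alpha$ are untouched.

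Using this, I partition
$$\alpha \;=\; \biguplus_{e} \alpha_e, \qquad \alpha_e \;=\; \{\, p \in \alpha : E_p = e \,\},$$
and similarly $\beta=\biguplus_e \beta_e$, $\gamma=\biguplus_e \gamma_e$. Because every elementary expansion stays inside a single component $\alpha_e$, a global derivation $\alpha \mapsto \beta$ restricts to componentwise derivations $\alpha_e \mapsto \beta_e$ (simply perform, in the same order, only those steps whose acting product lies in $\alpha_e$); likewise $\alpha_e \mapsto \gamma_e$. Each $\alpha_e$ is regular by construction, so $\beta_e$ and $\gamma_e$ are regular by the proposition stating that expansion preserves regularity. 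Since $\beta_e \subset \beta \in B$ and $\gamma_e \subset \gamma \in B$, both $\beta_e,\gamma_e$ lie in $B$, and $E_{\beta_e}=E_{\gamma_e}=e$.

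By Proposition~\ref{p:equivalence} applied to each component, $\alpha_e \equiv \beta_e$ and $\alpha_e \equiv \gamma_e$, hence $\beta_e \equiv \gamma_e$. The preceding lemma (regular case) then yields $\beta_e = \gamma_e$ for every $e$, and taking the disjoint union over $e$ gives $\beta = \gamma$.

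The main obstacle is justifying the componentwise decomposition cleanly: one must check that restricting a derivation to one component is itself a well-formed derivation (no step in another component was needed to enable a step in this one) and that $\alpha_e \equiv \beta_e$ follows rather than only the weaker global equivalence $\alpha \equiv \beta$. Both hinge on the invariance $E_q=E_r=E_p$ under an elementary expansion, which decouples the components entirely; once this is granted, the lemma is an immediate componentwise application of the regular uniqueness result.
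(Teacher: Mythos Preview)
Your proposal is correct and follows essentially the same approach as the paper: both decouple the expansion into independent pieces on which the regular uniqueness lemma applies, using the invariance $E_q=E_r=E_p$ under a single expansion step. The paper's decomposition is slightly finer---it splits $\alpha$ into the singletons $\{p\}$ for each $p\in\alpha$ (each trivially regular), rather than into your $E_p$-classes $\alpha_e$---but the underlying idea and the justification are the same.
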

\begin{proof}
If $|\alpha|=1$ then $\beta$ and $\gamma$ are regular,
and satisfy the conditions for the previous lemma, so that
$\beta = \gamma$.

Otherwise, for each member of $\alpha$, we have a unique expansion,
as just proved. The process of expanding each member is separate and
independent, since it works on one product $p \in \alpha$ at a time, without reference
to other members of $\alpha$. Thus we find a unique expansion in $B$ of $\alpha$
as the join of the unique expansions in $B$ of the members of $\alpha$.
\end{proof}

\begin{proof}[Proof of theorem~\ref{t:normal}]
This follows from lemmas~\ref{l:expansion} and~\ref{l:unique}.
\end{proof}

\section{First Proof of Main Theorem}
\label{s:mainproof}
We now give the first, very direct, proof of the main theorem.
In the following sections, we will give a more illuminating
and general proof.

This section can be skipped in its entirety; it is tedious and mechanical.
It's value is two fold: first, it illustrates how the general techniques
of the next sections apply in practice; second, it shows that,
once we have found the appropriate matrix, and simplex, that
the rest of the process can be automated.

The tedious computation of this section, should be compared and contrasted
with the equally tedious computation needed to verify, from first principles, the final polynomial 
for $\Rin$:
\begin{equation}
\label{e:final}
\begin{split}
[246][184][175][437][197] &+ [129][184][175][437][467] + \\
[138][194][247][175][467] &+ [156][184][247][437][197] + \\
[345][184][247][176][197] &+ [489][247][175][176][143] + \\
[597][247][184][176][143] &+ [678][247][175][194][143] + \\
[237][194][184][175][467] &= 0
\end{split}
\end{equation}
where, for indeterminates, $x_i, y_i, z_i$,
\begin{equation}
[i j k] = \begin{vmatrix}
x_i & x_j & x_ k \\
y_i & y_j & y_ k \\
z_i & z_j & z_ k
\end{vmatrix}
\end{equation}
This is taken from \cite{bjorner:oriented} p 349, note that the numbering of the lines
is different from ours.

At heart, we may conjecture that these two computations are cryptomorphic, and 
hence, the tedium of this section is unsurprising.

We have already argued that the conditions of
theorem~\ref{t:main} amount to requiring that 
the system~(\ref{e:system},\ref{e:linear-again}) is soluble.

Throughout this section we will use $M_9$ for the 
matrix from~(\ref{e:linear}), i.e.
we are considering the system
\begin{equation}
\label{e:system}
M_9 \boldsymbol{r} > \boldsymbol{0}
\end{equation}
where
\begin{equation}
\label{e:linear-again}
M_9 =
\left(
\begin{smallmatrix}
&(1)&(2)&(3)&(4)&(5)&(6)&(7)&(8)&(9)&(10)\\
(A)& &          &-\BSN{5}{8}& &\BSN{3}{8}& & &-\BSN{3}{5}& &\\ 
(B)& &-\BSN{8}{9}&          & & & & &\BSN{2}{9}&-\BSN{2}{8}&\\ 
(C)& &\BSN{4}{6} &          &-\BSN{2}{6}& &\BSN{2}{4}& & & & \\ 
(D)& &          &          & & &-\BSN{7}{10}&\SN{6}{10}& & &-\BSN{6}{7}\\ 
(E)&-\SN{3}{7}& &\BSN{1}{7}& & & &-\SN{1}{3}& & & \\ 
(F)&-\SN{4}{7}& & &\BSN{1}{7}& & &-\SN{1}{4}& & & \\ 
(G)&\SN{5}{9}& & & &-\BSN{1}{9}& & & &\BSN{1}{5} & \\ 
(H)&\SN{5}{10}& & & &-\BSN{1}{10}& & & & &\BSN{1}{5}\\ 
(I)&\SN{5}{7}& & & &-\BSN{1}{7}& &\SN{1}{5} &   & &
\end{smallmatrix}
\right)
\end{equation}

The 9 by 8 submatrix formed from columns 2-6 and 8-10, shown in bold, 
above, is referred to as $S_9$.

We also consider extensively the submatrix $M_8$ formed from the first eight rows:
\begin{equation}
\label{e:m8}
M_8 =
\left(
\begin{smallmatrix}
&(1)&(2)&(3)&(4)&(5)&(6)&(7)&(8)&(9)&(10)\\
(A)& &          &-\BSN{5}{8}& &\SN{3}{8}& & &-\BSN{3}{5}& &\\ 
(B)& &-\BSN{8}{9}&          & & & & &\BSN{2}{9}&-\BSN{2}{8}&\\ 
(C)& &\BSN{4}{6} &          &-\BSN{2}{6}& &\BSN{2}{4}& & & & \\ 
(D)& &          &          & & &-\BSN{7}{10}&\SN{6}{10}& & &-\BSN{6}{7}\\ 
(E)&-\SN{3}{7}& &\BSN{1}{7}& & & &-\SN{1}{3}& & & \\ 
(F)&-\SN{4}{7}& & &\BSN{1}{7}& & &-\SN{1}{4}& & & \\ 
(G)&\SN{5}{9}& & & &-\SN{1}{9}& & & &\BSN{1}{5} & \\ 
(H)&\SN{5}{10}& & & &-\SN{1}{10}& & & & &\BSN{1}{5}
\end{smallmatrix}
\right)
\end{equation}

The 8 by 7 submatrix formed from columns 2, 3, 4, 6, 8, 9 and~10, shown in bold
above, is referred to as $S_8$.

\begin{lem}
\label{l:simplex8}
With the conditions on $\boldsymbol{\theta}$ of
theorem~\ref{t:main},
the 8 by 7 matrix
$S_8$
is a simplex.
\end{lem}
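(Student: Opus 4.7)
The plan is to apply Theorem \ref{t:simplex}: the matrix $S_8$ is a simplex if and only if its eight $7 \times 7$ subdeterminants, obtained by deleting one row at a time, alternate in sign. So the task reduces to computing these eight signs and verifying the alternation, working directly from the bold entries displayed in $M_8$.

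The structural feature to exploit is the extreme sparsity of $S_8$. Each of rows E, F, G, H has a single nonzero entry among the seven columns of $S_8$, in columns $3, 4, 9, 10$ respectively; rows A and D each have two nonzero entries; rows B and C each have three. When the deleted row lies in $\{A,B,C,D\}$, all four sparse rows survive, and successive cofactor expansions along them eliminate columns $3, 4, 9, 10$, reducing the subdeterminant to a $3 \times 3$ determinant of the remaining three rows of $\{A,B,C,D\}$ restricted to columns $\{2, 6, 8\}$. When the deleted row lies in $\{E,F,G,H\}$, three sparse rows survive, leaving a $4 \times 4$ core in rows $A, B, C, D$ restricted to $\{2, 6, 8\}$ together with the one orphaned column among $\{3, 4, 9, 10\}$.

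My expectation is that, because rows A, B, C, D are themselves quite sparse, each core determinant, whether $3 \times 3$ or $4 \times 4$, will collapse to a single nonzero permutation term, i.e.\ a single monomial in the $\SN{i}{j}$. Once this has been verified in each of the eight cases, the sign of each subdeterminant $D_X$ is fully determined by the explicit $\pm$ signs on the entries of $S_8$ used and the parity of the permutation of column indices. Theorem \ref{t:main} guarantees $\SN{i}{j} > 0$ for every sine appearing in $S_8$ (each pair $(i,j)$ is covered by one of the nine ordered triangles listed there), so no analytic input is required beyond this combinatorial tally: one simply reads off the signs of $D_A, D_B, \ldots, D_H$ and checks they alternate.

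The main obstacle is the sign bookkeeping in each of the eight expansions, since both the entries of $S_8$ carry explicit $\pm$ signs and the cofactor expansions contribute further $(-1)^{i+j}$ factors whose indices depend on the subset of columns and rows remaining at each stage. The fortunate feature that makes the argument manageable is the collapse of every core to a single monomial; if instead some core produced a genuine sum of sine products, I would have to invoke Proposition \ref{p:normalize} and the normal-form theory of Section \ref{s:normal} to determine its sign, but I expect this complication not to arise for $S_8$.
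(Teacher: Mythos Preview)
Your proposal is correct and follows essentially the same approach as the paper: compute the eight $7\times 7$ subdeterminants of $S_8$, observe that each is a single signed monomial in the $\SN{i}{j}$, and apply Theorem~\ref{t:simplex} once the conditions of Theorem~\ref{t:main} guarantee every factor is positive. Your expectation that every core collapses to a single product is indeed borne out --- the paper simply lists the eight resulting monomials in equation~(\ref{e:subdets}) without spelling out the cofactor-expansion strategy you describe, but your sparsity analysis is exactly what produces that list.
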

\begin{proof}
The eight 7 by 7 subdeterminants are:
\begin{equation}
\label{e:subdets}
\begin{pmatrix}
-\SN{4}{6}\SN{1}{7}\SN{1}{7}\SN{7}{10}\SN{2}{9}\SN{1}{5}\SN{1}{5}\\
+\SN{4}{6}\SN{1}{7}\SN{1}{7}\SN{7}{10}\SN{3}{5}\SN{1}{5}\SN{1}{5}\\
-\SN{8}{9}\SN{1}{7}\SN{1}{7}\SN{7}{10}\SN{3}{5}\SN{1}{5}\SN{1}{5}\\
+\SN{8}{9}\SN{1}{7}\SN{1}{7}\SN{2}{4}\SN{3}{5}\SN{1}{5}\SN{1}{5}\\
-\SN{4}{6}\SN{5}{8}\SN{1}{7}\SN{7}{10}\SN{2}{9}\SN{1}{5}\SN{1}{5}\\
+\SN{8}{9}\SN{1}{7}\SN{2}{6}\SN{7}{10}\SN{3}{5}\SN{1}{5}\SN{1}{5}\\
-\SN{4}{6}\SN{1}{7}\SN{1}{7}\SN{7}{10}\SN{3}{5}\SN{2}{8}\SN{1}{5}\\
+\SN{8}{9}\SN{1}{7}\SN{1}{7}\SN{2}{4}\SN{3}{5}\SN{1}{5}\SN{6}{7}
\end{pmatrix}
\end{equation}
Since the constraints on the angles in the statement of the main
theorem require that $\SN{i}{j} > 0$ for all pairs $i, j$
appearing in these subdeterminants, the signs of the subdeterminant
alternate, and theorem~\ref{t:simplex} applies.
\end{proof}

\begin{lem}
\label{l:nequals}
With the conditions on $\boldsymbol{\theta}$ of theorem~\ref{t:main},
if
\begin{equation}
\SN{8}{9}\SN{1}{10}\SN{2}{4}\SN{3}{5}\SN{6}{7} 
+\SN{4}{6}\SN{1}{9}\SN{7}{10}\SN{3}{5}\SN{2}{8} 
- \SN{4}{6}\SN{3}{8}\SN{7}{10}\SN{2}{9}\SN{1}{5}
= 0
\end{equation}
then the system~(\ref{e:system}) is insoluble.
\end{lem}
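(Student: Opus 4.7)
The plan is to apply Motzkin's theorem~\ref{t:mus} to the submatrix $M_8$ (rows (A)--(H) of $M_9$), together with the simplex $S_8$ whose simplex property was just established in lemma~\ref{l:simplex8}. Since every solution of $M_9 \boldsymbol{r}>\boldsymbol{0}$ is also a solution of $M_8 \boldsymbol{r}>\boldsymbol{0}$, it suffices to show that the latter is insoluble under the stated hypothesis. By theorem~\ref{t:mus}, this will follow once we verify that for every column $c$ of $M_8$, the $8{\times}8$ determinant $\det[\,S_8 \mid c\,]$ vanishes.

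For the seven columns of $M_8$ that already appear in $S_8$ (i.e.\ columns $2,3,4,6,8,9,10$) the determinant is trivially zero because of a repeated column. The real content is therefore to evaluate the three $8{\times}8$ determinants obtained by augmenting $S_8$ with, in turn, column $1$, column $5$, and column $7$ of $M_8$, and to show that each of these is zero precisely when the hypothesised trigonometric identity
\begin{equation*}
\SN{8}{9}\SN{1}{10}\SN{2}{4}\SN{3}{5}\SN{6}{7}
+\SN{4}{6}\SN{1}{9}\SN{7}{10}\SN{3}{5}\SN{2}{8}
-\SN{4}{6}\SN{3}{8}\SN{7}{10}\SN{2}{9}\SN{1}{5}=0
\end{equation*}
holds. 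My expectation, based on the structure of $M_8$ (each row has only three nonzero entries, and the sign pattern is that of the chirotope inequalities), is that each of the three determinants factors as a product of the form $K_i\cdot L$, where $L$ is the left-hand side above and $K_i$ is a nonempty product of terms $\SN{j}{k}$ which is strictly positive under the angle constraints of theorem~\ref{t:main}. The hypothesis $L=0$ then forces each such determinant to vanish.

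The concrete steps are therefore: expand each of the three $8{\times}8$ determinants by cofactors along a suitably sparse row or column, collect the resulting sum of products of sines, and reduce it to normal form using theorem~\ref{t:normal} (equivalently, by repeated application of proposition~\ref{p:normalize}). By the uniqueness of that normal form, any two expressions that coincide symbolically must be identical; in particular, after normalization the hypothesised identity becomes a syntactic equality between the normal forms of the positive and negative parts of $L$, and the determinant can be matched against it term by term.

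\textbf{Main obstacle.} The principal difficulty is purely computational: producing the three explicit Laplace expansions of sparse $8{\times}8$ trigonometric matrices and then normalising the resulting sums of products of sines into a common shape. This is exactly the kind of bookkeeping that the normal form of section~\ref{s:normal} is designed to handle, and, as the author remarks, is amenable to mechanical verification. No new geometric idea is required once the simplex $S_8$ is in hand; the remaining work is to check that $S_8$ is indeed the simplex predicted by theorem~\ref{t:mus} and that the ``missing column'' determinants collapse to multiples of the hypothesised expression.
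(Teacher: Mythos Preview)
Your proposal is correct and follows essentially the same route as the paper: apply theorem~\ref{t:mus} to $M_8$ with the simplex $S_8$, note that the seven ``repeated column'' determinants vanish trivially, and then handle the three nontrivial determinants $d_1,d_5,d_7$ obtained by adjoining columns $1,5,7$. The only cosmetic difference is that the paper does not display each $d_i$ as $K_i\cdot L$ directly; instead it observes that $d_5$ visibly equals $-\SN{1}{7}\SN{1}{7}\SN{1}{5}\cdot L$ from its three-term Laplace expansion, and then records the proportionality $\SN{1}{7}\SN{1}{5}d_1=\SN{1}{5}\SN{5}{7}d_5=\SN{1}{7}\SN{5}{7}d_7$ (verified by normalization), which is equivalent to your factorization claim and immediately gives $d_1=d_7=0$ once $L=0$.
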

\begin{proof}
From above, 
$S_8$
is a simplex.
We can compute the three determinants required by
theorem~\ref{t:mus}, as follows:
\begin{equation}\begin{split}
d_1 = & | M\left[A-H;1,2,3,4,6,8,9,10\right] | = \\
&-\SN{5}{10}\SN{8}{9}\SN{1}{7}\SN{1}{7}\SN{2}{4}\SN{3}{5}\SN{1}{5}\SN{6}{7} \\
&-\SN{5}{9}\SN{4}{6}\SN{1}{7}\SN{1}{7}\SN{7}{10}\SN{3}{5}\SN{2}{8}\SN{1}{5} \\
&+\SN{4}{7}\SN{8}{9}\SN{1}{7}\SN{2}{6}\SN{7}{10}\SN{3}{5}\SN{1}{5}\SN{1}{5} \\
&+\SN{3}{7}\SN{4}{6}\SN{5}{8}\SN{1}{7}\SN{7}{10}\SN{2}{9}\SN{1}{5}\SN{1}{5}
\end{split}\end{equation}
\begin{equation}\begin{split}
\label{e:d5}
d_5 = & | M\left[A-H;2,3,4,5,6,8,9,10\right] | = \\
&+\SN{4}{6}\SN{1}{7}\SN{1}{7}\SN{3}{8}\SN{7}{10}\SN{2}{9}\SN{1}{5}\SN{1}{5} \\
&-\SN{4}{6}\SN{1}{7}\SN{1}{7}\SN{1}{9}\SN{7}{10}\SN{3}{5}\SN{2}{8}\SN{1}{5} \\
&-\SN{8}{9}\SN{1}{7}\SN{1}{7}\SN{1}{10}\SN{2}{4}\SN{3}{5}\SN{1}{5}\SN{6}{7}
\end{split}\end{equation}
\begin{equation}\begin{split}
d_7 =& | M\left[A-H;2,3,4,6,7,8,9,10\right] | = \\
&+\SN{4}{6}\SN{5}{8}\SN{1}{7}\SN{7}{10}\SN{1}{3}\SN{2}{9}\SN{1}{5}\SN{1}{5} \\
&-\SN{8}{9}\SN{1}{7}\SN{1}{7}\SN{2}{4}\SN{6}{10}\SN{3}{5}\SN{1}{5}\SN{1}{5} \\
&+\SN{8}{9}\SN{1}{7}\SN{2}{6}\SN{7}{10}\SN{1}{4}\SN{3}{5}\SN{1}{5}\SN{1}{5}
\end{split}\end{equation}
By normalization (theorem~\ref{t:normal}) we can show that:
\begin{equation}
\label{e:d1_d5_d7}
\SN{1}{7}\SN{1}{5}d_1 
=  \SN{1}{5}\SN{5}{7}d_5 
=  \SN{1}{7}\SN{5}{7} d_7
\end{equation}
Given the premise of the lemma, $d_5 = 0$, and hence so are
$d_1$ and $d_7$. Thus by theorem~\ref{t:mus} 
the system:
\begin{equation}
M_8\boldsymbol{r} > \boldsymbol{0}
\end{equation}
is insoluble, and hence so is~(\ref{e:system}).
\end{proof}

\begin{lem}
With the conditions on $\boldsymbol{\theta}$ of
theorem~\ref{t:main},
if
\begin{equation}
\SN{8}{9}\SN{1}{10}\SN{2}{4}\SN{3}{5}\SN{6}{7} 
+\SN{4}{6}\SN{1}{9}\SN{7}{10}\SN{3}{5}\SN{2}{8} 
- \SN{4}{6}\SN{3}{8}\SN{7}{10}\SN{2}{9}\SN{1}{5}
< 0
\end{equation}
then
the 9 by 8 matrix
$S_9$
is a simplex.
\end{lem}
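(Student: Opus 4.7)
The plan is to apply Theorem~\ref{t:simplex}: it suffices to verify that the nine $8\times 8$ subdeterminants of $S_9$ alternate in sign. The key structural observation driving the whole computation is that row $(I)$ of $M_9$ has nonzero entries only in columns $1$, $5$, and $7$, so inside $S_9$ (whose columns are $2,3,4,5,6,8,9,10$) row $(I)$ reduces to a single nonzero entry, namely $-\SN{1}{7}$ in the fourth position, i.e.\ the column indexed by $5$. This sparsity is what makes the whole analysis tractable.

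For each row $X\in\{A,B,C,D,E,F,G,H\}$ let $\Delta_X$ denote the subdeterminant of $S_9$ obtained by deleting row~$X$. I would expand $\Delta_X$ by Laplace along row~$(I)$: every term vanishes except the one coming from column~$5$, and after accounting for the cofactor sign one obtains $\Delta_X = -\SN{1}{7}\cdot\sigma_X$, where $\sigma_X$ is precisely the $X$-th subdeterminant of~$S_8$ (deleting column~$5$ from the column set of $S_9$ leaves exactly the column set $\{2,3,4,6,8,9,10\}$ of~$S_8$). By Lemma~\ref{l:simplex8} the $\sigma_X$ alternate in sign, and multiplying through by the common negative scalar $-\SN{1}{7}$ preserves that alternation. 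So the first eight subdeterminants of~$S_9$ already alternate.

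It remains to analyse the ninth subdeterminant $\Delta_I$, obtained by deleting row~$(I)$. Here $\Delta_I$ is literally the determinant of $M[A\text{--}H;\,2,3,4,5,6,8,9,10]$, which has already been computed as $d_5$ in equation~(\ref{e:d5}) during the proof of Lemma~\ref{l:nequals}. A direct factorisation of that expression shows $d_5 = -\SN{1}{7}^2\SN{1}{5}\cdot E$, where $E$ denotes the left-hand side of the main inequality~(\ref{e:main2}). Under the present hypothesis $E<0$, this forces $d_5>0$, which is exactly the sign needed to continue the alternating pattern established in the previous paragraph. Theorem~\ref{t:simplex} then concludes that $S_9$ is a simplex.

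The main obstacle, insofar as there is one, is the sign bookkeeping: one must track the Laplace cofactor $(-1)^{8+4}$ produced by expanding along row~$(I)$, the explicit signs of the $\sigma_X$ listed in~(\ref{e:subdets}), and the sign of the factorisation $d_5=-\SN{1}{7}^2\SN{1}{5}E$, and check that these three contributions assemble into a single alternating sequence of length nine. Everything else reduces to the structural observation about row~$(I)$ and to reusing the computation of~$d_5$ already performed in Lemma~\ref{l:nequals}.
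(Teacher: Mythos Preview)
Your proof is correct and follows essentially the same approach as the paper: both arguments compute the nine $8\times 8$ subdeterminants of $S_9$, observe that the first eight are $-\SN{1}{7}$ times the corresponding entries of~(\ref{e:subdets}), identify the ninth with $d_5$ from~(\ref{e:d5}), and verify alternation via Theorem~\ref{t:simplex}. Your Laplace-expansion justification for why the first eight subdeterminants factor as $-\SN{1}{7}\cdot\sigma_X$ is actually more explicit than the paper, which simply asserts this relation.
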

\begin{proof}
The first eight of the nine 8 by 8 subdeterminants
are the same as in equation~\ref{e:subdets} multiplied
by $-\SN{1}{7}$. The first is positive, the eighth is negative.

The ninth subdeterminant is
\begin{equation}
+\SN{4}{6}\SN{1}{7}\SN{1}{7}\SN{3}{8}\SN{7}{10}\SN{2}{9}\SN{1}{5}\SN{1}{5} 
-\SN{4}{6}\SN{1}{7}\SN{1}{7}\SN{1}{9}\SN{7}{10}\SN{3}{5}\SN{2}{8}\SN{1}{5}
-\SN{8}{9}\SN{1}{7}\SN{1}{7}\SN{1}{10}\SN{2}{4}\SN{3}{5}\SN{1}{5}\SN{6}{7}
\end{equation}
which is $-\SN{1}{7}\SN{1}{7}\SN{1}{5}$ times the
negative value in the premise of the lemma.
So the nine values alternate in sign, and theorem~\ref{t:simplex} applies.
\end{proof}

\begin{lem}
\label{l:nnegative}
With the conditions on $\boldsymbol{\theta}$ of theorem~\ref{t:main},
if
\begin{equation}
\SN{8}{9}\SN{1}{10}\SN{2}{4}\SN{3}{5}\SN{6}{7} 
+\SN{4}{6}\SN{1}{9}\SN{7}{10}\SN{3}{5}\SN{2}{8} 
- \SN{4}{6}\SN{3}{8}\SN{7}{10}\SN{2}{9}\SN{1}{5}
< 0
\end{equation}
then the system~(\ref{e:system}) is insoluble.
\end{lem}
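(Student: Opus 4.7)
The plan is to invoke Motzkin's theorem~\ref{t:mus} with the $9 \times 8$ simplex $S_9$ supplied by the preceding lemma. Since $S_9$ consists of columns $2,3,4,5,6,8,9,10$ of $M_9$, the hypothesis of~\ref{t:mus} reduces to showing that adjoining column $1$ or column $7$ to $S_9$ yields a $9 \times 9$ matrix of zero determinant; any of the other eight choices of column produces a matrix with a repeated column and so is automatic.

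I would then expand each of the two required $9 \times 9$ determinants along row~(I), the single row by which $M_9$ extends $M_8$. Row~(I) has nonzero entries $\SN{5}{7}$, $-\SN{1}{7}$, $\SN{1}{5}$ in columns $1$, $5$, $7$ respectively, so each expansion collapses to a two-term signed combination of the $8 \times 8$ minors $d_1$, $d_5$, $d_7$ already computed in the proof of lemma~\ref{l:nequals}. After tracking the cofactor signs I expect
\begin{equation*}
|M_9[A-I; 1,2,3,4,5,6,8,9,10]| = \SN{5}{7}\, d_5 - \SN{1}{7}\, d_1
\end{equation*}
and
\begin{equation*}
|M_9[A-I; 2,3,4,5,6,7,8,9,10]| = \SN{1}{7}\, d_7 - \SN{1}{5}\, d_5.
\end{equation*}
The finish is immediate: the normalization identity~\eqref{e:d1_d5_d7} holds as a polynomial identity in the sines regardless of the sign of the main premise, and it rearranges to $d_1 / \SN{5}{7} = d_5 / \SN{1}{7} = d_7 / \SN{1}{5}$. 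Substituting this common ratio into each right-hand side above yields a termwise cancellation, so both $9 \times 9$ determinants vanish, and theorem~\ref{t:mus} delivers insolubility of $M_9 \boldsymbol{r} > \boldsymbol{0}$.

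Note that the hypothesis that the main expression is $<0$ enters only indirectly, via the preceding lemma, to certify that $S_9$ is a simplex; the vanishing of the augmented $9 \times 9$ determinants is a consequence of the trigonometric identity~\eqref{e:d1_d5_d7} alone. The only real obstacle I anticipate is sign bookkeeping in the two cofactor expansions, which is purely clerical. In particular, no fresh trigonometric manipulation is required beyond what is already in the proof of lemma~\ref{l:nequals}, which accounts for this section's implicit promise that the $<0$ case reduces to almost nothing once the corresponding $=0$ case is in hand.
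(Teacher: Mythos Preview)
Your proposal is correct and follows the same overall strategy as the paper: apply theorem~\ref{t:mus} with the simplex $S_9$ and verify that adjoining either of the two missing columns ($1$ or $7$) yields a $9\times 9$ matrix of zero determinant. The paper carries this out by writing out the full expansions of $d_{1,5}$ and $d_{5,7}$ and then invoking the normalization theorem~\ref{t:normal} afresh, whereas you expand along row~(I) to express each as a two-term combination of the already-computed $d_1,d_5,d_7$ and finish with the identity~\eqref{e:d1_d5_d7}; your route is a tidy shortcut that avoids a second round of normalization, but the underlying argument is the same.
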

\begin{proof}
From above, 
$S_9$
is a simplex.
We can compute the two determinants required by
theorem~\ref{t:mus}, as follows:
\begin{equation}\begin{split}
d_{1,5} = & |M\left[A-I;1-6,8-10\right] |= \\
&-\SN{5}{7}\SN{8}{9}\SN{1}{7}\SN{1}{7}\SN{1}{10}\SN{2}{4}\SN{3}{5}\SN{1}{5}\SN{6}{7}\\
&-\SN{5}{7}\SN{4}{6}\SN{1}{7}\SN{1}{7}\SN{1}{9}\SN{7}{10}\SN{3}{5}\SN{2}{8}\SN{1}{5}\\
&+\SN{5}{7}\SN{4}{6}\SN{1}{7}\SN{1}{7}\SN{3}{8}\SN{7}{10}\SN{2}{9}\SN{1}{5}\SN{1}{5}\\
&+\SN{5}{10}\SN{8}{9}\SN{1}{7}\SN{1}{7}\SN{1}{7}\SN{2}{4}\SN{3}{5}\SN{1}{5}\SN{6}{7}\\
&+\SN{5}{9}\SN{4}{6}\SN{1}{7}\SN{1}{7}\SN{1}{7}\SN{7}{10}\SN{3}{5}\SN{2}{8}\SN{1}{5}\\
&-\SN{4}{7}\SN{8}{9}\SN{1}{7}\SN{2}{6}\SN{1}{7}\SN{7}{10}\SN{3}{5}\SN{1}{5}\SN{1}{5}\\
&-\SN{3}{7}\SN{4}{6}\SN{5}{8}\SN{1}{7}\SN{1}{7}\SN{7}{10}\SN{2}{9}\SN{1}{5}\SN{1}{5}
\end{split}\end{equation}
\begin{equation}\begin{split}
d_{5,7} = & |M\left[A-I;2-10\right] |= \\
&+\SN{4}{6}\SN{5}{8}\SN{1}{7}\SN{1}{7}\SN{7}{10}\SN{1}{3}\SN{2}{9}\SN{1}{5}\SN{1}{5}\\
&-\SN{4}{6}\SN{1}{7}\SN{1}{7}\SN{3}{8}\SN{7}{10}\SN{1}{5}\SN{2}{9}\SN{1}{5}\SN{1}{5}\\
&+\SN{4}{6}\SN{1}{7}\SN{1}{7}\SN{1}{9}\SN{7}{10}\SN{1}{5}\SN{3}{5}\SN{2}{8}\SN{1}{5}\\
&+\SN{8}{9}\SN{1}{7}\SN{1}{7}\SN{1}{10}\SN{2}{4}\SN{1}{5}\SN{3}{5}\SN{1}{5}\SN{6}{7}\\
&-\SN{8}{9}\SN{1}{7}\SN{1}{7}\SN{1}{7}\SN{2}{4}\SN{6}{10}\SN{3}{5}\SN{1}{5}\SN{1}{5}\\
&+\SN{8}{9}\SN{1}{7}\SN{2}{6}\SN{1}{7}\SN{7}{10}\SN{1}{4}\SN{3}{5}\SN{1}{5}\SN{1}{5}
\end{split}\end{equation}
By normalization (theorem~\ref{t:normal}) these are both zero. 
Thus by theorem~\ref{t:mus} 
the system~(\ref{e:system}) is insoluble.
\end{proof}

The main theorem is thus proved by combining lemmas~\ref{l:nequals} and~\ref{l:nnegative}.

\section{Oriented Matroids}
\label{s:om}
In the remainder of the paper, we assume familiarity with oriented matroids,
particularly with results from~\cite{bjorner:oriented}.

This section proves one result concerning oriented matroids of directed graphs,
which we will use in the next section.

Prior to that, we briefly review two classes of oriented matroids: 
acyclic uniform rank 2 oriented matroids,
and those derived from a directed graph.

We will be interested in strong maps between such oriented matroids,
and we briefly review these.

Oriented matroid theory makes extensive use of signed sets.
A signed set $A$ is a disjoint pair $(A^+,A^-)$. Its ground set
$\underline{A} = A^+ \cupdot A^-$. 
Its opposite $-A = (A^-,A^+)$.
For every $x$ (in some base set),
$A$ acts as a function to $\{ +1, 0, - 1\}$ defined by:
\begin{equation}
A(x) = \begin{cases}
+1 & x \in A^+ \\
-1 & x \in A^- \\
0 & \textrm{otherwise}
\end{cases}
\end{equation}

\subsection{Oriented matroids from a total order}

Given a finite set $E$, totally ordered by $<$,
we can construct a uniform rank 2 oriented matroid\footnote{
See pages~285-287 of~\cite{bjorner:oriented} for discussion of all rank 2
oriented matroids.
} 
$\Mat(<)$.
The circuit space $\Circ(<)$ signs each of the three element
subsets of $E$ (which are the circuits of the uniform rank 2 matroid on $E$).
\begin{equation}
\label{e:circuits}
\Circ(<) = \left\{ ( \{ e_1, e_3 \}, \{ e_2 \} ), 
( \{ e_2 \} , \{ e_1, e_3 \} ) : e_1 < e_2 < e_3 \right\}
\end{equation}
Since, none of these is positive, $\Mat(<)$ is acyclic,
and $\DualMat(<)$ is totally cyclic (see page 123 of~\cite{bjorner:oriented}, proposition 3.4.8).
The cocircuits can be given explicitly:
\begin{equation}
\CoC(<) = \left\{ ( \{ e' : e' < e \}, \{ e'' : e'' > e \} ), 
( \{ e'' : e'' > e \}, \{ e' : e' < e \} ) :  e \in E \right\}
\end{equation}
 
 \subsection{Oriented matroids from a directed graph}
 
 Given a directed graph $\vec{G}=(V,\vec{E})$, with underlying 
 connected simple
 graph $G=(V,E)$ (noting that we have restricted ourselves to
 graphs without loops or parallel or anti-parallel edges\footnote{
 Neither these restrictions, nor the restriction to connected $G$, are needed for these definitions, but
 simplify them: allowing us to identify an edge in $\vec{G}$ with an edge in $G$.
 }), then, we can construct an oriented matroid $\Mat(\vec{G})$ , on the edge set of $\vec{G}$ 
 in the following fashion,
 (see page 2 of~\cite{bjorner:oriented}).
 
 The set of circuits of the underlying matroid are simply the cycles of $G$.
 Each corresponds to two opposite signed sets, by following the cycle in $\vec{G}$, either
 `clockwise' or `anticlockwise',
 to get directed pairs of vertices, each being an edge, or an inverted edge.
The inverted edges in this cycle are negatively signed, and the remaining edges
in the cycle are
positively signed. This gives us the set of signed circuits $\Circ(\vec{G})$.

The cocircuits are similarly defined using minimal cuts of $G$.
A minimal cut is a cut dividing $G$ into two components. The edges
are signed depending on their direction in $\vec{G}$. More formally,
given a connected induced subgraph of $G$
with vertices $A \subset V$, such that the subgraph induced by
$V \setminus A$ is also connected, then:
\begin{equation}
C^*_A = (
\{ (u,v) \in \vec{E} : u \in A, v \in V \setminus A \},
\{ (u,v) \in \vec{E} : v \in A, u \in V \setminus A \},
) 
\end{equation}
and then
\begin{equation}
\CoC(\vec{G}) = \left\{ C^*_A
: A, V \setminus A \textrm{ connected in } G \right\}
\end{equation}

We will use later the specific cocircuits which isolate a vertex,
for $v \in V$:
\begin{equation}
\label{e:cocircuit}
C^*_v = C^*_{\{ v \} } = (
\{ (u,v) \in \vec{E} : u \in V \},
\{ (v,u) \in \vec{E} : u \in V \},
)
\end{equation}

We use the oriented matroid notion of totally cyclic:
\begin{defn}
A directed graph $\vec{G}$ is totally cyclic, if every edge is contained
in a directed cycle.
\end{defn}

In this case $\Mat(\vec{G})$ is also totally cyclic.

Since we are restricting ourselves to loop-free and parallel-free graphs, $\Mat(\vec{G})$
is simple (i.e. has no loops or parallel elements). Moreover, if $G$ is three edge connected,
then  $\DualMat(\vec{G})$ is also simple, since a coloop would form a cocircuit of size one,
and a pair of coparallel elements would form a cocircuit of size two, either of which would
disconnect $G$, by the construction of $\CoC(\vec{G})$. Thus:

\begin{prop}
\label{p:simpledual}
If $\vec{G}$ is a directed graph, with an underlying three edge connected 
simple graph, then $\DualMat(\vec{G})$ is simple.
\end{prop}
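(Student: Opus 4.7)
The plan is to unpack what simplicity of $\DualMat(\vec{G})$ means and translate it into a graph-theoretic statement about cuts of $G$, where the three-edge-connectedness hypothesis can be applied directly.

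First I would recall the standard cryptomorphism: an oriented matroid is simple exactly when its underlying matroid has no loops and no pair of parallel elements. Loops of $\DualMat(\vec{G})$ are coloops of $\Mat(\vec{G})$, and parallel elements of $\DualMat(\vec{G})$ are coparallel elements of $\Mat(\vec{G})$. In the cocircuit language, $e$ is a coloop of $\Mat$ iff $\{e\}$ is (the ground set of) a cocircuit, and $e,e'$ are coparallel iff, for suitable signs, $\{e,e'\}$ is a cocircuit. So the proposition is equivalent to the statement that every cocircuit of $\Mat(\vec{G})$ has size at least three.

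Next I would invoke the explicit description of $\CoC(\vec{G})$ given just before the proposition: every cocircuit arises from a bipartition $(A, V\setminus A)$ of the vertex set such that both induced subgraphs are connected, and the ground set of the cocircuit is exactly the set of edges of $G$ crossing this bipartition. Conversely, every minimal edge cut of $G$ arises this way, since a minimal cut must separate $G$ into exactly two connected pieces. Therefore the possible sizes of cocircuits of $\Mat(\vec{G})$ are precisely the sizes of minimal edge cuts of $G$.

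By the definition of three-edge-connectedness, every edge cut of $G$ has size at least three, so in particular every minimal edge cut does, hence every cocircuit of $\Mat(\vec{G})$ has at least three elements. This rules out cocircuits of size one (no coloops) and size two (no coparallel pairs), so $\DualMat(\vec{G})$ is simple. There is no real obstacle here; the only point requiring a moment's thought is the correspondence between cocircuits and bipartitions with both sides connected, which is a routine consequence of minimality and follows directly from the construction recalled above.
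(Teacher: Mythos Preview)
Your argument is correct and matches the paper's own reasoning, which appears inline just before the proposition: a coloop of $\Mat(\vec{G})$ would give a cocircuit of size one and a coparallel pair a cocircuit of size two, and by the description of $\CoC(\vec{G})$ either would be an edge cut of $G$ of size at most two, contradicting three-edge-connectedness.
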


\subsection{Strong Maps}

Strong maps are discussed in \cite{bjorner:oriented}, section 7.7, in particular,
proposition 7.7.1 and definition 7.7.2 on page~319. We combine these as:
\begin{defn}
Given two oriented matroids $\Mat_1, \Mat_2$ on the same ground set $E$,
then there is a {\em strong map} from $\Mat_1$ to $\Mat_2$, and we write
$\Mat_1 \longrightarrow \Mat_2$ if either of these equivalent conditions hold:
\begin{itemize}
\item Every cocircuit of $\Mat_2$ is a covector of $\Mat_1$
\item Every circuit of $\Mat_1$ is a vector of $\Mat_2$
\end{itemize}
\end{defn}

\subsection{Simple acyclic oriented matroids}
We have seen that each finite total order gives a simple acyclic oriented matroid
$\Mat(<)$.
For each simple acyclic oriented matroid $\Mat'$ with ground set $E$,
we can always find a total order $<$ of $E$ such that there is a strong map
from $\Mat'$ to $\Mat(<)$.

We use the following technical lemma, concerning orthogonality (see page 115 of~\cite{bjorner:oriented}).

\begin{lem}
If $X, Y_1, Y_2$ are signed sets over $E$, with
$X \perp Y_1$ and $X \perp Y_2$, and $Y_1, Y_2$ being conformal,
then $X \perp Y_1  \circ Y_2$.
\end{lem}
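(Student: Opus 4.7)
The plan is to unfold the definitions and carry out a short case split. Recall that conformality of $Y_1$ and $Y_2$ means that $Y_1(e) Y_2(e) \geq 0$ for every $e \in E$, so the composition $Z = Y_1 \circ Y_2$ exhibits no sign cancellations: $\underline{Z} = \underline{Y_1} \cup \underline{Y_2}$, with $Z(e) = Y_1(e)$ whenever $Y_1(e) \neq 0$ and $Z(e) = Y_2(e)$ whenever $Y_1(e) = 0$ and $Y_2(e) \neq 0$. In particular, on the overlap $\underline{Y_1} \cap \underline{Y_2}$ the three signs $Z(e)$, $Y_1(e)$, $Y_2(e)$ all agree.

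To establish $X \perp Z$, I may assume $\underline{X} \cap \underline{Z}$ is nonempty, and must produce $e, f$ in this intersection with $X(e) Z(e)$ and $X(f) Z(f)$ of opposite (nonzero) signs. First I would split on whether $\underline{X} \cap \underline{Y_1}$ is empty. If it is nonempty, then $X \perp Y_1$ delivers $e, f \in \underline{X} \cap \underline{Y_1}$ with $X(e) Y_1(e) > 0 > X(f) Y_1(f)$; by the conformality observation $Z$ agrees with $Y_1$ on both $e$ and $f$, so these same $e, f$ witness $X \perp Z$. Otherwise $\underline{X} \cap \underline{Z} = \underline{X} \cap \underline{Y_2}$, and $X \perp Y_2$ supplies $e, f \in \underline{X} \cap \underline{Y_2}$ with opposite-sign products; for these, $Y_1(e) = Y_1(f) = 0$, so $Z$ agrees with $Y_2$ on both, and we are done.

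There is no substantive obstacle: the whole content is the observation that, because conformality rules out sign cancellations in the composition, any pair of witnesses to $X \perp Y_1$ (or to $X \perp Y_2$ in the degenerate case) transfers verbatim to witnesses for $X \perp Z$. The only thing to be careful about is the degenerate case where $\underline{X} \cap \underline{Y_1}$ is empty; this is precisely why the argument splits into two branches, and why conformality (rather than a weaker hypothesis such as both $X \perp Y_i$) is actually what is needed to conclude orthogonality with the composition.
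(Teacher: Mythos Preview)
Your proof is correct and follows essentially the same approach as the paper's: both argue the trivial case $\underline{X}\cap\underline{Z}=\emptyset$ first, then split on whether $\underline{X}\cap\underline{Y_1}$ is nonempty, transferring the orthogonality witnesses from $Y_1$ (or, in the degenerate branch, from $Y_2$) to $Z$ via the fact that $Z$ agrees with $Y_1$ on $\underline{Y_1}$ and with $Y_2$ off $\underline{Y_1}$. One minor remark: the agreement $Z(e)=Y_1(e)$ for $e\in\underline{Y_1}$ is immediate from the definition of composition and does not itself require conformality, so your closing comment slightly overstates the role of that hypothesis in this particular argument.
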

\begin{proof}
If $\underline{X} \cap \underline{Y_1 \circ Y_2}$ is empty,
then there is nothing to prove. Otherwise, at least one of
$\underline{X} \cap \underline{Y_1}$ 
and
$\underline{X} \cap \underline{Y_2}$ is non-empty.
If the former, then there are $e,f \in \underline{X} \cap \underline{Y_1}$ with
\begin{equation}
X(e)(Y_1 \circ Y_2)(e) =  X(e)Y_1(e) = - X(f)Y_1(f) = - X(f)(Y_1 \circ Y_2)(f)
\end{equation}
If not then there are $e, f \in \underline{X} \cap \underline{Y_2}$
with $e,f \notin  \underline{Y_1}$, so that:
\begin{equation}
X(e)(Y_1 \circ Y_2)(e) =  X(e)Y_2(e) = - X(f)Y_2(f) = - X(f)(Y_1 \circ Y_2)(f)
\end{equation}
\end{proof}
\begin{cor}
The vectors of an oriented matroid are orthogonal to its covectors.
\end{cor}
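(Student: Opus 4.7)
The plan is to apply the preceding lemma iteratively, using the fact (built into the axiomatics of oriented matroids) that every circuit is orthogonal to every cocircuit, together with the structural fact that vectors are precisely the conformal compositions of circuits and covectors are precisely the conformal compositions of cocircuits.

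First I would fix an arbitrary covector $Y$ of the oriented matroid and write it as a conformal composition $Y = D_1 \circ D_2 \circ \cdots \circ D_m$ of cocircuits. For any circuit $C$ we have $C \perp D_i$ for each $i$, by the orthogonality of circuits and cocircuits. Since the $D_i$ are pairwise conformal (being part of a single conformal composition), I would induct on $m$: the base case $m=1$ is the orthogonality of circuits and cocircuits, and the inductive step applies the lemma to $X = C$, $Y_1 = D_1 \circ \cdots \circ D_{m-1}$, and $Y_2 = D_m$, which are conformal by hypothesis. This yields $C \perp Y$ for every circuit $C$.

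Next, I would fix an arbitrary vector $V$ and write it as a conformal composition $V = C_1 \circ C_2 \circ \cdots \circ C_k$ of circuits. By the previous step, $Y \perp C_i$ for each $i$, using that orthogonality is symmetric. The $C_i$ are pairwise conformal, so I induct again on $k$ and apply the lemma, this time with $X = Y$, $Y_1 = C_1 \circ \cdots \circ C_{k-1}$, and $Y_2 = C_k$, to conclude $Y \perp V$, i.e.\ $V \perp Y$.

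The main obstacle, such as it is, is purely bookkeeping: one must check that in both inductive applications of the lemma the hypothesis of conformality of $Y_1$ and $Y_2$ is actually met, which follows from the standard fact that any initial segment of a conformal composition is itself conformal with the next term. No deeper machinery is required; the corollary is essentially a clean double induction on the lengths of the two conformal decompositions.
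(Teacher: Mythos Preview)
Your proposal is correct and is precisely the argument the paper has in mind: the corollary is stated immediately after the lemma with no separate proof, and the intended derivation is exactly the double induction you describe, building covectors from cocircuits and then vectors from circuits via conformal composition, using circuit--cocircuit orthogonality as the base case.
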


If $\Mat'$ is a simple acyclic oriented matroid, we can use the topological 
representation theorem,
to find a representation of $\Mat'$ as a repetition free arrangement
of pseudo-spheres, (see page~234 of~\cite{bjorner:oriented}).
Since $\Mat'$ is acyclic, this arrangement has
a positive face $F$, labelled $(E,\emptyset)$ and its opposite $-F$.
We can draw a jordan curve 
joining a point inside $F$ to a point inside $-F$,
which crosses each pseudo-sphere at distinct points. Each pseudo-sphere
corresponds to an $e \in E$, and the order in which 
the curve crosses the pseudo-spheres induces an order $<$ on $E$.

Formally, we use the tope graph to show the following result. 
This is explained in section 4.2 of~\cite{bjorner:oriented}; note that
their results are stated for {\em simple} oriented matroids.
The proposition we use (their 4.2.3) says that,
in a simple oriented matroid, the distance
between two topes $X, Y$ in the tope graph is the size of their separation set,
$S(X,Y) = \{ e \in E : X_e = -Y_e \neq 0 \}$.

\begin{lem}
\label{l:simpletotal}
If $\Mat'$ is a simple acyclic oriented matroid on $E$, then there is
a total order $<$ of $E$, such that
$\Mat' \longrightarrow \Mat(<)$.
\end{lem}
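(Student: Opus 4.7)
The plan is to construct the required total order $<$ by walking the tope graph of $\Mat'$ along a geodesic from its positive tope to its negative tope, then use the walls between consecutive topes to witness that cocircuits of $\Mat(<)$ are covectors of $\Mat'$.

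Since $\Mat'$ is simple and acyclic, the positive tope $T^+ = (E,\emptyset)$ exists, and its negative $-T^+ = (\emptyset, E)$ is also a tope. Their separation set is all of $E$, so by the cited tope-graph distance formula (proposition~4.2.3 of~\cite{bjorner:oriented}), $T^+$ and $-T^+$ lie at distance $n := |E|$ in the tope graph. I would pick any geodesic $T^+ = T_0, T_1, \ldots, T_n = -T^+$. Consecutive topes are adjacent, hence differ in a single element $e_i$ by simplicity plus the same proposition. Because the total sign change across the path is complete and the geodesic has exactly $n$ steps, each element is flipped exactly once, and from $+$ to $-$; so $e_1, e_2, \ldots, e_n$ is an enumeration of $E$, and I define the total order by $e_1 < e_2 < \cdots < e_n$.

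Next I verify that $\Mat' \longrightarrow \Mat(<)$, which by definition means every cocircuit of $\Mat(<)$ is a covector of $\Mat'$. The cocircuits of $\Mat(<)$ are the signed sets $A_{e_i} = (\{e_j : j < i\}, \{e_j : j > i\})$ and their opposites. By construction, $T_{i-1}(e_j) = +$ for $j \geq i$ and $-$ for $j < i$, while $T_i(e_j) = +$ for $j > i$ and $-$ for $j \leq i$. Thus $T_{i-1}$ and $T_i$ agree on $E \setminus \{e_i\}$, and their common restriction coincides with $A_{e_i}$ on that set. It therefore suffices to exhibit, for each $i$, a `wall' signed set $W_i$ with support $E \setminus \{e_i\}$ that agrees with $T_{i-1}$ and is a covector of $\Mat'$; then $W_i = A_{e_i}$, and $-W_i = -A_{e_i}$ is automatically a covector since the covector space is closed under negation.

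The main obstacle is this last step, the existence of $W_i$. I would invoke the topological representation theorem to realize $\Mat'$ as an arrangement of pseudo-spheres; since $\Mat'$ is simple and $T_{i-1}, T_i$ are adjacent in the tope graph, they are separated by the single pseudo-sphere labelled $e_i$, and their common codimension-one face lies on that pseudo-sphere and realizes $W_i$ as a covector of $\Mat'$. Everything else is direct sign bookkeeping using the definitions of strong map and of $\CoC(<)$ given earlier.
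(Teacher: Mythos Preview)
Your proof is correct and essentially identical to the paper's: both walk a geodesic in the tope graph from the positive tope to its negative and read off the order from the sequence of elements crossed, then exhibit the cocircuits of $\Mat(<)$ as the subtopes (walls) along that path. The paper simply names those walls ``subtopes'' and uses that adjacency in the tope graph is by definition across a covector, whereas you invoke the topological representation theorem for the same fact; also note a harmless sign slip---your wall $W_i$ is $-A_{e_i}$, not $A_{e_i}$, but both are cocircuits of $\Mat(<)$.
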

\begin{proof}
Label the edges of the tope graph 
with the unique 
$e \in E$ that has sign zero in 
the subtope joining the two topes (note this is where simplicity is used).
Say that a path crosses $e$ if an edge in the path is labelled with $e$.

Since $\Mat'$ is acyclic, $F = (E,\emptyset)$ is a tope of $\Mat'$.

Using their proposition 4.2.3, we find that the distance in the tope graph
from $F$ to $-F$ is $|E|$. Take a path $P$ of this length, joining $F$ and $-F$
in the tope graph.

By induction, using the same proposition, 
 $P$ crosses
each $e$ exactly once.

Define $<$ as corresponding to the order of the labels along
$P$. Then the subtopes along the path are precisely:
\begin{equation}
\left\{ ( \{ e' : e' > e \}, \{ e'' : e'' < e \} ) :  e \in E \right\}
\end{equation}
which along with their opposites are all of the cocircuits of $\Mat(<)$,
which gives the strong map.
\end{proof}
\begin{cor}
\label{c:simpletotal}
If $\vec{G} = (V,\vec{E})$ is a totally cyclic, simply three edge connected, directed graph
then there is some total order $<$ of $\vec{E}$ such that
$\DualMat(\vec{G}) \longrightarrow \Mat(<)$
\end{cor}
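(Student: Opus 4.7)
The plan is to combine the three ingredients already in place and conclude essentially by immediate verification of hypotheses. I would first observe that $\DualMat(\vec{G})$ is simple: this is exactly Proposition~\ref{p:simpledual}, which only uses the simply three edge connected hypothesis on the underlying graph.

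Next I would establish that $\DualMat(\vec{G})$ is acyclic. Since $\vec{G}$ is totally cyclic, every edge lies on a directed cycle, and such a directed cycle gives a positive signed circuit in $\Mat(\vec{G})$ (all edges traversed in their forward orientation). By the standard duality between acyclic and totally cyclic oriented matroids (the same invocation of proposition~3.4.8 of \cite{bjorner:oriented} already used in the totally-ordered case), $\Mat(\vec{G})$ being totally cyclic is equivalent to $\DualMat(\vec{G})$ being acyclic.

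With these two facts in hand, $\DualMat(\vec{G})$ satisfies precisely the hypotheses of Lemma~\ref{l:simpletotal}: it is a simple acyclic oriented matroid on the ground set $\vec{E}$. Lemma~\ref{l:simpletotal} then directly produces a total order $<$ on $\vec{E}$ such that $\DualMat(\vec{G}) \longrightarrow \Mat(<)$, which is the desired conclusion.

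There is really no obstacle here, since both inputs (simplicity of the dual, and the total-order lemma) have already been proved; the only minor point requiring care is naming the duality fact that connects totally cyclic $\vec{G}$ to acyclic $\DualMat(\vec{G})$ via totally cyclic $\Mat(\vec{G})$. Thus the corollary reduces to a two-line verification of hypotheses followed by an application of Lemma~\ref{l:simpletotal}.
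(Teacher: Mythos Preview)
Your proposal is correct and matches the paper's own proof, which is the one-line assertion that $\DualMat(\vec{G})$ is simple and acyclic (implicitly invoking Proposition~\ref{p:simpledual} and the totally cyclic/acyclic duality) and then applying Lemma~\ref{l:simpletotal}. You have simply spelled out the same two verifications in more detail.
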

\begin{proof}
$\DualMat(\vec{G})$ is simple and acyclic.
\end{proof}

\section{Twisted Graphs}
\label{s:twisted}

Consider figure~\ref{f:graph}.
\begin{figure}[htbp]
\begin{center}
\begin{tabular}{{ccc}}
    \subfigure[Undirected (G)]{\label{f:ugraph}\scalebox{0.4}{\includegraphics{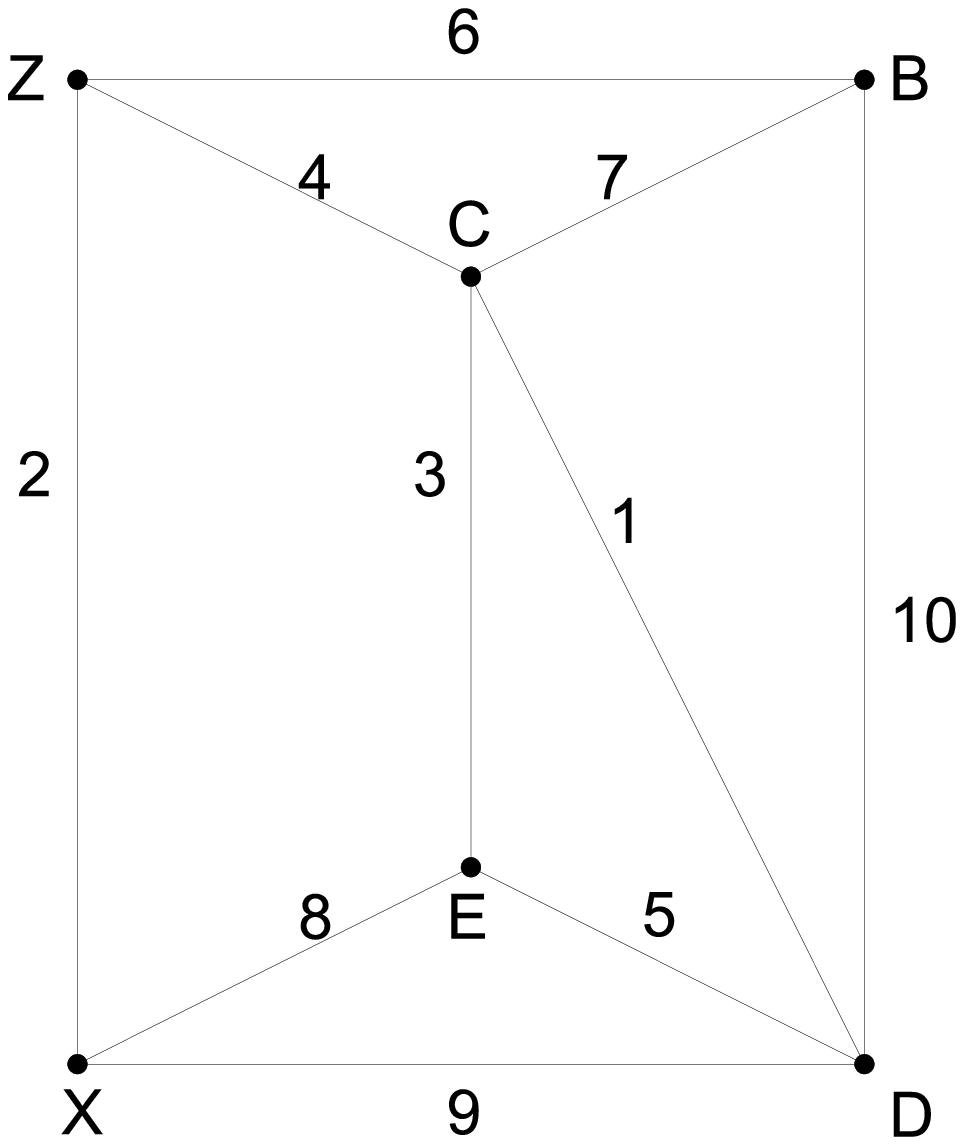}}} &
    $\:\:$
    &
    \subfigure[Directed (G,E)]{\label{f:digraph}\scalebox{0.4}{\includegraphics{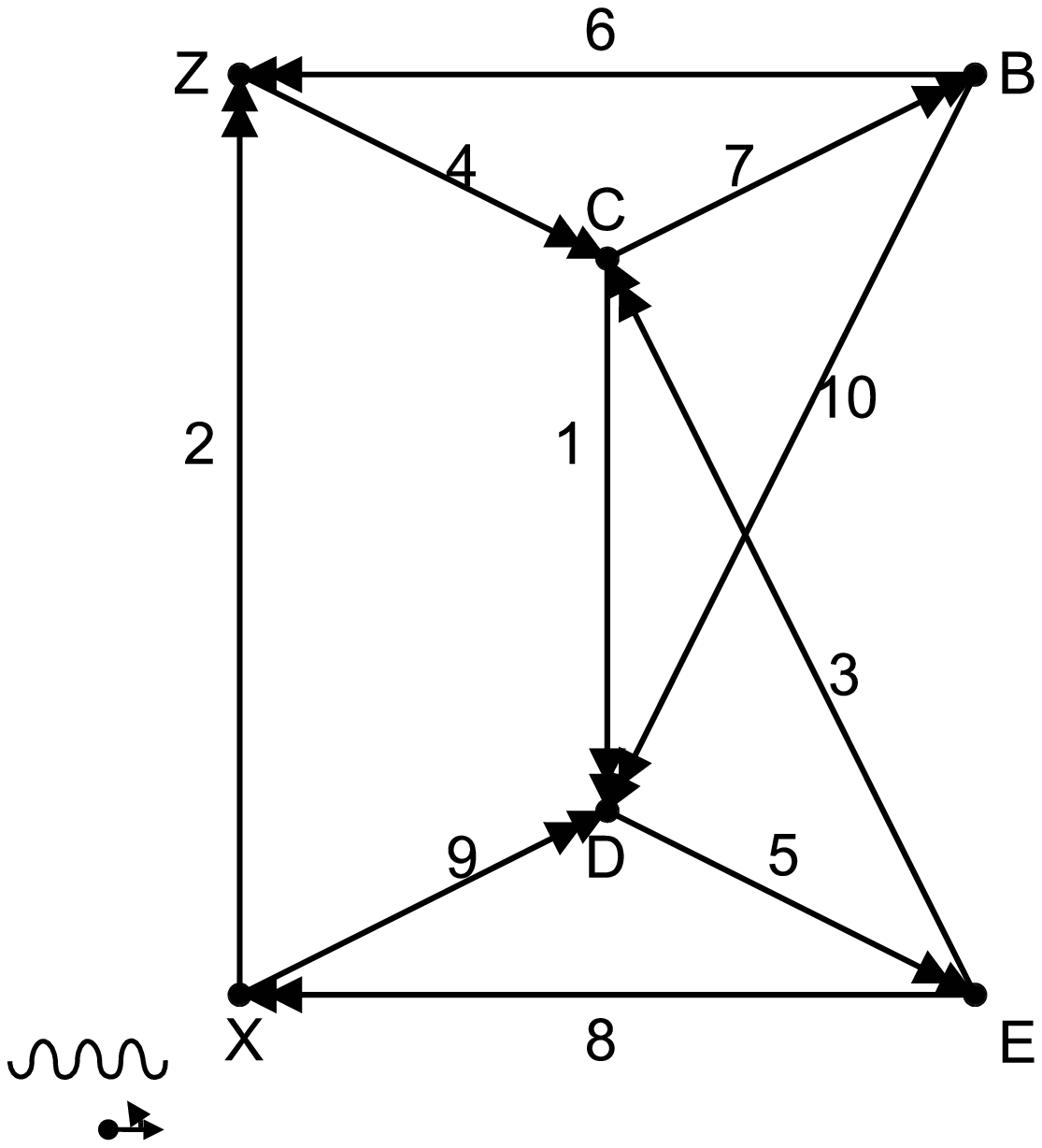}}}
\end{tabular}
\caption{The graph underlying fig.~\ref{f:straight}}
\label{f:graph}
\end{center}
\end{figure}

Two different drawings of the same (labelled) graph are shown. The
first is an undirected version of the second.
Moreover, 
since we are excluding
loops and parallel or anti-parallel edges,
we can identify each undirected edge in the first 
with a directed edge in the second.
The second can be converted into fig.~\ref{f:straight} (on page~\pageref{f:straight}), 
using the following recipe.
View the picture as a line arrangement in the Euclidean plane. Notice the choice
of polar origin.
Twist each edge of the graph, just a little, by moving the pointy head further away from the
origin, while keeping the other end fixed.
Add a pinch of imagination, and we have figure~\ref{f:straight}.

Let's formalise that recipe, keeping the pinch of imagination down to a minimum.

We start by observing that the crucial properties of figure~\ref{f:straight} that we care about
are listed in theorem~\ref{t:main}. These are the orientations of nine triangles,
and the constraints on the ordering of the lines. 
These constraints describe a partial order, $\sjle$.
For now, we will ignore the triangle $\{ 1, 5, 7 \}$ since it clearly plays a special role (for example,
it plays no part in
the proof of lemma~\ref{l:nequals}).
Each of the remaining eight triangles corresponds to one
of the six vertices in the graph. Vertices $C$ and $D$, having degree
4, have two corresponding triangles.

Given any total order $<$ over $E$ extending $\sjle$, then the triangles
can be expressed as a subset $A$ of the $\Circ(<)$.

In order to express this, we extend the idea of the circuits corresponding
to an order in the following fashion:
\begin{defn}
\label{d:partialcircuits}
Given a partial order $\sjle$ over $E$, the circuits $\Circ(\sjle)$
are those circuits that are circuits of $\Mat(<)$ for every total
order $<$ of $E$ which extends $\sjle$.
\end{defn}
i.e.
\begin{equation}
\Circ(\sjle) = \bigcap_{ < \supseteq \sjle } \Circ(<)
\end{equation}
For a total order $<$, definition~\ref{d:partialcircuits} and equation~(\ref{e:circuits})
are consistent.

\begin{defn}
\label{d:twistedgraph}
A twisted graph $T = (\vec{G}, G, \sjle, A)$
is a directed graph $\vec{G} = (V,\vec{E})$, 
with $\sjle$ partially ordering $E$, such that:
\begin{enumerate}
\item
 $G = (V,E)$ is the underlying simple graph of $\vec{G}$
 \item
 $G$ is
three edge connected
\item
$A \subset \Circ(\sjle)$
\item
for every total order $<$ of $E$ extending $\sjle$
there is a strong map $\DualMat(\vec{G}) \rightarrow \Mat(<)$.
\item
$A$ can be partitioned into $\{ A_v : v \in V \}$
with $C^*_v$ (as in equation~(\ref{e:cocircuit})) being the conformal composition over
$A_v$
for each $v$.
\end{enumerate}
\end{defn}


Essentially, a twisted graph is formed from the signed incidence matrix 
of $\vec{G}$, by taking every row with more than three entries, and splitting
it into rows of $A$ (considered as a matrix), 
each having three entries. Every row has the form $( -, +, - )$ or $( -, +, -)$ with
the rest of the entries being zero.

For example, we take the incidence matrix of fig.~\ref{f:digraph}:
\begin{equation}
\left(
\begin{smallmatrix}
&(1)&(2)&(3)&(4)&(5)&(6)&(7)&(8)&(9)&(10)\\
(E)& &          &-& &+& & &-& &\\ 
(X)& &-&          & & & & &+&-&\\ 
(Z)& &+ &          &-& &+& & & & \\ 
(B)& &          &          & & &-&+& & &-\\ 
(C)&-& &+&+& & &-& & & \\ 
(D)&+& & & &-& & & &+ &+  
\end{smallmatrix}
\right)
\end{equation}
and split the last two rows, because they have more than three entries, to give:
\begin{equation}
\left(
\begin{smallmatrix}
&(1)&(2)&(3)&(4)&(5)&(6)&(7)&(8)&(9)&(10)\\
(E)& &          &-& &+& & &-& &\\ 
(X)& &-&          & & & & &+&-&\\ 
(Z)& &+ &          &-& &+& & & & \\ 
(B)& &          &          & & &-&+& & &-\\ 
(C)&-& &+& & & &-& & & \\ 
(C')&-& & &+& & &-& & & \\ 
(D)&+& & & &-& & & &+ & \\ 
(D')&+& & & &-& & & & &+
\end{smallmatrix}
\right)
\end{equation}
which, with the insertion of appropriate $\SN{i}{j}$ values is $M_8$
from equation~(\ref{e:m8}).

While we don't exploit such a view of a twisted graph, 
for explanatory purposes,
we express it more formally as:
\begin{prop}
\label{p:twisted}
If $T = (\vec{G}, G, \sjle, A)$ is a twisted graph then there is a function
$v: A \rightarrow V$ such that:
\begin{enumerate}
\item
For each $a \in A$, there is a single vertex $v(a) \in V$
incident with each of the edges in $\underline{a}$.
\item
For each $e \in E$, there is some $a \in A$, with $a(e)=1$,
and $e = (u,v(a))$, for some $u \in V$
\item
For each $e \in E$, there is some $a \in A$, with $a(e)=-1$,
and $e = (v(a),u)$, for some $u \in V$
\item
For each each $e \in E$, there are $a, a' \in A$, with 
$a(e)=1$ and  $a(e)=-1$, and $e = (v(a'),v(a))$.
\item
Given $a_1, a_2 \in A$, and some $e \in E$ 
for which $a_1(e) = a_2(e) \neq 0$ then  $v(a_1)=v(a_2)$.
\item
If $a_1, a_2 \in A$ are conformal, and $\underline{a_1} \cap \underline{a_2} \neq \emptyset$
then $v(a_1)=v(a_2)$.
\end{enumerate}
\end{prop}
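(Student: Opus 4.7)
The plan is to define $v : A \to V$ using the partition furnished by condition (5) of Definition~\ref{d:twistedgraph}: set $v(a)$ to be the unique $v \in V$ such that $a \in A_v$. This is well-defined precisely because $\{A_v\}_{v \in V}$ is a partition of $A$, and I expect to need no other condition of the definition for this proposition; the strong-map condition and the ordering data are there for later use.

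The heart of the argument is to unpack what it means for $C^*_v$ to be the conformal composition of $A_v$. First I would recall from (\ref{e:cocircuit}) that, since $G$ has no loops, $\underline{C^*_v}$ is exactly the set of edges incident with $v$, with $C^*_v(e) = +1$ when $v$ is the head of $e$ and $C^*_v(e) = -1$ when $v$ is the tail. The conformal composition hypothesis then supplies two facts: (i) every $a \in A_v$ is conformal with $C^*_v$, so $\underline{a} \subseteq \underline{C^*_v}$ and $a(e) = C^*_v(e)$ for every $e \in \underline{a}$; and (ii) the supports $\underline{a}$, for $a \in A_v$, together cover $\underline{C^*_v}$.

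Property (1) is immediate from (i), since each edge of $\underline{a}$ lies in $\underline{C^*_{v(a)}}$ and is therefore incident with $v(a)$. For (5), if $a_1(e) = a_2(e) \neq 0$ then (i) forces $C^*_{v(a_1)}(e) = C^*_{v(a_2)}(e) \neq 0$, so $e$ plays the same endpoint role at $v(a_1)$ as at $v(a_2)$; since every edge has a unique head and a unique tail, this forces $v(a_1) = v(a_2)$. Property (6) is then an immediate corollary, as two conformal elements sharing an edge agree in sign on it. For (2), given $e = (u,w)$, condition (ii) applied to $C^*_w$ yields some $a \in A_w$ with $e \in \underline{a}$, and (i) then forces $a(e) = +1$; this $a$ has $v(a) = w$, and so $e = (u, v(a))$. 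Property (3) is symmetric, and (4) is the simultaneous application of (2) and (3) (reading the evident typo in the statement as $a'(e) = -1$). I expect the main obstacle to be bookkeeping rather than mathematical depth --- specifically, keeping the head/tail sign convention for $C^*_v$ aligned with the $\pm 1$ entries of each $a \in A_v$. The interesting content has already been packaged into the definition of twisted graph; the proof merely verifies that the packaging delivers what (1)--(6) request.
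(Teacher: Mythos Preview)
Your proposal is correct and follows essentially the same route as the paper: define $v(a)$ via the partition $\{A_v\}$, use conformality with $C^*_v$ to read off signs and incidence, and derive (2)--(6) from the head/tail convention in~(\ref{e:cocircuit}). The one small point you gloss over is the word ``single'' in (1): the paper also argues that \emph{no other} vertex can be incident with all three edges of $\underline{a}$, since that would force three parallel edges between two vertices, contrary to the standing simplicity assumption on $G$. This is a one-line addition and does not affect the structure of your argument.
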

\begin{proof}
By construction, each $a \in A$, is in some $A_v$. Since
$C^*_v$ is the conformal composition of $A_v$, we have for each edge
$e \in \underline{a}$ that $a(e) = C^*_v(e)$. 
Thus, from~(\ref{e:cocircuit}), for each edge $e \in a$, 
there is a $u \in V$, such that
either $e = (u,v)$ and $a(e)=C^*_v(e)=1$ or
$e=(v,u)$ and $a(e)=C^*_v(e)=-1$.
So $v(a)=v$ is incident with each edge in $a$. If there was some other
vertex also incident with each edge in $a$, then we would have three
parallel edges, which we have excluded.

For the second point, we note that if $e=(u,v)$, then $C^*_v(e)=1$,
and there is some $A_v \subset A$, such that the conformal composition
over $A_v$ is $C^*_v$, so there is at least one $a \in A_v$ with $a(e) = C^*_v(e)$.
By the previous paragraph, for this $a$, we have $v(a) = v$.

The third point is similar.

The fourth point is a rephrasing of the second and third points.

For the fifth point, $e = (v(a'),v(a))$ as in the fourth point, and for the case
$v(a_1)=1$ we can, without loss of generality, take $a = a_1$.
From the first point, $v(a_2)$ is incident with $e$, and $a_2 \neq a'$ because
$a_2(e) \neq a'(e)$. Thus $a_2 = a_1$.

The sixth point follows from the fifth by
taking any $e \in \underline{a_1} \cap \underline{a_2}$.
\end{proof}

We explore the notion of twisted graphs, with the aim
of deriving minimal insoluble systems from them, corresponding
to constraints on line arrangements, or the realizations of rank 3
oriented matroids.

Two preliminary definitions:
\begin{defn}
A directed graph $\vec{G}$ can be {\em twisted}
if there is a twisted graph $(\vec{G}, G, \sjle, A)$,
for some $A$, and $\sjle$.
\end{defn}

\begin{defn}
A simple graph $G$ can be twisted
if there is some orientation $\vec{G}$ of $G$, such that
$\vec{G}$ can be twisted.
\end{defn}

We start by showing that there are infinitely many twisted graphs.
Given the requirement for there to be the strong map,
we wish to find a totally cyclic orientation of $G$.

\begin{prop}
\label{p:orientability}
Given a three-edge connected simple graph $G$, then there
is a totally cyclic directed graph $\vec{G}$ with underlying graph
$G$.
\end{prop}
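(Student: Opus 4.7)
The plan is to recognize this as (a special case of) Robbins' theorem: a simple graph admits a strongly connected orientation if and only if it is bridgeless (equivalently, $2$-edge connected). Since $3$-edge connectivity implies $2$-edge connectivity, this suffices. Moreover, any strongly connected orientation is automatically totally cyclic, because for each directed edge $(u,v)$, strong connectivity supplies a directed $v \rightsquigarrow u$ path whose concatenation with $(u,v)$ is a directed cycle containing that edge. Thus the proposition reduces to producing a strongly connected orientation of $G$.

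To produce such an orientation, I would use the classical ear decomposition of $2$-edge connected graphs due to Whitney: write $G = C_0 \cup P_1 \cup \cdots \cup P_k$, where $C_0$ is a simple cycle and each $P_i$ is a simple path (an ``ear'') whose two endpoints lie in $H_{i-1} := C_0 \cup P_1 \cup \cdots \cup P_{i-1}$ but whose interior vertices are new. Orient $C_0$ cyclically in either direction, and orient each subsequent ear $P_i$ as a directed path from one of its endpoints to the other (either choice works).

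A routine induction on $i$ then shows that each $H_i$, with its induced orientation, is strongly connected. The base case $H_0 = C_0$ is the directed cycle. For the inductive step, let $P_i$ be an ear with endpoints $x, y$ attached to $H_{i-1}$: any interior vertex $w$ of $P_i$ is reached from $x$ along $P_i$ and in turn reaches $y$ along $P_i$, while strong connectivity of $H_{i-1}$ provides a directed $y \rightsquigarrow x$ path. Combining these observations with the inductive hypothesis yields strong connectivity of $H_i$, and hence of $H_k = G$.

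The main obstacle is really just invoking the ear decomposition; once that tool is in hand, the rest is mechanical. Note that the stronger hypothesis of $3$-edge connectivity is not actually needed for this proposition --- $2$-edge connectivity suffices --- but since we have it, there is nothing more to verify. If one preferred to avoid appealing to Whitney's theorem, the same construction could be obtained by a depth-first search argument, orienting tree edges away from the root and back-edges toward it, and noting that bridgelessness guarantees each tree edge is covered by a back-edge.
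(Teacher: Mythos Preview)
Your proof is correct, but it takes a different route from the paper's. The paper argues via oriented matroids: it orients $G$ arbitrarily to get $\vec{G_0}$, takes the graphic oriented matroid $\Mat(\vec{G_0})$, picks any maximal vector $v$, and reorients so that $v$ becomes positive. Since $G$ has no bridges, $\Mat(\vec{G_0})$ has no coloops, so a maximal vector has full support; making it positive forces every edge to lie in a positive circuit, i.e.\ a directed cycle, and the resulting $\vec{G}$ is totally cyclic.

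Your approach via Robbins' theorem and an ear decomposition is more elementary and entirely self-contained within graph theory, whereas the paper's argument leans on the oriented matroid machinery already set up in the surrounding sections. Both use only $2$-edge connectivity, as you rightly observe; the paper needs the stronger $3$-edge connected hypothesis elsewhere (to ensure $\DualMat(\vec{G})$ is simple), not here. The oriented matroid proof has the advantage of being a two-line application of existing structure in the paper's context, while yours would stand alone without any of that background.
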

\begin{proof}
Arbitrarily direct edge each of $G$ to get $\vec{G_0}$.
Take the oriented matroid $\Mat = \Mat(\vec{G_0})$.
Take an arbitrary maximal vector $v$ of $\Mat$. Reorient $\Mat$ so that
$v$ is positive. Apply the same reorientation to $\vec{G_0}$ to
get $\vec{G}$. 
\end{proof}

\begin{lem}
\label{l:twistability}
Every simply  three-edge connected, totally cyclic, directed graph $\vec{G}$, can be twisted.
\end{lem}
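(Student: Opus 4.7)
The plan is to exhibit the twisted graph explicitly by taking the partial order $\sjle$ to be the total order supplied by Corollary~\ref{c:simpletotal}, and then decomposing each cocircuit $C^*_v$ of $\Mat(\vec{G})$ into three-element signed circuits of the rank-two oriented matroid $\Mat(<)$. First I would check that Corollary~\ref{c:simpletotal} applies: since $\vec{G}$ is totally cyclic, $\Mat(\vec{G})$ is totally cyclic, so $\DualMat(\vec{G})$ is acyclic; and since $G$ is three-edge connected, Proposition~\ref{p:simpledual} gives that $\DualMat(\vec{G})$ is simple. The corollary therefore yields a total order $<$ of $\vec{E}$ with strong map $\DualMat(\vec{G}) \longrightarrow \Mat(<)$. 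I would set $\sjle \,:=\, <$, which makes condition~(4) of Definition~\ref{d:twistedgraph} trivial: the only extension of a total order is itself.

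Second, for each $v \in V$, the cocircuit $C^*_v$ of $\Mat(\vec{G})$ is a circuit of $\DualMat(\vec{G})$, hence a vector of $\Mat(<)$ via the strong map. Using the standard oriented-matroid fact that every vector is a conformal composition of circuits, together with the explicit description~(\ref{e:circuits}) of $\Circ(<)$ as signed three-subsets of $E$, I would choose $A_v \subset \Circ(<)$ whose conformal composition equals $C^*_v$, and set $A := \bigcup_v A_v$. To justify calling $\{A_v : v \in V\}$ a \emph{partition} I would verify $A_u \cap A_v = \emptyset$ for distinct $u, v$: a common element $c$ would have $\underline{c}$ a three-element subset of $\underline{C^*_u} \cap \underline{C^*_v}$, i.e.\ three distinct edges each incident with both $u$ and $v$; but at most the single edge $uv$ has both endpoints in $\{u,v\}$, since parallel and antiparallel edges are excluded by hypothesis. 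Then $A \subset \Circ(<) = \Circ(\sjle)$ gives condition~(3), and condition~(5) holds by construction.

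The main obstacle is the conformal-decomposition step. Mere existence follows from general oriented-matroid structure theory, but in rank two the argument can be made constructive: the circuits of $\Mat(<)$ are precisely the signed three-subsets whose signs alternate $(+,-,+)$ or $(-,+,-)$ along the order, and any vector $X$ of $\Mat(<)$ can be covered element-by-element by such conforming triples, because orthogonality of $X$ to the initial-segment cocircuits of $\Mat(<)$ forces its signs along its support to admit the required alternation pattern. Producing each $A_v$ explicitly, and verifying that the conformal composition over it recovers $C^*_v$ exactly rather than some vector properly containing it, is the main technical work; once done, Definition~\ref{d:twistedgraph} is satisfied and $\vec{G}$ is twisted.
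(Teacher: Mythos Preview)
Your proposal is correct and follows essentially the same route as the paper: invoke Corollary~\ref{c:simpletotal} to obtain the total order with its strong map, observe that each $C^*_v$ becomes a vector of $\Mat(<)$, conformally decompose it into circuits $A_v$, and take $A=\bigcup_v A_v$. You in fact supply more detail than the paper does---verifying the hypotheses of the corollary, checking disjointness of the $A_v$ so that they genuinely partition $A$, and worrying about exactness of the conformal composition---whereas the paper's proof is a terse paragraph that leaves these points implicit. Your concern about ``recovering $C^*_v$ exactly rather than some vector properly containing it'' is unnecessary: the standard conformal decomposition theorem (e.g.\ \cite{bjorner:oriented}, Proposition~3.7.2) already guarantees that a vector equals the conformal composition of the circuits in its decomposition, so no extra work is needed there.
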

\begin{proof}
From corollary~\ref{c:simpletotal} we find a total order
$<$ over $E$, with the strong map $\DualMat(\vec{G}) \longrightarrow \Mat(<)$.
For every $v \in V$, $C^*_v$ is a signed minimal cut of  $\vec{G}$,
and hence a cocircuit of $\Mat(\vec{G})$, i.e. a circuit of $\DualMat(\vec{G})$.
By the strong map, $C_v$ is a vector of $\Mat(<)$, and hence a conformal
composition of some set of circuits $A_v$ of $\Mat(<)$.
We put $A = \bigcup_{v \in V}A_v$, and we have a twisted graph, with a total order.
\end{proof}
\begin{cor}
Every three-edge connected, simple graph $G$ can be twisted.
\end{cor}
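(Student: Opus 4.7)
The plan is to assemble this corollary by chaining together two results that have just been established, with essentially no additional work required. Specifically, the definition of what it means for a simple graph $G$ to be twistable says exactly that \emph{some} orientation of $G$ can be twisted, so the task reduces to producing such an orientation.

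First, I would invoke Proposition~\ref{p:orientability}, which guarantees that any three-edge connected simple graph $G$ admits a totally cyclic orientation $\vec{G}$. The underlying simple graph of $\vec{G}$ is $G$ by construction, so $\vec{G}$ is itself simply three-edge connected (the three-edge connectivity is a property of $G$, not of the orientation) and totally cyclic.

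Next, I would apply Lemma~\ref{l:twistability} directly to this $\vec{G}$: since $\vec{G}$ is a simply three-edge connected, totally cyclic, directed graph, it can be twisted, i.e.\ there exist $\sjle$ and $A$ making $(\vec{G},G,\sjle,A)$ a twisted graph. By the definition of twistability for simple graphs, this is exactly what is required to conclude that $G$ can be twisted.

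There is no real obstacle here — the content of the corollary is entirely carried by the two preceding results, and the proof is a two-line composition. The only thing to be slightly careful about is ensuring that the hypotheses line up: Proposition~\ref{p:orientability} delivers a directed graph whose underlying graph is precisely the given $G$ (not some other graph), so the three-edge connectivity transfers to $\vec{G}$ without any extra argument, and Lemma~\ref{l:twistability}'s hypotheses are then met verbatim.
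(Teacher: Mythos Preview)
Your proposal is correct and matches the paper's approach exactly: the corollary is stated immediately after Lemma~\ref{l:twistability} with no separate proof, since it is meant to follow at once from combining Proposition~\ref{p:orientability} with Lemma~\ref{l:twistability}, precisely as you describe.
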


The definition of twisted graph permits adding more triangles 
to $A$ than is strictly needed (for some vertices incident
with more than four edges).
So the following definition is helpful:
\begin{defn}
A twisted graph $T= (\vec{G}, G, \sjle, A)$ is  {\em irredundant}
if:
\begin{itemize}
\item
for all proper subsets $A' \subsetneq A$, then
$T'= (\vec{G}, G, \sjle, A)$ is not a twisted graph.
\item
for all partial orders $\sjle^{\prime} \subsetneq \sjle$, then
$T'= (\vec{G}, G, \sjle^{\prime}, A)$ is not a twisted graph.
\end{itemize}
\end{defn}

\subsection{Why the strong maps?}
Definition~\ref{d:twistedgraph} requires the existence of at least one strong map.
However,  the results in {\em this} paper do not depend on
that condition, nor even the potentially weaker condition that
$\vec{G}$ be totally cyclic.
\begin{figure}[htbp]
\begin{center}
\scalebox{0.4}{\includegraphics{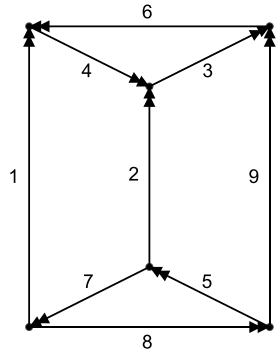}}
\caption{Should this be a twisted graph?}
\label{f:badTwisted}
\end{center}
\end{figure}

The motivation for this condition is to prevent graphs like in figure~\ref{f:badTwisted},
from being twisted.
The numbering in that figure shows a potential ordering. This satisfies the conclusions of
proposition~\ref{p:twisted}, and definition~\ref{d:twistedgraph} without the strong map
constraint. The positive cut of size three prevents there from being a strong map.

Currently, I think the various conjectures I make in the subsequent sections
are more plausible with the strong map condition. This is the motivation.

\subsection{Matrices, Realisations and Twisted Graphs}

Of course, our interest in twisted graphs is because of their relationship
to matrices that express conditions on line arrangements.
A twisted graph is realized by a line arrangement $(\mathcal{L}_e)_{e \in E}$,
with a polar frame of reference,
if for every $a =( \{ e, f \}, \{ g \} ) \in A$, the triangle $\{ L_e, L_f, L_g \}$ is positively oriented,
and likewise for every $( \{ e \}, \{ f, g \} )\in A$ there is a negatively oriented triangle.

To express this in the fashion of the previous sections, given a twisted graph $T$,
we consider a vector of
angles $\boldsymbol{\theta}$ indexed by $E$, and construct
a $|A|$ by $|E|$ matrix $\Sigma = \Sigma(T)$ with entries being either $0$ or $\sin(\theta_f - \theta_e)$
or $-\sin(\theta_f - \theta_e)$ with $e, f \in E$ and $e \sjle f$.

The actual entries are given, for each $a \in A$ and $e \in E$ by:
\begin{equation}
\Sigma_{a,e} = \begin{cases}
0 & e \notin a \\
a(e)\sin(\theta_e''-\theta_e') & \underline{a} = \{ e, e', e'' \}, e' \sjle e'' 
\end{cases}
\end{equation}

So as we have seen, there is a $T$ corresponding to figure~\ref{f:digraph}, 
with $\Sigma(T)$ being a matrix consisting of
the rows of $M_8$, equation (\ref{e:m8}), in some order.

This matrix can be understood either as a matrix of formal expressions over $\boldsymbol{\theta}$
or as a matrix over $\Real$, given specific values of $\boldsymbol{\theta}$. So that
we can compute signs of determinants, we restrict the values of interest, to those
that respect $\sjle$.

\begin{defn}
\label{d:respect}
A vector of real numbers $\boldsymbol{\theta} \in \Real^E$ {\em respects} $\sjle$, when:
\begin{itemize}
\item 
For all $e \in E$, $0 < \theta_e < 180$
\item
For all $e, f \in E$, if $e \sjle f$ then $\theta_e < \theta_f$.
\end{itemize}
\end{defn}

Then a solution in $\boldsymbol{\theta}, \boldsymbol{r} \in \Real^E$,
with $\boldsymbol{\theta}$ respecting $\sjle$,
for the system:
\begin{equation}
\Sigma(T) \boldsymbol{r} > 0
\end{equation}
provides polar coordinates of a line arrangement realizing $T$.
If the solution in  $\boldsymbol{r} $ is not positive, then 
the arrangement can still be drawn,
interpreting negative values for any $r_e$ as in the opposite direction to 
positive values.
Moving the polar origin will then give a solution that is all positive. 
More formally, 
we can convert the first set of coordinates into homogeneous coordinates and then
use lemma~\ref{l:origin} to find true polar coordinates.
So, without loss of generality, we can take $\boldsymbol{r} > 0$.

We have seen in lemma~\ref{l:simplex8}, that for one twisted graph $T$,
and for some $\boldsymbol{\theta}$, that $\Sigma(T)$ is a minimal insoluble
system. We ask whether this is generally the case.
\begin{conj}
\label{c:unsolvable}
For every twisted graph $T$, there is some $\boldsymbol{\theta}$
respecting $\sjle$,
for which $\Sigma(T) \boldsymbol{r} > 0$ is not soluble.
\end{conj}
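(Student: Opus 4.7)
The plan is to apply Carver's theorem (theorem~\ref{t:carver}): it suffices to exhibit, for some $\boldsymbol{\theta}$ respecting $\sjle$, a non-negative non-zero vector $\boldsymbol{\lambda} \in \Real^A$ with $\boldsymbol{\lambda}^T \Sigma(T) = \boldsymbol{0}$. Equivalently, by Motzkin's theorem~\ref{t:mus}, I would locate inside $\Sigma(T)$ an $(r+1)$-by-$r$ simplex submatrix $S$ (in the sense of definition~\ref{d:simplex}) such that every column $c$ of $\Sigma(T)$ outside $S$ produces a singular extension $[S\, c]$ at some admissible $\boldsymbol{\theta}$. The specific proof of section~\ref{s:mainproof} is exactly an instance of this pattern, with $S$ being $S_8$ or $S_9$, and I would try to generalise that blueprint.

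First, I would construct $S$ using the combinatorial data of $T$. The strong map condition in definition~\ref{d:twistedgraph} supplies, for every total order $<$ extending $\sjle$, a strong map $\DualMat(\vec{G}) \longrightarrow \Mat(<)$, so each vertex cocircuit $C^*_v$ is a vector of $\Mat(<)$ and, by hypothesis, the conformal composition of the triangles in $A_v$. This suggests singling out columns of $\Sigma(T)$ indexed by an edge set complementary to a suitably chosen spanning structure of $G$, with the complement governed by the cut structure of $\vec{G}$. The alternation of signs required by theorem~\ref{t:simplex} should then follow from the sign pattern imposed on the entries $\pm \sin(\theta_j - \theta_i)$ by the partial order $\sjle$ and the orientation data in $A$, mirroring the verification carried out in lemma~\ref{l:simplex8}.

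The more substantial step is showing that each extension determinant $|[S\,c]|$ expands, after applying the normalization of theorem~\ref{t:normal}, to an explicit polynomial in the sines $\SN{i}{j}$, and that these polynomials have a common zero locus $Z \subset \Real^E$ intersecting the open cone of $\boldsymbol{\theta}$'s respecting $\sjle$. Each minimal cut of $\vec{G}$, viewed through the strong map, should contribute one defining sine identity for $Z$; the expectation is that the number of independent identities matches the number of extension columns, so that $Z$ is cut out cleanly. The main obstacle is this last step: verifying that the normalised expressions are compatible (not forcing $\sin(\theta_j - \theta_i) = 0$ for some $i \sjle j$), and that the zero set really is non-empty in the prescribed region.

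A subsidiary obstacle, which I would treat by case analysis, is that the naive choice of $S$ need not be a simplex for every $\boldsymbol{\theta}$ respecting $\sjle$ — the sign of certain subdeterminants may flip as $\boldsymbol{\theta}$ moves through distinct total extensions of $\sjle$. In that case, one chooses $S$ adaptively according to which total extension is realised, paralleling the split between lemma~\ref{l:nequals} and lemma~\ref{l:nnegative}; the hope is that, over the higher-Bruhat stratification of extensions, a valid choice of $S$ always exists and the corresponding extension determinants can be made to vanish simultaneously at a single $\boldsymbol{\theta}$.
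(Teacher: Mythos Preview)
The statement you are attempting to prove is Conjecture~\ref{c:unsolvable}, and the paper does \emph{not} prove it; it is explicitly left open. What the paper does establish is a partial result: in section~\ref{s:twistedsimplex} it shows that every \emph{simplicial} twisted graph (in particular, every twisted graph arising from a three edge connected cubic graph, via the notion of a positive sequence in definition~\ref{d:positivesequence}) is constraining. Your outline accurately reconstructs the mechanism the paper uses in those special cases --- locate a simplex submatrix $S$ inside $\Sigma(T)$, then use Motzkin/Carver to force insolubility at some $\boldsymbol{\theta}$ --- but you should be aware that the paper itself does not claim this mechanism works for arbitrary twisted graphs.

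The obstacles you flag are exactly the ones the paper leaves unresolved. First, the existence of a simplex submatrix $S$ for a general twisted graph is not established; the paper's sufficient condition is the existence of a positive sequence, and it is not shown that every twisted graph admits one. Second, even granting simpliciality, your ``more substantial step'' --- that the extension determinants have a common zero inside the region where $\boldsymbol{\theta}$ respects $\sjle$ --- is precisely the content of the conjecture that $\sigma$ is surjective, stated without proof just before theorem~\ref{t:simplicial}. Your hope that ``the number of independent identities matches the number of extension columns'' is plausible in the strictly simplicial case (and is essentially what the rank count in proposition~\ref{p:partialLessThan} and the discussion around lemma~\ref{l:positiveLambda} are driving at), but turning this into a proof that the zero locus actually meets the open cone is not done in the paper and would be the genuine new contribution. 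So your proposal is a reasonable programme, consonant with the paper's own line of attack, but it is not a proof, and neither is anything in the paper.
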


To simplify statements of such conjectures and theorems, we introduce the 
term {\em constraining}:
\begin{defn}
A twisted graph is {\em constraining} if there is some $\boldsymbol{\theta}$
respecting $\sjle$,
for which $\Sigma(T) \boldsymbol{r} > 0$ is not soluble, and for
which $\Sigma(T) \boldsymbol{r} = 0$ has a nontrivial solution.
\end{defn}

Conjecture \ref{c:unsolvable}  can be strengthened to
\begin{conj}
Every irredundant $\sjle$-minimal twisted graph $T=(\vec{G}, G, \sjle, A)$
is constraining.
\end{conj}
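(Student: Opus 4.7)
The plan is to attack the conjecture using the two dual characterizations from section~\ref{s:motzkin}: Motzkin's simplex criterion (theorem~\ref{t:simplex} and~\ref{t:mus}) for insolubility from the column side, and Carver's theorem (theorem~\ref{t:carver}) for insolubility from the row side. Constraining requires two things: that $\Sigma(T)\boldsymbol{r} > 0$ be insoluble, and that $\Sigma(T)\boldsymbol{r} = 0$ have a nontrivial solution, both at the same $\boldsymbol{\theta}$. I would first aim to produce a non-negative, nonzero row combination $\boldsymbol{\lambda}^T \Sigma(T) = 0$ from the graph-theoretic content of definition~\ref{d:twistedgraph}: the partition of $A$ into $\{A_v : v \in V\}$ with $C^*_v$ the conformal composition over $A_v$, combined with the identity $\sum_{v \in V} C^*_v = 0$ in $\DualMat(\vec{G})$ (each directed edge contributes once with each sign).

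The obstacle is that the rows of $\Sigma(T)$ carry sine factors, so conformal composition in oriented-matroid terms does not by itself give a linear identity with constant coefficients. Here equation~(\ref{e:coincident}) is the crucial bridge: for any triple of coincident lines, $r_1\SN{2}{3} - r_2\SN{1}{3} + r_3\SN{1}{2} = 0$. For a vertex $v$ with $\deg(v) = 3$, $A_v$ is a single row, and its $\boldsymbol{r}$-vanishing is exactly the coincidence condition for the three lines at $v$. For higher degree vertices, one should be able to aggregate the $|A_v|$ rows corresponding to $A_v$ into a single coincidence relation by iterated application of~(\ref{e:coincident}), with coefficients that are themselves products of sines. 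Theorem~\ref{t:normal} guarantees that this aggregation has a uniquely defined normal form and does not depend on the order of the operations.

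Granted such an aggregation, the second step would produce coefficients $\mu_a(\boldsymbol{\theta}) > 0$ (products of sines) such that $\sum_{a \in A_v} \mu_a(\boldsymbol{\theta})(\textrm{row}_a)$ is the vector whose dot product with $\boldsymbol{r}$ vanishes precisely when the $\deg(v)$ lines at $v$ are coincident. Summing over $v$, the telescoping from $\sum_v C^*_v = 0$ would give a non-negative, nonzero left null vector of $\Sigma(T)$ at any $\boldsymbol{\theta}$ realizing a line arrangement in which every vertex of $\vec{G}$ corresponds to a true triple (or higher) intersection; by Carver (theorem~\ref{t:carver}) this forces insolubility of $\Sigma(T)\boldsymbol{r} > 0$, and the same $\boldsymbol{r}$ realizing the arrangement furnishes a nontrivial solution to $\Sigma(T)\boldsymbol{r} = 0$.

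The hard part is existence of such a $\boldsymbol{\theta}$: for a generic twisted graph, no line arrangement realizes a triple intersection at every vertex, since that would be a highly degenerate projective configuration. I would attempt a boundary/limit argument, taking $\boldsymbol{\theta}$ at the frontier of the realizability cone of $T$, where $\Sigma(T)\boldsymbol{r} > 0$ just fails and a single row relation collapses the rank. The irredundancy and $\sjle$-minimality hypotheses should be exactly what prevents the twisted graph from decomposing into smaller pieces, forcing this frontier to be hit as a \emph{single} simplex in Motzkin's sense, so that the subdeterminants of some $|A| \times (|A|-1)$ submatrix of $\Sigma(T)$ alternate in sign while the remaining $|E|-|A|+1$ extension columns yield determinants that vanish by normalization; this would generalize the two-determinant computation of lemma~\ref{l:nnegative} and the identity~(\ref{e:d1_d5_d7}). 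The main technical obstacle is thus proving a general normalization identity indexed by the combinatorial structure of $\vec{G}$, and I would try induction on $|\vec{E}|$ using three-edge-connectedness to perform contractions that preserve the twisted-graph structure.
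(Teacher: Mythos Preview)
The statement you are attempting to prove is stated in the paper as a \emph{conjecture}; the paper does not prove it. What the paper does prove is the special case in which $T$ admits a \emph{positive sequence} (definition~\ref{d:positivesequence}), which forces $\Sigma(T)$ to contain a simplex on some column set $F$; this covers all cubic graphs and is the content of section~\ref{s:twistedsimplex}. So there is no ``paper's own proof'' to compare against, and your proposal should be read as an attack on an open problem.

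Your central mechanism---the telescoping via $\sum_{v\in V} C^*_v = 0$---does not go through as written. You want coefficients $\mu_a(\boldsymbol{\theta})>0$ so that $V_v := \sum_{a\in A_v}\mu_a(\text{row}_a)$ satisfies $\sum_v V_v = 0$ in $\Real^E$. But the $e$-entry of $V_v$ for an edge $e=(u,w)$ incident to $v$ is, up to sign, a product of sines of differences between $\theta_e$ and the \emph{other} edges meeting $v$. The $e$-entry of $V_u$ involves the neighbours of $u$, while the $e$-entry of $V_w$ involves the neighbours of $w$; these are generically unrelated products, so there is no reason for them to cancel. The combinatorial identity $\sum_v C^*_v = 0$ lives at the level of sign vectors and does not lift to an identity in $\widehat{\Circ(\sjle)}\subset\Real^E$ once the sine weights are inserted. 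Equation~(\ref{e:coincident}) lets you aggregate within one $A_v$, but it gives you no handle on matching across different vertices.

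There is a second, independent gap. Even granting a degenerate line arrangement in which every vertex of $\vec{G}$ is realized by a genuine multiple intersection, what you obtain is a positive vector $\boldsymbol{r}$ in the \emph{right} kernel of $\Sigma(T)$. Carver's theorem~\ref{t:carver} requires a non-negative, nonzero \emph{left} null vector, and the existence of a positive right null vector does not imply this (already the $1\times 2$ matrix $(1\ -1)$ has positive right kernel but $\Sigma\boldsymbol{r}>0$ is soluble). Your boundary/limit sketch is where the actual work would have to happen, and it would need to produce the simplex structure directly---which is exactly what the paper's positive-sequence argument does in the cases it handles, and exactly what remains open in general.
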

where $\sjle$-minimal is defined as:
\begin{defn}
A twisted graph $T=(\vec{G}, G, \sjle, A)$ is $\sjle$-minimal
if for each $e \in E$, there is no $A'$ such that $T=(\vec{G}\setminus e, G\setminus e, \sjle \setminus e, A')$ is a twisted graph.
\end{defn}
If these conjectures are true, then we would 
also have (by lemma~\ref{l:twistability}), the weaker:
\begin{conj}
For every minimal simply three edge connected, totally cyclic
directed graph $\vec{G}$, there is a total order $<$,
and a constraining twisted graph, $T=(\vec{G}, G, <, A)$.
\end{conj}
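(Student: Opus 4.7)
The plan is to leverage the existence results proved earlier in this section to manufacture a candidate pair $(<,A)$, and then bring the Motzkin machinery of Section \ref{s:motzkin} together with the normalization theorem of Section \ref{s:normal} to verify that the resulting twisted graph is constraining. First I would invoke Corollary \ref{c:simpletotal}: since the underlying simple graph $G$ is three edge connected and $\vec{G}$ is totally cyclic, there exists a total order $<$ on $\vec{E}$ such that $\DualMat(\vec{G}) \longrightarrow \Mat(<)$. Applying Lemma \ref{l:twistability} to this total order produces an explicit set $A = \bigcup_{v\in V}A_v \subset \Circ(<)$ whose conformal composition at each vertex $v$ recovers the cocircuit $C^*_v$. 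This gives a bona fide twisted graph $T = (\vec{G}, G, <, A)$, so what remains is to exhibit $\boldsymbol{\theta}$ respecting $<$ witnessing that $T$ is constraining.

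For this second step I would follow the blueprint of Section \ref{s:mainproof}. Specifically, I would select a distinguished vertex $v_0 \in V$ (analogous to the role of triangle $(1,5,7)$, i.e.\ row $(I)$, in $M_9$) and partition the rows of $\Sigma(T)$ into the $|A_{v_0}|$ rows coming from $v_0$ and the remaining $|A|-|A_{v_0}|$ rows. The target is to produce, within the latter block, a square-minus-one submatrix that is a simplex in the sense of Definition \ref{d:simplex}, and then pick $\boldsymbol{\theta}$ so that the full-rank determinants adjoining each column of $\Sigma(T)$ vanish, triggering Theorem \ref{t:mus}. Minimality of $\vec{G}$ would be used to ensure that no edge is dispensable, so that the simplex is exactly of the size needed. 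Positivity of $\sin(\theta_j-\theta_i)$ under any $\boldsymbol{\theta}$ respecting $<$, combined with the alternation-of-signs criterion of Theorem \ref{t:simplex}, reduces the simplex check to a sign pattern determined entirely by the incidence data of $\vec{G}$.

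The hard part will be establishing the vanishing of the adjoined determinants in the required generality. Each such determinant expands into a sum of products of $\sin(\theta_j-\theta_i)$ terms, i.e.\ an element of $S$ in the notation of Section \ref{s:normal}. Using Theorem \ref{t:normal} one can in principle reduce to normal form and test equality with zero termwise. The challenge is that the specific cancellations which worked for the Pappus example (equation (\ref{e:d1_d5_d7})) had a visibly symmetric flavour tied to the particular triangulation, whereas here we need a uniform reason why, for every $\vec{G}$ in the class, the cocircuit data of $\DualMat(\vec{G})$ combined with the orthogonality with $\Circ(<)$ forces normalization-based cancellation. The most promising route I see is to interpret each vanishing determinant as an orthogonality statement between a particular covector of $\Mat(<)$ and a vector built from the $A_v$'s via the strong map $\DualMat(\vec{G}) \longrightarrow \Mat(<)$, then translate that orthogonality back into a trigonometric identity through lemma \ref{l:origin}.

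I expect the main obstacle to be precisely this translation: making the step from an abstract covector/vector orthogonality inside $\Mat(<)$ into a cancellation of monomials in $S$, uniformly across all minimal simply three edge connected totally cyclic $\vec{G}$. A successful proof would likely proceed by induction on $|V|$ or $|\vec{E}|$, peeling off a vertex or collapsing a directed cycle, and reducing to a smaller twisted graph for which the inductive hypothesis applies; but minimality is delicate under such reductions, and preserving it is where, in my estimation, the argument is most likely to either require new combinatorial input or fail outright and expose a counterexample to the conjecture.
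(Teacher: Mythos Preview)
The statement you are attempting to prove is explicitly presented in the paper as a \emph{conjecture}, not a theorem; there is no proof in the paper to compare against. The paper introduces it as a weakening of the earlier Conjecture~\ref{c:unsolvable}, noting only that ``if these conjectures are true, then we would also have (by lemma~\ref{l:twistability}), the weaker'' statement. So your task here was, unbeknownst to you, to resolve an open problem.

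That said, your first paragraph is entirely correct and matches exactly what the paper \emph{does} establish: Corollary~\ref{c:simpletotal} and Lemma~\ref{l:twistability} together guarantee the existence of a total order $<$ and a set $A$ making $T=(\vec{G},G,<,A)$ a twisted graph. This is precisely the content of the ``by lemma~\ref{l:twistability}'' parenthetical. What the paper leaves open, and what you correctly identify as the hard part, is the \emph{constraining} property: exhibiting $\boldsymbol{\theta}$ respecting $<$ for which $\Sigma(T)\boldsymbol{r}>0$ is insoluble and $\Sigma(T)\boldsymbol{r}=0$ has a nontrivial solution.

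Your proposed attack via simplexes and Theorem~\ref{t:mus} is in the spirit of Section~\ref{s:twistedsimplex}, but note that the paper only succeeds there for twisted graphs admitting a \emph{positive sequence} (Definition~\ref{d:positivesequence}), which is guaranteed for cubic $G$ but not in general. Your own final paragraph is an accurate self-assessment: the step from abstract orthogonality in $\Mat(<)$ to uniform trigonometric cancellation is exactly the missing idea, and neither your induction sketch nor anything in the paper supplies it. The honest conclusion is that your proposal outlines a plausible strategy but does not close the gap, and the paper agrees by leaving the statement as a conjecture.
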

and the, weaker still
\begin{conj}
For every minimal three edge connected simple
graph $G$, there is a directed graph $\vec{G}$, total order $<$,
and a constraining twisted graph, $T=(\vec{G}, G, <, A)$.
\end{conj}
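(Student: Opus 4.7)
The plan is to derive this weak, graph-level conjecture from the chain of constructions developed earlier, reducing it to the still-open constraint-producing property of twisted graphs. Given a minimal three-edge-connected simple graph $G$, I would first invoke proposition~\ref{p:orientability} to obtain a totally cyclic orientation $\vec{G}$ with underlying graph $G$; minimality of $G$ propagates to $\vec{G}$ in the sense that deleting any edge destroys three-edge connectivity and hence (by adapting the orientability argument) total cyclicity. Since $G$ is simply three-edge-connected, proposition~\ref{p:simpledual} ensures $\DualMat(\vec{G})$ is simple, and corollary~\ref{c:simpletotal} then supplies a total order $<$ on $\vec{E}$ together with a strong map $\DualMat(\vec{G}) \to \Mat(<)$. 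Lemma~\ref{l:twistability} assembles these ingredients into a twisted graph $T = (\vec{G}, G, <, A)$.

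The substance of the proof is to select $A$ (possibly trimming to an irredundant subset) so that $T$ is constraining: that is, to exhibit some $\boldsymbol{\theta}$ respecting $<$ for which $\Sigma(T)\boldsymbol{r} = 0$ has a nontrivial solution and $\Sigma(T)\boldsymbol{r} > 0$ has none. Following the template of section~\ref{s:mainproof}, I would look for a submatrix $S$ of $\Sigma(T)$ of dimension $m \times (m-1)$ whose $(m-1) \times (m-1)$ minors alternate in sign for every admissible $\boldsymbol{\theta}$; by theorem~\ref{t:simplex} this makes $S$ a simplex. Theorem~\ref{t:mus} then reduces insolubility to showing that for every column $c$ of $\Sigma(T)$ outside $S$, the $m \times m$ determinant $|S\,c|$ vanishes at the critical $\boldsymbol{\theta}$. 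Each such vanishing identity can, in principle, be checked by the normalization procedure of section~\ref{s:normal}, which renders sums of products of sines into a canonical form.

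The main obstacle is identifying this canonical simplex and verifying the vanishing identities in full generality. For $\Rin$ the proof leaned on three distinguished columns (indices $1, 5, 7$) together with the specific relation~(\ref{e:d1_d5_d7}) linking three determinants; the combinatorial role of those columns in the graph, and the structural reason the identity holds, were not articulated intrinsically. The guiding principle for a general argument should be that the strong map $\DualMat(\vec{G}) \to \Mat(<)$ organizes $\Sigma(T)$ row by row: each cocircuit $C^*_v$ splits, as a covector of $\Mat(<)$, into a conformal composition of three-element circuits which are precisely the rows of $\Sigma(T)$ labelled by $v$. The hope is that choosing $<$ so that a designated edge is $<$-extremal, combined with deletion/contraction in $\vec{G}$, yields an inductive construction of $S$ together with a proof that the relevant auxiliary determinants vanish by normalization.

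Making this precise is essentially the content of conjecture~\ref{c:unsolvable} and appears to be the single hard step: the weak graph-level conjecture reduces, modulo the construction above, to showing that the twisted graph produced by the orientation/ordering/decomposition pipeline is always constraining. I expect the chief difficulty to lie not in the graph-theoretic reductions but in the trigonometric-algebraic task of locating the simplex and proving the vanishing identities uniformly across all three-edge-connected graphs, perhaps via a deletion/contraction recursion mirroring the matroidal structure of $\DualMat(\vec{G})$.
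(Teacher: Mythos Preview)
The statement you are attempting to prove is labelled a \emph{conjecture} in the paper, not a theorem; the paper provides no proof. It is introduced explicitly as a weakening of the preceding conjectures, with the remark that it would follow from them via lemma~\ref{l:twistability}. So there is no ``paper's own proof'' to compare against.

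Your proposal is not a proof either, and you recognise this in your final paragraph: the construction pipeline (orientability, the strong map, the decomposition into $A_v$'s) is correctly assembled from propositions~\ref{p:orientability} and~\ref{p:simpledual}, corollary~\ref{c:simpletotal}, and lemma~\ref{l:twistability}, but the actual content --- showing the resulting twisted graph is \emph{constraining} --- you defer to conjecture~\ref{c:unsolvable}, which is itself open. That is circular: you are reducing one conjecture to a strictly stronger one, which is exactly how the paper already positions this statement.

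Where the paper goes further than your sketch is in section~\ref{s:twistedsimplex}: rather than searching ad hoc for a simplex as in section~\ref{s:mainproof}, it isolates the notion of a \emph{positive sequence} (definition~\ref{d:positivesequence}) and proves that any twisted graph admitting one is simplicial. For cubic $G$ any spanning tree furnishes a positive sequence, so the simplicial property is automatic there. This gives genuine partial progress toward the conjecture that your deletion/contraction suggestion does not --- though note that even simpliciality is not shown to imply ``constraining'' in the precise sense of the definition (the insolubility theorem in that section concerns $\widehat{A\cup B}$, not $\hat A$ alone). If you want to turn your outline into something substantive, the positive-sequence machinery is the tool the paper offers; the gap is extending it beyond the cubic case and closing the distance between ``simplicial'' and ``constraining''.
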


We will find an initial class of constraining twisted graphs,
including
all three edge connected cubic graphs.

We will see that the somewhat surprising proofs
of lemmas~\ref{l:nequals} and~\ref{l:nnegative} are general, 
and not sporadic facts about
the non-Pappus arrangement.

\section{A Representation of $\Mat(<)$ and $\Mat(\vec{G})$}
\label{s:om2}
We continue, our study of twisted graphs, by a simultaneous
representation of both the oriented matroids of interest,
within the vector space in $\Real^n$, where $n = |E|$.
We identify $E$ with the numbers $1 \ldots n$, such that
$e_i < e_j$ if and only if $i < j$.
We take $\boldsymbol{\theta}$ to be an increasing sequence
of values in $(0,180)^n$, rather than a sequence of formal variables:
although we will once or twice, consider the effect of modifying some
of these values, we are, in the main, considering $\boldsymbol{\theta}$  
as a given vector.

Each circuit and cocircuit of $\Mat(<)$ is represented as a non-zero point
in $\Real^n$.
Each vector and covector are represented by
the convex cone  formed from 
non-negative combinations of the rays corresponding to the 
conforming circuits or cocircuits. Thus, a circuit, when considered as
a vector, is represented by a ray from the origin through the point corresponding
to the circuit.

The strong map from  $\DualMat(\vec{G})$ to  $\Mat(<)$,
then allows each of the cocircuits of $\Mat(\vec{G})$ to be
represented by the cone representing it when considered
as a vector of $\Mat(<)$.

For circuits of the form $+C_{i,j,k}=(\{i,k\},\{j\})$, with $1 \leq i<j<k \leq n$,
the corresponding  point is given by:
\begin{align}
\widehat{+C_{i,j,k}} &= \begin{pmatrix}
x_1 & x_2 & \cdots & x_n
\end{pmatrix}\\
x_h &=  \begin{cases}
\LSN{j}{k} & h = i \\
-\LSN{i}{k} & h = j \\
\LSN{i}{j} & h = k \\
0 & \text{otherwise}
\end{cases} 
\end{align}

We represent the opposite circuits by the opposite points.

For cocircuits of the form $+C^*_i = ( \{ j: 1 \leq j < i \}, \{j : n \geq j > i\})$,
with $1 \leq i \leq n$, the corresponding point is given by
\begin{equation}
\widehat{+C^*_i} = \begin{pmatrix}
\SN{1}{i}  & \SN{2}{i} & \cdots & \SN{n}{i}
\end{pmatrix} 
\end{equation}
Again, the opposites are represented
by the opposite points.

Immediately from proposition~\ref{p:normalize}, we have the following:
\begin{prop}
For any circuit $C \in \Circ(<)$ and any cocircuit $C^* \in \CoC(<)$,
$\hat{C} \perp \hat{C^*}$.
\end{prop}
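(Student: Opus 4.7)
The plan is a direct verification using proposition~\ref{p:normalize}. Since the opposite of a circuit (or cocircuit) is represented in the scheme by the negation of its point, and orthogonality of two vectors is preserved under negating either argument, it suffices to check the positive representatives. So I would fix $1 \leq i < j < k \leq n$ together with $1 \leq m \leq n$, and verify that $\widehat{+C_{i,j,k}} \cdot \widehat{+C^*_m} = 0$.

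Expanding this inner product is immediate: because $\widehat{+C_{i,j,k}}$ is nonzero only in coordinates $i$, $j$, $k$, only three terms of the dot product survive, and it collapses to
\[
\widehat{+C_{i,j,k}} \cdot \widehat{+C^*_m} \;=\; \SN{j}{k}\SN{i}{m} \,-\, \SN{i}{k}\SN{j}{m} \,+\, \SN{i}{j}\SN{k}{m}.
\]
Applying proposition~\ref{p:normalize} with the angles $\theta_i, \theta_j, \theta_k, \theta_m$ substituted for $\theta_a, \theta_b, \theta_c, \theta_d$ rewrites $\SN{i}{k}\SN{j}{m}$ as $\SN{i}{j}\SN{k}{m} + \SN{i}{m}\SN{j}{k}$, and substituting back produces $0$.

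There is no real obstacle here. Proposition~\ref{p:normalize} is stated for arbitrary real arguments, so no case analysis on where $m$ sits relative to $\{i,j,k\}$ is needed; the degenerate situations $m \in \{i,j,k\}$ collapse automatically because one sine factor then vanishes, and the identity itself remains consistent. Everything reduces to a single invocation of the normalization identity, which is in fact the reason the representation $\widehat{\,\cdot\,}$ was set up with these particular sine coefficients in the first place.
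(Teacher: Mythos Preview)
Your proof is correct and is precisely the computation the paper has in mind: the paper states the proposition as following ``immediately from proposition~\ref{p:normalize}'', and your explicit expansion of the dot product together with one application of that identity is exactly the intended argument.
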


We can represent 
any set $X$ of circuits or cocircuits as a non-negative sum of 
the representations of its members:
\begin{equation}
\hat{X} = \left\{ \sum_{x \in X}\lambda_x \hat{x} : \lambda_x \geq 0 \right\}
\end{equation}
so that if $X$ is symmetric (i.e. $x \in X$ if and only if $-x \in X$),
$\hat{X}$ is a vector space.

The basic structure of the representation is then 
captured in the following proposition:
\begin{prop}
\label{p:lessthan}
With $\hat{}$ as defined above:
\begin{enumerate}
\item
$\widehat{\Circ(<)} \perp \widehat{\CoC(<)}$
\item
$\widehat{\Circ(<)}$ has rank $n-2$.
\item
$\widehat{\CoC(<)}$ has rank $2$.
\end{enumerate}
\end{prop}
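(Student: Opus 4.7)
The plan is to handle the three claims in reverse order of difficulty. First, part~(1) is almost immediate: $\widehat{\Circ(<)}$ and $\widehat{\CoC(<)}$ are, by construction, the nonnegative real spans of the vectors $\hat{x}$ and the symmetric closures include the opposites, so each is a linear subspace of $\Real^n$. Since the standard inner product is bilinear, orthogonality of the two spans reduces to orthogonality of their generators, which is supplied by the immediately preceding proposition. So I would open with this observation and then record the consequence that $\widehat{\Circ(<)}$ and $\widehat{\CoC(<)}$ are orthogonal linear subspaces of $\Real^n$ whose ranks therefore sum to at most $n$.

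Next I would dispatch part~(3). Using the sine addition formula, the $h$-th entry of $\widehat{+C^*_i}$ is
\begin{equation}
\sin(\theta_i - \theta_h) = \sin(\theta_i)\cos(\theta_h) - \cos(\theta_i)\sin(\theta_h),
\end{equation}
so $\widehat{+C^*_i} = \sin(\theta_i)\, \mathbf{c} - \cos(\theta_i)\, \mathbf{s}$, where $\mathbf{c} = (\cos\theta_h)_{h=1}^n$ and $\mathbf{s} = (\sin\theta_h)_{h=1}^n$. Hence $\widehat{\CoC(<)} \subseteq \mathrm{span}(\mathbf{c},\mathbf{s})$, giving rank at most $2$. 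For the matching lower bound I would exhibit $\widehat{+C^*_1}$, which has a $0$ in coordinate $1$ and a nonzero entry in coordinate $2$, and $\widehat{+C^*_2}$, which has the opposite pattern; these are visibly linearly independent. Hence $\widehat{\CoC(<)}$ has rank exactly $2$.

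For part~(2), part~(1) combined with part~(3) gives the upper bound $\rank \widehat{\Circ(<)} \leq n - 2$. To produce the matching lower bound I would write down the $n-2$ explicit circuits $\widehat{+C_{1,2,k}}$ for $k = 3, \ldots, n$. Each such vector has entry $\LSN{1}{2} \neq 0$ in column $k$, but zero in columns $3, \ldots, n$ other than $k$. Stacking these as the rows of an $(n-2) \times n$ matrix and inspecting the submatrix formed by columns $3, \ldots, n$ exhibits a diagonal matrix with nonzero diagonal entries, forcing the rows to be linearly independent. This is the one step that has any content at all, and it is essentially the whole proof.

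There is no real obstacle here; the main thing to be careful about is that $\widehat{\Circ(<)}$ and $\widehat{\CoC(<)}$, initially defined as nonnegative cones, are genuine linear subspaces (which they are, because both sets of generators are closed under negation), so that ``rank'' and ``orthogonal complement'' make sense and the dimension bookkeeping goes through cleanly.
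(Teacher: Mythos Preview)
Your proof is correct and follows essentially the same approach as the paper: establish orthogonality from the preceding proposition, use the resulting rank inequality $\rank\widehat{\Circ(<)} + \rank\widehat{\CoC(<)} \le n$, and then exhibit explicit independent families (the paper uses $\widehat{C_{i,n-1,n}}$ and looks at the first $n-2$ coordinates, where you use the symmetric choice $\widehat{C_{1,2,k}}$ and look at the last $n-2$ coordinates; both use $\widehat{C^*_1},\widehat{C^*_2}$). Your extra sine-addition argument giving $\rank\widehat{\CoC(<)} \le 2$ directly is not needed, since the paper gets both upper bounds simultaneously from the combined inequality once the two lower bounds are in hand, but it does no harm.
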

\begin{proof}
The first point follows from the previous proposition.

It then suffices to show $n-2$ independent members of $\widehat{\Circ(<)}$,
and $2$ independent members of $\widehat{\CoC(<)}$, since
$\rank \widehat{\Circ(<)} + \rank \widehat{\CoC(<)} \leq n$.

The set $\{ \widehat{C_{i,n-1,n}} : 1 \leq i \leq n-2 \}$
is independent, by considering the first $n-2$ coordinates.

The set $\{ \widehat{C^*_1}, \widehat{C^*_2}\}$ is independent
by considering the first $2$ coordinates.
\end{proof}

For a partial order $\sjle$, and for any $C \in \Circ(\sjle)$, 
$\hat{C}$ is independent of the total order $<$ extending $\sjle$ used,
so that $\hat{C}$ and $\widehat{\Circ(\sjle)}$ are well-defined. 
In addition, $\widehat{\Circ(\sjle)} \subset \widehat{\Circ(<)}$, so that:
\begin{prop}
\label{p:partialLessThan}
For a partial order,
$\widehat{\Circ(\sjle)}$
has rank at most $n-2$.
\end{prop}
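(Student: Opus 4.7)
The approach is to reduce the claim to Proposition~\ref{p:lessthan}(2) via the inclusion $\Circ(\sjle) \subseteq \Circ(<)$ for any total order $<$ extending $\sjle$, composed with the observation that the rank of a cone is monotone under inclusion.

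The plan is as follows. First I would choose a linear extension $<$ of the partial order $\sjle$ on $E$; since $E$ is finite this is an elementary greedy construction (or Szpilrajn's theorem in general). The rank of $\widehat{\Circ(\sjle)}$ is invariant under relabelling the ground set, since relabelling permutes coordinates of $\Real^n$ and is therefore a linear isomorphism, so without loss of generality I may assume $<$ coincides with the numerical indexing $1, \ldots, n$ fixed at the start of Section~\ref{s:om2}, with $\boldsymbol{\theta}$ the corresponding increasing angle vector. Second, Definition~\ref{d:partialcircuits} yields immediately
\begin{equation*}
\Circ(\sjle) \;=\; \bigcap_{<' \supseteq \sjle} \Circ(<') \;\subseteq\; \Circ(<).
\end{equation*}
Third, since the representation $\hat{C}$ of a circuit $C$ is defined by an explicit formula in the numerical indices and the entries of $\boldsymbol{\theta}$ alone, the same formula serves on both sides, so $\widehat{\Circ(\sjle)} \subseteq \widehat{\Circ(<)}$. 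The latter has rank exactly $n-2$ by Proposition~\ref{p:lessthan}(2), hence its subcone $\widehat{\Circ(\sjle)}$ has rank at most $n-2$.

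There is no serious obstacle here; the argument is a two-line chain of inclusions closed off by the already-established rank count. The only mild care required is in the first step, to verify that the numerical indexing can be taken compatibly with $\sjle$, which follows from the existence of linear extensions of finite partial orders, and to confirm that $\hat{C}$ does not depend on any choice of total order beyond the one used to index $E$, which is visible from the formula.
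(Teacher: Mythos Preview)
Your proposal is correct and matches the paper's own argument essentially line for line: the paper simply observes (in the sentence immediately preceding the proposition) that $\hat{C}$ is well-defined for $C \in \Circ(\sjle)$, that $\widehat{\Circ(\sjle)} \subset \widehat{\Circ(<)}$, and then invokes Proposition~\ref{p:lessthan}(2). Your additional remarks about linear extensions and relabelling are sound but more detailed than what the paper bothers to spell out.
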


We tie this back in with the previous section:
\begin{prop}
If $T=(\vec{G}, G, \sjle, A)$ is a twisted graph, then $\Sigma(T) = \hat{A}$.
\end{prop}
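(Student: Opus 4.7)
The plan is to verify directly from the definitions that the row of $\Sigma(T)$ indexed by $a \in A$ agrees componentwise with the vector $\hat a \in \Real^n$ of section~\ref{s:om2}; since $\hat A$ is by definition the non-negative cone over $\{\hat a : a \in A\}$, the matrix--cone identification then follows as an immediate bookkeeping consequence.

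The one preliminary I would extract first is a ``middle-element'' observation: if $a \in A \subset \Circ(\sjle)$ and $\underline a = \{x,y,z\}$, then $\sjle$ already picks out which of the three elements is the middle one. Indeed, by definition~\ref{d:partialcircuits} the signed set $a$ must be a circuit of $\Mat(<)$ for every total extension $<$ of $\sjle$, and by~(\ref{e:circuits}) those circuits have the form $\pm(\{e_1,e_3\},\{e_2\})$ with $e_1 < e_2 < e_3$; for the same signing to arise in every extension, one element of $\{x,y,z\}$ must lie strictly between the other two in every extension, which forces $e_1 \sjle e_2 \sjle e_3$ for some relabelling of $\{x,y,z\}$ as $\{e_1,e_2,e_3\}$. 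So $a = \pm C_{e_1,e_2,e_3}$ in the sense of section~\ref{s:om2}.

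With this labelling in hand, I would conclude by comparing $\Sigma_{a,e}$ against the $e$-th component of $\hat a$ through a three-way case split on $e \in \{e_1,e_2,e_3\}$, combined with a sign split on whether $a$ equals $+C_{e_1,e_2,e_3}$ or $-C_{e_1,e_2,e_3}$. In each case the pair $\{e',e''\}$ with $e' \sjle e''$ appearing in the definition of $\Sigma_{a,e}$ is precisely the complementary pair in $\{e_1,e_2,e_3\}$ taken in its induced order; the value $a(e)\sin(\theta_{e''}-\theta_{e'})$ then matches the corresponding entry $\LSN{e_2}{e_3}$, $-\LSN{e_1}{e_3}$ or $\LSN{e_1}{e_2}$ of $\widehat{\pm C_{e_1,e_2,e_3}}$ exactly, because the signs $a(e_1)=a(e_3)=-a(e_2)$ line up with the alternating signs in the formula for $\widehat{+C_{i,j,k}}$. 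The only real obstacle is the middle-element observation; once that is in place, the rest of the proof is pure definition-chasing with careful sign bookkeeping.
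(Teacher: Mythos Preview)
Your proposal is correct. The paper states this proposition without proof, treating it as immediate from the definitions; your argument supplies exactly the definition-chase one would expect, and your middle-element observation (that $a \in \Circ(\sjle)$ forces $\underline{a}$ to be a $\sjle$-chain) is the one nontrivial point, which you justify correctly and which the paper elsewhere asserts in the remark that $\hat{C}$ is well-defined for $C \in \Circ(\sjle)$.
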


\section{Twisted Graphs with Simplexes}
\label{s:twistedsimplex}
The key interesting property of the twisted graph that we studied in section~\ref{s:mainproof},
is that the corresponding matrix $\Sigma$ contains a simplex.
We will now study such twisted graphs, generalising the treatment in section~\ref{s:mainproof},
and providing a condition sufficient for a twisted graph to have such a simplex.
This provides an alternative proof of the main theorem.

\begin{defn}
A twisted graph $T= (\vec{G}, G, \sjle, A)$ is {\em simplicial} on $F \subset E$, 
if the submatrix of the  system $\Sigma = \Sigma(T)$ formed from the columns indexed by $F$
is a simplex, for every $\boldsymbol{\theta}$ respecting $\sjle$.
\end{defn}

We note that in this case $|F| = |A| -1$.

As an example, consider the twisted graph $T$ corresponding to $M_8$, equation  (\ref{e:m8}): 
lemma~\ref{l:simplex8} shows that $T$ is simplicial on $\{2,3,4,6,8,9,10\}$.

If there is a linear dependency between the rows of $\Sigma$ then there is only one (up to multiplication)
and it is the unique positive dependency (restricted to $F$),
provided by the simplex. Thus, rank $\Sigma$ is either $|A|$ or $|A|-1$.
In the example, we saw that which of these held, depends on the choice of $\boldsymbol{\theta}$.
Now, $\Sigma$ is also a set of $|A|$ vectors from $\widehat{\Circ(\sjle)}$, which has rank 
at most $|E| -2$,
hence:

\begin{prop}
For a simplicial twisted graph $T= (\vec{G}, G, \sjle, A)$, $|A| \leq |E| -2 $.
\end{prop}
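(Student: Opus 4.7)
The plan is a two-sided rank estimate for the $|A|\times|E|$ matrix $\Sigma = \Sigma(T)$. For the lower bound, the simplicial hypothesis says $\Sigma|_F$ is a simplex with $|A|$ rows, $|F|$ columns, and a unique-up-to-scalar positive dependency among its rows; hence $|F| = |A|-1$ and $\rank(\Sigma|_F) = |A|-1$. Any linear dependency among the full rows of $\Sigma$ restricts on $F$ to this unique positive one, so at most one dependency can lift, giving $\rank \Sigma \in \{|A|-1,\,|A|\}$.

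For the upper bound, the proposition just preceding identifies $\Sigma(T) = \hat{A}$, so every row of $\Sigma$ lies in $\widehat{\Circ(\sjle)}$, which by Proposition~\ref{p:partialLessThan} has rank at most $|E|-2$. Thus $\rank \Sigma \le |E|-2$. Combining the two bounds directly yields only the weak conclusion $|A|-1 \le |E|-2$, i.e.\ $|A| \le |E|-1$. To reach the claimed $|A| \le |E|-2$ I would next show that some $\boldsymbol{\theta}$ respecting $\sjle$ achieves $\rank \Sigma = |A|$, so that the ambient bound applies to $|A|$ itself.

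The main obstacle is exactly this last step: exhibiting a $\boldsymbol{\theta}$ at which all $|A|$ rows of $\Sigma$ are linearly independent. The locus where the rank drops to $|A|-1$ is the common vanishing of the $|A|\times|A|$ minors of $\Sigma$, equivalently the condition that the simplex's positive dependency on $F$ extends to each column in $E\setminus F$; each such extension is a specific trigonometric identity in the $\LSN{i}{j}$. The prototypical $M_8$ of section~\ref{s:mainproof}, where both rank cases explicitly occur and where $|A|=|E|-2=8$ is tight, is the model to emulate: I would pick $\boldsymbol{\theta}$ generic in the open subset respecting $\sjle$, and argue that an identically holding extension of the dependency would force a formal linear relation among the $\hat{a}$ inconsistent with the combinatorial distinctness of the circuits in $A$ together with the rank of $\widehat{\Circ(<)}$ for any total order $<$ extending $\sjle$. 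This delivers $\rank \Sigma = |A|$ at the chosen $\boldsymbol{\theta}$, and the upper bound then closes to $|A| \le |E|-2$.
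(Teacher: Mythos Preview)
Your two-sided rank estimate is precisely the paper's argument: the paragraph preceding the proposition observes that $\rank \Sigma \in \{|A|-1,\,|A|\}$ from the simplex on $F$, then invokes Proposition~\ref{p:partialLessThan} to bound $\rank \Sigma \le |E|-2$, and asserts the result. You are right that these two facts together yield only $|A|-1 \le |E|-2$, i.e.\ $|A| \le |E|-1$; the paper's text is no more detailed than your first two paragraphs on this point, and you have correctly isolated what is missing.

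Your proposed repair, however, cannot succeed as stated. You want a $\boldsymbol{\theta}$ with $\rank\Sigma(\boldsymbol{\theta}) = |A|$, but the case to be excluded is exactly $|A| = |E|-1$, and there the ambient bound already forces $\rank\Sigma \le |E|-2 = |A|-1$ for \emph{every} $\boldsymbol{\theta}$: any $|E|-1$ vectors drawn from the $(|E|-2)$-dimensional space $\widehat{\Circ(<)}$ are linearly dependent identically, so no generic choice of angles can achieve rank $|A|$. Appealing to ``combinatorial distinctness of the circuits in $A$'' does not help either, since distinct circuits of $\Mat(<)$ routinely satisfy linear relations in $\widehat{\Circ(<)}$ --- for instance any three of the four circuits supported on a common four-element subset are dependent, by exactly the identity of Proposition~\ref{p:normalize}. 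Closing the gap to $|A|\le |E|-2$ would therefore require an argument of a different character, for instance a combinatorial count exploiting the partition $\{A_v\}_{v\in V}$ together with the graph structure, or a structural obstruction to the simplicial hypothesis when $|E\setminus F|=2$, rather than a genericity argument in $\boldsymbol{\theta}$.
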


Now, consider the set $F^c=E \setminus F$. This has at least three members.
We want to look at the restriction of $\widehat{\Circ(\sjle)}$ to $F^c$,
i.e. those which are zero on all members of $F$.
In the conditions on the main theorem, we saw that the partial order
being used totally orders $F^c$. This motivates:
\begin{defn}
A twisted graph $T= (\vec{G}, G, \sjle, A)$ is {\em strictly simplicial}
if it is simplicial on $F$ and $E \setminus F$ is totally ordered by $\sjle$.
\end{defn}

In such a case, taking $<$ as a total order extending $\sjle$:
\begin{align}
\widehat{\Circ(\sjle)}|_{F^c} &=\widehat{\Circ(<)}|_{F^c}\\
&= \widehat{\Circ(<)} \cap \Real ^ {F^c}\\
&= \{ c \in \widehat{\Circ(<)} : c(f)=0 \text{ for all } f \in F \}\\
&= \{ c \in \Real ^ {F^c} : c \perp \widehat{\CoC(<)} \}
\end{align}
the last of these, follows from proposition~\ref{p:lessthan}, and we conclude:
\begin{prop}
$\widehat{\Circ(\sjle)}|_{F^c}$ has rank $|F^c|-2$.
\end{prop}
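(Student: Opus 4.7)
The plan is to exploit the chain of equalities immediately preceding the statement, which identifies
\[
\widehat{\Circ(\sjle)}|_{F^c} \;=\; \{\, c \in \Real^{F^c} : c \perp \widehat{\CoC(<)}\,\},
\]
thought of as a subspace of $\Real^{F^c}$ (equivalently, of vectors in $\Real^n$ supported on $F^c$). A vector $c \in \Real^{F^c}$ annihilates $\widehat{\CoC(<)}$ precisely when it annihilates the projection $\pi_{F^c}(\widehat{\CoC(<)}) \subset \Real^{F^c}$, so the rank I want equals $|F^c| - \rank(\pi_{F^c}(\widehat{\CoC(<)}))$. Since $\widehat{\CoC(<)}$ has rank $2$ by proposition \ref{p:lessthan}, the projection has rank at most $2$, and the entire proposition reduces to showing that $\pi_{F^c}$ is injective on $\widehat{\CoC(<)}$.

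To prove injectivity, I would suppose $c \in \widehat{\CoC(<)}$ vanishes on $F^c$. By the orthogonality clause of proposition \ref{p:lessthan}, $c$ is orthogonal to every element of $\widehat{\Circ(<)}$, and a fortiori to each row $\hat{a}$ of $\Sigma(T)$, since $A \subset \Circ(\sjle) \subset \Circ(<)$. Because $c$ is supported on $F$, the inner products $c \cdot \hat{a}$ depend only on $c|_F$ and $\hat{a}|_F$, so $c|_F$ is orthogonal to every row of the submatrix $\Sigma(T)|_F$. By hypothesis $\Sigma(T)|_F$ is a simplex: by definition \ref{d:simplex} and theorem \ref{t:simplex}, it has $|A|=|F|+1$ rows in $|F|$ columns with a unique (up to scalar) linear dependency. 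Hence its rank is $|F|$ and its rows span all of $\Real^F$, forcing $c|_F = 0$ and therefore $c = 0$.

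This shows $\pi_{F^c}$ restricted to the rank-$2$ space $\widehat{\CoC(<)}$ is injective, so its image has rank exactly $2$, yielding the desired rank $|F^c| - 2$. I do not anticipate a substantive obstacle: the argument is a dimension count resting on two uses of proposition \ref{p:lessthan} (the rank of $\widehat{\CoC(<)}$ and the orthogonality $\widehat{\Circ(<)} \perp \widehat{\CoC(<)}$) together with the standard fact that a simplex of size $r+1$ by $r$ has rank $r$. The only point requiring care is the bookkeeping between $\Real^n$ and $\Real^{F^c}$ under the padding-by-zero embedding, which matches the inner products without any adjustment.
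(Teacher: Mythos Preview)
Your argument is correct. The paper states the proposition immediately after the displayed chain of equalities with no separate proof, so the intended justification is whatever makes the rank of $\{c \in \Real^{F^c} : c \perp \widehat{\CoC(<)}\}$ equal to $|F^c|-2$. You and the paper agree that this amounts to showing $\pi_{F^c}(\widehat{\CoC(<)})$ has rank exactly $2$; the difference lies in how that is established.

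The paper's implicit route does not use the simplex hypothesis on $F$. Since $F^c$ is totally ordered by $\sjle$ (the strictly simplicial assumption) and $|F^c|\geq 3$, one can simply rerun Proposition~\ref{p:lessthan} on the ground set $F^c$: for distinct $i_1,i_2\in F^c$ the projected cocircuits $\pi_{F^c}(\widehat{C^*_{i_1}})$ and $\pi_{F^c}(\widehat{C^*_{i_2}})$ have $(i_1,i_2)$-coordinates $(0,\SN{i_2}{i_1})$ and $(\SN{i_1}{i_2},0)$, which are independent because $\theta_{i_1}\neq\theta_{i_2}$. Equivalently, the restriction of $\widehat{\Circ(<)}$ to $F^c$ is exactly the circuit space of $\Mat(<|_{F^c})$, to which Proposition~\ref{p:lessthan} applies verbatim.

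Your route instead exploits the simplicial hypothesis on $F$: a cocircuit vanishing on $F^c$ is orthogonal to every row of $\Sigma(T)$, hence to every row of the simplex $\Sigma(T)|_F$, whose full column rank forces the cocircuit to be zero. This is a pleasant observation---it shows that the simplex condition alone (without the total-order hypothesis on $F^c$) already forces $\pi_{F^c}$ to be injective on $\widehat{\CoC(<)}$---but it is more machinery than the paper needs here, and slightly obscures that the result is really just Proposition~\ref{p:lessthan} applied to a smaller ground set.
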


So for a strictly simplicial twisted graph,
we can take a basis $\{ b_1, b_2, \ldots b_k \}$ for $\widehat{\Circ(\sjle)}|_{F^c}$,
with $k = |F^c|-2 = |E|-|F|-2$. Thus $|A|+k = |E| -1$.
Moreover, $\Sigma \cup \{ b_1, b_2, \ldots b_k \} \subset \widehat{\Circ(\sjle)}$,
and $|\Sigma \cup \{ b_1, b_2, \ldots b_k \}| = |E| -1 > |E| -2 \geq \rank \widehat{\Circ(\sjle)}$.
Thus, there is a linear dependency amongst  $\Sigma \cup \{ b_1, b_2, \ldots b_k \}$.

Also, we can choose the basis such that, for each $i$, $b_i = \widehat{C_i}$, for some circuit
$C_i$ in $\Circ(\sjle)$, with $\underline{C_i} \subset F_c$.
By taking $B$ as an appropriate subset (not necessarily proper) of $\{ C_1, C_2, \ldots C_k \}$,
we have the following:

\begin{lem}
For a twisted graph $T= (\vec{G}, G, \sjle, A)$,
strictly simplicial on $F$,
there is some subset $B \subset \Circ(\sjle) \setminus F$, such that there is
a homogeneously unique linear dependency $\Lambda$ in $\widehat{A \cup B}$ 
\end{lem}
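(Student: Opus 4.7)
The plan is to take $B = \{C_1, \ldots, C_k\}$ to be the entire basis assembled in the paragraph preceding the lemma, and to show that this choice already yields a homogeneously unique dependency; no further pruning is needed. First, I would count: $|A \cup B| = (|F|+1) + (|F^c|-2) = |E|-1$, while every vector in $\widehat{A \cup B}$ lies in $\widehat{\Circ(\sjle)}$, which has rank at most $|E|-2$ by proposition~\ref{p:partialLessThan}. So $\widehat{A \cup B}$ is linearly dependent and a non-trivial $\Lambda$ exists.

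For homogeneous uniqueness, the plan is to prove the sharper statement that the linear span of $\widehat{A \cup B}$ has dimension exactly $|E|-2$, which forces the space of dependencies to be one-dimensional. Let $\pi : \Real^E \rightarrow \Real^F$ denote coordinate projection. Because $T$ is simplicial on $F$, theorem~\ref{t:simplex} gives that $\pi(\widehat{A}) = \Sigma(T)|_F$ has rank exactly $|F|$. Each $\widehat{C_i}$ is supported on $F^c$, so $\widehat{B} \subset \ker \pi$, and hence $\pi\bigl(\mathrm{span}\,\widehat{A\cup B}\bigr) = \pi\bigl(\mathrm{span}\,\widehat{A}\bigr)$ still has dimension $|F|$.

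Next I would compute $\dim\bigl(\mathrm{span}\,\widehat{A\cup B} \cap \ker \pi\bigr)$. Since $\Circ(\sjle)$ is closed under negation, $\widehat{\Circ(\sjle)}$ is a genuine vector subspace of $\Real^E$ and contains $\mathrm{span}\,\widehat{A \cup B}$. Intersecting with $\ker \pi = \Real^{F^c}$ therefore gives $\mathrm{span}\,\widehat{A \cup B} \cap \ker \pi \subset \widehat{\Circ(\sjle)}|_{F^c}$, while conversely $\mathrm{span}\,\widehat{B} = \widehat{\Circ(\sjle)}|_{F^c}$ by the choice of $B$ as a basis of the latter. Hence the intersection is exactly $\widehat{\Circ(\sjle)}|_{F^c}$, of dimension $k$. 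Adding the kernel and image contributions, $\dim \mathrm{span}\,\widehat{A\cup B} = |F|+k = |E|-2$, and the dependency space has dimension $(|E|-1)-(|E|-2) = 1$, giving the homogeneously unique $\Lambda$.

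The only subtle step is the identification $\mathrm{span}\,\widehat{A\cup B} \cap \ker \pi = \widehat{\Circ(\sjle)}|_{F^c}$; this rests on the symmetry of $\Circ(\sjle)$ under negation (so that its cone really is a vector space) combined with the strict simpliciality of $T$ (which, via the total order on $F^c$, pins the rank of $\widehat{\Circ(\sjle)}|_{F^c}$ at exactly $k$, not merely at most $k$). Once those two observations are in place, the rest of the argument is pure dimension counting.
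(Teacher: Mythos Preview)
Your argument is correct and in fact slightly sharper than the paper's. The paper splits into two cases: if $\widehat{A}$ already carries a dependency it takes $B=\emptyset$ and invokes the simplex property directly; otherwise it picks a basis $B_0$ of $\widehat{\Circ(\sjle)}|_{F^c}$, observes that $\widehat{A\cup B_0}$ is dependent by the rank bound, and then extracts a minimal dependent subset $\widehat{A\cup B}$ with $B\subset B_0$. By contrast, you keep the full basis $B=B_0$ throughout and show, via rank--nullity for the projection $\pi:\Real^E\to\Real^F$, that $\dim\mathrm{span}\,\widehat{A\cup B}=|F|+k=|E|-2$, so the $|E|-1$ vectors have a one-dimensional dependency space. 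This sidesteps the case split entirely and also makes explicit something the paper leaves tacit, namely that any minimal dependent subset must contain all of $\widehat{A}$ (which follows because dropping any $\hat a$ leaves $|F|$ simplex rows that are independent on $F$, so no dependency can be supported on $(A\setminus\{a\})\cup B_0$). Your identification $\mathrm{span}\,\widehat{A\cup B}\cap\ker\pi=\widehat{\Circ(\sjle)}|_{F^c}$ is exactly the point where strict simpliciality is used, and you flag this correctly. One small thing worth stating explicitly is that $A\cap B=\emptyset$: every $a\in A$ meets $F$ (else its simplex row would vanish), so your count $|A\cup B|=|E|-1$ is honest.
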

\begin{proof}
If $\widehat{A}$ has a linear dependency, take $B = \emptyset$,
any linear dependency of $\widehat{A}$, is also a linear dependency
of the rows of the simplex, and hence homogeneously unique.

Otherwise choose a basis $B_0$ for $\widehat{\Circ(\sjle)}|_{F^c}$.
By considering the rank, we have that $A \cup B_0$ is a linearly dependent
set, hence, there is some $B \subset B_0$, such that $\widehat{A \cup B}$  is a minimal
independent set, and has a homogeneously unique linear dependency.
\end{proof}

\begin{lem}
\label{l:positiveLambda}
In the previous lemma, $B$ can be chosen such that $\Lambda > 0$.
\end{lem}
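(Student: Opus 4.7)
The plan is to leverage the simplex structure of $A$ on $F$ to obtain a canonical positive partial dependency, then cancel the residual by combining circuits supported in $F^c$ with non-negative coefficients, using closure of $\Circ(\sjle)$ under negation to absorb any sign choices.

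Concretely, since $A$ is a simplex on $F$, Definition~\ref{d:simplex} supplies strictly positive coefficients $\lambda_a > 0$, unique up to a common scalar, with $\sum_{a \in A} \lambda_a (\hat{a}|_F) = 0$. Set $v = \sum_{a \in A} \lambda_a \hat{a}$; then $v$ vanishes on $F$, so $v \in \widehat{\Circ(\sjle)}|_{F^c}$. If $v = 0$ then $B = \emptyset$ already suffices and $\Lambda = \lambda > 0$. Otherwise, the rank computation in the proof of the previous lemma shows that $\{\hat{C} : C \in \Circ(\sjle),\ \underline{C} \subset F^c\}$ spans $\widehat{\Circ(\sjle)}|_{F^c}$, so $-v = \sum_i \alpha_i \hat{C}_i$ for some real $\alpha_i$ and circuits $C_i$ supported in $F^c$. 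Because $\Circ(\sjle)$ is closed under negation and $\widehat{-C} = -\hat{C}$, any $C_i$ with $\alpha_i < 0$ may be replaced by its opposite $-C_i$ with coefficient $|\alpha_i|$, producing $-v = \sum_i \mu_i \hat{C}'_i$ with every $\mu_i > 0$.

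Setting $B = \{C'_i\}$, pruned so that $\{\hat{C} : C \in B\}$ remains linearly independent, then makes $\sum_{a} \lambda_a \hat{a} + \sum_{C \in B} \mu_C \hat{C} = 0$ a linear dependency on $\widehat{A \cup B}$ with every coefficient strictly positive; this is the required $\Lambda > 0$. The main obstacle is compatibility with the homogeneous uniqueness guaranteed by the previous lemma: the sign-absorbing step may introduce linear dependencies among the chosen $\hat{C}$'s, and $B$ must be pruned carefully so that the dependency space on $\widehat{A \cup B}$ remains one-dimensional. This is handled by a standard minimality argument on $B$; uniqueness of the $\lambda_a$ up to scalar, combined with linear independence of $\{\hat{C} : C \in B\}$ modulo the dependency, then pins down the full positive coefficient structure.
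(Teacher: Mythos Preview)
Your approach is correct but works harder than necessary. The paper's proof is much shorter: it simply takes the $B$ already furnished by the previous lemma, together with its homogeneously unique dependency $\Lambda$, and then observes two things. First, since each $\hat b$ for $b\in B$ vanishes on $F$, the restriction of $\Lambda$ to the $A$-coordinates is a dependency among the rows of the simplex on $F$; by the simplex property this forces every $\lambda_a$ to have the same sign, which after a global sign change makes them all strictly positive. Second, for any $b\in B$ with $\lambda_b<0$ one replaces $b$ by $-b$; this leaves $\widehat{A\cup B}$ unchanged as a set of lines in $\Real^n$, so linear independence and one-dimensionality of the dependency space are automatically preserved.

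By contrast, you rebuild $B$ from scratch: you start from the positive simplex coefficients $\lambda_a$, form the residual $v=\sum_a\lambda_a\hat a\in\widehat{\Circ(\sjle)}|_{F^c}$, write $-v$ as a combination of circuits supported in $F^c$, absorb signs, and then prune for linear independence. This is fine, and the Carath\'eodory-style pruning you allude to does work, but it costs you extra verification (that pruning preserves positivity, and that the final $\widehat{A\cup B}$ has a one-dimensional dependency space). The paper avoids all of that by recycling the $B$ from the previous lemma, where uniqueness is already in hand, and noting that replacing $b$ by $-b$ cannot disturb it. Your route is more self-contained; the paper's is quicker because it leans on what was just proved.
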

\begin{proof}
Since $\widehat{B}$ is a linearly independent set, there is some
some $a \in A$, with $\lambda_a \neq 0$. By multiplying by $-1$ if necessary, $\lambda_a > 0$.
Since  $\widehat{B} \subset \Real ^ {F^c}$, $\Lambda$ gives a linear 
dependency for the simplex indexed by $F$, thus $\lambda_{a'} > 0$ for all $a' \in A$.
For each $b \in B$, if $\lambda_b < 0$ we can replace $b$ with $-b$ to get the desired
result.
\end{proof}

Combining the previous lemma with Carver's theorem, we get:
\begin{thm}
For a strictly simplicial twisted graph $T= (\vec{G}, G, \sjle, A)$, for every $\boldsymbol{\theta}$
respecting $\sjle$,
there is
some set $B$ of circuits from $\Circ(\sjle)$ such that $\widehat{ A \cup B } . \mathbf{r} > 0$
is a minimal insoluble system.
\end{thm}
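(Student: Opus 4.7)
The plan is to apply Carver's theorem (Theorem~\ref{t:carver}) directly, using as input the positive linear dependency produced by Lemma~\ref{l:positiveLambda}. All the structural work --- building the simplex on $F$, extending it to a minimally linearly dependent set $\widehat{A \cup B}$, and arranging the dependency to be positive --- has already been done in the immediately preceding lemmas; what remains is to repackage this as a minimal insoluble system.

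Fix $\boldsymbol{\theta}$ respecting $\sjle$. By the lemma immediately preceding Lemma~\ref{l:positiveLambda}, there is a set $B$ of circuits of $\Circ(\sjle)$ with supports contained in $F^c$ such that $\widehat{A \cup B}$ is a minimally linearly dependent set; by Lemma~\ref{l:positiveLambda} we may further choose $B$ so that the homogeneously unique dependency $\Lambda$ on $\widehat{A \cup B}$ satisfies $\Lambda > 0$ componentwise. Since $\Lambda$ is then a non-negative, non-zero linear dependency among the rows of $\widehat{A \cup B}$, Theorem~\ref{t:carver} yields that the system $\widehat{A \cup B} \cdot \mathbf{r} > 0$ is insoluble.

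For minimality, fix any row $w \in A \cup B$ and consider the reduced row set $\widehat{(A \cup B) \setminus \{w\}}$. Because $\widehat{A \cup B}$ was chosen to be minimally linearly dependent, this smaller set is linearly independent, so it admits no non-trivial linear dependency, and in particular no non-negative non-zero one. Applying Theorem~\ref{t:carver} in the opposite direction, the reduced strict-inequality system is soluble, which is exactly the required minimality statement. I do not anticipate a genuine obstacle: the theorem is essentially a repackaging of Lemma~\ref{l:positiveLambda}, and the only point deserving care --- positivity of the $B$-components of $\Lambda$ --- has already been arranged inside that lemma by flipping the sign of each $b \in B$ as needed, while positivity of the $A$-components is forced by the simplex on $F$ having a unique positive dependency.
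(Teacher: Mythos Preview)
Your proposal is correct and follows essentially the same approach as the paper: invoke Lemma~\ref{l:positiveLambda} to obtain a positive dependency, apply Carver's theorem for insolubility, and use uniqueness of the dependency for minimality. Your minimality argument (removing a row leaves a linearly independent set, hence no nonnegative dependency) is just a slightly more explicit phrasing of the paper's one-line observation that a proper insoluble subsystem would yield, via Carver, a second dependency contradicting homogeneous uniqueness.
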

\begin{proof}
The previous lemma furnishes a positive linear dependency, showing from theorem~\ref{t:carver},
that the system is insoluble. If there were some proper insoluble subsystem, then
the same theorem would furnish a different positive linear dependency, contradicting
the uniqueness of the previous lemma.
\end{proof}

As in section~\ref{s:mainproof}, this system can be analysed in terms of determinants of
submatrices of $\widehat{ A \cup B } $ to give a set of equations and inequalities that are sufficient
for a system to be insoluable. By allowing the $\boldsymbol{\theta}$ to vary, we can find conditions sufficient
for the system $\widehat{ A \cup B } . \mathbf{r} > 0$ to be soluble, i.e. for every subset of $B$,
the conditions for insolubility do not hold. We conjecture that for any set $B$ chosen as in lemma~\ref{l:positiveLambda},
with some specific $\boldsymbol{\theta}$ respecting $\sjle$, such that the system $\widehat{ A \cup B } . \mathbf{r} > 0$ is not
soluble, there is some other $\boldsymbol{\theta'}$ which respects $\sjle$ for
which the system is soluble. This would allow us to represent any simplicial twisted graph as a line arrangement, like
figure~\ref{f:straight}, with some constraint on the angles in that arrangement.
Further investigation of this conjecture may depend on converting the various constraints, which
all amount to constraints on the signs of differences of pairs of sums of products of sines, into normal form,
using section~\ref{s:normal}, and showing that there is a simultaneous solution.

\subsection{Maximal Simplicial Twisted Graphs}

We investigate one case further, the easiest, when $|A| = |E| - 2$, so that $k=0$ or $k=1$.
The analogue of lemma~\ref{l:simplex8}, requires us to consider the signs of the determinants
of three matrices. These signs are all covariant, or contravariant. Precisely which depends on 
the detail of the relative ordering of the members of $F_c$ within $E$, since odd permutations
cause sign changes of determinants.

\begin{lem}
For a twisted graph $T= (\vec{G}, G, \sjle, A)$, strictly simplicial on $F$, with $|F| = |E| - 3$,
then there is a signed set $X$ with $\underline{X} = F_c$, such that
for each $\boldsymbol{\theta}$ respecting $\sjle$, 
there is a sign $\sigma(\boldsymbol{\theta} ) \in \{ -1, 0, +1 \}$,
and for
each $x \in E \setminus F$, the matrix formed from the $F \cup \{ x \}$ columns of $\Sigma$ has determinant
with sign $X(x) \sigma(\boldsymbol{\theta} )$. 
\end{lem}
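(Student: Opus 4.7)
The plan is to extract from the simplex $\Sigma[F]$ a single strictly positive dependency among its rows, and to observe that the corresponding residual vector on the three-element set $F^c$ is forced into a one-dimensional subspace whose sign pattern is combinatorially fixed. In particular, no new combinatorial data beyond $\sjle$ will be needed to describe $X$.

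First I would apply theorem~\ref{t:simplex} and definition~\ref{d:simplex} to $\Sigma[F]$: under any $\boldsymbol{\theta}$ respecting $\sjle$, its $|A|=|E|-2$ maximal minors alternate in sign, and with an appropriate alternation convention their absolute values $\lambda_a>0$, $a\in A$, furnish the unique (up to scale) strictly positive dependency $\sum_{a\in A}\lambda_a\Sigma_{a,f}=0$ for every $f\in F$. For any $x\in F^c$, Laplace expansion of $\det\Sigma[F\cup\{x\}]$ along the extra column reuses these same cofactors, giving
\begin{equation*}
\det\Sigma[F\cup\{x\}]=\epsilon(x)\sum_{a\in A}\lambda_a\Sigma_{a,x},
\end{equation*}
where $\epsilon(x)\in\{-1,+1\}$ depends only on the position of $x$ in $F\cup\{x\}$ and on the alternation convention fixed above, not on $\boldsymbol\theta$.

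Next I would form the vector $v:=\sum_{a\in A}\lambda_a\hat a\in\Real^E$. By the dependency $v$ vanishes on $F$, so $v$ is supported on $F^c$; and as a non-negative combination of elements of $\widehat{\Circ(\sjle)}$ it lies in that subspace. Writing $F^c=\{x_1,x_2,x_3\}$ with $x_1\sjle x_2\sjle x_3$, the vectors of $\widehat{\Circ(\sjle)}$ supported on $F^c$ form a one-dimensional subspace: fixing any total order $<$ extending $\sjle$, any such vector is orthogonal to the cocircuit representatives $\widehat{+C^*_{x_1}}$ and $\widehat{+C^*_{x_2}}$, which restrict to two independent linear forms on $\Real^{F^c}$. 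This subspace is spanned by $\widehat{+C_{x_1,x_2,x_3}}$, whose coordinates at $(x_1,x_2,x_3)$ are $\bigl(\sin(\theta_{x_3}-\theta_{x_2}),\,-\sin(\theta_{x_3}-\theta_{x_1}),\,\sin(\theta_{x_2}-\theta_{x_1})\bigr)$, with the $\boldsymbol\theta$-independent sign pattern $(+,-,+)$ whenever $\boldsymbol\theta$ respects $\sjle$.

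Consequently $v=\mu(\boldsymbol\theta)\,\widehat{+C_{x_1,x_2,x_3}}$ for some scalar $\mu(\boldsymbol\theta)\in\Real$. Setting $X(x):=\epsilon(x)\cdot\sign\bigl(\widehat{+C_{x_1,x_2,x_3}}\bigr)_x$ defines a signed set on $F^c$ with $\underline{X}=F^c$ that depends only on the combinatorics, and $\sigma(\boldsymbol\theta):=\sign\mu(\boldsymbol\theta)\in\{-1,0,+1\}$ absorbs the $\boldsymbol\theta$-dependence, yielding $\sign\det\Sigma[F\cup\{x\}]=X(x)\,\sigma(\boldsymbol\theta)$ as required. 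The main obstacle is the routine bookkeeping of signs through the alternation convention and Laplace expansion; the genuine content lies in the one-dimensionality of $\widehat{\Circ(\sjle)}|_{F^c}$ together with the fixed-sign pattern of $\widehat{+C_{x_1,x_2,x_3}}$, both of which follow transparently from proposition~\ref{p:lessthan} and the constraint that $\boldsymbol\theta$ respects $\sjle$.
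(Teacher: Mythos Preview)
Your proof is correct and shares its core insight with the paper's: both recognize that the unique positive dependency among the rows of the simplex $\Sigma[F]$ produces a residual vector on $F^c$ lying in the one-dimensional space spanned by $\widehat{+C_{x_1,x_2,x_3}}$, whose sign pattern $(+,-,+)$ is combinatorially fixed. The presentations differ, however. The paper invokes lemma~\ref{l:positiveLambda} to obtain $B=\{C'\}$ with $C'=\pm C$, augments $\hat A$ by the extra row $\widehat{C'}$, observes that the resulting $(|A|+1)\times|A|$ submatrix on columns $F\cup\{x\}$ is again a simplex, and then \emph{defines} $X$ by fixing one particular $\boldsymbol\theta$ and checking invariance under change of $\boldsymbol\theta$ by a case split on whether $C'=C$ or $C'=-C$. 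Your route is more direct: Laplace expansion along column $x$ identifies $\det\Sigma[F\cup\{x\}]$ with $\epsilon(x)\,v_x$, and the explicit formula $X(x)=\epsilon(x)\cdot\sign(\widehat{+C_{x_1,x_2,x_3}})_x$ drops out without the row augmentation or the case analysis. What your version buys is an explicit description of $X$; what the paper's buys is that it stays within the simplex/Motzkin framework already set up.

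One point you pass over quickly deserves a sentence: for $\epsilon(x)$ to be $\boldsymbol\theta$-independent you need the alternation pattern of the $|A|$ maximal minors of $\Sigma[F]$ (i.e.\ which is positive and which negative, not merely that they alternate) to be the same for every $\boldsymbol\theta$ respecting $\sjle$. This holds because the region $\{\boldsymbol\theta:\boldsymbol\theta\text{ respects }\sjle\}$ is convex, hence connected, and those minors never vanish on it by the simplicial hypothesis; but it is worth saying explicitly.
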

\begin{proof}
Given a specific  $\boldsymbol{\theta}$ either $\rank \Sigma = |A| -1$ or $\rank \Sigma = |A|$.
In the first case, each of the three matrices is singular, and   $\sigma(\boldsymbol{\theta} )=0$.

In the second case, we write $F_c = \{ x_1, x_2, x_3 \}$ with $x_1 \sjle x_2 \sjle x_3$;
we write $C = ( \{ x_1, x_3 \}, \{ x_2 \} )$.
We find the circuit $C'$ and $B = \{ C' \}$ satisfying lemma~\ref{l:positiveLambda}.
Either $C' = C$ or $C'= -C$.

We set:
\begin{equation}
\sigma(\boldsymbol{\theta} ) = \begin{cases}
1 & C' = C \\
-1 & C' = -C
\end{cases}
\end{equation}

For each $x \in F_c$, the positive dependency $\Lambda$ from that lemma is
also a positive dependency of the $|A|$ by $|A|+1$ submatrix $\Sigma'$ 
of $\widehat{A \cup \{ C' \}}$,
formed from the columns $F \cup \{ x \}$. 
Moreover, it is unique, because the last row, $\widehat{C'}$, of the submatrix,
is zero on $F$, and has only the one non-zero entry at $x$, and we know that the
other rows and columns have a unique positive dependency, being a simplex.

Thus,  $\Sigma'$ is a simplex, and the signs of the subdeterminants alternate.
The signs of the first $|A|$ subdeterminants are given by the sign
of $C'(x)$, the position of $x$ with respect to $F$ (and hence the sign of the permutation
to move $x$ to the final column of the submatrix), and the  sign of the corresponding 
subdeterminant in the simplex from $\hat{A}$ on $F$.
We choose each $X(x)$ to satisfy the lemma for some particular $\boldsymbol{\theta}$.
For a different $\boldsymbol{\theta}$, where we also find $C' = C$ 
none of the signs
in the derivation change, so that the same $X$ satisfies. On the other hand, if we find $C' = -C$,
the sign of $C'(x)$ has changed, and so has the sign of $\sigma(\boldsymbol{\theta} )$, so that
once again $X$ satisfies the lemma.
\end{proof}

We conjecture that in all such cases, $\sigma$ is surjective, which allows us to draw a line arrangement
corresponding to both variations of $A$ and $C$ or $-C$, and a third arrangement in which each $a \in A$ 
corresponds to a point in the drawing.

\begin{thm}
\label{t:simplicial}
For a twisted graph $T= (\vec{G}, G, \sjle, A)$, simplicial on $F$,
with $E = F \cupdot \{ x_1, x_2, x_3 \}$, with $x_1 \sjle x_2 \sjle x_3$,
and $C$ being either $( \{ x_1, x_3 \}, \{ x_2 \} )$ or its opposite,
then there is a sign $\sigma_C \in \{ 1, -1\}$, such that
$\widehat{A \cup \{  C \} }.\mathbf{r} > 0$
is soluble for $\boldsymbol{\theta}$ respecting $\sjle$ 
if and only if the sign of the determinant of
the square submatrix of $\hat{A}$ on columns $F \cup \{ x_1 \}$
is $\sigma_C$. Moreover, $\sigma_{-C} = - \sigma_C$.
\end{thm}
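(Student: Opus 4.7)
The plan is to apply Carver's criterion (Theorem~\ref{t:carver}) to reduce the solubility question to the sign of a single scalar $c = c(\boldsymbol\theta)$ that measures how the positive dependency of the simplex $\hat A|_F$ projects against $\widehat C$, and then to read off $\sign(c)$ from the target determinant via cofactor expansion.

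First I would extract the unique positive dependency $\mu = (\mu_a)_{a \in A}$ with $\mu_a > 0$ guaranteed by the definition of simplex (Definition~\ref{d:simplex}) applied to $\hat A|_F$. By construction $\mu^T \hat A$ vanishes on every column in $F$, and being a linear combination of vectors in $\widehat{\Circ(\sjle)}$ it lies in $\widehat{\Circ(\sjle)}$. Since $F^c = \{x_1, x_2, x_3\}$ is totally ordered by $\sjle$, the analysis preceding this theorem gives that $\widehat{\Circ(\sjle)}|_{F^c}$ has rank $|F^c|-2 = 1$ and is spanned by $\widehat C$. Hence $\mu^T \hat A = c\,\widehat C$ for a unique scalar $c = c(\boldsymbol\theta)$.

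Next I would apply Theorem~\ref{t:carver} to the $(|A|+1)$-row matrix $\widehat{A \cup \{C\}}$. Because $\hat A|_F$ is a simplex, the $A$-part of any dependency of $\widehat{A \cup \{C\}}$ must be a scalar multiple of $\mu$ (its restriction to columns $F$ annihilates $\hat A|_F$); combined with $\mu^T \hat A = c\,\widehat C$, this shows the dependency space is one-dimensional and spanned by $(\mu, -c)$ with $-c$ in the $C$-position. Since $\mu > 0$, a non-negative non-zero dependency exists precisely when $c \le 0$, so Carver yields: the system $\widehat{A \cup \{C\}}\cdot \mathbf r > 0$ is soluble if and only if $c > 0$.

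Finally I would identify $\sign(c)$ with the sign of $\det(\hat A|_{F \cup \{x_1\}})$. The standard cofactor formula for a left null vector of an $|A|\times(|A|-1)$ matrix expresses $\mu_a$ as a fixed $\pm 1$ sign times $(-1)^a$ times the $a$-th maximal minor of $\hat A|_F$, with the overall sign independent of $\boldsymbol\theta$ by the simplex hypothesis. Expanding $\det(\hat A|_{F\cup\{x_1\}})$ along the $x_1$-column therefore yields
\[
(\mu^T \hat A)(x_1) = \epsilon \det(\hat A|_{F\cup\{x_1\}})
\]
for some $\epsilon \in \{+1,-1\}$ depending only on combinatorial conventions. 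Evaluating $\mu^T \hat A = c\,\widehat C$ at $x_1$ gives $c\cdot\widehat C(x_1) = \epsilon\det(\hat A|_{F\cup\{x_1\}})$. Setting $\sigma_C := \epsilon\cdot \sign(\widehat C(x_1))$ converts the condition $c>0$ into $\sign(\det(\hat A|_{F\cup\{x_1\}})) = \sigma_C$; the relation $\sigma_{-C} = -\sigma_C$ is then immediate from $\widehat{-C}(x_1) = -\widehat C(x_1)$. The main obstacle is bookkeeping the combinatorial sign $\epsilon$: one must track carefully the permutation sign from placing $x_1$ as the final column and the global sign fixing $\mu > 0$. Once this is done cleanly, the degenerate case $c = 0$ takes care of itself, as it forces $\det(\hat A|_{F\cup\{x_1\}}) = 0 \ne \sigma_C$ while $\mu$ becomes a non-negative dependency of $\hat A$ making the system insoluble, in agreement with the ``iff''.
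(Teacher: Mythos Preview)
Your argument is correct and follows essentially the same route as the paper: both proofs hinge on the unique positive dependency $\mu$ of the simplex $\hat A|_F$, push it through Carver's criterion (Theorem~\ref{t:carver}), and read off the answer from the sign of $\det(\hat A|_{F\cup\{x_1\}})$. The paper packages the last step slightly differently, invoking the alternating--minor characterisation of a simplex (Theorem~\ref{t:simplex}) on the $(|A|+1)\times|A|$ matrix $\widehat{A\cup\{C\}}|_{F\cup\{x_1\}}$ and the preceding lemma on $\sigma(\boldsymbol\theta)$, whereas you compute $\mu^T\hat A = c\,\widehat C$ explicitly via cofactors and track $\sign(c)$ directly; the content is the same.
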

\begin{proof}
We suppose a specific $\boldsymbol{\theta}$, and
taking $\sigma(\boldsymbol{\theta}) $ as in the previous
lemma,
if the specified determinant is zero, then $\sigma(\boldsymbol{\theta}) = 0$,
and all the determinants on columns $F \cup \{ x_i \}$ are also zero,
and hence not equal to $\sigma_C$, moreover,
the system is insoluble, so this theorem is satisfied.

Otherwise, whether $\widehat{A \cup \{C\}}$ has a simplex or not
on  $F \cup \{ x_1 \}$
depends only on the sign of $C(x_1)$, and the sign of the determinant
of the square submatrix specified, since $\widehat{A}$ has a simplex
on $F$, and $C$ is zero on $F$.
Thus we choose $\sigma_C$ to be the sign such that this is not a simplex.

Since $\sigma_C$ depends on $C(x_1)$,  which is not zero,
we have that $\sigma_{-C} = - \sigma_C$.
\end{proof}

\subsection{Positive Sequences}

So, given that simplicial twisted graphs have this interesting property, how can we
easily tell if a twisted graph is simplicial, other than the laborious computation
of determinants that we did in section~\ref{s:mainproof}.
If $G$ is cubic, then a very quick glance at the matrix, is sufficient to see that
any spanning tree of $G$ gives a simplex, for example, we consider 
an orientation of $K_4$, to get:
\begin{equation}
\begin{pmatrix}
+&-&+&&&\\
-&&&+&-&\\
&+&&-&&+\\
&&-&&+&-
\end{pmatrix}
\end{equation}
This functions both as the incidence matrix of the directed graph, and the matrix
corresponding to the twisted graph, if we interpret the signs as representing signed values $\SN{i}{j}$.
If we consider the spanning tree formed from the first three edges, we see that in any linear dependency
$\Lambda$
amongst the rows of the 3 by 4 submatrix, the second, third and fourth rows must each have the same
sign as the first, by considering the first, second and third columns respectively. For each,
the ratio between the values of $\lambda_i$ is the inverse of the ratio of the entries
in the matrix. Since the submatrix has
more rows than columns, there must be at least one linear dependency. Since the signs of each row are
the same as the first row, this is a positive linear dependency. Since the ratios are fixed, it is homogenously
unique.

We can generalise this argument as follows.

\begin{defn}
\label{d:positivesequence}
Given a twisted graph $T= (\vec{G}, G, \sjle, A)$, then a {\em positive sequence}
for $T$ on $F$
is an ordering $A = \{ a_1, a_2, \ldots a_m \}$ and an ordered subset of $F \subset E$,
$F = \{  f_2, \dots f_{m} \}$ with $m = |A|$, such that for each $j$, $2 \leq j \leq m$:
\begin{itemize}
\item
$a_{j}(f_j) \neq 0$.
\item
For all $i > j$, $a_i(f_j) = 0$.
\item
For all $i < j$, $a_{i}(f_j) \in \{ 0, -a_{j}(f_j) \}$.
\end{itemize}
\end{defn}
Note, the absence of $f_1$.

The ordering $f_2, f_3 \ldots$ is not usually related to the order $\sjle$.

\begin{prop}
If $G$ is cubic, and has a spanning tree $F$, then
for any twisted graph $T= (\vec{G}, G, <, A)$
has a positive sequence on $F$.
\end{prop}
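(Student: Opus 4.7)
The plan is to exploit the fact that in a cubic graph, each cocircuit $C^*_v$ has exactly three elements, which forces $A$ to be in canonical bijection with $V$. Specifically, each $A_v$ in the partition given by definition~\ref{d:twistedgraph}(5) is a set of conformal circuits of $\Mat(<)$ whose conformal composition equals $C^*_v$; circuits of the uniform rank~2 oriented matroid $\Mat(<)$ have support of size exactly three, and two conformal circuits with the same support must coincide. So the only way a set of such circuits can compose to the three-element signed set $C^*_v$ is if that set is $\{C^*_v\}$ itself. Hence $a_v := C^*_v$ gives $A = \{a_v : v \in V\}$ with $|A| = |V| = |F| + 1$, matching the cardinality demanded by definition~\ref{d:positivesequence}.

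Next, root the spanning tree $F$ at an arbitrary vertex $r \in V$, and enumerate the vertices $v_1 = r, v_2, \ldots, v_m$ in any order consistent with the rooted-tree parent relation (e.g., non-decreasing distance to $r$ in $F$), so that each non-root $v_j$ has its parent in $\{v_1, \ldots, v_{j-1}\}$. Set $a_j := a_{v_j}$, and for each $j \geq 2$ let $f_j \in F$ be the unique edge joining $v_j$ to its parent in the rooted tree.

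The three conditions of definition~\ref{d:positivesequence} then reduce to elementary facts about the rooted tree. Since $f_j$ is incident to $v_j$, we have $a_j(f_j) = C^*_{v_j}(f_j) \neq 0$. For $k > j$, the vertex $v_k$ is neither $v_j$ nor the parent of $v_j$; in a cubic graph the edge $f_j$ has only its two endpoints, so $v_k$ is not incident to $f_j$ and $a_k(f_j) = 0$. For $i < j$, either $v_i$ is not incident to $f_j$, giving $a_i(f_j) = 0$, or $v_i$ is the parent of $v_j$, in which case equation~(\ref{e:cocircuit}) assigns opposite signs to the two endpoints of the directed edge $f_j$, so $a_i(f_j) = -a_j(f_j)$. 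In every case $a_i(f_j) \in \{0, -a_j(f_j)\}$.

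The only real obstacle is the bijective identification $A \leftrightarrow V$, which crucially uses cubicity via $|\underline{C^*_v}| = 3$; without it, a single vertex can admit several distinct partitions $A_v$ using more than one triangle, and a tree-based enumeration would then have to interleave non-tree edges to cover the extra rows. For cubic $G$ the triangles are forced and a single pre-order sweep of the rooted spanning tree suffices.
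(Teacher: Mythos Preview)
Your proof is correct and follows essentially the same idea as the paper's one-line argument (``consider any sequence such that the initial subsequences form a connected subgraph of $F$''): rooting the tree and ordering vertices so that each non-root is preceded by its parent is precisely a way to guarantee connected initial segments. Your version is more explicit in two respects---you justify that cubicity forces each $A_v$ to be the singleton $\{C^*_v\}$ (so that $|A|=|V|=|F|+1$), and you verify the three conditions of definition~\ref{d:positivesequence} directly---both of which the paper leaves implicit.
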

\begin{proof}
Consider any sequence such that the initial
subsequences form a connected subgraph of $F$.
\end{proof}

\begin{prop}
For any positive sequence as in definition~\ref{d:positivesequence},
for every $2 \leq j \leq m$, there is an $i < j$, such that
$a_{i}(f_j) =-a_{j}(f_j) \}$.
\end{prop}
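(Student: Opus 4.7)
The plan is to combine the positive-sequence constraints on the values $a_i(f_j)$ with the structural information about $A$ provided by proposition~\ref{p:twisted}. Fix $j$ with $2 \le j \le m$ and set $\sigma = a_j(f_j)$, which is nonzero by the first bullet of definition~\ref{d:positivesequence}, so $\sigma \in \{+1,-1\}$. The goal is to produce some $i < j$ with $a_i(f_j) = -\sigma$.

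First I would invoke proposition~\ref{p:twisted} (points~2 and~3, or equivalently the combined point~4) applied to the edge $f_j \in E$: this guarantees the existence of elements of $A$ carrying both signs on $f_j$. In particular, there is some $a^\star \in A$ with $a^\star(f_j) = -\sigma$.

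Next I would locate $a^\star$ within the sequence $a_1,\ldots,a_m$ by elimination. It cannot be $a_j$ itself, since $a_j(f_j) = \sigma \ne -\sigma$. It cannot be any $a_i$ with $i > j$, since the second bullet of definition~\ref{d:positivesequence} forces $a_i(f_j) = 0$ for all such $i$, and $0 \ne -\sigma$. The only remaining possibility is $a^\star = a_i$ for some $i < j$, which is the desired conclusion; this is automatically consistent with the third bullet, whose permitted values for $a_i(f_j)$ are precisely $0$ and $-\sigma$.

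There is no real obstacle here: the argument is pure pigeonhole, relying only on the fact that each edge of $\vec{G}$ appears with both signs somewhere in $A$, combined with the upper-triangular shape that the positive sequence imposes on $\Sigma(T)$ when restricted to the columns $f_2,\ldots,f_m$. The only subtlety worth flagging in the write-up is a small correction to the printed statement of proposition~\ref{p:twisted} point~4, where the second occurrence of $a(e) = -1$ must be read as $a'(e) = -1$; this is unambiguous from points~2 and~3 of the same proposition and does not affect the argument.
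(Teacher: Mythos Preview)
Your proposal is correct and follows the same approach as the paper's proof, which is just a two-sentence compression of what you wrote: invoke proposition~\ref{p:twisted} to obtain some element of $A$ with the opposite sign on $f_j$, then use the second bullet of definition~\ref{d:positivesequence} (together with the sign mismatch at $j$ itself) to force its index below $j$. Your write-up simply makes the elimination step explicit.
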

\begin{proof}
From proposition~\ref{p:twisted} there is some $a_i$ such that $a_i(f_j) = -a_j(f_j)$.
From the definition of positive sequence, we then have $i<j$.
\end{proof}

We now show that a twisted graph with a positive sequence is simplicial.

\begin{thm}
A twisted graph $T= (\vec{G}, G, \sjle, A)$, with a positive sequence
is simplicial.
\end{thm}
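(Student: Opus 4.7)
The plan is to exhibit an explicit positive linear dependence among the rows of the $|A| \times |F|$ submatrix of $\Sigma(T)$ indexed by $F$ and then show this dependence is unique up to a positive scalar. Writing the positive sequence as $A = \{a_1, \ldots, a_m\}$ and $F = \{f_2, \ldots, f_m\}$, and abbreviating $s_{a,e} = \sin(\theta_{e''}-\theta_{e'})$ where $\{e',e''\} = \underline{a}\setminus\{e\}$ with $e' \sjle e''$, the first step is to read off the structure of the $f_j$-column directly from Definition \ref{d:positivesequence}: row $j$ carries the entry $a_j(f_j)\, s_{a_j,f_j}$ with $a_j(f_j) = \pm 1$ and $s_{a_j,f_j} > 0$; rows $i > j$ vanish; and in each row $i < j$ the entry is either $0$ or $-a_j(f_j)\, s_{a_i,f_j}$.

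With that in hand, I would set $\lambda_1 := 1$ and define $\lambda_2,\ldots,\lambda_m$ recursively by forcing the row-dependence equation on the $f_j$-column:
\[
\lambda_j\, s_{a_j,f_j} \;=\; \sum_{\substack{i<j \\ a_i(f_j) = -a_j(f_j)}} \lambda_i\, s_{a_i,f_j}.
\]
This is obtained by substituting the column structure above into $\sum_i \lambda_i \Sigma_{i,f_j} = 0$ and cancelling the common factor $a_j(f_j) \in \{+1,-1\}$. The right-hand sum is nonempty by the proposition immediately preceding the theorem (which furnishes an $i<j$ with $a_i(f_j) = -a_j(f_j)$), and every $s_{\cdot,f_j}$ is strictly positive because $\boldsymbol{\theta}$ respects $\sjle$ (Definition \ref{d:respect}). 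A straightforward induction on $j$ then gives $\lambda_j > 0$ for all $j$.

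By construction $(\lambda_1,\ldots,\lambda_m)$ is a positive linear dependence among the rows of the $F$-submatrix, so this submatrix has rank exactly $m-1 = |F|$. For uniqueness, any other linear dependence $(\mu_1,\ldots,\mu_m)$ must satisfy the same recursion, forcing $\mu_j = (\mu_1/\lambda_1)\lambda_j$ for every $j$; hence the kernel is one-dimensional and spanned by a strictly positive vector. This is precisely the condition of Definition \ref{d:simplex}, so the submatrix is a simplex, and since $\boldsymbol{\theta}$ was arbitrary subject to respecting $\sjle$, $T$ is simplicial on $F$.

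The only real obstacle is the sign bookkeeping: one must verify that the factors $a_j(f_j)$ cancel cleanly when deriving the recursion (the ``opposite sign'' clause of Definition \ref{d:positivesequence} is exactly what makes the coefficients match), and one must invoke the preceding proposition at the right moment to rule out an empty right-hand side so that each $\lambda_j$ is strictly positive rather than merely nonnegative. No determinant computation, and in particular no appeal to Theorem \ref{t:simplex}, is needed.
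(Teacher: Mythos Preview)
Your proposal is correct and follows essentially the same route as the paper: both arguments read off the recursion
\[
\lambda_j \,\hat{a_j}(f_j) = -\sum_{i<j} \lambda_i\,\hat{a_i}(f_j)
\]
from the column structure of the positive sequence and use it to establish positivity and uniqueness of the dependency. The only cosmetic difference is that you build $(\lambda_j)$ forward from $\lambda_1=1$, whereas the paper starts from an arbitrary dependency and normalises; your explicit appeal to the preceding proposition to secure \emph{strict} positivity is in fact slightly more careful than the paper's wording.
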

\begin{proof}
Take $A = \{ a_1, a_2, \ldots a_m \}$ and
$F = \{ f_2, f_3, \dots f_{m} \} \subset E$,
as the positive sequence.

Consider the submatrix of $\hat{A}$ formed from the columns
corresponding to $F$. There is at least one linear dependency
between its rows, since it has more rows than columns, call
this $\Lambda = \lambda_i$, where the subscripts correspond to
the positive sequence on $A$.

From the definition, for each $j>1$, we have 
\begin{align}
\sum_{i=1}^m  \lambda_i \hat{a_i}(f_j) &= 0 \\
\label{e:pos-seq}
\sum_{i=1}^j  \lambda_i \hat{a_i}(f_j) &= 0 \\
\label{e:pos-seq2}
\lambda_j &= - \sum_{i=1}^{j-1}  \lambda_i \hat{a_i}(f_{j}) / \hat{a_j}(f_{j})
\end{align}

By multiplying by a constant we can assume that the first non-zero
value in $\Lambda$ is 1.

Suppose that the first non-zero value is $\lambda_j$ with $j>1$.
Then equation (\ref{e:pos-seq2}) gives a contradiction.
Thus $\lambda_1 = 1$.

Now, suppose the first non-positive value is  $\lambda_j$.
Then equation (\ref{e:pos-seq2}) gives a contradiction.
Moreover, the same equation shows that the positive dependency is unique.
Thus the matrix is a simplex, and we are done.
\end{proof}

We can now give a shorter proof for the main theorem.
While the ground work for this proof was extensive, the style of the proof
can be reused, like final polynomials, but the positive sequence is easier to verify.
If using this within a non-stretchability proof, the steps corresponding
to lemma~\ref{l:rin9} are also required.

\begin{proof}[Second proof of theorem~\ref{t:main}]
\label{p:second}
Consider the twisted graph $T$ corresponding to figure~\ref{f:digraph},
and $M_8$ of~(\ref{e:m8}).
We can construct a positive sequence from the eight rows in order,
and columns $\{ 8,2,6,3,4,9,10 \}$. Hence, $T$ is simplicial.
Since $\{ 1, 5, 7 \}$ is totally ordered by the specified partial order,
 $T$ is strictly simplicial.
We apply theorem
\ref{t:simplicial}, looking at figure~\ref{f:straight}. This shows that
the system corresponding to~(\ref{e:linear-again}) is soluble,
when $\boldsymbol{\theta} = ( 42, 20, 62, 80, 120, 98, 158, 125, 149, 170 )$. 
The angles
are approximate, measured from the figure.

The determinant of the 8-by-8 submatrix on columns
$\{ 2,3,4,5,6,8,9,10 \}$, is given by $d_5$, equation~(\ref{e:d5}).
In figure~\ref{f:straight} this evaluates to approximately $-0.17$, which is
negative, hence the value $\sigma$ in theorem
\ref{t:simplicial} is -1. Factoring out $\SN{1}{7}\SN{1}{7}\SN{1}{5}$ completes the proof.
\end{proof}

A variant of this proof more suited to automation would consider one of the other
determinants to provide $\sigma$, rather than a drawing.

\section{Pseudoline Stretching and Further Directions}
\label{s:future}

We have studied the realizability of a particular oriented
matroid by considering only some of the values from its 
chirotope. This, of course, is also the case with the method
of final polynomials.

This suggests that we may fruitfully study {\em partial} oriented
matroids. An approach to axiomatising them for the rank 3 case,
is to fix a line at infinity, and then provide
an abstraction of a Euclidean line arrangement, in terms
of the ordering of the angles of the lines, and
the orientation (positive, cocurrent or negative)
of some of the sets of three lines. We sketch such an approach.

\subsection{Euclidean Arrangements}
We define a combinatoric object, a Euclidean arrangement,
$\Arr$
over a set $X$, preordered by $\jle$, in terms of a set of
ordered triples $\PA \subset X^3$. 
\begin{defn}
A set of triples $A$ is alternating when $(x,y,z) \in A$ if and only if $(y,z,x) \in A$.
\end{defn}
This corresponds to the alternating nature of a chirotope.
We record negative values from the chirotope with an odd permutation:
\begin{defn}
If $A$ is a set of triples, then $-A$ is given by:
\begin{equation}
-A = \left\{ (y,x,z) : (x,y,z) \in A \right\}
\end{equation}
\end{defn}
We also write:
\begin{align}
\ZA &= \PA \cap \NA \\
\PPA &= \PA \setminus \ZA \\
\NNA &= \NA \setminus \ZA 
\end{align}
The preorder $\jle$ is not strict, so it is helpful to define:
\begin{defn}
$x, y \in X$ are {\em parallel\footnote {
This usage corresponds to Euclid's usage, and
not to the usage in the oriented matroid literature.}}, 
if $x \jle y$ and
$y \jle x$. 
\end{defn}
We write $x \parallel y$ in this case, and
$x \sjle y$ to mean: $x \jle y$ but not $y \jle x$.
We also use $\parallel$ and $\sjle$ to mean the corresponding subsets of $X^2$.

\begin{defn}
\label{d:arrangement}
Given a set $X$, a transitive and reflexive relation
$\jle$ over $X$, and an alternating set of triples $\PA$ from $X$, 
we can form the Euclidean arrangement
$\Arr = \left( X, \jle, \PA \right)$,
when:
\begin{description}
\item[\axCompare]
For all $(x,y,z) \in \PA$, either
$x \jle y$ or $y \jle x$.
\item[\axZero]
For all $x,y \in X$ with $x \jle y$
then
$(x,x,y), (x,y,y) \in \PA$.
\item[\axDegenerate]
If $(x,y,z) \in \PA$ and $x \parallel y \parallel z$
then $(y,x,z) \in \PA$.
\item[\axMonotone]
Each increasing sequence in $X^4$ is monotonic,
(see definition~\ref{d:monotonic-seq}).
\end{description}
\end{defn}

The notion of monotonic used in the above definition
is a variation of that from~\cite{felsner:sweeps}.
It corresponds to the observation that given four 
pseudolines, $\{ a, b, c, d \}$, if both
$\{ a,b,c \}$ and $\{ a,b, d \}$ are coincident,  then
so are all four lines, and more generally to 
axiom B2, from~\cite{bjorner:oriented} page 126, 
abstracting the Grassmann-Pl{\"u}cker relations.
We use the following subscript conventions, with $i$ from 1 to 4:
$x_i$ is a $\jle$-ordered sequence of elements of $X$; $t_i$
is the three element subsequence of $( x_1, x_2, x_3, x_4 )$
excluding $x_i$; $s_i$ is the three element subsequence
of $( t_1, t_2, t_3, t_4 )$
excluding $t_i$. In full:
\begin{equation}
\label{e:st}
\begin{aligned}
t_1 &= (x_2, x_3, x_4 ) &\qquad \qquad \qquad &t_2 &= (x_1, x_3, x_4 ) \\
t_3 &= (x_1, x_2, x_4 ) & &t_4 &= (x_1, x_2, x_3 ) \\
s_1 &= (t_2, t_3, t_4 ) & &s_2 &= (t_1, t_3, t_4 ) \\
s_3 &= (t_1, t_2, t_4 ) & &s_4 &= (t_1, t_2, t_3 )
\end{aligned}
\end{equation}

\begin{defn}
For a preorder $\jle$
a sequence $(x_1,x_2,\ldots,x_k) \in X^k$ is {\em $\jle$-increasing}
if $x_{i} \jle x_{i+1}$, for each $i$ from 1 to $k-1$.

In an arrangement $\Arr$, a sequence is {\em increasing}
if it is $\jle$-increasing.
\end{defn}
We note that both a sequence and a proper permutation of
the same sequence can be increasing, since two or more elements
may be parallel.
We wish to constrain the triangles formed by an increasing
sequence of four elements.
\begin{defn}
\label{d:monotonic-seq}
In an arrangement $\Arr$,
an increasing sequence $(x_1,x_2,x_3,x_4)$
is {\em monotonic}, when, with
$t_j$ and $s_k$ 
as in~(\ref{e:st}), for every$1 \leq i,j \leq 4$:
 if $s_i \in ( \PA \times \NA \times \PA) \cup  ( \NA \times \PA \times \NA )$
 then either $t_j \in \PA \cap \NA$ or $x_i \parallel x_j$.
\end{defn}

When $\PA \cup \NA = X^3$ this is a cryptomorphic form of the normal axioms
for an acyclic rank 3 oriented matroid over $X \cup \{ \omega \}$, where
$\omega$ corresponds to the line at infinity, given by $\jle$.
This motivates:
\begin{defn}
An arrangement is complete if $\PA \cup \NA = X^3$.
\end{defn}
The advantage of this axiomatization is that it does not require complete
information about the oriented matroid, but only some of the orientations.
Thus, it can be used to represent the partial arrangements such as
that in theorem~\ref{t:main}. Informally, it amounts to permitting
a chirotope $\chi$ to take values in $\left\{ +1, 0, -1, \ast \right\}$,
where $\ast$ means `unknown', and distinguishing a simple member
$\omega$
of the ground set $E$, such that if $\chi(a,b,c) \neq \ast$
then $\chi(\omega,a,b) \neq \ast$.

This allows us to relate the problem of oriented matroid
realizability to Ringel's conjecture.

We need
an appropriate notion of subarrangement,
which we define in terms of homomorphisms:
\begin{defn}
A homomorphism $\phi$ from an arrangement $\Arr = (X,\jle_A,A)$ to
an arrangement $\Brr = (Y,\jle_B,B)$, is a function $\phi:X \rightarrow Y$ such that
$\phi(\sjle_A) \subset \sjle_B$,
$\phi(\parallel_A) \subset \parallel_B$ and $\phi(A^+) \subset B^+$,
$\phi(A^0) \subset B^0$.
\end{defn}

Leading, perhaps, to:
\begin{defn}
An arrangements $\Arr = (X,\jle_A,A)$ is a subarrangement of $\Brr = (Y,\jle_B,B)$,
if there is a function $\phi:X \rightarrow Y$, such that,
for every minimal complete arrangement $\Crr = (Y,\jle_C,C) $ 
extending $\Brr$, i.e. $\jle_B \subset \jle_C$
and $B \subset C$, $\phi$ extends to a homomorphism $\phi: \Arr \rightarrow \Crr$.
\end{defn} 
In this case,
we write $\phi : \Arr \hookrightarrow \Brr$, and $\Arr \lesssim \Brr$.

We extend definition~\ref{d:respect}
\begin{defn}
\label{d:respect2}
A vector of real numbers $\boldsymbol{\theta} \in \Real^X$ {\em respects} $\jle$, when:
\begin{itemize}
\item 
For all $e \in X$, $0 < \theta_e < 180$
\item
For all $e, f \in X$, with $e \sjle f$, $\theta_e < \theta_f$.
\item
For all $e, f \in X$, with $e \parallel f$, $\theta_e = \theta_f$.
\end{itemize}
\end{defn}

A realization of an arrangement $\Arr = (X,\jle_A,A)$
is defined in terms of polar coordinates.
\begin{defn}
A {\em realization} of an arrangement $\Arr = (X,\jle_A,A)$
is a pair of vectors $\boldsymbol{r}, \boldsymbol{\theta} \in \Real^X$, such that:
\begin{itemize}
\item  $\boldsymbol{\theta}$ respects $\jle$.
\item For all $t = \{ x, y, z \} \in \PA \cup \NA$, with $x \jle y \jle z$:
\begin{itemize}
\item
if $t \in \ZA$
equation~(\ref{e:coincident}) holds.
\item
if $t \in \PPA$
inequality~(\ref{e:ptriangle}) holds.
\item
if $t \in \NNA$
inequality~(\ref{e:ntriangle}) holds.
\end{itemize}
\end{itemize}
\end{defn}
For complete arrangements, this is the same notion as
oriented matroid realizability; with a coordinate transform
from homogeneous coordinates into polar coordinates.

We then consider only the angular realization space:

\begin{defn}
The realization space $\mathbf{\Theta}(\Arr) \subset (0,180)^X$  of an arrangement $\Arr$,
is given by:
\begin{equation}
\mathbf{\Theta}(\Arr) = 
\left\{  \boldsymbol{\theta} :  \text{there is } \boldsymbol{r} \text{ such that } 
\boldsymbol{r}, \boldsymbol{\theta}  
           \text{ is a realization of } \Arr \right\}
\end{equation}
\end{defn}

\begin{defn}
Given two arrangements $\Arr = (X,\jle_A,A)$ and $\Brr = (Y,\jle_B,B)$,
with $\phi: \Brr \hookrightarrow \Arr$,
the realization space of $\Brr$
in $\Arr$ via $\phi$
$\mathbf{\Theta}(\Brr ; \Arr ; \phi) \subset (0,180)^X$ is given by:
\begin{equation}
\mathbf{\Theta}(\Brr ; \Arr ; \phi) = \left\{ 
\boldsymbol{\theta} \in (0,180)^X : 
\begin{aligned}
&\boldsymbol{\theta} \text{ respects } \jle_A, \\
&\exists \boldsymbol{\theta'}   \in \mathbf{\Theta}(\Brr) \\
& \qquad \text{ s.t. for every } y \in Y,
\theta'_y  = \theta_{\phi(y)}
\end{aligned}
\right\}
\end{equation}
\end{defn}
that is, the realization space of $\Brr$ in $\Arr$ corresponds to those realizations
of $\Brr$ that respect $\jle_A$.

We define the notion of a minimal counterexample
to Ringel's conjecture by:
\begin{defn}
An arrangement $\Arr$ is minimally angle constraining if 
there is $\boldsymbol{\theta} \notin 
\mathbf{\Theta}( \Arr )$,
for which, for all $\Brr \lesssim \Arr$,
and $\Brr \neq \Arr$
$\boldsymbol{\theta} \in \mathbf{\Theta}( \Brr; \Arr )$
\end{defn}

The following conjecture depends on the (unknown)
well-foundedness of $\lesssim$.

\begin{conj}
\label{c:intersection}
The realization space of an arrangement $\Arr$
is the intersection of the realization spaces in $\Arr$ of all its 
minimally angle constraining subarrangements.
\end{conj}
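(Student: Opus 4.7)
The plan is to prove Conjecture \ref{c:intersection} by a two-sided inclusion, with the reverse inclusion driven by a well-founded descent along $\lesssim$. The forward inclusion, $\mathbf{\Theta}(\Arr) \subset \bigcap_\Brr \mathbf{\Theta}(\Brr;\Arr;\phi_\Brr)$, should be a routine push-forward. Given a realization $(\boldsymbol{r},\boldsymbol{\theta})$ of $\Arr$ and a subarrangement $\Brr \lesssim \Arr$ via $\phi: Y \to X$, I would set $\theta'_y = \theta_{\phi(y)}$ and $r'_y = r_{\phi(y)}$, and verify that the three kinds of constraints (\ref{e:coincident})--(\ref{e:ntriangle}) required for realizability of $\Brr$ are inherited from the corresponding constraints of $\Arr$. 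Here the through-completions definition of subarrangement is used: the realization of $\Arr$ extends to some minimal complete $\Crr \gtrsim \Arr$ (for instance, by Lemma~\ref{l:origin} applied to the homogeneous coordinates of the realizing line arrangement), and $\phi$ extends to a homomorphism $\Brr \to \Crr$, along which constraints pull back.

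For the reverse inclusion I would argue by contrapositive. Suppose $\boldsymbol{\theta} \notin \mathbf{\Theta}(\Arr)$, and consider the family
\[
\mathcal{S}_{\boldsymbol{\theta}} = \left\{ \Brr \lesssim \Arr \text{ via some } \phi_\Brr : \boldsymbol{\theta} \notin \mathbf{\Theta}(\Brr; \Arr; \phi_\Brr) \right\}.
\]
This family is non-empty because $\Arr \in \mathcal{S}_{\boldsymbol{\theta}}$ via the identity embedding, since $\mathbf{\Theta}(\Arr;\Arr;\mathrm{id}) = \mathbf{\Theta}(\Arr)$. Assuming $\lesssim$ is well-founded, pick a $\lesssim$-minimal element $\Brr \in \mathcal{S}_{\boldsymbol{\theta}}$, and set $\theta'_y := \theta_{\phi_\Brr(y)}$. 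Membership in $\mathcal{S}_{\boldsymbol{\theta}}$ gives $\boldsymbol{\theta'} \notin \mathbf{\Theta}(\Brr)$. For any $\Crr \lesssim \Brr$ with $\Crr \neq \Brr$, transitivity of $\lesssim$ (composing $\phi_\Crr$ with $\phi_\Brr$) places $\Crr$ strictly below $\Brr$ in the subarrangement order on $\Arr$, so minimality forces $\Crr \notin \mathcal{S}_{\boldsymbol{\theta}}$, equivalently $\boldsymbol{\theta'} \in \mathbf{\Theta}(\Crr;\Brr;\phi_\Crr)$. Hence $\Brr$ is minimally angle constraining, witnessing $\boldsymbol{\theta}$'s exclusion from the intersection on the right-hand side.

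The main obstacle is exactly the one already flagged by the author: well-foundedness of $\lesssim$. Even with $|X|$ finite, $\lesssim$ is not obviously well-founded because its definition quantifies over all minimal complete extensions, and two non-isomorphic arrangements can subembed each other through different completions. I would attack this by producing a strictly decreasing lexicographic size function along proper $\lesssim$-chains, for instance the triple $(|X|,\,|\PPA \cup \NNA|,\,|\sjle|)$ suitably normalized; the delicate point is showing that $\Brr \lesssim \Arr$ with $\Brr \neq \Arr$ forces a strict decrease in at least one coordinate, which presumably requires a careful choice of $\phi_\Brr$ in the equivalence class of embeddings. A secondary concern is ensuring that the induced $\boldsymbol{\theta'}$ actually respects $\jle_B$; this should follow because $\phi_\Brr$ preserves $\sjle$ and $\parallel$ at every minimal completion of $\Arr$, but it must be checked uniformly.

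Once well-foundedness is in hand, the rest of the argument reduces to bookkeeping: transitivity of $\lesssim$ under composition of maps, the observation that $\mathbf{\Theta}(\Brr;\Arr;\phi_\Brr \circ \phi_\Crr) \subset \mathbf{\Theta}(\Brr;\Arr;\phi_\Brr)$ implies the analogous inclusion for induced $\boldsymbol{\theta'}$, and the fact that the witness condition in the definition of minimally angle constraining only requires a single excluding $\boldsymbol{\theta'}$, namely the one produced by the descent. If well-foundedness fails in general, a partial result along the same lines should still hold after replacing $\lesssim$-minimality by minimality with respect to the refined size function, giving a weaker intersection statement over subarrangements of bounded complexity.
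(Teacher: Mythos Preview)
The statement you are attempting to prove is labelled a \emph{Conjecture} in the paper, and the paper does not supply a proof. Indeed, the sentence immediately preceding it reads: ``The following conjecture depends on the (unknown) well-foundedness of $\lesssim$.'' So there is no paper-proof to compare your proposal against; the author explicitly leaves this open.

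That said, your outline is precisely the argument the author has in mind and is blocked on. The forward inclusion is indeed routine push-forward along $\phi$, and the reverse inclusion is exactly the Zorn-style descent you describe: take a $\lesssim$-minimal witness of non-realizability at $\boldsymbol{\theta}$ and observe that minimality forces it to be minimally angle constraining. You have correctly located the only genuine obstruction, namely the well-foundedness of $\lesssim$, and your proposed attack via a lexicographic size function $(|X|,|\PPA\cup\NNA|,|{\sjle}|)$ is reasonable but not obviously sufficient. The difficulty you flag --- that the definition of $\Brr\lesssim\Arr$ quantifies over all minimal complete extensions and so may permit mutual proper subembeddings --- is real, and nothing in the paper resolves it. Your fallback, replacing $\lesssim$-minimality by minimality for a cruder size function, would yield a weaker statement than the conjecture as written.

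In short: your proposal is the natural conditional argument, it matches the author's own reasoning, and it is not a proof because the well-foundedness hypothesis remains open. The paper does not claim otherwise.
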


In these terms, we have seen that many twisted graphs
give rise to angle constraining arrangements. Generalizing:
\begin{conj}
\label{c:mac}
An arrangement is minimally angle constraining if, and only if,
it corresponds to a $<$-minimal twisted graph, for some $<$.
\end{conj}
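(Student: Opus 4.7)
The plan is to attack the two directions separately and to exploit the bridge provided by Motzkin's theorem~\ref{t:mus} and Carver's theorem~\ref{t:carver}, which translate insolubility of linear systems into the existence of a simplex admitting a unique positive linear dependency. The intuition is that a ``minimally angle constraining'' arrangement is the combinatorial shadow of a minimal insoluble system of the form $\Sigma \boldsymbol{r} > 0$, and that such minimal insoluble systems are exactly what twisted graphs encode.

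For the easier direction ($\Leftarrow$), suppose $T = (\vec{G}, G, \sjle, A)$ is a $\sjle$-minimal twisted graph and $\Arr$ is the arrangement built from the triples in $A$ together with the order $\sjle$. Assuming conjecture~\ref{c:unsolvable}, pick $\boldsymbol{\theta}$ respecting $\sjle$ for which $\Sigma(T)\boldsymbol{r} > 0$ is insoluble; this witnesses $\boldsymbol{\theta} \notin \mathbf{\Theta}(\Arr)$. To establish minimal angle-constraint I would remove a single edge $e$ (equivalently, a line from the arrangement), invoke $\sjle$-minimality to show the reduced arrangement does admit a realization for the same $\boldsymbol{\theta}$, and conclude by induction on $|E|$ that every proper subarrangement $\Brr \lesssim \Arr$ lies in $\mathbf{\Theta}(\Brr;\Arr;\phi)$. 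The step that does real work here is showing that $\sjle$-minimality of the graph matches $\lesssim$-minimality of the arrangement, which should follow from Proposition~\ref{p:twisted} identifying the triples with stars at vertices of $\vec{G}$.

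For the harder direction ($\Rightarrow$), start with a minimally angle-constraining $\Arr = (X, \jle, A)$ and a witness $\boldsymbol{\theta}$. Build the matrix $\Sigma$ whose rows are $\hat{t}$ for $t \in A^+ \cup A^-$. By Carver's theorem there is a non-negative, non-zero linear dependency $\Lambda$ on the rows of $\Sigma$, which by minimality of $\Arr$ must involve every row, so $\Lambda > 0$. Now I would try to read off a graph $G$ from the support pattern of $\Sigma$: each column of $\Sigma$ (i.e.\ each $x \in X$) should correspond to a directed edge, and the rows meeting in a column should split into a ``head'' vertex and a ``tail'' vertex according to the sign $\Sigma_{a,x}$. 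The crux is organizing the rows into the vertex-classes $A_v$ of Definition~\ref{d:twistedgraph}, so that each $C^*_v$ is the conformal composition of its class; this is essentially a combinatorial decomposition problem on the signed incidence pattern of $\Sigma$. Once the graph exists, Corollary~\ref{c:simpletotal} (applied to the chosen total order refining $\jle$) should supply the required strong map, while three-edge connectivity should follow from the minimality of $\Arr$ (an $\le 2$-edge cut would yield a nontrivial split of the insoluble system into two smaller insoluble or vacuous pieces, contradicting minimality).

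The main obstacle, and the reason this is only conjectured in the paper, is the forward direction: there is no a priori reason that the abstract triples of $A$ assemble into the incidence structure of a directed graph. Concretely, one must show that for every $x \in X$ the set $\{ a \in A : a(x) = +1 \}$ (resp.\ $-1$) can be partitioned into exactly two vertex-classes, and that globally these vertex-classes organise into a simply three-edge connected totally cyclic $\vec{G}$. This is a rigidity statement about minimal positive linear dependencies among $\{ \hat{t} \}$ vectors, and I would attempt it by induction on $|X|$ using the normal form for products of sines from Section~\ref{s:normal} to convert the dependency $\Lambda = 0$ into a purely combinatorial identity on pairs $\{i,j\}$, from which the graph structure can be read off; but I fully expect that closing this step will require either a new structural lemma on positive sequences (Definition~\ref{d:positivesequence}) or a reduction through conjecture~\ref{c:intersection}.
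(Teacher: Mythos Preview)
The statement you are attempting to prove is Conjecture~\ref{c:mac}; the paper offers no proof of it, so there is nothing to compare your approach against. What you have written is a plausible outline of how one might attack the conjecture, but it is not a proof, and you yourself flag this in your final paragraph.

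Two concrete gaps deserve emphasis. First, your ``easier'' direction ($\Leftarrow$) is not actually established: you invoke Conjecture~\ref{c:unsolvable} to obtain a $\boldsymbol{\theta}$ for which $\Sigma(T)\boldsymbol{r}>0$ is insoluble, but that conjecture is itself unproven in the paper. The only cases where the paper shows insolubility are the simplicial twisted graphs of Section~\ref{s:twistedsimplex}, and even there the argument does not verify the full minimality condition in the definition of ``minimally angle constraining'' (which quantifies over \emph{all} proper subarrangements $\Brr\lesssim\Arr$, not merely over edge deletions). Your reduction of $\lesssim$-minimality to $\sjle$-minimality via Proposition~\ref{p:twisted} is asserted but not argued; the subarrangement relation $\lesssim$ is defined through homomorphisms into every minimal complete extension, which is considerably more general than deleting a single edge.

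Second, in the forward direction you correctly identify the core difficulty: given a minimal positive dependency among vectors $\hat{t}$, there is no known mechanism forcing the support pattern to assemble into the incidence matrix of a three-edge-connected directed graph. Your suggestion to use the normal form of Section~\ref{s:normal} to extract combinatorial structure from the identity $\sum\lambda_a\hat{a}=0$ is interesting, but the normal form operates on formal sums of sine products, not on their \emph{signs}, and it is the sign pattern (not the identity itself) that would have to force a graph structure. Nothing in the paper bridges this gap, which is precisely why the statement remains a conjecture.
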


\subsection{Pseudoline Stretching}

Given conjecture~\ref{c:mac}, a combinatoric analysis of a rank 3 oriented matroid,
could start by chosing an arbitrary pseudoline as the line at infinity,
and then find all such twisted graphs, in the resulting projection. 
These could be analysed as in this paper,
to derive systems of equalities and inequalities between sums of products
of sines. These could then be analysed using the techniques of 
section~\ref{s:normal}.

In this way, it is hoped that, an algorithm can be developed that either:
\begin{itemize}
\item
Finds a contradiction between the constraints
placed on the angles, hencing proving nonrealizability
\item
Or finds specific angles satisfying all the constraints,
hence proving realizability (since the resulting linear
program in $\boldsymbol{r}$ is necessarily soluble).
\end{itemize}

We further conjecture that:
\begin{conj}
A rank 3 oriented matroid over $n$ has a biquadratic final polynomial
if and only if, there is some choice of a line at infinity,
such that the remaining $n-1$ pseudolines form an arrangment $\Arr$
where there is a  subarrangement $\Brr$, which is minimally
angle constraining and for which 
$\mathbf{\Theta}(\Brr ; \Arr )$ is empty,
i.e. only one item in the intersection in conjecture~\ref{c:intersection}
need be considered.
\end{conj}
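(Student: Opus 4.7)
The plan is to prove both directions via the dictionary between brackets $[ijk]$ in homogeneous coordinates and sine expressions $\SN{i}{j}$ in polar coordinates. After choosing a pseudoline as the line at infinity $\omega$ and applying lemma~\ref{l:origin}, each bracket $[ijk]$ factors (up to a positive scalar $r_i r_j r_k$) into an expression of the form appearing on the left-hand side of equations~(\ref{e:coincident}), (\ref{e:ptriangle}) or~(\ref{e:ntriangle}); consequently, the sign of $[ijk]$ is governed by the orientation of the triangle $\{i,j,k\}$ in the Euclidean projection, and any bracket polynomial identity translates, after clearing the common $\prod_i r_i^{d_i}$ factor, into a trigonometric identity between sums of products of sines.

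For the forward direction, suppose a biquadratic final polynomial (BFP) $\sum_k c_k \prod_j [i_j^k\, j_j^k\, l_j^k]=0$ is given, in which each monomial is nonnegative under the chirotope signs. Choose $\omega$ so that lemma~\ref{l:origin} applies, and expand each bracket into polar form. The support of the BFP picks out a finite set of triples, whose orientations (prescribed by the chirotope) define a subarrangement $\Brr$. The nonnegativity of each monomial, after the polar rewrite, is exactly the positivity condition required by Carver's theorem~\ref{t:carver} for the row-dependency obtained by reading off the coefficients of each $r$-variable: the BFP becomes a positive linear dependency among the rows of $\Sigma(\Brr)$ for \emph{every} $\boldsymbol{\theta}$ respecting $\jle_A$, so $\mathbf{\Theta}(\Brr;\Arr)=\emptyset$. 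Passing to a $\lesssim$-minimal such $\Brr$ yields a minimally angle constraining subarrangement, as required.

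For the converse, suppose $\Brr\lesssim\Arr$ is minimally angle constraining with $\mathbf{\Theta}(\Brr;\Arr)=\emptyset$. By theorem~\ref{t:carver}, for each $\boldsymbol{\theta}$ respecting $\jle_A$ the system $\Sigma(\Brr)\mathbf{r}>0$ is infeasible via a nonnegative dependency $\Lambda$. The idea is to show that a single $\Lambda$ works uniformly (as a polynomial identity in the sine entries), rather than pointwise in $\boldsymbol{\theta}$: minimality of $\Brr$ forces $\Lambda$ to be homogeneously unique, so by specialization to rational $\boldsymbol{\theta}$ and clearing denominators one obtains a formal identity $\sum_a \lambda_a\,\tau(\alpha_a)\equiv 0$ with $\lambda_a\ge 0$. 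Reducing both sides to the normal form of theorem~\ref{t:normal} yields a cancelling pair of sums of products of sines. Reversing the polar-to-bracket dictionary term by term then repackages this identity as a polynomial identity among brackets, whose monomials are nonnegative by construction; the biquadratic structure is inherited from the fact that each triangle in $\Brr$ contributes precisely one bracket and each line at most twice per monomial, because each row of $\Sigma(\Brr)$ is supported on exactly three columns.

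The main obstacle is controlling the degree structure of the resulting bracket identity so that it is genuinely \emph{biquadratic} rather than merely a final polynomial: the normal form of section~\ref{s:normal} chooses a canonical expansion that need not respect the bracket grouping, and the reverse translation may produce brackets of unbounded degree unless the underlying twisted graph satisfies additional combinatorial restrictions. A secondary obstacle is that this conjecture is stated downstream of conjecture~\ref{c:mac}, which identifies minimally angle constraining arrangements with $<$-minimal twisted graphs; without that identification, the converse argument cannot invoke the simplex machinery of section~\ref{s:twistedsimplex} directly and must instead work with the abstract axioms of definition~\ref{d:arrangement}, where a uniform positive dependency is not automatically available. A satisfactory proof will therefore likely require first settling conjecture~\ref{c:mac}, and then characterising which positive dependencies among rows of $\Sigma(\Brr)$ admit a biquadratic bracket repackaging.
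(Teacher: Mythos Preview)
The statement you are addressing is a \emph{conjecture} in the paper, not a theorem; the paper offers no proof and explicitly says only that the single example $\Rin$ is consistent with it. There is therefore no ``paper's own proof'' to compare your proposal against.

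Your proposal is not a proof but a strategy sketch, and you correctly identify this yourself in the final paragraph. Two concrete gaps deserve emphasis. First, in the forward direction you assert that the polar rewrite of a biquadratic final polynomial yields a positive linear dependency among the rows of $\Sigma(\Brr)$ for \emph{every} $\boldsymbol{\theta}$; but a BFP is a polynomial identity in the brackets $[ijk]$, and after the polar substitution each bracket becomes a \emph{linear form in $\mathbf{r}$} with coefficients depending on $\boldsymbol{\theta}$, not a single row of $\Sigma$. The BFP is thus an identity among products of such linear forms, and extracting from it a row dependency in the sense of Carver's theorem~\ref{t:carver} is not automatic: the paper's machinery goes the other way (from a simplex in $\Sigma$ to determinantal conditions), and you have not shown how to invert that passage in general. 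Second, in the converse direction your claim that minimality of $\Brr$ forces the dependency $\Lambda$ to be homogeneously unique and hence polynomial in $\boldsymbol{\theta}$ is exactly what the paper establishes only for \emph{simplicial} twisted graphs (section~\ref{s:twistedsimplex}); for a general minimally angle constraining $\Brr$ this uniqueness is unproved, and indeed is essentially the content of conjecture~\ref{c:mac}, on which you yourself note the argument depends.

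In short: your dictionary between brackets and sines is the right starting point and matches the spirit of the paper's remark that the two computations ``may be cryptomorphic'', but neither direction is close to complete, and the paper does not claim otherwise.
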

We have seen in this paper that this is the case for $\Rin$.
For the cases for which no biquadratic final polynomial exists,
then more than one item would be relevant.

\subsection{Isotopy Problems}
The above discussion, also raises the hope that
these techniques may help explain 
oriented matroids with non-isotopic realizations.
This suggests a study of $\Omega_{14}^+$ and $\Omega_{14}^-$ from
 \cite{richter:two} 
to consider both the isotopy question, and the question of
non-realizable oriented matroids
with no biquadratic final polynomial.

\subsection{Drawing With Straight Edge Alone}
\cite{bjorner:oriented}, page 364, relate the isotopy issue with {\em projective
constructive sequences}.
In terms of the Euclidean plane, these correspond to the
drawings that can be done with straight edge alone: i.e.
a simpler version of the classic straight edge and compass 
problem.
When expressed in polar coordinates, this, once again,
has an elegant form, and may be more amenable to analysis
than the projective version.

\subsection{Rank $> 3$}
It is tempting to want to generalize all of the above
to higher rank. Methodologically, I think it is better
to concentrate on addressing rank 3 problems, where
our geometric intuition is more helpful.
The realizability
problem for higher rank oriented matroids,
can be reduced to ETR, which is equivalent
t
\section{Final Conclusion}
Twisted graphs explain all the examples of
counter-examples to Ringel's slope conjecture,  given at the start of this paper.
Figs~\ref{f:circsaw3} and~\ref{f:circsaw4ringel}
are both twistings of $K_4$; circular saws~\cite{carroll:saws} such as 
fig.~\ref{f:circsaw4} are twistings of wheel graphs $W_n$; fig.~\ref{f:main} has been
discussed in depth. 
Ceva's theorem, fig.~\ref{f:ceva}, is a limiting case, explicable from
the uniform variant, fig.~\ref{f:circsaw3}. This points to the possibility
of a modified statement of Ringel's conjecture, which gives a complete
account of the slope constraints in a line arrangement, in terms of twisted graphs.

The key step in this paper was at the beginning, in section~\ref{s:polar},
with the choice to use polar coordinates for lines.
With this choice, it is clear that for realizable rank 3 oriented matroids,
and an appropriate choice of angles, that a realization can be found
by linear programming, (over the reals, which is harder than over the rationals).

Assuming some of the conjectures stated, it may be possible
to also use the same technique for automatically finding non-realizability proofs,
for all non-realizable cases. A specific example of this, the non-Pappus oriented 
matroid, has been explored in depth. 
If this does indeed generalise, then this would also furnish appropriate angles
in the realizable cases.
For oriented matroids where the techniques of this paper are applicable, 
the specific proof for that oriented matroid is short,
like the second proof of the main theorem, found on page~\pageref{p:second}.

$\ldots$ and Pappus was right.

o rank 3 realizability, \cite{mnev:universality}. I hope that, with
an appropriate approach, this can be seen clearly from
within oriented matroid theory, rather than stepping
out to ETR.
In particular, the approach taken to proving the
non-realizability of $\Rin$, starts by fixing a line
at infinity, and a realization of the rank 2 quotient
oriented matroid along that line.
Maybe, an approach to rank $n$ realizability that 
builds on a realization of a rank $n-1$ quotient
can be made to work in general.

\subsection{A new axiomatization of the Euclidean plane}
Line arrangements are pseudoline arrangements with an
additional constraint. This paper points to the possibility
of being able to formalize those constraints in terms
of twisted graphs. With considerable effort, it may be possible
to provide a cryptomorphic axiomatization of the Euclidean
plane in terms of infinite pseudoline arrangements satisfying
those constraints.

The value in this, would be that aspects of the plane that have
been neglected, because of an historical emphasis on distance
at the expense of angle, may become more apparent.

\subsection{Trigonometric Identities}
We gave a complete account of a fairly large class
of trigonometric identities.
With the additional fact that $\sin(90)=1$, these,
at first glance, appear to generate many
of the conventional identities that we learnt at school.

Thus:
\begin{conj}
All finite trigonometric identities
involving products, sums, differences and quotients
of
the sines and cosines
of rational combinations $\sum_{i=1}^k q_i \theta_i + q_0$
of
a set of unknown angles can be derived from
equation~(\ref{e:normalize}) and $\sin(90)=1$.
i.e. the kernel of the corresponding system is generated
by these two equations.
\end{conj}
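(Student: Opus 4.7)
The plan is to reduce any trigonometric identity in the stated class to a polynomial equation among sines of pairwise differences of a finite auxiliary set of angles, and then to invoke the uniqueness of the normal form of theorem~\ref{t:normal}. The reduction has three stages: clear quotients; remove cosines and rational coefficients in favour of sines of differences of new unknowns; normalise.

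The first algebraic ingredient I would establish is that the classical addition formulas are already consequences of~(\ref{e:normalize}) and $\sin(90)=1$. Substituting $\theta_c-\theta_b=90$ into~(\ref{e:normalize}) and setting $x=\theta_b-\theta_a$, $y=\theta_d-\theta_c$ gives
\begin{equation}
\cos(x+y)=\cos(x)\cos(y)-\sin(x)\sin(y),
\end{equation}
while the symmetric specialisation $\theta_b-\theta_a=90$ yields the sine subtraction formula. These, together with $\cos(\alpha)=\sin(90-\alpha)$, supply $\sin(\alpha\pm\beta)$, $\cos(\alpha\pm\beta)$, and inductively the multi-angle formulas for $\sin(n\alpha)$ and $\cos(n\alpha)$.

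Given any identity, first multiply through by all denominators to obtain a polynomial identity. Then let $N$ be the least common multiple of the denominators of all rational coefficients appearing in the trigonometric arguments, introduce formal unknowns $\phi_i$ by $\theta_i=N\phi_i$ so that each argument becomes an integer combination $\sum p_i\phi_i+q_0$, and expand every $\sin(\sum p_i\phi_i+q_0)$ and $\cos(\sum p_i\phi_i+q_0)$ using the addition and multi-angle formulas above. After this expansion the only trigonometric atoms are $\sin(\phi_i)$, $\cos(\phi_i)$, $\sin(q_0)$ and $\cos(q_0)$. Adjoining a formal zero-angle $\phi_0$ and the constant $90$, each atom becomes a sine of a difference ($\sin(\phi_i)=\sin(\phi_i-\phi_0)$, $\cos(\phi_i)=\sin(90-\phi_i+\phi_0)$, and so on), and the identity now has exactly the form required by theorem~\ref{t:normal}; uniqueness of the normal form then certifies equality via a finite sequence of~(\ref{e:normalize}) rewrites, each of which lifts back through the above substitutions to a derivation in the original variables.

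The main obstacle is the rational-coefficient step: one must show that the substitution $\theta_i=N\phi_i$ introduces no essential new relations on the $\phi_i$---equivalently, that the multi-angle identities expressing $\sin(n\phi)$ and $\cos(n\phi)$ as polynomials in $\sin\phi$ and $\cos\phi$ are themselves derivable from~(\ref{e:normalize}) and $\sin(90)=1$ alone. These follow by induction from the addition formulas, but the bookkeeping is delicate because $N$ depends on the identity, and because $90$ together with the rational constants $q_0$ must be treated as fixed angles rather than free unknowns when theorem~\ref{t:normal} is applied. Handling this specialisation is precisely the role of the second generator $\sin(90)=1$, and the cleanest route is likely to prove a strengthened normal-form statement in which some angles may be declared constant and their sine-differences evaluated; that strengthening, rather than the reductions themselves, is where the real work lies.
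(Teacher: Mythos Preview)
The statement you are attempting to prove is labelled a \emph{Conjecture} in the paper; the paper offers no proof of it, so there is nothing to compare your argument against.

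As a proof attempt, your reduction is reasonable in outline, and the observation that the addition formulas and $\cos\alpha=\sin(90-\alpha)$ already follow from~(\ref{e:normalize}) together with $\sin(90)=1$ is correct and useful. But you have correctly located the real difficulty, and it is not a minor bookkeeping issue. Theorem~\ref{t:normal} and the lemmas around it establish that~(\ref{e:normalize}) is complete for identities among the $\SN{i}{j}$ when the $\theta_i$ are \emph{algebraically independent}: two expressions agree for all $\boldsymbol\theta$ if and only if they share a normal form. The moment you pin some angles to constants ($0$, $90$, the various $q_0$) or impose linear relations among them (as $\theta_i=N\phi_i$ does implicitly, since $\sin(2\phi)$ forces you to introduce $2\phi$ alongside $\phi$), you are asking about identities that hold only on a proper subvariety, and the free normal form cannot see those. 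Your own strategy already shows this: $\sin^2\theta+\cos^2\theta=1$, written with three angles $0,\theta,90$, is $\SN{1}{2}\SN{1}{2}+\SN{2}{3}\SN{2}{3}=\SN{1}{3}\SN{1}{3}$, and all three terms are already in normal form yet visibly unequal as multisets. One can rescue this particular identity by adjoining a fourth angle $90+\theta$ and a second instance of $\sin(90)=1$, but proving that such auxiliary angles can \emph{always} be introduced, and that the resulting enlarged system has~(\ref{e:normalize}) and $\sin(90)=1$ as a complete set of generators, is exactly the conjecture itself. Your ``strengthened normal-form statement'' is therefore not a technical lemma on the way to the conjecture; it is equivalent to it.
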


Noting that the identities appeared as determinants
in our derivations, we may also ask which
of these identities can be derived from the analysis of
twisted graphs.

\section{Acknowledgements}
Professionally, I particularly thank
the anonymous referee, who gave a damning review of~\cite{carroll:drawing},
pointing out that I had not read the literature, specifically~\cite{shor:stretchability}. I hope
that this document addresses your concerns. I also thank Peter Cameron's 
Combinatorial Study Group and Hewlett-Packard, who funded the initially research
into Venn triangles.

At a personal level, there are many too many friends and family to thank for their
support over the many years of this research. I single out Chiara Menchini as
having been particularly supportive over the whole period.
I also 
thank specifically Janet Sherry and Dr Alex Tsilegkeridis.

\bibliographystyle{apalike}
\bibliography{pappus}

\begin{thebibliography}{}

\bibitem[Agarwal and Sharir, 2002]{agarwal:pseudoline}
Agarwal, P. and Sharir, M. (2002).
\newblock Pseudoline arrangements: Duality, algorithms, and applications.
\newblock In {\em Thirteenth Annual ACM-SIAM Symposium on Discrete Algorithms}.

\bibitem[Bj{\"o}rner et~al., 1999]{bjorner:oriented}
Bj{\"o}rner, A., Las~Vergnas, M., Sturmfels, B., White, N., and Ziegler, G.~M.
  (1999).
\newblock {\em Oriented matroids}.
\newblock Cambridge University Press, Cambridge, 2nd edition.

\bibitem[Carroll, 2000a]{carroll:between}
Carroll, J.~J. (2000a).
\newblock Betweenness in polar coordinates.
\newblock Tech. Report HPL-2000-71, HP Labs.

\bibitem[Carroll, 2000b]{carroll:drawing}
--- (2000b).
\newblock Drawing straight lines.
\newblock Tech. Report HPL-2000-72, HP Labs.

\bibitem[Carroll, 2000c]{carroll:venn}
--- (2000c).
\newblock Drawing venn triangles.
\newblock Tech. Report HPL-2000-73, HP Labs.

\bibitem[Carroll, 2000d]{carroll:saws}
--- (2000d).
\newblock The sharpness of circular saws.
\newblock Tech. Report HPL-2000-74, HP Labs.

\bibitem[Carroll, 2006]{carroll:sines}
--- (2006).
\newblock Products of sines in two simple arrangements of six lines.
\newblock Tech. Report HPL-2006-36, HP Labs.

\bibitem[Carver, 1922]{carver:systems}
Carver, W.~B. (1922).
\newblock Systems of linear inequalities.
\newblock {\em The Annals of Mathematics, 2nd Ser.}, 23(3):212--220.

\bibitem[Ceva, 1678]{ceva:lineis}
Ceva, G. (1678).
\newblock {\em de lineis rectis se invicem secantibus statica constructio}.
\newblock Ludovici Montiae, Mediolanum (Milan).

\bibitem[Coxeter, 1995]{coxeter:applications}
Coxeter, H. (1995).
\newblock Some applications of trilinear coordinates.
\newblock {\em Linear Algebra and its Applications}, 226-228:375--388.

\bibitem[Euclid, c300BC]{euclid:porisms}
Euclid (c300BC).
\newblock {\em Porisms {\em (lost)}}.
\newblock Alexandria.

\bibitem[Felsner, 1997]{felsner:number}
Felsner, S. (1997).
\newblock On the number of arrangements of pseudolines.
\newblock {\em Discrete Comput. Geom.}, 18(3):257--267.

\bibitem[Felsner and Kriegel, 1999]{felsner:triangles}
Felsner, S. and Kriegel, K. (1999).
\newblock Triangles in {E}uclidean arrangements.
\newblock {\em Discrete Comput. Geom.}, 22(3):429--438.

\bibitem[Felsner and Weil, 1999]{felsner:sweeps}
Felsner, S. and Weil, H. (1999).
\newblock Sweeps, arrangements and signotopes.

\bibitem[Felsner and Ziegler, 2001]{felsner:zonotopes}
Felsner, S. and Ziegler, G.~M. (2001).
\newblock {Zonotopes associated with higher Bruhat orders}.
\newblock {\em Discrete Mathematics}, 241(1-3):301--312.

\bibitem[Folkman and Lawrence, 1978]{folkman:oriented}
Folkman, J. and Lawrence, J. (1978).
\newblock Oriented matroids.
\newblock {\em J. Combin. Theory Ser. B}, 25(2):199--236.

\bibitem[Gioan and Las~Vergnas, 2004]{gioan:bases}
Gioan, E. and Las~Vergnas, M. (2004).
\newblock Bases, reorientations, and linear programming in uniform and rank 3
  oriented matroids.
\newblock {\em Advances in Applied Mathematics}, 32:212--238.

\bibitem[Gr{\"u}nbaum, 1969]{grunbaum:straight}
Gr{\"u}nbaum, B. (1969).
\newblock {The Importance of Being Straight}.
\newblock In {\em Proc. 12th Biennual Intern. Seminar of Canadian Math.
  Congress}, pages 243--254, Vancouver.

\bibitem[Jaudon and Parlier, 2006]{jaudon:angles}
Jaudon, G. and Parlier, H. (2006).
\newblock {On angles formed by N points of the Euclidean and the hyperbolic
  planes}.
\newblock arXiv:math/0603156v2.

\bibitem[Las~Vergnas, 1986]{lasvergnas:order}
Las~Vergnas, M. (1986).
\newblock {Order properties of lines in the plane and a conjecture of G.
  Ringel}.
\newblock {\em J. Combinatorial Theory Ser. B}, 41:246--249.

\bibitem[Lawrence, 1983]{lawrence:lopsided}
Lawrence, J. (1983).
\newblock Lopsided sets and orthant-intersection by convex sets.
\newblock {\em Pacific J. Math.}, 104:155--173.

\bibitem[Levi, 1926]{levi:pseudolines}
Levi, F. (1926).
\newblock Die teilung der projektiven ebene durch gerade oder pseudogerade.
\newblock {\em Ber. Math.-Phys. Kl S{\"a}chs. Akad. Wiss.}, 78:256--267.

\bibitem[Mn{\"e}v, 1988]{mnev:universality}
Mn{\"e}v, N.~E. (1988).
\newblock The universality theorems on the classification problem of
  configuration varieties and convex polytopes varieties.
\newblock In {\em Topology and geometry---Rohlin Seminar}, volume 1346 of {\em
  Lecture Notes in Math.}, pages 527--543. Springer, Berlin.

\bibitem[Motzkin, 1933]{motzkin:phd}
Motzkin, T.~S. (1933).
\newblock {\em Contributions to the Theory of Linear Inequalities}.
\newblock PhD thesis, University of Basel.
\newblock Translated by D.R. Fulkerson, 1952; in \cite{motzkin:papers}.

\bibitem[Motzkin, 1983]{motzkin:papers}
Motzkin, T.~S. (1983).
\newblock {\em {Theodore S. Motzkin: Select Papers}}.
\newblock Birkh{\"a}user.

\bibitem[Pappus, c340]{pappus:synagogue}
Pappus (c340).
\newblock {\em {$\Sigma\upsilon\nu\alpha\gamma\omega\gamma\acute{\eta}$ {\em
  or} Collection }}.
\newblock Alexandria.

\bibitem[Pappus and Jones, 1986a]{pappus:jones1}
Pappus and Jones, A. (1986a).
\newblock {\em Book 7 of the Collection: Part 1. Introduction, text and
  translation}.
\newblock Springer-Verlag, New York.

\bibitem[Pappus and Jones, 1986b]{pappus:jones2}
--- (1986b).
\newblock {\em Book 7 of the Collection: Part 2. Commentary, index, and
  figures}.
\newblock Springer-Verlag, New York.

\bibitem[Pl{\"u}cker, 1835]{plucker:system}
Pl{\"u}cker, J. (1835).
\newblock {\em System der analytischen Geometrie, auf neue betrachtungsweisen
  gegrundet $\ldots$}.
\newblock Duncker und Humblot, Berlin.

\bibitem[Richter-Gebert, 1996]{richter:two}
Richter-Gebert, J. (1996).
\newblock Two interesting oriented matroids.
\newblock {\em Documenta Mathematica}, (1):137--148.

\bibitem[Richter-Gebert and Sturmfels, 1991]{richter:topology}
Richter-Gebert, J. and Sturmfels, B. (1991).
\newblock On the topology and geometric construction of oriented matroids and
  convex polytopes.
\newblock {\em Transactions Amer. Math. Soc.}, 325:389--412.

\bibitem[Ringel, 1956]{ringel:teilungen}
Ringel, G. (1956).
\newblock Teilungen der {E}bene durch {G}eraden oder topologische {G}eraden.
\newblock {\em Math. Z.}, 64:79--102.

\bibitem[Sendov, 1995]{sendov:minimax}
Sendov, B. (1995).
\newblock Minimax of the angles in a plane configuration of points.
\newblock {\em Acta Mathematica Hungarica}, 69(1--2):27--46.

\bibitem[Sharir and Smorodinsky, 2001]{sharir:generalized}
Sharir, M. and Smorodinsky, S. (2001).
\newblock On generalized geometric graphs and pseudolines.

\bibitem[Shor, 1991]{shor:stretchability}
Shor, P.~W. (1991).
\newblock Stretchability of pseudolines is {N}{P}-hard.
\newblock In Gritzmann, P. and Sturmfels, B., editors, {\em Applied geometry
  and discrete mathematics: the Victor Klee Festschrift}, volume~4 of {\em
  DIMACS series in discrete mathematics and theoretical computer science},
  pages 531--554. Amer. Math. Soc., Providence, RI.

\end{thebibliography}

\end{document}